\theoremstyle{plain}
\newtheorem{thm}{Theorem}[section] 
\numberwithin{section}{chapter}
\numberwithin{thm}{section}
\theoremstyle{definition}
\newtheorem{defi}[thm]{Definition}
\newtheorem{deflemma}[thm]{Definition/Lemma}
\newtheorem{rmk}[thm]{Remark}
\newtheorem{recall}[thm]{Recall}
\newtheorem{ex}[thm]{Example}
\theoremstyle{plain}
\newtheorem{prop}[thm]{Proposition}
\newtheorem{lemma}[thm]{Lemma}
\newtheorem{cor}[thm]{Corollary}
\newcommand{\Spec}{\operatorname{Spec}}
\newcommand{\Supp}{\operatorname{Supp}}
\newcommand{\Pic}{\operatorname{Pic}}
\newcommand{\WDiv}{\operatorname{WDiv}}
\newcommand{\GPic}{\operatorname{GPic}}
\newcommand{\GDiv}{\operatorname{GDiv}}
\newcommand{\Prin}{\operatorname{Prin}}
\newcommand{\CDiv}{\operatorname{CDiv}}
\newcommand{\Lie}{\operatorname{Lie}}
\newcommand{\End}{\operatorname{End}}
\newcommand{\Aut}{\operatorname{Aut}}
\newcommand{\Nm}{\operatorname{Nm}}
\newcommand{\tr}{\operatorname{tr}}
\newcommand{\Fr}{\operatorname{Fr}}
\newcommand{\Hilb}{\operatorname{Hilb}}
\newcommand{\lHilb}{\operatorname{{}^\ell Hilb}}
\newcommand{\lrho}{\operatorname{{}^\ell \rho}}
\newcommand{\Image}{\operatorname{Im}}
\newcommand{\Jgen}{\operatorname{\textit{G}\mathbb{J}}}
\newcommand{\JgenSch}{\operatorname{\textit{GJ}}}
\newcommand{\Jbar}{\overline{\mathbb{J}}}
\newcommand{\JbarSch}{\overline{J}}
\newcommand{\Jtf}{\overline{\mathbb{J}}_{tf}}
\newcommand{\JtfSch}{\overline{J}_{tf}}
\newcommand{\rk}{\operatorname{rk}}
\newcommand{\rr}{\operatorname{\textbf{rank}}}
\newcommand{\mult}{\operatorname{mult}}
\newcommand{\Prbar}{\overline{\Pr}}
\newcommand{\Fitt}{\operatorname{Fitt}}
\newcommand{\Nn}{\mathcal{N}}
\newcommand{\Z}{\mathbb{Z}}
\newcommand{\Zz}{\mathcal{Z}}
\newcommand{\Q}{\mathcal{Q}}
\newcommand{\Kk}{\mathcal{K}}
\newcommand{\C}{\mathbb{C}}
\newcommand{\Cc}{\mathbb{C}}
\newcommand{\A}{\mathcal{A}}
\newcommand{\Asm}{\mathcal{A}^\textrm{sm}}
\newcommand{\Aell}{\mathcal{A}^\textrm{ell}}
\newcommand{\Areg}{\mathcal{A}^\textrm{reg}}
\newcommand{\avect}{{\underline{a}}}
\newcommand{\rvect}{{\underline{r}}}
\newcommand{\Pvect}{{\underline{P}}}
\newcommand{\Qvect}{{\underline{Q}}}
\newcommand{\Pp}{\mathbb{P}}
\newcommand{\Ee}{\mathcal{E}}
\newcommand{\Oo}{\mathcal{O}}
\newcommand{\Ii}{\mathcal{I}}
\newcommand{\Jj}{\mathcal{J}}
\newcommand{\g}{\mathfrak{g}}
\newcommand{\Hh}{\mathcal{H}}
\newcommand{\Ff}{\mathcal{F}}
\newcommand{\M}{\mathcal{M}}
\newcommand{\Mm}{\mathcal{M}}
\newcommand{\Ll}{\mathcal{L}}
\newcommand{\p}{\mathfrak{p}}
\newcommand{\q}{\mathfrak{q}}
\newcommand{\GL}{\mathrm{GL}}
\newcommand{\PGL}{\mathrm{PGL}}
\newcommand{\SL}{\mathrm{SL}}
\newcommand{\PSL}{\mathrm{PSL}}
\newcommand{\Sp}{\mathrm{Sp}}
\newcommand{\GSp}{\mathrm{GSp}}
\newcommand{\PSp}{\mathrm{PSp}}
\newcommand{\SO}{\mathrm{SO}}
\newcommand{\gl}{\mathfrak{gl}}
\newcommand{\slalg}{\mathfrak{sl}}
\newcommand{\pgl}{\mathfrak{pgl}}
\newcommand{\gsp}{\mathfrak{gsp}}
\newcommand{\psp}{\mathfrak{psp}}
\newcommand{\spalg}{\mathfrak{sp}}
\DeclareMathOperator{\ad}{ad} 
\DeclareMathOperator{\ev}{ev}
\DeclareMathOperator{\Hhom}{\mathcal{H}\textit{om}}
\DeclareMathOperator{\id}{id}
\newcommand{\chk}{{\smash{\scalebox{.7}[1.4]{\rotatebox{90}{\textnormal{\guilsinglleft}}}}}}
      \string\usetikzlibrary{decorations.markings} to use arrows with markings}{}}{}%
\providecommand*{\twoheadrightarrowfill@}{%
  \arrowfill@\relbar\relbar\twoheadrightarrow
}
\providecommand*{\twoheadleftarrowfill@}{%
  \arrowfill@\twoheadleftarrow\relbar\relbar
}
\providecommand*{\xtwoheadrightarrow}[2][]{%
  \ext@arrow 0579\twoheadrightarrowfill@{#1}{#2}%
}
\providecommand*{\xtwoheadleftarrow}[2][]{%
  \ext@arrow 5097\twoheadleftarrowfill@{#1}{#2}%
}
\newcounter{ListOfModuli}
\author{Raffaele Carbone}
\date{}
\title{The Norm map on the compactified Jacobian, the Prym stack and Spectral data for G-Higgs pairs}
\begin{document}

\begin{titlepage}
\vspace*{-40mm}
	\begin{center}
		\begin{figure}[h!]
			\begin{center}
				\includegraphics[scale=0.60]{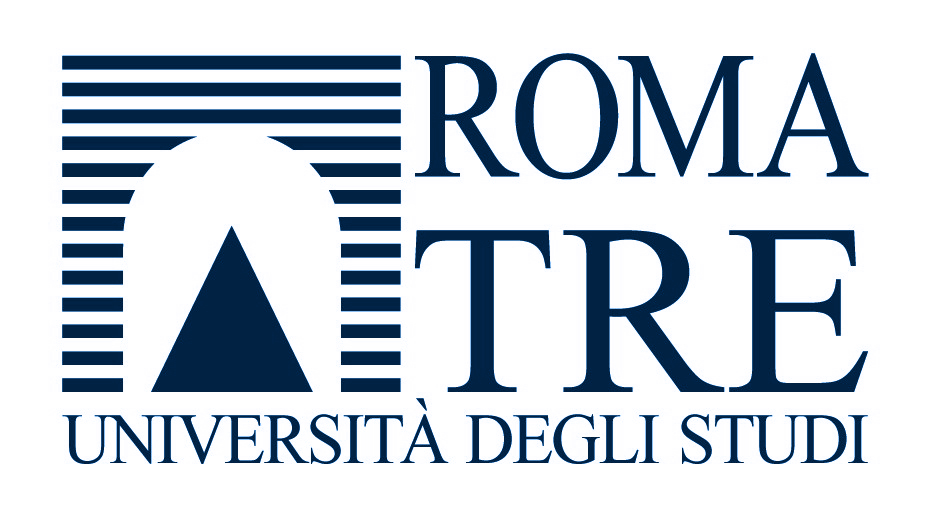}
			\end{center}
		\end{figure}
\vspace{15mm}
		\large{\textsc{Dottorato di Ricerca in Matematica}}
	\end{center}
	\vspace{15 mm}

			\begin{center}
		
		\large{XXXII Ciclo\\}
		\Large{Dottorato in Matematica}
	\end{center}
		\vspace{15mm}
	\begin{center}
		{\LARGE{\bf The Norm map on the compactified Jacobian,\\ the Prym stack \\}}
		\vspace{3mm}
		{\Large{and\\}}
		\vspace{3mm}
		{\LARGE{\bf Spectral data for G-Higgs pairs}}\\
	\end{center}

\vfill
	\par
	\noindent
\begin{tabular}{ll}
RAFFAELE CARBONE & \hspace{2em}\makebox[2.5in]{\hrulefill}\\
\textit{Nome e Cognome del dottorando} &  \hspace{10em}\raisebox{.5ex}{\scriptsize Firma}\\[4ex]
FILIPPO VIVIANI & \hspace{2em}\makebox[2.5in]{\hrulefill}\\
\textit{Docente Guida/Tutor: Prof.} &  \hspace{10em}\raisebox{.5ex}{\scriptsize Firma}\\[4ex]
ANGELO LOPEZ& \hspace{2em}\makebox[2.5in]{\hrulefill}\\
\textit{Coordinatore: Prof.} &  \hspace{10em}\raisebox{.5ex}{\scriptsize Firma}\\[4ex]
\end{tabular}

\end{titlepage}

\tableofcontents

\chapter*{Introduction and statement of results}

Let $C$ be a fixed smooth and projective curve over $\C$. A ($L$-twisted) \textit{Higgs pair} on $C$ is a pair $(E, \Phi)$ of a vector bundle $E$ on $C$ and a endomorphism  $\Phi: E \rightarrow E \otimes L$ with values in a line bundle $L$ on $C$. Higgs pairs were introduced first in the case $L=K_C$ under the name of Higgs bundles by N. Hitchin, in his study about dimensionally reduced self-duality equations of Yang-Mills gauge  theory(\cite{Hit87}, \cite{Hit87a}), and then by C. Simpson, in his study of nonabelian  Hodge  theory  (\cite{Simp92}, \cite{SimpI}, \cite{SimpII}). $L$-twisted Higgs pairs were introduced by N. Nitsure \cite{Nit92}.

If $E$ has a $G$-structure, where $G$ is some complex reductive group, and $\Phi$ satisfies some extra conditions depending on $G$, we speak of $G$-Higgs pairs and $G$-Higgs bundles. Hitchin \cite{Hit87a}, followed by R. Donagi \cite{Don93}, showed that the moduli space $M_G$ of  semistable $G$-Higgs bundles is endowed with a proper map $H_G$ to a vector space, whose generic fiber is a complex Lagrangian torus and an abelian variety. Such map is called $G$-\textit{Hitchin fibration} and makes $M_G$ an algebraically completely integrable system; moreover, the smooth locus of $M_G$ has the structure of a hyperk\"ahler manifold. More generally, the Hitchin fibration $H_G$ on the moduli space $M_G^L$ of semistable $G$-Higgs pairs was introduced by N. Nitsure (loc. cit.); when $G=\GL$ and $L \otimes K_C^{-1}$ is effective, E. Markman \cite{Mark} and F. Bottacin \cite{Bott} proved that $M_G^L$ is endowed with a Poisson structure (depending upon the choice of a section of $L \otimes K_C^{-1}$) with respect to which $H_G$ becomes an algebraically completely integrable system.

\vspace{1em}

T. Hausel and M. Thaddeus \cite{HT03} related the $G$-Hitchin fibration to mirror symmetry, while A. Kapustin and E. Witten \cite{KW} pointed out on physical grounds that Hitchin’s system for a complex reductive Lie group $G$ is dual to Hitchin’s system for the Langlands dual group ${}^L G$. This was proved in an algebro-geometric setting by R. Donagi and T. Pantev \cite{DP}; they also conjectured the \textit{classical limit of the geometric Langlands correspondence} as a canonical equivalence between the derived categories of coherent sheaves over the moduli stacks of $G$-Higgs bundles and ${}^L G$-Higgs bundles, which intertwines the action of the classical limit tensorization functors with the action of the classical limit Hecke functors. Recently, M. Groechenig, D. Wyss and P. Ziegler \cite{GWZ17} proved, using an arithmetic perspective, a conjecture by Hausel-Thaddues stating that the moduli spaces of $\SL$ and $\PGL$-Higgs bundles are mirror partners and that appropriately defined Hodge numbers of such spaces agree.

\vspace{1em}

The connection between Hitchin system and geometric Langlands program led to the study of Lagrangian submanifolds of the moduli space of Higgs bundles supporting holomorphic sheaves (A-branes), and their dual objects (B-branes). This study was introduced by A. Kapustin and E. Witten in loc. cit. (see also E. Witten \cite{Witt15}), followed by L. Schaposhnik and D. Baraglia (\cite{SchapBar14}, \cite{SchapBar16}) and L. Branco \cite{Branco18}.

\vspace{1em}

More recently, M. de Cataldo, T. Hausel and L. Migliorini \cite{dCHM} followed by J. Shen and Z. Zhang \cite{SZ} and M. de Cataldo, D. Maulik and J. Shen \cite{dCMS} studied the ``P=W''  conjecture, stating that the canonical isomorphism \[ H^*(\Mm_B, \mathbb{Q}) \simeq H^*(\Mm(r), \mathbb{Q}) \]  induced by canonical diffeomorphism between the moduli space $\Mm(r)$ of rank $r$ Higgs bundle and the corresponding character variety $\Mm_B$ of rank $r$ stable local systems, identifies the weight filtration and the perverse filtration associated with the Hitchin fibration.

\vspace{1em}

The generic fiber of the $\GL$-Hitchin morphism is the Jacobian of a curve associated to the fiber, called the \textit{spectral curve}; when $G=\SL, \Sp, \SO$, the generic fiber of the $G$-Hitchin morphisms are Prym varieties associated to certain morphisms from the spectral curve. This fact, also known as \textit{spectral correspondence} or \textit{abelianization process}, has been pointed out first in \cite{Hit87a}, followed by \cite{Hit07} and \cite{Schap13}. The duality of Hitchin systems manifests itself in the statement that the dual of the abelian variety for the generic fiber of $\Hh_G$ is the abelian variety for the generic fiber of $\Hh_{{}^L G}$. More recently, N. Hitchin and L. Schaposnik introduced in \cite{Hit14}	a \textit{non-abelianization} process in order to study Higgs bundles that correspond (by solving the gauge-theoretic Higgs bundle equations) to flat connections on $C$ with holonomy in some real Lie groups.
\vspace{1em}

The study of the Hitchin morphism restricted to the fibers whose associated spectral curve is integral played a crucial role in B. C. Ng\^o's proof of the fundamental lemma (\cite{Ngo06} and \cite{Ngo10}); more generally, the study of the Hitchin morphism restricted to the fibers whose associated spectral curve is reduced was a key ingredient in Chaudard-Laumon's proof of the weighted fundamental lemma (\cite{CL10} and \cite{CL12}).

\vspace{1em}

For $G=\GL$ the spectral correspondence has been generalized to non-generic fibers with integral spectral curve by A. Beauville, M. Narasimhan and S. Ramanan in \cite{BNR}, and to any fiber by D. Schaub \cite{Schaub98}, followed by P.-H. Chaudouard and G. Laumon \cite{ChLau} and M. A. De Cataldo \cite{DeCat17}. The spectral correspondence for non-generic fibers involves a wider moduli space than the Jacobian scheme, namely the \textit{compactified Jacobian} parametrizing torsion-free rank-1 sheaves.

For $G$ different from $\GL$, the spectral correspondence for non-generic fibers has been studied by \cite{HP} for the case of $\SL$, and by \cite{Branco18} for the Hitchin map associated to some semisimple real Lie groups.

\vspace{1em}

This thesis is divided in two parts, introduced by a preliminary Chapter about torsion-free rank-1 sheaves and Higgs pairs (Chapter \ref{ChapterPrelim}). In the first part (Chapter \ref{ChapterNorm} and \ref{ChapterPrym}), motivated by the spectral correspondence for $G=\SL$, we study the Norm map $\Nm_\pi$ on the compactified Jacobian associated to a finite, flat morphism $X \xrightarrow{\pi} Y$ between projective curves; the Norm map happens to be well-defined only if $Y$ is smooth. In such case, we define the Prym stack of $X$ over $Y$ as the (stacky) fiber $\Nm^{-1}(\Oo_Y)$. In the case that $X$ is reduced with locally planar singularities, we show that the usual Prym scheme is contained in the Prym stack as an open and dense subset. In the second part (Chapter \ref{ChapterSpectralData}) we study the spectral correspondence for $G$-Higgs pairs, in the case of $G=\SL(r,\C)$, $\PGL(r, \C)$, $\Sp(2r,\C)$, $\GSp(2r,\C)$, $\PSp(2r,\C)$, over any fiber. 

\vspace{1em}

In the future work, we aim to study the spectral data also for other classical groups such as $\SO(2r, \C)$ and $\SO(2r+1, \C)$. Moreover, we aim to study the geometric properties of the moduli loci arising in the description of the spectral data. Finally, we are interested in characterizing, for various $G$, the spectral data corresponding to semistable pairs, and to consider also such data up to $S$-equivalence of the corresponding pairs; this would allow to describe spectral data for the scheme-theoretic version of the Hitchin morphism.

\vspace{1em}

\textbf{Acknowledgements}. First, I would like to thank my advisor, Filippo Viviani, for suggesting the problem and for his continuous guidance and proofreading. I would also like to thank Edoardo Sernesi for helpful comments and discussions, together with Eduardo Esteves and Margarida Melo. Finally, I thank my collegues Fabrizio Anella and Daniele Di Tullio for their help and patient listening during these years.

\section{Preliminaries}
In Chapter \ref{ChapterPrelim} we first introduce generalized line bundles and torsion-free rank-1 sheaves, giving the following general definitions.

\begin{defi}
Let $X$ be a Noetherian scheme of pure dimension 1. A coherent sheaf $\Ff$ on $X$ is said: \begin{enumerate}
\item \textit{torsion-free} if the support of $\Ff$ has dimension 1 and the maximal subsheaf  $T(\Ff) \subset \Ff$ of dimension 0 is equal to 0;
\item \textit{rank-1} if for any generic point $\xi \in X$, the length of $\Ff_\xi$ as an $\Oo_{X,\xi}$-module is equal to the length of $\Oo_{X,\xi}$ as a module over itself.
\end{enumerate}
\end{defi}

\begin{defi}
Let $X$ be a Noetherian scheme of pure dimension 1. A \textit{generalized line bundle} is a torsion-free sheaf $\Ff$ on $X$ such that for any generic point $\xi \in X$,the stalk $\Ff_\xi$ is isomorphic to $\Oo_{X,\xi}$ as an $\Oo_{X,\xi}$-module.
\end{defi}

In the case of $X$ projective over a field, generalized line bundles and torsion-free rank-1 sheaves are both particular cases of torsion-free sheaves with polarized rank 1 with respect to any fixed polarization.
\begin{defi}
Let $X$ be a projective scheme of pure dimension 1 over a base field $k$ and let $H$ be a polarization of degree $\deg H = \delta$. Let $\Ff$ be a torsion-free sheaf on $X$. The \textit{polarized rank} of $\Ff$ is the rational number $r_H(\Ff)$ determined by the Hilbert polinomial of $\Ff$ with respect to $H$: \[
P(\Ff, n, H) := \chi(\Ff \otimes \Oo_X(nH)) = \delta r_H(\Ff) n + \chi(X, \Ff).
\]
\end{defi}

The polarized rank of a torsion-free sheaf is related with its rank at the generic points.
\begin{thm}
Let $X$ be a projective scheme of pure dimension 1 over a field, with irreducible components $X_1, \dots, X_s$ and let $\Ff$ be a torsion-free sheaf on $X$. For each $i$, let $\xi_i$ be the generic point of $X_i$ and let $\rk_{X_i}(\Ff):=\ell_{\Oo_{X, \xi_i}}(\Ff_{\xi_i}) / \ell_{\Oo_{X, \xi_i}}(\Oo_{X, \xi_i})$ be the rank of $\Ff$ at $X_i$. Let $H$ be a polarization on $X$. Then, the following formula for $r_H(\Ff)$ holds:
\[
r_H(\Ff)=\frac{\sum_{i=1}^s \rk_{X_i}(\Ff) \deg H_{|X_i} }{ \sum_{i=1}^s  \deg H_{|X_i} }.
\]
In particular, if $\Ff$ is a torsion-free rank-1 sheaf, then $r_H(\Ff)=1$ for any polarization $H$ on $X$.
\end{thm}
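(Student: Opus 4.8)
The plan is to recognise both sides, once the denominators are cleared, as \emph{additive} numerical invariants of $\Ff$ that vanish on sheaves supported in dimension $0$, and then to pin them down by evaluating on the structure sheaves of the irreducible components, where the identity becomes an instance of Riemann--Roch on a projective curve.

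Write $d_i:=\deg H_{|X_i}$ and $D:=\sum_{i=1}^s d_i$. First I would record two additivity facts. For any coherent sheaf $\Ff$ on $X$, $n\mapsto\chi(\Ff\otimes\Oo_X(nH))$ is a numerical polynomial in $n$ of degree at most $\dim\Supp\Ff\le 1$ (Snapper's lemma), so by the very definition of $r_H$ its coefficient of $n$ is $\delta\, r_H(\Ff)$; since $\Oo_X(nH)$ is locally free this coefficient is additive on short exact sequences, and it vanishes whenever $\dim\Supp\Ff=0$ (then the polynomial is the constant $\chi(\Ff)$). Similarly, localisation at a generic point $\xi_i$ is exact and length is additive on finite-length $\Oo_{X,\xi_i}$-modules, so $\Ff\mapsto\rk_{X_i}(\Ff)$ is additive on short exact sequences and vanishes on $0$-dimensional sheaves (for which $\Ff_{\xi_i}=0$); hence so is $\Ff\mapsto\sum_i\rk_{X_i}(\Ff)\,d_i$. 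Thus both $\Ff\mapsto\delta\, r_H(\Ff)$ and $\Ff\mapsto\sum_i\rk_{X_i}(\Ff)\,d_i$ descend to the Grothendieck group of coherent sheaves on $X$ of dimension $\le 1$ modulo the classes of sheaves of dimension $0$.

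Next I would invoke dévissage: that quotient Grothendieck group is freely generated by the classes $[\Oo_{(X_i)_{\mathrm{red}}}]$ of the structure sheaves of the reduced irreducible components, and a localisation–and–length count at $\xi_i$ identifies, for any $\Ff$ of dimension $\le 1$, the coefficient of $[\Oo_{(X_i)_{\mathrm{red}}}]$ in $[\Ff]$ with $\ell_{\Oo_{X,\xi_i}}(\Ff_{\xi_i})$ — every other generator, and every $0$-dimensional sheaf, has zero stalk at $\xi_i$. Since $[\Oo_{X_i}]=\ell_{\Oo_{X,\xi_i}}(\Oo_{X,\xi_i})\,[\Oo_{(X_i)_{\mathrm{red}}}]$ for the same reason, this can be rewritten as $[\Ff]=\sum_{i=1}^s\rk_{X_i}(\Ff)\,[\Oo_{X_i}]$. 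Applying the first additive invariant gives $\delta\, r_H(\Ff)=\sum_i\rk_{X_i}(\Ff)\,\delta\, r_H(\Oo_{X_i})$, and $\delta\, r_H(\Oo_{X_i})$ is the coefficient of $n$ in $n\mapsto\chi\bigl(X_i,(H_{|X_i})^{\otimes n}\bigr)$, which by Riemann--Roch on the projective curve $X_i$ (equivalently, by the definition of the degree of a line bundle on it) equals $\chi(X_i,H_{|X_i})-\chi(X_i,\Oo_{X_i})=\deg H_{|X_i}=d_i$. Hence $\delta\, r_H(\Ff)=\sum_i\rk_{X_i}(\Ff)\,d_i$.

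Finally, evaluating this at $\Ff=\Oo_X$ — where $\rk_{X_i}(\Oo_X)=1$ for every $i$ by construction, and $r_H(\Oo_X)=1$ directly from the definition of $\delta=\deg H$ — gives $\delta=\sum_i d_i=D$. Dividing through yields $r_H(\Ff)=\bigl(\sum_i\rk_{X_i}(\Ff)\,d_i\bigr)/D$, the asserted formula; and if $\Ff$ is torsion-free of rank $1$ then $\rk_{X_i}(\Ff)=1$ for all $i$ by definition, so $r_H(\Ff)=D/D=1$. The step I expect to require the most care is the dévissage identification $[\Ff]=\sum_i\rk_{X_i}(\Ff)[\Oo_{X_i}]$ — in particular keeping track of the scheme structure of $X_i$ at its generic point, so that the multiplicity of $\Oo_X$ along $X_i$ enters correctly — together with the attendant normalisation $\delta=\sum_i\deg H_{|X_i}$; the Riemann--Roch input on an integral projective curve over an arbitrary base field is classical, and the additivity statements are routine.
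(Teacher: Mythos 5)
Your proof is correct, and it runs on the same underlying engine as the paper's — dévissage for coherent sheaves plus additivity of Euler characteristics — but it packages the argument differently. The paper first reduces to the case of $X$ irreducible via the explicit sequence $0 \to \Ff \to \bigoplus_i \Ff_{|X_i} \to T \to 0$ (with $T$ zero-dimensional, so harmless for the leading coefficient), and then on each component invokes Lemma \ref{ChiOfTensor}, itself proved by dévissage via the two-out-of-three property with the generators $\Oo_C$ and skyscrapers. You instead work globally: both $\Ff \mapsto \delta\, r_H(\Ff)$ (read off as the linear coefficient of the Hilbert polynomial, extended to all sheaves of dimension $\le 1$) and $\Ff \mapsto \sum_i \rk_{X_i}(\Ff)\deg H_{|X_i}$ are additive and kill dimension-$0$ classes, so everything reduces to the single identity $[\Ff]=\sum_i \ell_{\Oo_{X,\xi_i}}(\Ff_{\xi_i})[\Oo_{(X_i)_{\mathrm{red}}}]$ in $K_0$ modulo dimension-$0$ classes (rationalized, since the $\rk_{X_i}$ are only in $\mathbb{Q}$), followed by an evaluation on generators. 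Two small dividends of your route: the normalization $\delta=\sum_i\deg H_{|X_i}$ comes out for free by plugging in $\Ff=\Oo_X$ rather than being quoted as the separate multiplicity formula $\deg H=\sum_i\mult_X(C_i)\deg H_{|C_i}$, and the two sides of the identity are treated symmetrically, which makes it transparent why the statement is really about classes in $K_0$. The one step you rightly flag as delicate — that the length of $\Ff_{\xi_i}$ computes the coefficient of $[\Oo_{(X_i)_{\mathrm{red}}}]$, and that $[\Oo_{X_i}]=\mult_X(C_i)[\Oo_{(X_i)_{\mathrm{red}}}]$ modulo lower-dimensional classes — is exactly the content of the dévissage filtration (each one-dimensional graded piece is an ideal sheaf on some $C_i$, hence equals $[\Oo_{C_i}]$ minus a zero-dimensional class, and localizing the filtration at $\xi_i$ counts the pieces), so no gap there; just make sure you state that version of dévissage rather than only the generation statement for the graded pieces.
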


We consider the following moduli spaces for torsion-free sheaves.
\begin{defi}
Let $X$ be a projective scheme of pure dimension 1 over a field $k$ and let $d \in \Z$ be an integer number. 
\begin{enumerate}
\item The \textit{Jacobian scheme} of degree $d$ on $X$ is the algebraic scheme $J^d(X)$ representing the sheafification of the functor that associates to any $k$-scheme $T$ the set of isomorphism classes of line bundles of degree $d$ on $X \times_k T$. The union of the Jacobians of all degrees is denoted as $J(X)$.

\item The \textit{generalized Jacobian stack} of degree $d$ on $X$ is the algebraic stack $\Jgen(X,d)$ such that, for any $k$-scheme $T$, $\Jgen(X,d)(T)$ is the groupoid of $T$-flat coherent sheaves on $X \times_k T$ whose fibers over $T$ are generalized line bundles of degree $d$ on $X \simeq X \times_k \{t\} $.

\item The \textit{compactified Jacobian stack} of degree $d$ on $X$ is the algebraic stack $\Jbar(X,d)$ such that for any $k$-scheme $T$, $\Jbar(X,d)(T)$ is the groupoid of $T$-flat coherent sheaves on $X \times_k T$ whose fibers over $T$ are torsion-free sheaves of rank 1 and degree $d$ on $X \simeq X \times_k \{t\}$.

\item Let $H$ be a polarization on $X$ The \textit{Simpson Jacobian stack} of degree $d$ on $X$ is the algebraic stack  $\Jtf(X,H,d)$  such that for any $k$-scheme $T$, $\Jtf(X,H,d)(T)$ is the groupoid of $T$-flat coherent sheaves on $X \times_k T$ whose fibers over $T$ are torsion-free sheaves of polarized rank 1 and polarized degree $d$ on $X \simeq X \times_k \{t\}$ with respect to $H$.  

\setcounter{ListOfModuli}{\value{enumi}}
\end{enumerate}
The locus of semi-stable object can be defined in any of such moduli spaces, depending on the choiche of the polarization $H$. Then, we have the following good moduli spaces. \begin{enumerate}
\setcounter{enumi}{\value{ListOfModuli}}

\item The \textit{generalized Jacobian scheme} $\JgenSch(X,H,d)$ representing S-equivalence classes of $H$-semistable generalized line bundles of degree $d$ on $X$.

\item The \textit{compactified Jacobian scheme} $\JbarSch(X,H,d)$ representing S-equivalence classes of $H$-semistable torsion-free sheaves of rank 1 and degree $d$ on $X$.

\item The \textit{Simpson Jacobian} $\JtfSch(X,H,d)$ representing S-equivalence classes of $H$-semistable torsion-free sheaves of polarized rank 1 and polarized degree $d$ on $X$.

\end{enumerate}
\end{defi}
These moduli spaces satisfy the following chains of inclusion:\[
J^d(X) \stackrel{(1)}{\subseteq} \Jgen(X,d) \stackrel{(2)}{\subseteq} \Jbar(X,d) \stackrel{(3)}{\subseteq} \Jtf(X,H,d),
\]
and: \[
J^d(X) \stackrel{(1')}{\subseteq} \JgenSch(X,H,d) \stackrel{(2')}{\subseteq} \JbarSch(X,H,d) \stackrel{(3')}{\subseteq} \JtfSch(X,H,d).
\]
All the inclusions above are open embeddings, non strict in general; inclusions $(3)$ and $(3')$ are also closed. When $X$ satisfies additional conditions, some of them are actually equalities.
\begin{itemize}
	\item If $X$ is irreducible, inclusion $(3)$ (resp. $(3)'$) is an equality.
	\item If $X$ is reduced, inclusion $(2)$ (resp. $(2')$) is an equality.
	\item  If $X$ is integral and smooth, inclusions $(1)$, $(2)$ and $(3)$ (resp. $(1')$, $(2')$ and $(3')$) are equalities.
\end{itemize}
The moduli space $\JtfSch(X,H,d)$ of semistable torsion-free sheaves of polarized rank $1$ and degree $d$  is projective by the work of C. Simpson (\cite{SimpI} and \cite{SimpII}); hence, the compactified Jacobian $\JbarSch(X,H,d)$ is a projective subscheme and union of connected components.

\vspace{1em}

The compactified Jacobian plays an important role in the spectral correspondence for Higgs pairs. Let $C$ be a  fixed smooth curve over the field of complex numbers and let $L$ be a fixed line bundle on $C$ with degree $\ell = \deg L$. 
The algebraic stack $\M(r,d)$ of all $L$-twisted Higgs pairs $(E, \Phi)$ on $C$ of rank $r$ and degree $d$ is endowed with a morphism, called the \textit{Hitchin morphism}, defined as:
\begin{align*}
\Hh_{r,d}: \M(r,d) &\rightarrow \A(r) = \bigoplus_{i=1}^r H^0(C, L^i) \\
(E, \Phi) &\mapsto (a_1(E, \Phi), \dots, a_r(E, \Phi))
\end{align*}
where $L^i=L^{\otimes i}$,  $a_i(E, \Phi) := (-1)^i \tr(\wedge^i\Phi)$ and $\A(r)$ is called the \textit{Hitchin base}. Similarly, the good moduli space $M(r,d)$ parametrizing S-equivalence classes of semistable $L$-twisted Higgs pairs of rank $r$ and degree $d$ is endowed with a flat projective morphism, called the \textit{Hitchin fibration}:
\begin{align*}
H_{r,d}: M(r,d) &\rightarrow \A(r) = \bigoplus_{i=1}^r H^0(C, L^i) \\
(E, \Phi) &\mapsto (a_1(E, \Phi), \dots, a_r(E, \Phi)).
\end{align*}

Let $\avect \in \A(r)$ be any characteristic. The spectral curve $X_\avect \xrightarrow{\pi_\avect} C $ is the projective scheme defined in the total space of $L$, $P=\Pp(\Oo_C \oplus L^{-1}) \xrightarrow{p} C$, by the homogeneous equation \[
x^r + p^*(a_1)x^{r-1}y + \dots + p^*(a_r)y^r=0
\]
where $x$ is the section of $\Oo_P(1) \otimes p^*(L)$ whose pushforward via $p$ corresponds to the constant section $(1,0)$ of $L \otimes \Oo_C$ and $y$ is the section of $\Oo_P(1)$ whose pushforward via $p$ corresponds to the constant section $(1,0)$ of  $\Oo_C \otimes L^{-1}$. The spectral curve $X_\avect$ has pure dimension 1  and canonical sheaf $\omega_{X_\avect} = \pi_\avect^*(\omega_c \otimes L^{r-1})$. The following spectral correspondence is a classical result.

\begin{prop} {\normalfont (Spectral correspondence)}
Let $\avect \in \A(r)$ be any characteristic and let $X_\avect \xrightarrow{\pi_\avect} C$ be the associated spectral curve.  Let $\Oo_C(1)$ be an ample line bundle on $C$; let $\Oo_{X_\avect}(1) := \pi_\avect^*\big(\Oo_C(1)\big)$ be the ample line bundle on $X_\avect$ obtained by pullback and denote with $H$ the associated polarization on $X_\avect$.
\begin{enumerate}
\item For any integer $d$, there is an isomorphism of stacks:
\[\
\Hh_{r,d}^{-1}(\avect) \xrightarrow[\Pi]{ \hspace{1em} {}_\sim \hspace{1em} } \Jbar(X_\avect, d'),
\]
where $d'=d+r(1-g)-\chi(\Oo_{X_\avect})=d+\frac{r(r-1)}{2}\ell$ and $g=g(C)$ is the genus of $C$. If $\Mm$ is a torsion-free sheaf of rank 1 on $X_\avect$, then $\Pi(\Mm) := (E, \Phi)$ where \begin{align*}
E &=\pi_{\avect,*} (\Mm) \\
\Phi &= \pi_{\avect,*}(\cdot x): \pi_{\avect,*}(\Mm) \rightarrow L \otimes \pi_{\avect,*}(\Mm) \simeq \pi_{\avect,*}(\pi_\avect^* L \otimes \Mm).
\end{align*}

\item A torsion-free sheaf of rank 1 $\Mm$ on $X_\avect$ is $H$-semistable if and only if the associated Higgs pair $\Pi(\Mm)$ is semistable on $C$. Hence, the above correspondence yields an isomorphism of schemes: \[
H_{r,d}^{-1}(\avect) \simeq \JbarSch(X_\avect, H, d'). 
\]
\end{enumerate}
\end{prop}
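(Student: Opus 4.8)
The plan is to realize $\Pi$ as an equivalence of abelian categories induced by push-forward along $\pi_\avect$, to read off which numerical conditions it preserves and reflects (this gives part (1)), and then to restrict to semistable loci and pass to good moduli spaces (this gives part (2)). First, $X_\avect$ is disjoint from the section at infinity $\{y=0\}$ of $P$ (there the defining equation reduces to $x^r=0$), so it lies in the complement $P\setminus\{y=0\}$ and is cut out there by a polynomial that is monic of degree $r$ in the fibre coordinate; hence $\pi_\avect$ is finite and flat of degree $r$, $\pi_{\avect,*}\Oo_{X_\avect}$ is a locally free $\Oo_C$-algebra of rank $r$, generated over $\Oo_C$ by the tautological section $x$ of $\pi_\avect^*L$ subject to $x^r+\pi_\avect^*(a_1)x^{r-1}+\dots+\pi_\avect^*(a_r)=0$, and $\pi_{\avect,*}\Oo_{X_\avect}\cong\bigoplus_{i=0}^{r-1}L^{-i}$ as an $\Oo_C$-module (consistent with $\omega_{X_\avect}=\pi_\avect^*(\omega_C\otimes L^{r-1})$ quoted above). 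Since $\pi_\avect$ is affine, push-forward is an equivalence between quasi-coherent sheaves on $X_\avect$ and $\pi_{\avect,*}\Oo_{X_\avect}$-modules on $C$; a sheaf $\Mm$ corresponds to $\bigl(\pi_{\avect,*}\Mm,\ \Phi=\pi_{\avect,*}(\cdot x)\bigr)$, where by the projection formula $\Phi\colon\pi_{\avect,*}\Mm\to L\otimes\pi_{\avect,*}\Mm$, and conversely a pair $(E,\Phi)$ with $\Phi$ a Higgs field endows $E$ with a $\pi_{\avect,*}\Oo_{X_\avect}$-module structure exactly when the above relation holds, i.e. --- by Cayley--Hamilton --- exactly when the characteristic polynomial of $\Phi$ equals $\avect$. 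Everything is functorial in a base scheme $T$ (replace $\pi_\avect$ by $\pi_\avect\times\id_T$), and since finite push-forward is computed affine-locally by the identity on the underlying module, $T$-flatness is preserved in both directions; thus $\Pi$ identifies $T$-families of coherent sheaves on $X_\avect\times T$ with $T$-families of Higgs pairs on $C\times T$ of characteristic $\avect$.

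Next I verify the fibrewise conditions and match degrees. Fix $t\in T$ and put $E_t=\pi_{\avect,*}\Mm_t$. A nonzero $0$-dimensional subsheaf of $\Mm_t$ would push forward to a nonzero torsion subsheaf of $E_t$, impossible once $E_t$ is locally free on the smooth curve $C$; conversely $\pi_{\avect,*}$ of a torsion-free sheaf is torsion-free, and on $C$ torsion-free $=$ locally free. For the rank, restrict to the function field $K(C)$, where $\pi_{\avect,*}\Oo_{X_\avect}\otimes K(C)=K(C)[x]/(\bar a)$ with $\bar a=x^r+\bar a_1 x^{r-1}+\dots=\prod_j q_j^{e_j}$ its factorization into powers of distinct irreducibles, the factors indexing the generic points $\xi_j$ of $X_\avect$; the characteristic polynomial of $\Phi_t$ restricted to the $q_j$-primary component $M_j$ of $E_t\otimes K(C)$ is a power of $q_j$, their product is $\bar a$, so by unique factorization it is $q_j^{e_j}$, whence $\dim_{K(C)}M_j=\deg(q_j^{e_j})=\dim_{K(C)}\Oo_{X_\avect,\xi_j}$, which is exactly the rank-$1$ condition $\ell_{\Oo_{X_\avect,\xi_j}}((\Mm_t)_{\xi_j})=\ell_{\Oo_{X_\avect,\xi_j}}(\Oo_{X_\avect,\xi_j})$ --- in particular $\Mm_t$ is supported on all of $X_\avect$. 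Finally $\chi$ is preserved by the finite morphism $\pi_\avect$ and $\chi(\Oo_{X_\avect})=\sum_{i=0}^{r-1}\chi(L^{-i})=r(1-g)-\tfrac{r(r-1)}{2}\ell$, so $\deg\pi_{\avect,*}\Mm=\chi(\Mm)-r(1-g)=\deg\Mm+\chi(\Oo_{X_\avect})-r(1-g)=\deg\Mm-\tfrac{r(r-1)}{2}\ell$; this equals $d$ iff $\deg\Mm=d'=d+\tfrac{r(r-1)}{2}\ell$. Hence $\Pi$ restricts to the isomorphism of stacks in part (1).

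For part (2): the $\Phi$-invariant subsheaves $F\subseteq E$ are exactly the $\pi_{\avect,*}\Oo_{X_\avect}$-submodules of $E$, hence correspond to subsheaves $\Nn\subseteq\Mm$ via $F=\pi_{\avect,*}\Nn$, with $0\neq\Nn\subsetneq\Mm$ iff $0\neq F\subsetneq E$ (by exactness and faithfulness of $\pi_{\avect,*}$). For such $\Nn$, $\deg H=r\deg\Oo_C(1)$ together with the projection formula give $r_H(\Nn)=\rk(F)/r$ and reduced Hilbert polynomial $p(\Nn,n)=n+\chi(F)/(\rk(F)\deg\Oo_C(1))$; comparing with the corresponding quantities for $\Mm$ (resp. $E$) yields $p(\Nn)\leq p(\Mm)\iff \deg(F)/\rk(F)\leq\deg(E)/\rk(E)$, and likewise for strict inequalities. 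Therefore $\Mm$ is $H$-(semi)stable iff $\Pi(\Mm)$ is a (semi)stable Higgs pair, so $\Pi$ restricts to an isomorphism of the semistable loci of the two stacks. Taking good moduli spaces --- which is stable under the base change $\{\avect\}\hookrightarrow\A(r)$, so that $H_{r,d}^{-1}(\avect)$ is the good moduli space of $\Hh_{r,d}^{-1}(\avect)^{\sst}$ and $\JbarSch(X_\avect,H,d')$ that of $\Jbar(X_\avect,d')^{\sst}$ --- gives the isomorphism of schemes $H_{r,d}^{-1}(\avect)\simeq\JbarSch(X_\avect,H,d')$.

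The only genuinely delicate step is the fibrewise rank-$1$ assertion when $X_\avect$ is non-reduced or reducible: one must pass to the generic point of $C$ and use that the characteristic polynomial being \emph{exactly} $\avect$ pins down the length of $\Mm_t$ over each primary component of $\pi_{\avect,*}\Oo_{X_\avect}$, via unique factorization --- in particular ruling out that $\Mm_t$ fails to meet some component of $X_\avect$, which would break both the rank-$1$ condition and the surjectivity of $\Pi$. The equivalence of categories, the Euler-characteristic bookkeeping for $d'$, and the Hilbert-polynomial/slope translation are all formal; the remaining care is only in phrasing these fibrewise statements so that they assemble into the claimed isomorphism of stacks over an arbitrary base $T$.
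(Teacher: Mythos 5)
The paper does not actually prove this proposition --- it is quoted as a known result, with the proof deferred to Schaub, Chaudouard--Laumon and de Cataldo (generalizing Beauville--Narasimhan--Ramanan). Your argument is precisely the standard one from those sources: identify quasi-coherent sheaves on $X_\avect$ with $\pi_{\avect,*}\Oo_{X_\avect}$-modules, i.e.\ with pairs $(E,\Phi)$ satisfying the polynomial relation defined by $\avect$, check fiberwise that torsion-freeness and the rank-one condition match local freeness of $E$ and the equality of characteristic polynomials, do the Euler-characteristic bookkeeping, and translate (semi)stability through the projection formula. The substance is correct, including the delicate generic-point/unique-factorization argument for non-reduced or reducible $X_\avect$, which is exactly where the cited references improve on BNR.

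Two small points to tighten. First, the clause ``endows $E$ with a $\pi_{\avect,*}\Oo_{X_\avect}$-module structure \dots\ i.e.\ --- by Cayley--Hamilton --- exactly when the characteristic polynomial of $\Phi$ equals $\avect$'' is false as stated: the module condition is $\avect(\Phi)=0$, which Cayley--Hamilton shows is \emph{implied by}, but is in general strictly weaker than, $\chi_\Phi=\avect$ (e.g.\ a pair supported on a proper subcurve of $X_\avect$ satisfies the relation but lies in a different Hitchin fibre). Your second paragraph repairs this --- among locally free rank-$r$ modules over $\pi_{\avect,*}\Oo_{X_\avect}$, the condition $\chi_\Phi=\avect$ is equivalent to the rank-one condition at every generic point of $X_\avect$ --- so the correspondence you end up with is the right one; just delete the ``i.e.'' and state the two conditions separately. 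Second, in part (2) the paper's Higgs semistability is tested against $\Phi$-invariant \emph{subbundles} while $H$-semistability of $\Mm$ is tested against arbitrary subsheaves $\Nn\subseteq\Mm$; you should add the one-line saturation remark (the saturation of $\pi_{\avect,*}\Nn$ is again $\Phi$-invariant, has the same rank and no smaller degree) to pass between the two.
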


The inverse direction in the spectral correspondence is resumed by the following proposition.
\begin{prop}
Let $\avect \in \A(r)$ be any characteristic and let $X_\avect \xrightarrow{\pi_\avect} C$ be the associated spectral curve.  Let $\Mm \in \Jbar(X_\avect,d')$ be a torsion-free rank-1 sheaf on $X_\avect$ corresponding to the Higgs pair $(E, \Phi) \in \Hh_{r,d}^{-1}(\avect)$ on $C$. Then, the following exact sequence holds: \[
0 \rightarrow \Mm \otimes \pi_\avect^*(L^{1-r}) \rightarrow \pi_\avect^* E \xrightarrow{\pi_\avect^*(\Phi) - x} \pi_\avect^* E \otimes \pi_\avect^* L \xrightarrow{\ev} \Mm \otimes \pi_\avect^* L \rightarrow 0
\]
where $\ev$ is induced by the evaluation map $\pi_\avect^*\pi_{\avect,*}(\Mm) \rightarrow \Mm$.
\end{prop}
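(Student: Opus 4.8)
\emph{Strategy.} I would transport the statement to the total space of $L$ and there combine the classical ``companion matrix'' presentation of a $\pi_{\avect,*}\Oo_{X_\avect}$--module with the structure sequence of $X_\avect$ as a Cartier divisor. First, on the section $\{y=0\}\subset P$ the defining equation of $X_\avect$ reduces to $x^r$ and $x$ is nowhere zero there, so $X_\avect\cap\{y=0\}=\emptyset$; hence $X_\avect$ lies in the open subscheme $\mathbb{V}:=P\setminus\{y=0\}=\Tot(L)=\underline{\Spec}_C\bigl(\Sym L^{-1}\bigr)$. Write $q:=p|_{\mathbb{V}}\colon\mathbb{V}\to C$ and let $i\colon X_\avect\hookrightarrow\mathbb{V}$ be the inclusion, so $\pi_\avect=q\circ i$; on $\mathbb{V}$, ``$x$'' (that is, $x/y$) is the tautological section of $q^*L$, and $X_\avect$ is the zero scheme of $\sigma:=x^r+q^*(a_1)x^{r-1}+\dots+q^*(a_r)$, a section of $q^*L^{\,r}$. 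In particular $i$ is an effective Cartier divisor with $\Oo_{\mathbb{V}}(X_\avect)\cong q^*L^{\,r}$, hence with conormal bundle $i^*\Oo_{\mathbb{V}}(-X_\avect)=\pi_\avect^*L^{-r}$.

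\emph{Two resolutions on $\mathbb{V}$.} Since $E=\pi_{\avect,*}\Mm=q_*(i_*\Mm)$ together with $\Phi=\pi_{\avect,*}(\,\cdot\,x)$ presents $i_*\Mm$ as a module over $q_*\Oo_{\mathbb{V}}=\Sym L^{-1}$, there is an exact sequence $(\star)$ of $\Oo_{\mathbb{V}}$--modules
\[
0\longrightarrow q^*E\otimes q^*L^{-1}\xrightarrow{\ q^*(\Phi)-x\ }q^*E\xrightarrow{\ \ev\ }i_*\Mm\longrightarrow 0 ,
\]
with $\ev$ the counit of $q^*\dashv q_*$. Its exactness is checked Zariski--locally on $C$: trivialising $L$ turns $(\star)$ into the usual free resolution $0\to\Oo_C[t]\otimes E\xrightarrow{\,t-\Phi\,}\Oo_C[t]\otimes E\to E\to 0$ of the $\Oo_C[t]$--module $(E,\ t\mapsto\Phi)$, which is exact because $\Phi$ satisfies $\Phi^r+a_1\Phi^{r-1}+\dots+a_r=0$ (injectivity of $t-\Phi$ by the degree in $t$, exactness in the middle by division by the monic $t-\Phi$); one then applies $q^*(-)$, using $\widetilde{\Sym(L^{-1})\otimes_{\Oo_C}F}=q^*F$. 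As $E$ is locally free, both middle terms of $(\star)$ are locally free $\Oo_{\mathbb{V}}$--modules, so $(\star)$ is a length--one locally free resolution of $i_*\Mm$. Separately, $X_\avect$ being a Cartier divisor gives the short exact sequence $0\to q^*L^{-r}\xrightarrow{\,\cdot\,\sigma\,}\Oo_{\mathbb{V}}\to i_*\Oo_{X_\avect}\to 0$.

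\emph{Conclusion.} I would compute $\operatorname{Tor}_\bullet^{\Oo_{\mathbb{V}}}\bigl(i_*\Mm,\,i_*\Oo_{X_\avect}\bigr)$ in two ways. From the Cartier sequence tensored with $i_*\Mm$, the arrow ``$\cdot\,\sigma$'' becomes multiplication by $\sigma|_{X_\avect}=0$, so $\operatorname{Tor}_0=i_*\Mm$, $\operatorname{Tor}_1=i_*(\Mm\otimes\pi_\avect^*L^{-r})$, and $\operatorname{Tor}_{\geq 2}=0$. From $(\star)$ tensored with $i_*\Oo_{X_\avect}$ one has, using the projection formula, $q^*E\otimes_{\Oo_{\mathbb{V}}}i_*\Oo_{X_\avect}=i_*\pi_\avect^*E$ and $q^*E\otimes q^*L^{-1}\otimes_{\Oo_{\mathbb{V}}}i_*\Oo_{X_\avect}=i_*(\pi_\avect^*E\otimes\pi_\avect^*L^{-1})$, with no higher $\operatorname{Tor}$ of these locally free terms; hence $\operatorname{Tor}_0=i_*\coker\bigl(\pi_\avect^*(\Phi)-x\bigr)$ and $\operatorname{Tor}_1=i_*\ker\bigl(\pi_\avect^*(\Phi)-x\bigr)$ for the map $\pi_\avect^*E\otimes\pi_\avect^*L^{-1}\xrightarrow{\pi_\avect^*(\Phi)-x}\pi_\avect^*E$. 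Since $i^*i_*=\id$, comparing the two computations gives $\coker\bigl(\pi_\avect^*(\Phi)-x\bigr)=\Mm$, the cokernel map being $i^*(\ev)$, i.e.\ the map induced by the evaluation $\pi_\avect^*\pi_{\avect,*}\Mm\to\Mm$, and $\ker\bigl(\pi_\avect^*(\Phi)-x\bigr)=\Mm\otimes\pi_\avect^*L^{-r}$. In other words
\[
0\longrightarrow\Mm\otimes\pi_\avect^*L^{-r}\longrightarrow\pi_\avect^*E\otimes\pi_\avect^*L^{-1}\xrightarrow{\ \pi_\avect^*(\Phi)-x\ }\pi_\avect^*E\xrightarrow{\ \ev\ }\Mm\longrightarrow 0
\]
is exact, and tensoring with the line bundle $\pi_\avect^*L$ yields the claimed sequence. (Equivalently, this four--term sequence is the long exact cohomology sequence of $Li^*$ applied to $(\star)$, since for the Cartier divisor $i$ one has $\mathcal H^0(Li^*i_*\Mm)=\Mm$ and $\mathcal H^{-1}(Li^*i_*\Mm)=\Mm\otimes\pi_\avect^*L^{-r}$.)

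\emph{Main difficulty.} The formal steps are routine; the delicate point is to set up $(\star)$ with its maps matched precisely to ``$\pi_\avect^*(\Phi)-x$'' and to the evaluation map. This requires carefully tracking the $\Sym L^{-1}$--module structure on $\pi_{\avect,*}\Mm$, the meaning of the section $x$ after dehomogenising, and the twists by powers of $\pi_\avect^*L$; and then checking that the two $\operatorname{Tor}$ computations agree \emph{as sequences}, not merely up to an abstract isomorphism of sheaves.
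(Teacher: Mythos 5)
Your argument is correct, but it is genuinely different from the one in the paper. The paper works entirely on $X_\avect$ and $C$: over an affine $U=\Spec R$ trivializing $L$ it sets $S=R[x]/P(x)$ and writes down an explicit $S$-linear map $Q=\sum_{i=0}^{r-1}x^i\sum_{j=0}^{r-1-i}\overline{x}^{\,r-1-i-j}\overline{a}_j:M\to S\otimes_R M$, then verifies by direct computation that $\Psi\circ Q=0$, that $Q$ is injective, and that $\ker\Psi\subseteq\Image(Q)$ (right-exactness is quoted from Bourbaki), and finally globalizes by bookkeeping the twists by powers of $\pi_\avect^*L$, which is exactly how the factor $\pi_\avect^*(L^{1-r})$ appears. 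You instead pass to the ambient surface $\Tot(L)$, use the BNR presentation $(\star)$ of $i_*\Mm$ as a two-term locally free resolution, and compare $\operatorname{Tor}_\bullet^{\Oo_{\mathbb V}}(i_*\Mm,i_*\Oo_{X_{\avect}})$ computed from $(\star)$ against the same groups computed from the Koszul (Cartier-divisor) resolution of $i_*\Oo_{X_\avect}$. Your route is more conceptual: the kernel term is forced to be $\Mm\otimes\pi_\avect^*L^{-r}$ (before the final twist) because the conormal bundle of $X_\avect$ in $\Tot(L)$ is $\pi_\avect^*L^{-r}$, and no explicit inverse-image map has to be written down; the only map that must be matched is the cokernel map, which you correctly identify with $i^*(\ev)$ since $\operatorname{Tor}_0$ is the ordinary tensor product. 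The paper's route is more elementary and self-contained, and has the side benefit of exhibiting the injection $Q$ by an explicit formula. Two small remarks on your write-up: exactness of the local model $0\to M[t]\xrightarrow{t-\phi}M[t]\to M\to 0$ holds for \emph{any} endomorphism $\phi$ and does not use the Cayley--Hamilton relation; that relation is what guarantees $i_*\Mm$ is scheme-theoretically supported on $X_\avect$, which here is given by hypothesis. And you implicitly use that $E=\pi_{\avect,*}\Mm$ is locally free (so that the terms of $(\star)$ are locally free); this holds because $C$ is smooth, $\pi_\avect$ is finite flat, and $\Mm$ is torsion-free, and is worth stating.
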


\section{Direct image of generalized divisors and Norm map}
Let $X \xrightarrow{\pi} Y$ be a finite, flat morphism between projective schemes of pure dimension 1 over a field, such that $Y$ is smooth. In Chapter \ref{ChapterNorm} we introduce two pairs of important morphisms associated to $\pi$.

\begin{defi}
\begin{enumerate}
\item
The \textit{direct and inverse image map} between the Hilbert schemes $\Hilb_X^d$ and $\Hilb_Y^d$ parametrizing 0-dimensional subschemes of $X$ (resp. of $Y$) with Hilbert polynomial equal to $d \geq 0$ are defined on the $T$-valued points as:
	\begin{align*}
	\pi_*(T): \hspace{2em}	\Hilb_X^d(T)  &\longrightarrow \Hilb_Y^d(T) \\
	D \subseteq X \times_k T &\longmapsto \Zz\left( \Fitt_0(\pi_{T,*}(\Oo_D)) \right)
	\end{align*}
	\begin{align*}
	\pi^*(T): \hspace{2em}	\Hilb_Y^d(T)  &\longrightarrow \Hilb_X^d(T) \\
	D \subseteq Y \times_k T &\longmapsto \Zz\left( \pi_T^{-1}(\Ii_D)\cdot \Oo_{X \times_k T} \right)
	\end{align*}
	where $\pi_T: X \times_k T \rightarrow Y \times_k T$ is the morphism induced by base change of $\pi$, $\Fitt_0$ denotes the $0$-th Fitting ideal of a sheaf of modules and $\Zz$ denotes the closed subscheme defined by a sheaf of ideals.
	
\item The \textit{Norm and the inverse image map} between the compactified Jacobians of any degree $d$ on $X$ and $Y$ are defined on the $T$-valued points as:
	\begin{align*}
	\Nm_\pi(T): \hspace{2em} \Jbar(X,d)(T) &\longrightarrow J^d(Y)(T) \\
	\Ll &\longmapsto \det\left( \pi_{T,*} (\Ll) \right) \otimes \det\left( \pi_{T,*}\Oo_{X\times_k T} \right)^{-1}
	\end{align*}
	\begin{align*}
	\pi^*(T): \hspace{2em} J^d(Y)(T) &\longrightarrow J^d(X) \subseteq \Jbar(X,d)(T) \\
	\Nn &\longmapsto \pi_T^*(\Nn).
	\end{align*}
\end{enumerate}
\end{defi}

Recall that, for any line bundle $M$ of degree $e$ on $X$, the $M$-twisted Abel map relates $\Hilb_X^d$ with the generalized Jacobian of degree $-d+e$:
\begin{align*}
\A_M: \hspace{2em} \Hilb_X^d &\longrightarrow \Jgen(X,-d+e) \subseteq \Jbar(X,-d+e) \\
D &\longmapsto \Ii_D \otimes M.
\end{align*}
\begin{prop}
Let $X \xrightarrow{\pi} Y$ be a finite, flat morphism between projective schemes of pure dimension 1 over a field, such that $Y$ is smooth. The direct image between Hilbert schemes and the Norm map between compactified Jacobians are related for any $d \geq 0$ by the commutative diagram:
\[
\begin{tikzcd}
\Hilb_X^d \arrow{rrr}{\pi_*} \arrow{d}{\A_M} &[-25pt]  &[-25pt]  & \lHilb_Y^d \arrow{d}{\A_{\Nm_\pi(M)}} \\
\Jgen(X,-d+e) & \subseteq & \Jbar(X,-d+e) \arrow{r}{\Nm_\pi} & J^{-d+e}(Y).
\end{tikzcd}
\]
Similarly, the inverse image between Hilbert schemes and the inverse image between compactified Jacobians are related for any $d \geq 0$ by the commutative diagram:
\[
\begin{tikzcd}
\lHilb_Y^d \arrow{r}{\pi^*} \arrow{d}{\A_N}  & \lHilb_X^d
\arrow{d}{\A_{\pi^*(N)}} \\
J(Y,-d+e) \arrow{r}{\pi^*} & J(X,-d+e).
\end{tikzcd}
\]
\end{prop}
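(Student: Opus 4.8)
The plan is to check each of the two squares on $T$-valued points, for every $k$-scheme $T$, and conclude by Yoneda: in each square the two composites to be compared are morphisms \emph{of schemes} --- from $\Hilb_X^d$ to $J^{-d+e}(Y)$ in the first square (even though one composite factors through the stack $\Jbar(X,-d+e)$) and from $\lHilb_Y^d$ to $J(X,-d+e)$ in the second --- and $\A_{(-)}$, $\pi_*$, $\pi^*$, $\Nm_\pi$ are all constructed functorially in $T$, so the verification for one fixed $T$ suffices. I would fix $T$, write $\pi_T\colon X\times_k T\to Y\times_k T$ for the base change of $\pi$, and first record two facts to be used throughout: for a $T$-flat family $D\subseteq X\times_k T$ of $0$-dimensional subschemes the ideal sheaf $\Ii_D$ is itself $T$-flat (look at the long exact sequence of $0\to\Ii_D\to\Oo_{X\times_k T}\to\Oo_D\to 0$, whose outer terms are $T$-flat); and for a $T$-flat family $\Ll$ of torsion-free rank-$1$ sheaves on $X\times_k T$, the pushforward $\pi_{T,*}\Ll$ is a vector bundle on $Y\times_k T$ (it is $T$-flat and finitely presented and, fibrewise, the pushforward of a torsion-free sheaf under a finite morphism onto a \emph{smooth} curve, hence fibrewise locally free).

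For the \emph{first square}, take $D\in\Hilb_X^d(T)$ and write $M$ also for the pullback to $X\times_k T$ of the given degree-$e$ line bundle on $X$. Tensoring $0\to\Ii_D\to\Oo_{X\times_k T}\to\Oo_D\to 0$ with the line bundle $M$ and pushing forward along the finite, hence exact, morphism $\pi_T$ gives
\[
0\longrightarrow\pi_{T,*}(\Ii_D\otimes M)\longrightarrow\pi_{T,*}M\longrightarrow\pi_{T,*}(\Oo_D\otimes M)\longrightarrow 0,
\]
with the first two terms vector bundles on $Y\times_k T$ and the last $T$-flat, finite over $T$, of fibrewise $0$-dimensional support. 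As $Y\times_k T\to T$ is smooth of relative dimension one, every such $T$-flat coherent sheaf has, Zariski-locally, a two-term resolution by finite locally free sheaves, so the determinant line bundle is defined and additive along the sequence, giving
\[
\det\pi_{T,*}(\Ii_D\otimes M)=\det\pi_{T,*}M\otimes\bigl(\det\pi_{T,*}(\Oo_D\otimes M)\bigr)^{-1}.
\]
Two sub-lemmas then finish it. First: for a coherent sheaf $\mathcal Q$ on $Y\times_k T$ that is $T$-flat, finite over $T$, of fibrewise $0$-dimensional support, a local two-term resolution $0\to F_1\to F_0\to\mathcal Q\to 0$ by bundles of equal rank exhibits $\Fitt_0(\mathcal Q)$ as the invertible ideal $(\det F_1)\otimes(\det F_0)^{-1}$, i.e.\ as the inverse of the determinant of $\mathcal Q$; apply this to $\pi_{T,*}(\Oo_D\otimes M)$ and to $\pi_{T,*}\Oo_D$. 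Second: $\Fitt_0\bigl(\pi_{T,*}(\Oo_D\otimes M)\bigr)=\Fitt_0\bigl(\pi_{T,*}\Oo_D\bigr)$; indeed, for $y\in Y\times_k T$ the fibre product $(X\times_k T)\times_{Y\times_k T}\Spec\Oo_{Y\times_k T,y}$ is the spectrum of the \emph{semilocal} ring $(\pi_{T,*}\Oo_{X\times_k T})_y$, on which $M$ is therefore trivial, so $\Oo_D\otimes M$ and $\Oo_D$ restrict there to isomorphic sheaves, whence $(\pi_{T,*}(\Oo_D\otimes M))_y\cong(\pi_{T,*}\Oo_D)_y$ as $\Oo_{Y\times_k T,y}$-modules and the two Fitting ideals agree stalkwise, hence globally. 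Feeding these into the determinant identity, using the definition $\Ii_{\pi_*(D)}=\Fitt_0(\pi_{T,*}\Oo_D)$, and noting that $\det\pi_{T,*}M\otimes\bigl(\det\pi_{T,*}\Oo_{X\times_k T}\bigr)^{-1}$ is the pullback of $\Nm_\pi(M)$, yields
\[
\Nm_\pi\bigl(\A_M(D)\bigr)=\Nm_\pi(M)\otimes\Ii_{\pi_*(D)}=\A_{\Nm_\pi(M)}\bigl(\pi_*(D)\bigr).
\]

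For the \emph{second square} the argument is much shorter: for $D\in\lHilb_Y^d(T)$ the ideal $\Ii_D$ is invertible ($D$ is a relative effective Cartier divisor, being a $T$-flat family of $0$-dimensional subschemes in the smooth relative curve $Y\times_k T\to T$), so $\pi_T^*\Ii_D$ is invertible and the canonical map $\pi_T^*\Ii_D\to\Oo_{X\times_k T}$ is injective --- locally it is multiplication by the image of a local equation of $D$, a nonzerodivisor because $\pi_T$ is flat; hence $\pi_T^{-1}(\Ii_D)\cdot\Oo_{X\times_k T}=\pi_T^*\Ii_D$, i.e.\ $\Ii_{\pi^*(D)}=\pi_T^*\Ii_D$, and therefore $\pi^*(\A_N(D))=\pi_T^*\Ii_D\otimes\pi_T^*N=\Ii_{\pi^*(D)}\otimes\pi^*(N)=\A_{\pi^*(N)}(\pi^*(D))$. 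The matching of degree indices in both squares --- the equality $\deg\Nm_\pi(M)=e$ and the analogous Riemann--Roch bookkeeping for $\pi^*$ --- I regard as routine and would relegate to a remark. The step I expect to be the main obstacle is the pair of sub-lemmas of the first square: producing two-term locally free resolutions of a relative torsion sheaf on the smooth relative curve $Y\times_k T/T$ \emph{in families} so as to identify $\Fitt_0$ with the inverse of the determinant line bundle, and showing that this ideal is unchanged when $\Oo_D$ is twisted by $M$ --- which is exactly where the triviality of the Picard group of the semilocal stalks of $\pi_{T,*}\Oo_{X\times_k T}$ comes in.
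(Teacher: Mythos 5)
Your argument is correct, and the overall skeleton (reduction to $T$-valued points, the determinant formula for $\Nm_\pi$, and the Fitting-ideal description of $\pi_*(D)$) matches the paper's; the genuine difference is in how the key identity for the first square is established. The paper computes both composites directly and reduces everything to the single isomorphism $\det(\pi_{T,*}(\Ii\otimes \Mm))\otimes\det(\pi_{T,*}\Oo_{X\times_k T})\simeq\det(\pi_{T,*}\Ii)\otimes\det(\pi_{T,*}\Mm)$ (Lemma \ref{AssociatveFormulaForDet}), which it proves by exhibiting $\pi_{T,*}\Mm$ as an invertible $\pi_{T,*}\Oo_{X\times_k T}$-module, trivializing it on an open cover of $Y\times_k T$, and checking that the gluing obstructions of the two local trivializations cancel. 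You instead push forward the sequence $0\to\Ii_D\otimes M\to M\to\Oo_D\otimes M\to 0$, use multiplicativity of $\det$, identify $\Fitt_0$ of the torsion quotient with the inverse determinant of its two-term locally free resolution (which is essentially Definition \ref{FittingDef} applied to the pushed-forward presentation), and then show $\Fitt_0(\pi_{T,*}(\Oo_D\otimes M))=\Fitt_0(\pi_{T,*}\Oo_D)$ by trivializing $M$ over the semilocal stalks $(\pi_{T,*}\Oo_{X\times_k T})_y$. The two routes rest on the same underlying fact --- triviality of $M$ as an $\Oo_{X\times_k T}$-module over preimages of small opens (resp.\ stalks) of $Y\times_k T$ --- but yours passes through the torsion sheaf $\Oo_D\otimes M$ and an equality of honest ideal sheaves checked stalkwise, whereas the paper's passes through a tensor identity for determinants of pushforwards of the torsion-free sheaves themselves; the paper's lemma is the more reusable statement (it is invoked again for Proposition \ref{PropertiesOfNormOnCompJac}), while your Fitting-ideal version is arguably more self-contained for this particular square and makes the appearance of $\Ii_{\pi_*(D)}$ on the right-hand side immediate rather than an after-the-fact comparison. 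Your treatment of the second square coincides with the paper's.
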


The Norm and the inverse image map between compactified Jacobians satisfy some expected properties.
\begin{prop}
	Let $X \xrightarrow{\pi} Y$ be a finite, flat morphism between projective schemes of pure dimension 1 over a field, such that $Y$ is smooth. Let $\Nm_\pi$ and $\pi^*$ be the Norm and inverse image map between the compactified Jacobans of $X$ and $Y$ and let $T$ be any $k$-scheme.
	\begin{enumerate}
		\item Let $\Ll, \Mm \in \Jbar(X)(T)$  such that $\Mm$ is a $T$-flat family of line bundles. Then, $\Nm_\pi(\Ll \otimes \Mm) \simeq \Nm_\pi(\Ll) \otimes \Nm_\pi(\Mm)$.
		\item Let $\Nn, \Nn' \in J(Y)(T)$. Then, $\pi^*(\Nn \otimes \Nn') \simeq \pi^*(\Nn) \otimes \pi^*(\Nn')$.
		\item Let $\Nn \in J(Y)(T)$. Then, $\pi^*(\Nn)$ is a $T$-flat family of  line bundles over $X$ and $\Nm_\pi(\pi^*(\Nn)) \simeq \Nn^{\otimes n}$.
	\end{enumerate}
\end{prop}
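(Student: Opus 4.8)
The plan is to argue directly from the defining formula $\Nm_\pi(\Ll)=\det(\pi_{T,*}\Ll)\otimes\det(\pi_{T,*}\Oo_{X\times_k T})^{-1}$, using throughout that, $Y$ being smooth, $\pi_{T,*}$ carries a $T$-flat coherent sheaf to a $T$-perfect complex on $Y\times_k T$, so that its Knudsen--Mumford determinant is a line bundle, commutes with base change, and is multiplicative in short exact sequences. Assertion (2) is then immediate: $\pi_T^{*}$ commutes with tensor products, so $\pi_T^{*}(\Nn\otimes\Nn')\simeq\pi_T^{*}\Nn\otimes\pi_T^{*}\Nn'$. For assertion (3), $\pi_T^{*}\Nn$ is a line bundle because $\pi_T^{*}$ preserves line bundles, and it is $T$-flat because $\pi_T$ is flat and $\Nn$ is $T$-flat; for the norm identity, the projection formula gives $\pi_{T,*}(\pi_T^{*}\Nn)\simeq\Ee\otimes\Nn$ with $\Ee:=\pi_{T,*}\Oo_{X\times_k T}$ locally free of rank $n=\deg\pi$, hence $\det(\Ee\otimes\Nn)\simeq\det(\Ee)\otimes\Nn^{\otimes n}$; substituting into the definition cancels $\det(\Ee)$ against $\det(\Ee)^{-1}$ and leaves $\Nm_\pi(\pi_T^{*}\Nn)\simeq\Nn^{\otimes n}$.

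Assertion (1) is the substantial one. First, $\Ll\otimes\Mm$ again lies in $\Jbar(X)(T)$, since tensoring by a line bundle is fibrewise a local isomorphism, so the left-hand side is defined. Cancelling two copies of $\det(\pi_{T,*}\Oo_{X\times_k T})$, the claim becomes the twisting identity $\det(\pi_{T,*}(\Ll\otimes\Mm))\otimes\det(\pi_{T,*}\Oo_{X\times_k T})\simeq\det(\pi_{T,*}\Ll)\otimes\det(\pi_{T,*}\Mm)$. The key point is that
\[
\Ff\ \longmapsto\ \psi_\Mm(\Ff):=\det\bigl(\pi_{T,*}(\Ff\otimes\Mm)\bigr)\otimes\det(\pi_{T,*}\Ff)^{-1}
\]
is multiplicative in short exact sequences of $T$-flat coherent sheaves on $X\times_k T$ — because $-\otimes\Mm$ and $\pi_{T,*}$ are exact and the determinant is multiplicative — hence descends to a homomorphism $K_0\to\Pic(Y\times_k T)$; and on an honest line bundle $\Ll_0$ one has $\psi_\Mm(\Ll_0)\simeq\Nm_\pi(\Ll_0\otimes\Mm)\otimes\Nm_\pi(\Ll_0)^{-1}\simeq\Nm_\pi(\Mm)$ by the classical multiplicativity of the norm of line bundles, which is pinned down by $\Nm_\pi(\Oo_X(D))=\Oo_Y(\pi_*D)$ on Cartier divisors. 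Thus $\psi_\Mm$ agrees, on every class of a line bundle, with $\Ff\mapsto\Nm_\pi(\Mm)^{\otimes\rk\Ff}$; since these two homomorphisms then agree on all of $K_0$ (the classes of line bundles generate it, at least when $X\times_k T$ is regular with an ample bundle) and $\Ll$, $\Oo_{X\times_k T}$ both have rank $1$, we get $\psi_\Mm(\Ll)\simeq\Nm_\pi(\Mm)\simeq\psi_\Mm(\Oo_{X\times_k T})$, which is the twisting identity.

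The main obstacle is to make this last step legitimate over an arbitrary base $T$ and for a torsion-free rank-$1$ sheaf $\Ll$ that need not be locally free, or even a generalized line bundle. Concretely one carries out a dévissage of $\Ll$ with respect to line bundles, the successive quotients being (twisted) structure sheaves of relative Cartier or finite subschemes, on which $\psi_\Mm$ is trivial by the same computation using the norm of line bundles; the reduction to the regular case is handled by base change (all the line bundles and their formation commute with base change) together with density of $\Jgen(X)$ inside $\Jbar(X)$. Alternatively, for $\Ll$ a generalized line bundle one can avoid $K$-theory altogether by combining the Abel-map commutative square of the preceding proposition — which gives $\Nm_\pi(\Ii_D\otimes M)\simeq\Ii_{\pi_*D}\otimes\Nm_\pi(M)$ — with the homomorphism property of the norm on line bundles, and then pass to arbitrary torsion-free rank-$1$ sheaves by a limit argument.
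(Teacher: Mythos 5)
Parts (2) and (3) of your argument are fine and coincide with the paper's (projection formula plus $\det(\Ee\otimes\Nn)\simeq\det\Ee\otimes\Nn^{\otimes n}$). The problem is part (1): you correctly identify the statement to be proved as the twisting identity
\[
\det\bigl(\pi_{T,*}(\Ll\otimes\Mm)\bigr)\otimes\det\bigl(\pi_{T,*}\Oo_{X\times_k T}\bigr)\simeq\det\bigl(\pi_{T,*}\Ll\bigr)\otimes\det\bigl(\pi_{T,*}\Mm\bigr),
\]
but none of the routes you sketch actually establishes it in the required generality. The $K_0$ argument needs line bundle classes to generate $K_0(X\times_k T)$, which you only get when $X\times_k T$ is regular; here $X$ is an arbitrary (possibly reducible, non-reduced) projective curve and $T$ an arbitrary $k$-scheme, so this hypothesis fails exactly in the cases the proposition is about. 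The proposed repairs do not close the gap: density of $\Jgen(X)$ (or $J(X)$) inside $\Jbar(X)$ is \emph{false} in general --- the paper's Remark~\ref{JacobianInclusions} states explicitly that these inclusions need be neither equalities nor dense --- so no limit or specialization argument can carry the identity from (generalized) line bundles to all torsion-free rank-$1$ sheaves; and the dévissage you describe (filtering $\Ll$ by line bundles with punctual quotients) is not available for a general torsion-free rank-$1$ sheaf on a non-reduced curve, nor in flat families over $T$. The Abel-map square you invoke as an alternative is itself proved in the paper \emph{from} this twisting identity, so that route is circular, and in any case it only reaches ideal sheaves of effective divisors twisted by line bundles, not all of $\Jbar(X)(T)$.

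The missing idea is much more elementary and avoids all of this. Since $\pi_T$ is finite and flat, $\pi_{T,*}\Mm$ is an invertible $\pi_{T,*}\Oo_{X\times_k T}$-module (EGA II, 6.1.12); hence there is an open cover $\{V_i\}$ of $Y\times_k T$ such that $\Mm$ is trivialized by isomorphisms $\lambda_i:\Mm_{|U_i}\xrightarrow{\ \sim\ }\Oo_{X\times_k T|U_i}$ on the saturated opens $U_i=\pi_T^{-1}(V_i)$. Over each $V_i$ both sides of the twisting identity are identified by $\det(\pi_{T,*}\lambda_i)\otimes\det(\pi_{T,*}\lambda_i^{-1})$, and the transition obstructions $\lambda_i\circ\lambda_j^{-1}$ appear once in each tensor factor with opposite exponents, so they cancel and the local isomorphisms glue. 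This works for any $T$-flat family $\Ll$ with no hypothesis on its fibers beyond coherence, which is why the paper can state (1) for all of $\Jbar(X)(T)$. I would replace your entire $K_0$/dévissage discussion with this cocycle-cancellation argument.
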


The fibers of the direct image and of the Norm map are studied in Chapter \ref{ChapterPrym}, in the case that $X$ is reduced with locally planar singularities.
\begin{prop}
Let $X \xrightarrow{\pi} Y$ be a finite, flat morphism between projective schemes of pure dimension 1 over a field, such that $Y$ is smooth and $X$ is reduced with locally planar singularities.
\begin{enumerate}
\item  Let $E \in \Hilb_Y$ be an effective divisor of degree $d$ on $Y$ and let $\pi_*^{-1}(E)$ be the corresponding  fiber in $\Hilb_X$. Then, the locus of Cartier divisors in  $\pi_*^{-1}(E)$ is an open and non-empty dense subset of $\pi_*^{-1}(E)$.
\item Let $\Nn \in J(Y)$ be any line bundle on $Y$. Then, the fiber $\Nm_\pi^{-1}(\Nn)$ is non-empty and contains $\Nm_\pi^{-1}(\Nn) \cap J(Y)$ as an open and dense subset.

\end{enumerate}

\end{prop}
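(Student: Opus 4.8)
The plan is to prove the two assertions together, exploiting the commutative square that relates $\pi_*$ to $\Nm_\pi$ through the Abel maps (recalled above): once the Abel map is understood well enough, the statement for the Hilbert–scheme fibre $\pi_*^{-1}(E)$ reduces to the one for the Norm fibre $\Nm_\pi^{-1}(\Nn)$, so I would establish (2) first, where the locus of line bundles is governed by an algebraic group. Write $n=\deg\pi$, $g=g(Y)$, and let $\partial\Jbar(X):=\Jbar(X)\setminus J(X)$ be the locus of non-invertible torsion-free rank-$1$ sheaves.

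For (2): since $X$ is reduced, $J(X)\subseteq\Jbar(X)=\Jgen(X)$ is an open immersion, so $J(X)\cap\Nm_\pi^{-1}(\Nn)$ is automatically open in $\Nm_\pi^{-1}(\Nn)$, and only non-emptiness and density need an argument. By the third item of the Proposition on the properties of $\Nm_\pi$ and $\pi^*$, $\Nm_\pi(\pi^*\Nn')\simeq(\Nn')^{\otimes n}$, i.e. $\Nm_\pi\circ\pi^*=[n]$ on $\Pic^0(Y)$; since $Y$ is a smooth curve in characteristic $0$, $\Pic^0(Y)$ is divisible, so $[n]$ is surjective on it, and together with the identity $\Nm_\pi(\Oo_X(x))\simeq\Oo_Y(\pi(x))$ for a smooth point $x\in X$ (immediate from $0\to\Oo_X\to\Oo_X(x)\to\kappa(x)\to 0$ and the behaviour of determinants under $\pi_*$), which realizes the degree shift, this gives a surjective homomorphism $\Nm_\pi\colon J(X)\twoheadrightarrow J(Y)$; in particular $\Nm_\pi^{-1}(\Nn)\cap J(X)\ne\emptyset$, whence $\Nm_\pi^{-1}(\Nn)\ne\emptyset$. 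For density, $\Nm_\pi|_{J(X)}$ is a torsor map under the surjective homomorphism of group schemes $\Pic^0(X)\to\Pic^0(Y)$, which in characteristic $0$ is smooth, so every fibre $\Nm_\pi^{-1}(\Nn)\cap J(X)$ is smooth of pure dimension $\rho:=p_a(X)-g$. Because $X$ has locally planar singularities, $\Jbar(X)$ is Cohen–Macaulay of dimension $p_a(X)$ and $\Nm_\pi$ is dominant onto the smooth $g$-fold $J(Y)$, so every component of $\Nm_\pi^{-1}(\Nn)$ has dimension $\ge\rho$; on the other hand $\partial\Jbar(X)$ is covered by the images of the pushforward maps $\Jbar(X')\to\Jbar(X)$ along the partial normalizations $X'\to X$ at the singular points, with $p_a(X')\le p_a(X)-1$ and with $\Nm_{\pi'}$ still surjective onto $J(Y)$ by the same argument, so an induction on the $\delta$-invariant gives $\dim\bigl(\Nm_\pi^{-1}(\Nn)\cap\partial\Jbar(X)\bigr)\le\rho-1$. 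Hence no component of $\Nm_\pi^{-1}(\Nn)$ lies inside $\partial\Jbar(X)$, i.e. $\Nm_\pi^{-1}(\Nn)\cap J(X)$ is dense.

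For (1): fix a line bundle $M$ on $X$ of degree $e$ and use the square relating $\pi_*$ to $\Nm_\pi$ via $\A_M$ and $\A_{\Nm_\pi(M)}$. For $D\in\pi_*^{-1}(E)$ one gets $\Nm_\pi(\A_M(D))=\A_{\Nm_\pi(M)}(\pi_*D)=\Oo_Y(-E)\otimes\Nm_\pi(M)=:\Nn_0$, so $\A_M$ carries $\pi_*^{-1}(E)$ into the single fibre $\Nm_\pi^{-1}(\Nn_0)\subseteq\Jbar(X,-d+e)$, and $D$ is Cartier precisely when $\Ii_D$ is invertible, i.e. when $\A_M(D)\in J(X)$; thus the Cartier locus inside $\pi_*^{-1}(E)$ equals $\A_M^{-1}\bigl(J(X)\cap\Nm_\pi^{-1}(\Nn_0)\bigr)$, which is open, and the point is its density. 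The fibres of $\A_M$ over a torsion-free rank-$1$ sheaf $\Ff$ are the projectivized spaces of generically injective homomorphisms $\Ff\otimes M^{-1}\hookrightarrow\Oo_X$; once $d$ is large enough for these to be non-empty, $\A_M\colon\pi_*^{-1}(E)\to\Nm_\pi^{-1}(\Nn_0)$ is surjective with irreducible equidimensional fibres, so by (2) the source $\pi_*^{-1}(E)$ is irreducible and its Cartier locus, being the preimage of the dense open $J(X)\cap\Nm_\pi^{-1}(\Nn_0)$, is dense. In the remaining range one descends to the local situation: as $Y$ is smooth, $\pi_*^{-1}(E)$ is the product, over the points $y_i\in\Supp E$, of the corresponding fibres of $\pi_*$ inside the punctual Hilbert schemes of $\Spec\Oo_{X,\pi^{-1}(y_i)}$, and $D$ is Cartier iff each local factor is locally principal; one then checks directly, from the structure of punctual Hilbert schemes of planar curve singularities, that the locally-principal locus is open and dense in each such local fibre.

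The real obstacle is the density in (2) — excluding components of $\Nm_\pi^{-1}(\Nn)$ that lie entirely inside $\partial\Jbar(X)$. This is exactly where local planarity is indispensable: it supplies both the bound $\dim\partial\Jbar(X)=p_a(X)-1$ and the Cohen–Macaulayness of $\Jbar(X)$ that forces the fibres to be equidimensional, and the inductive descent along partial normalizations must be arranged so that the Norm maps at every stage stay surjective onto $J(Y)$. The parallel local input needed to finish (1) — that punctual Hilbert schemes of planar curve singularities have dense locally-principal loci inside the relevant $\pi_*$-fibres — is the other point demanding a genuine, if elementary, computation.
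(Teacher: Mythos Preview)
Your logical order is inverted relative to the paper, and the inversion introduces a genuine error. The paper proves (1) first, directly, and then deduces (2) from (1); you try to prove (2) first and then pull (1) back through the Abel square.

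The concrete gap is in your treatment of (1). The assertion that for large $d$ the fibre $\pi_*^{-1}(E)$ is irreducible is false: if $E=\sum_i d_i[y_i]$ and $\pi^{-1}(y_i)=\{x_i^1,\dots,x_i^{n_i}\}$, then $\pi_*^{-1}(E)$ is the \emph{disjoint} union, over all tuples $(c_{ij})$ with $\sum_j c_{ij}=d_i$, of the products $\prod_{i,j}\Hilb_{X,x_i^j}^{c_{ij}}$. (Already for $E=[y]$ with two smooth preimages the fibre is two points.) For the same reason $\Nm_\pi^{-1}(\Nn_0)$ is typically disconnected, so even dropping the word ``irreducible'' you cannot conclude density of the Cartier locus merely from surjectivity of $\A_M$ with irreducible fibres: some component of $\pi_*^{-1}(E)$ could in principle map into the boundary. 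You then fall back, for ``the remaining range'', to the punctual description --- but that local argument is exactly the paper's proof of (1), and it works uniformly for every $d$, so the detour through (2) buys nothing.

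The paper's route is: factor $\pi_*$ through the Hilbert--Chow morphisms, so that (since $\rho_Y$ is an isomorphism) $\pi_*^{-1}(E)$ is a finite disjoint union of fibres of $\rho_X$; each such fibre is a product of punctual Hilbert schemes $\Hilb_{X,x}^{c}$, and for locally planar singularities the Cartier locus is the smooth locus of $\Hilb_{X,x}^{c}$, hence dense (this is the BGS input you allude to). That gives (1) outright. For (2), non-emptiness is as you say; for density the paper does \emph{not} argue by dimension count on $\partial\Jbar(X)$ but instead uses that $\Jbar(X)$ admits an open cover $\{U_\beta\}$ together with line bundles $M_\beta$ such that each $\A_{M_\beta}\colon \A_{M_\beta}^{-1}(U_\beta)\to U_\beta$ is smooth and surjective. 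Pulling $\Nm_\pi^{-1}(\Nn)$ back along these smooth covers lands inside various $\pi_*^{-1}(E)$, where density of the Cartier locus is already known by (1); smoothness of the Abel cover then pushes density back down.

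Your dimension argument for (2) is a legitimate alternative strategy, but as written it leans on several facts you have not justified: that $\Jbar(X)$ is pure of dimension $p_a(X)$ and Cohen--Macaulay (true under local planarity, but a substantial input), and more seriously that every non-invertible torsion-free rank-$1$ sheaf is the pushforward from a strict partial normalization. The latter is not automatic for arbitrary planar singularities; one needs to know that $\End(\Ff_p)\supsetneq\Oo_{X,p}$ whenever $\Ff_p$ is non-free, which requires an argument. The paper's Abel-map transfer from (1) avoids all of this.
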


In particular, the fiber of $\Nm_\pi$ over $\Oo_Y$ is called the \textit{Prym stack} of $X$  associated to $\pi$. When $Y$ is smooth and $X$ is reduced with locally planar singularities, the Prym variety contained in the Prym stack of $X$ associated to $\pi$ as an open and dense subset.

\section{Spectral data for $G$-Higgs pairs}
In Chapter \ref{ChapterSpectralData} we study the spectral correspondence for $G$-Higgs pairs, where $G=\SL(r,\C)$, $\PGL(r, \C)$, $\Sp(2r,\C)$, $\GSp(2r,\C)$, $\PSp(2r,\C)$. Throughout this chapter, $C$ denotes a fixed smooth curve over the field of complex numbers and $L$ a fixed line bundle on $C$ with degree $\ell = \deg L$.

\begin{defi}
Let $G$ be a complex reductive Lie group. A \textit{($L$-twisted) $G$-Higgs pair} is a pair $(P, \phi)$ where $P$ is a principal $G$-bundle over $C$ and the $G$-Higgs field $\phi$ is a section of $H^0(\ad(P) \otimes L)$.

Let $\delta \in \pi_1(G)$ and let $p_1, \dots, p_k$ be a homogeneous basis for the algebra of invariant polynomials of
the Lie algebra $\mathfrak g$ of $G$, such that $p_i$ has degree $d_i$. Then, the algebraic moduli stack $\M_G(\delta)$ of all $L$-twisted $G$-Higgs pairs $(P, \phi)$ such that $P$ has topological type $\delta$ is endowed with a morphism, called the $G$-\textit{Hitchin morphism}, defined as:
\begin{align*}
\Hh_{G,\delta}^{p_1, \dots, p_k} = \Hh_{G,\delta}: \M_G(\delta) &\rightarrow \A_G = \bigoplus_{i=1}^k H^0(C, L^{d_i}) \\
(P, \phi) &\mapsto (p_1(\phi), \dots, p_k(\phi)).
\end{align*}
\end{defi}

If $G \xhookrightarrow{\rho} \GL(r, \C)$, any $G$-Higgs pair $(P, \phi)$ gives rise via the associated bundle construction to  a classical Higgs pair $(E, \Phi)$, where $E = P \times_\rho \C^r$ and $\Phi$ is the image $\phi$ with respect to the morphism $\ad(P) \rightarrow \End(E)$ induced by the embedding $\rho$; such $E$ has rank $r$ and degree $d$ corresponding to $\delta$ under the map $\pi_1(G) \xrightarrow{\pi_1(\rho)} \pi_1(\GL(r, \C))$ induced by $\rho$. In this case, we denote as \[
\M_G(r,d) := \coprod_{\pi_1(\rho)(\delta)=d} \M_G(\delta) 
\] the moduli space of $G$-Higgs pairs whose associated vector bundle is of rank $r$ and degree $d$; the $G$-Hitchin morphism on $\M_G(r,d)$ is denoted as $\Hh_{G,r,d}$.

\subsection{$\boldsymbol{G= \SL(r, \C)}$} In this case, the datum $(P, \phi)$ of a $\SL(r, \Cc)$-Higgs pair on $C$ corresponds univocally, via the associated bundle construction, to the datum of $(E, \Phi, \lambda)$, where $(E, \Phi)$ is a Higgs pair of rank $r$ on $C$ with trace zero, and $\lambda$ is a trivialization of $\det E$. The isomorphism $\det  E \simeq \Oo_C$ implies in particular that $E$ has degree equal to 0.

A basis for the invariant polynomials of $\slalg(r, \C)$ is given by $\{p_2, \dots, p_r\}$ where $p_i(P, \phi) := (-)^i \tr(\wedge^i \phi )$. The $\SL$-Hitchin morphism of rank $r$ can be defined as:
\begin{align*}
\Hh_{\SL,r}: \Mm_\SL(r):=\Mm_\SL(r,0) &\longrightarrow  \A_\SL(r)=\bigoplus_{i=2}^r H^0(C, L^i) \\
(P, \phi) &\longmapsto (p_2(P, \phi), \dots, p_r(P, \phi)).
\end{align*}

\begin{prop}
Let $\avect \in \A_\SL(r)$ be any characteristic, let $X=X_\avect \xrightarrow{\pi}C$ be the associated spectral curve, and denote
 $B:=\det(\pi_*\Oo_X)^{-1}$. The fiber $\Hh_{\SL,r}^{-1}(\avect)$ of the $\SL$-Hitchin morphism is isomorphic, via the spectral correspondence, to the fiber $\Nm_\pi^{-1}(B)$ of the Norm map from $\Jbar(X, d')$ to $J(Y,d')$ induced by $\pi$ for $d'=\frac{r(r-1)}{2}\ell$. If the spectral curve $X_\avect$ is reduced,  then  \[
\Hh_{\SL,r}^{-1}(a) \simeq \Prbar(X,C).
\]
\end{prop}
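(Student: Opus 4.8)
The plan is to combine the spectral correspondence with the definition of the Norm map and with the structural results on its fibres; the argument then breaks into three steps.

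\emph{Step 1: the $\SL$-structure becomes a condition on the Norm map.} Since $\avect \in \A_\SL(r) \subseteq \A(r)$ has vanishing first coordinate $a_1$, every $(E,\Phi) \in \Hh_{r,0}^{-1}(\avect)$ automatically has $\tr\Phi = -a_1(E,\Phi) = 0$; hence the only datum of an $\SL(r,\C)$-Higgs pair over $\avect$, beyond its underlying $(E,\Phi) \in \Hh_{r,0}^{-1}(\avect)$, is a trivialisation $\lambda\colon \det E \xrightarrow{\sim}\Oo_C$. Under the spectral correspondence $\Pi$ the pair $(E,\Phi)$ corresponds to $\Mm \in \Jbar(X,d')$ with $E = \pi_*\Mm$, so $\det E = \det(\pi_*\Mm)$; on the other hand $\Nm_\pi(\Mm) = \det(\pi_*\Mm)\otimes B$ by definition of the Norm map, and a short Euler-characteristic computation (using $\chi(\Oo_X) = r(1-g) - \tfrac{r(r-1)}{2}\ell$) gives $\deg B = d'$, so that $\Nm_\pi$ indeed lands in $J^{d'}(C)$ and $B$ is an admissible value. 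Therefore the trivialisation $\lambda$ is precisely the same datum as an isomorphism $\Nm_\pi(\Mm)\xrightarrow{\sim} B$.

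\emph{Step 2: the isomorphism for arbitrary $\avect$.} Feeding Step 1 into the spectral correspondence, which is an isomorphism of stacks compatible with base change, the assignment $(E,\Phi,\lambda) \mapsto \bigl(\Mm,\ \Nm_\pi(\Mm)\xrightarrow{\sim}B\bigr)$ and its inverse (run $\Pi^{-1}$ and read $\lambda$ off the given isomorphism with $B$) give $\Hh_{\SL,r}^{-1}(\avect) \simeq \Nm_\pi^{-1}(B)$, where the fibre on the right is taken as the $2$-categorical fibre of $\Nm_\pi$ over $B$, i.e. keeping track of the isomorphism with $B$. This stacky bookkeeping is the point that requires real care, and I expect it to be the main obstacle: the comparison must be made at the level of groupoids and compatibly with families, and the fibre must retain the isomorphism with $B$ for the two sides to match — for instance the automorphism groups are then both $\mu_r$ at a point corresponding to a line bundle on a smooth spectral curve, whereas discarding that isomorphism would give $\C^*$ instead.

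\emph{Step 3: identification with the Prym stack when $X$ is reduced.} Suppose now $X = X_\avect$ is reduced. As a Cartier divisor in the smooth projective surface $P = \Pp(\Oo_C\oplus L^{-1})$, the curve $X$ has locally planar singularities; since moreover $\pi$ is finite and flat and $C$ is smooth, the proposition describing the fibres of the Norm map (in the case $X$ reduced with locally planar singularities) applies and yields a line bundle $L_0 \in \Nm_\pi^{-1}(B)$ of degree $d'$ on $X$ (the line-bundle locus of $\Nm_\pi^{-1}(B)$ being dense, hence non-empty). Tensoring by $L_0$ is an isomorphism of stacks $\Jbar(X,0) \xrightarrow{\sim} \Jbar(X,d')$, and by the multiplicativity of the Norm map under twists by line bundles one has $\Nm_\pi(\Ll\otimes L_0) \simeq \Nm_\pi(\Ll)\otimes B$; hence it restricts to an isomorphism $\Nm_\pi^{-1}(\Oo_C) \xrightarrow{\sim}\Nm_\pi^{-1}(B)$. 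Since $\Nm_\pi^{-1}(\Oo_C) = \Prbar(X,C)$ by definition of the Prym stack, composing with Step 2 gives $\Hh_{\SL,r}^{-1}(\avect) \simeq \Prbar(X,C)$; fixing once and for all an isomorphism $\Nm_\pi(L_0)\xrightarrow{\sim}B$ renders this canonical.
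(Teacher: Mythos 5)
Your proposal is correct and follows essentially the same route as the paper: the trivialisation $\lambda$ of $\det E = \det(\pi_*\Mm)$ is identified with an isomorphism $\Nm_\pi(\Mm)\xrightarrow{\sim}B$ via the definition $\Nm_\pi(\Mm)=\det(\pi_*\Mm)\otimes B$, yielding the (stacky) fibre product description of $\Nm_\pi^{-1}(B)$; and in the reduced case one invokes Proposition \ref{fibersOfNm} to produce a line bundle in $\Nm_\pi^{-1}(B)$ and translates by it, using Proposition \ref{PropertiesOfNormOnCompJac}. Your added checks (that $\deg B = d'$, and the remark on automorphism groups in the $2$-fibre) are consistent refinements of the paper's argument rather than deviations from it.
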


\subsection{$\boldsymbol{G= \PGL(r, \C)}$}  Any $\PGL(r, \C)$-Higgs pair admits a lifting $(\tilde{P}, \tilde{\phi})$  to a $\GL(r, \C)$-Higgs pair, corresponding to a Higgs pair $(E, \Phi)$ via the associated bundle construction. Then, the datum of $(P, \phi)$ corresponds uniquely to the datum of the equivalence class $[(E, \Phi)]$ of Higgs pairs on $C$ with trace zero, under the equivalence relation $\sim_{J(C)}$ defined by: \[
(E, \Phi) \sim_{J(C)} (E \otimes N, \Phi \otimes 1_N) \hspace{2em}\textrm{for any } N \in J(C).
\]
Denote with $\M^{\tr = 0}(r)$ the closed substack of $\Mm(r)=\coprod_{d \in \Z} \Mm(r,d)$ given by Higgs pairs of rank $r$ with trace zero. Then, $[(E, \Phi)]$ is the orbit of $(E, \Phi)$ under the action of $J(C)$ on $\M^{\tr = 0}(r)$ defined by:
\begin{align*}
\M^{\tr = 0}(r) \times J(C)&\longrightarrow \M^{\tr = 0}(r) \\
((E, \Phi), N ) &\longmapsto (E \otimes N, \Phi \otimes 1_N)
\end{align*}

\begin{defi}
Let $(P, \phi)$ be a $\PGL$-Higgs pair and let $(E, \Phi)$ be a Higgs pair with trace zero and degree $d \in \Z$ corresponding to a lifting $(\tilde{P}, \tilde{\phi})$ of $(P, \phi)$ to a $\GL$-Higgs pair. The \textit{degree of} $(P, \phi)$ is the congruence class $\overline{d}  \in \Z/r\Z$. 
\end{defi}

The first homotopy group  $\pi_1(\PGL(r,\C))$ of $\PGL(r, \C)$ is isomorphic to $\Z/r\Z$ and the degree of $(P, \phi)$ characterizes uniquely the topological type of $P$. Moreover, up to the action of multiples of $\Oo_C(1)$ on $(E, \Phi)$,the $J(C)$-orbit $[(E, \Phi)]$ corresponding to a $\PGL$-Higgs pair with degree $\overline{d} \in \Z/r\Z$ is the orbit of a Higgs pair of degree $d$ with $d \in \{ 0, \dots, r-1 \}$. Then, a $\PGL$-Higgs pair of degree $\overline{d}$ is identified uniquely with the orbit in $\Mm^{\tr=0}(r,d)$ of a Higgs pair of trace zero \textit{and degree }$d$ with respect to the action of line bundles \textit{of degree} $0$ on $C$.

\vspace{1em}

A basis for the invariant polynomials of $\pgl(r, \C)=\slalg(r, \C)$ is given by $\{p_2, \dots, p_r\}$ where $p_i(P, \phi) := (-)^i \tr(\wedge^i \phi )$.  Hence, we have the following $\PGL$-Hitchin morphism of rank $r$ and degree $\overline{d}$:
\begin{align*}
\Hh_{\PGL,r, \overline{d}}: \Mm_\PGL(r, \overline{d}) &\longrightarrow  \A_\PGL(r)=\bigoplus_{i=2}^r H^0(C, L^i) \\
(P, \phi) &\longmapsto (p_2(P, \phi), \dots, p_r(P, \phi)).
\end{align*}

\begin{prop}
Let $\avect \in \A_\PGL(r)$ be any characteristic and let $X=X_\avect \xrightarrow{\pi}C$ be the associated spectral curve. Let $\overline{d} \in \Z/r\Z$ with $d \in \{ 0, \dots, r-1 \}$ be any degree. The fiber $\Hh_{\PGL,r,\overline{d}}^{-1}(\avect)$ of the $\PGL$-Hitchin morphism is isomorphic, via the spectral correspondence, to the quotient moduli space \[
\Jbar(X,d')/\pi^*J^0(C)
\]
of torsion-free sheaves of rank 1 and degree $d'$ up to the action of line bundles of degree $0$ on $C$ by tensor product, with $d' = d+\frac{r(r-1)}{2}\ell$.
\end{prop}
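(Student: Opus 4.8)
The plan is to bootstrap from the $\GL$-spectral correspondence of the first Proposition of the Introduction, by tracking the action of $J^0(C)$ through the isomorphism $\Pi$. First, recall the quotient presentation of the source set up just before the statement: a $\PGL$-Higgs pair of degree $\overline{d}$ is the same datum as a $J^0(C)$-orbit of a trace-zero Higgs pair of rank $r$ and honest degree $d\in\{0,\dots,r-1\}$, where $N\in J^0(C)$ acts by $(E,\Phi)\mapsto(E\otimes N,\Phi\otimes 1_N)$; equivalently $\Mm_\PGL(r,\overline{d})\cong[\Mm^{\tr=0}(r,d)/J^0(C)]$. For $i\ge 2$ the canonical isomorphism $\wedge^i(E\otimes N)\cong\wedge^iE\otimes N^{\otimes i}$ carries $\wedge^i(\Phi\otimes 1_N)$ to $\wedge^i\Phi\otimes 1_{N^{\otimes i}}$, so $p_i(E\otimes N,\Phi\otimes 1_N)=(-1)^i\tr(\wedge^i\Phi)=p_i(E,\Phi)$; hence $\Hh_{\PGL,r,\overline{d}}$ is induced, by descent, from the restriction of the $\GL$-Hitchin morphism $\Hh_{r,d}$ to $\Mm^{\tr=0}(r,d)$. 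Since $\A_\PGL(r)=\bigoplus_{i=2}^rH^0(C,L^i)$ is precisely the locus $\{a_1=0\}\subseteq\A(r)$ and $\tr\Phi=0\iff a_1(E,\Phi)=0$, identifying $\avect$ with $(0,a_2,\dots,a_r)\in\A(r)$ gives $\Hh_{\PGL,r,\overline{d}}^{-1}(\avect)\cong[\Hh_{r,d}^{-1}(\avect)/J^0(C)]$.

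Next, apply the $\GL$-spectral correspondence $\Pi\colon\Hh_{r,d}^{-1}(\avect)\xrightarrow{\sim}\Jbar(X,d')$, $\Pi(\Mm)=(\pi_*\Mm,\pi_*(\cdot x))$, with $d'=d+\tfrac{r(r-1)}{2}\ell$. The key point is that $\Pi$ is equivariant for the $J^0(C)$-action on the Higgs side and for the action $\Mm\mapsto\Mm\otimes\pi^*N$ of $\pi^*J^0(C)$ on $\Jbar(X,d')$. Indeed, for $N\in J^0(C)$ the sheaf $\pi^*N$ is a degree-$0$ line bundle on $X$, so $\Mm\otimes\pi^*N$ is again torsion-free of rank $1$ and degree $d'$; the projection formula yields a canonical isomorphism $\pi_*(\Mm\otimes\pi^*N)\cong\pi_*\Mm\otimes N$, under which multiplication by $x$ on $\Mm\otimes\pi^*N$, namely $(\cdot x)\otimes 1_{\pi^*N}$, pushes forward to $\pi_*(\cdot x)\otimes 1_N$. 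Hence $\Pi(\Mm\otimes\pi^*N)\cong\Pi(\Mm)\otimes N$, i.e.\ $\Pi$ intertwines the two actions. Since the projection formula and this comparison are functorial, the same holds after any base change, so $\Pi$ induces an equivalence of the corresponding quotient stacks.

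Combining the two steps,
\[
\Hh_{\PGL,r,\overline{d}}^{-1}(\avect)\;\cong\;\big[\Hh_{r,d}^{-1}(\avect)/J^0(C)\big]\;\cong\;\big[\Jbar(X,d')/\pi^*J^0(C)\big],
\]
which is the asserted quotient of torsion-free rank-$1$ sheaves of degree $d'$ on $X$ modulo degree-$0$ line bundles on $C$ acting by tensor product. The main obstacle is the equivariance of $\Pi$ in the second step: one has to check that the compatibility between the Higgs field and the projection formula persists in families over an arbitrary test scheme, so that $\Pi$ descends to an isomorphism of algebraic stacks and not merely a bijection on points; the remaining points — the descent of $\Hh_{\PGL,r,\overline{d}}$ through the quotient and the matching of the two quotient presentations — are routine bookkeeping given the constructions recalled in the Introduction.
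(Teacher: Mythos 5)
Your proposal is correct and follows essentially the same route as the paper's proof: identify $\PGL$-Higgs pairs of degree $\overline{d}$ with $J^0(C)$-orbits of trace-zero Higgs pairs of degree $d$, then observe via the projection formula that the $\GL$-spectral correspondence $\Pi$ intertwines the $J^0(C)$-action on Higgs pairs with the $\pi^*J^0(C)$-action on $\Jbar(X,d')$, and pass to quotients. Your additional explicit check that the invariants $p_i$ are unchanged under tensoring by $N$ (so that the Hitchin morphism descends) is handled in the paper by the discussion preceding the proposition, but is the same content.
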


The case of $\PGL(r, \C)$-Higgs pairs of degree $0$ deserves special attention.
\begin{prop}
Let $\avect \in \A_\PGL(r)$ be any characteristic, let $X=X_\avect \xrightarrow{\pi}C$ be the associated spectral curve and denote
$B:=\det(\pi_*\Oo_X)^{-1}$. Let $d'=\frac{r(r-1)}{2}\ell$ and let $\Nm_\pi$ be the Norm map induced by $\pi$ on $\Jbar(X,d')$. Let $J^0(C)[r]$ be the group stack of line bundles with $r$-th torsion on $C$, acting on $\Nm_\pi^{-1}(B)$ as follows: \begin{align*}
\Nm_\pi^{-1}(B) \times \pi^* J^0(C)[r] &\longrightarrow \Nm_\pi^{-1}(B) \\
\big((\Mm, \epsilon), (\pi^*N, \pi^*(\eta: N^r \xrightarrow{{}_\sim} \Oo_C))\big) &\longmapsto (\Mm \otimes \pi^*N, \epsilon_{N, \eta})
\end{align*}
where $\epsilon_{N, \eta}$ is equal to the following composition: \begin{align*}
\epsilon_{N, \eta}: &\Nm( \Mm \otimes \pi^*N ) \xrightarrow{{}_\sim} \Nm( \Mm) \otimes \Nm(\pi^*N) \xrightarrow{{}_\sim} \Nm(\Mm) \otimes N^r \xrightarrow{\epsilon \otimes \eta} B.
\end{align*}
Then, the fiber $\Hh_{\PGL,r,\overline 0}^{-1}(\avect)$ of the $\PGL$-Hitchin morphism is isomorphic, via the spectral correspondence, to the quotient moduli space \[\Nm_\pi^{-1}(B) /\pi^*(J^0C)[r]. \]
\end{prop}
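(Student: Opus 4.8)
The plan is to combine the previous Proposition (the $\PGL$-description as the quotient $\Jbar(X,d')/\pi^*J^0(C)$ for degree $\overline 0$) with the $\SL$-description ($\Hh_{\SL,r}^{-1}(\avect)\simeq\Nm_\pi^{-1}(B)$) and to show that the two quotients agree. Concretely, I would first recall that $\Mm_\PGL(r,\overline 0)$ is the stack-quotient of $\Mm^{\tr=0}(r,0)$ by the action of $J^0(C)$, so that via the spectral correspondence for ordinary Higgs pairs $\Hh_{\PGL,r,\overline 0}^{-1}(\avect)\simeq\Jbar(X,d')/\pi^*J^0(C)$, where $J^0(C)$ acts through pullback $\pi^*$ and tensor product. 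The claim is then equivalent to producing an isomorphism of stacks
\[
\Jbar(X,d')/\pi^*J^0(C)\;\simeq\;\Nm_\pi^{-1}(B)\,/\,\pi^*J^0(C)[r].
\]

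The key step is to analyze the $\pi^*J^0(C)$-action on $\Jbar(X,d')$ relative to the Norm map. By the functorial properties of $\Nm_\pi$ (multiplicativity when twisting by a line bundle, and $\Nm_\pi(\pi^*N)\simeq N^{\otimes r}$ for $N\in J^0(C)$), the map $\Nm_\pi\colon\Jbar(X,d')\to J^{d'}(Y)$ is $J^0(C)$-equivariant where $J^0(C)$ acts on the target through the degree-$r$ isogeny $N\mapsto N^{\otimes r}$. Hence $\Nm_\pi$ descends to a morphism from the quotient stack to $J^{d'}(Y)/(N\mapsto N^{\otimes r})$, and the fiber of this descended morphism over the class of $B$ is exactly the quotient of $\Nm_\pi^{-1}(B\otimes N^{\otimes r})$, as $N$ ranges, by the stabilizer. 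Now tensoring by $\pi^*N$ for arbitrary $N\in J^0(C)$ carries $\Nm_\pi^{-1}(B)$ isomorphically onto $\Nm_\pi^{-1}(B\otimes N^{\otimes r})$; since $N^{\otimes r}$ ranges over all of $J^0(C)$ as $N$ does (surjectivity of multiplication by $r$ on an abelian variety), every such fiber is identified with $\Nm_\pi^{-1}(B)$, and two elements of $\Nm_\pi^{-1}(B)$ become identified in the big quotient precisely when they differ by $\pi^*N$ with $\Nm_\pi(\pi^*N)\simeq N^{\otimes r}$ trivial, i.e.\ $N\in J^0(C)[r]$ — but crucially one must remember the \emph{data} of that trivialization. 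This is where the rigidified objects $(\Mm,\epsilon)$ and the twisted action $\epsilon_{N,\eta}$ in the statement enter: the fiber $\Nm_\pi^{-1}(B)$ must be taken with its natural $B$-rigidification, and the residual action is by $\pi^*J^0(C)[r]$ together with the choice of trivialization $\eta\colon N^{\otimes r}\xrightarrow{\sim}\Oo_C$, acting on $\epsilon$ by the composition displayed in the Proposition. I would phrase this as: the quotient map $\Jbar(X,d')\to\Jbar(X,d')/\pi^*J^0(C)$ restricted to $\Nm_\pi^{-1}(B)$ (with its rigidification) is a $\pi^*J^0(C)[r]$-torsor onto its image after taking into account the extra $\mathbb G_m$-gerbe structure, and the image is all of the target because every $\pi^*J^0(C)$-orbit in $\Jbar(X,d')$ meets $\Nm_\pi^{-1}(B)$ (again by $r$-divisibility on $J^0(C)$).

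Assembling: I would exhibit mutually inverse morphisms of stacks, one sending the orbit of $\Mm\in\Jbar(X,d')$ to the orbit of $(\Mm\otimes\pi^*N,\epsilon)$ for any $N$ with $\Nm_\pi(\Mm)\otimes N^{\otimes r}\simeq B$ together with a chosen such isomorphism $\epsilon$ — well-definedness being exactly the content of the twisted action $\epsilon_{N,\eta}$ — and the other sending the orbit of $(\Mm,\epsilon)$ to the orbit of $\Mm$, forgetting $\epsilon$. Checking these are inverse to each other is a diagram chase using the two Propositions just cited. The main obstacle, I expect, is bookkeeping the $\mathbb G_m$-gerbe / rigidification data carefully: one has to verify that the $J^0(C)[r]$-action with its $\eta$-twist is free in the stacky sense and that the two quotient stacks (one a plain quotient by $\pi^*J^0(C)$, the other a quotient of a rigidified fiber by the extension-type group $\pi^*J^0(C)[r]$ acting on $\epsilon$) are genuinely equivalent and not merely isomorphic on coarse spaces — this is a compatibility of automorphism groups that needs the explicit formula for $\epsilon_{N,\eta}$ to work out, and it is the one place a careless argument would drop a gerbe.
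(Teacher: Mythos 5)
Your proposal is essentially correct but follows a genuinely different route from the paper. The paper does \emph{not} pass through the general-degree quotient description $\Jbar(X,d')/\pi^*J^0(C)$ at all: instead it uses the cohomology sequence attached to $0 \to \mu_r \to \SL(r,\C) \to \PSL(r,\C) \to 0$ to lift a degree-$\overline{0}$ $\PGL$-Higgs pair to an $\SL$-Higgs pair $(E,\Phi,\lambda)$, unique up to the action of $J^0(C)[r]$, and then applies the already-established $\SL$-correspondence (Proposition \ref{DataSL}) equivariantly: the volume form $\lambda$ becomes the rigidification $\epsilon$, and the action of $(N,\eta)$ on $\lambda$ becomes precisely the twisted action $\epsilon_{N,\eta}$, so the result follows by passing to quotients on both sides. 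Your route instead starts from Proposition \ref{DataPGL} with $d=0$ and proves the purely Norm-theoretic ``slice'' isomorphism $\Jbar(X,d')/\pi^*J^0(C) \simeq \Nm_\pi^{-1}(B)/\pi^*J^0(C)[r]$, using equivariance of $\Nm_\pi$ for the $N\mapsto N^{\otimes r}$ action on the target, surjectivity of multiplication by $r$ on $J^0(C)$, and identification of the stabilizer of the rigidified fiber with the group stack $J^0(C)[r]$. This is a valid general principle and has the advantage of being intrinsic to the Norm map (it would apply even without an $\SL$-type description), but it buys this generality at the cost of exactly the bookkeeping you flag: you must construct the rigidification $\epsilon$ by an auxiliary choice of $N$ with $N^{\otimes r}\simeq B\otimes\Nm_\pi(\Mm)^{-1}$ and then verify that automorphism groups (the $\mu_r$ of a rigidified object versus the $\mathbb{G}_m$ of a bare one, extended by the relevant stabilizers) match on the two sides. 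The paper's route gets all of that for free, because $\epsilon$ and $\epsilon_{N,\eta}$ are not constructed but \emph{inherited} from the geometric data $(\lambda,\lambda_{N,\eta})$ on the $\SL$-side; on the other hand it presupposes Proposition \ref{DataSL}, whereas your argument only needs Proposition \ref{DataPGL}. If you carry out your version, the one step that must be written in full rather than asserted is the equivalence of quotient stacks (not merely of coarse spaces), i.e.\ the compatibility of automorphism groups under the slice isomorphism; as you note, that is where the $\mathbb{G}_m$-gerbe would otherwise be dropped.
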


\subsection{$\boldsymbol{G= \Sp(2r, \C)}$}
In this case, the datum of a $\Sp(2r, \C)$-Higgs pair $(P, \phi)$ on $C$ corresponds univocally to the datum of $(E, \Phi, \omega)$ where $(E, \Phi)$ is a Higgs pair of rank $2r$ and degree $0$ and $\omega: E \otimes E \rightarrow \Oo_C$ is a non-degenerate symplectic form on $E$ satisfying the condition: \[
\omega(\Phi v, w)=-\omega(v, \Phi w).
\]

\vspace{1em}

A basis for the invariant polynomials of $\spalg(2r, \C)$ is given by $\{p_{2i}\}_{i=1, \dots, r}$ where $p_{2i}(P, \phi) := \tr(\wedge^{2i} \phi )$. The corresponding $\Sp$-Hitchin morphism takes the form:
\begin{align*}
\Hh_{\Sp,2r}: \Mm_\Sp(2r):=\Mm_\Sp(2r,0) &\longrightarrow  \A_\Sp(2r)=\bigoplus_{l=1}^r H^0(C, L^{2l}) \\
(P, \phi) &\longmapsto (p_2(P, \phi), p_4(P, \phi), \dots, p_{2r}(P, \phi)).
\end{align*}

For any characteristic $\avect \in \A_\Sp(2r)$, the spectral curve $\pi: X_\avect \rightarrow C$ is defined in the total space $p: \Pp(\Oo_C \oplus L^{-1}) \rightarrow C$ by the equation \[
x^{2r}y + a_2 x^{2r-2}y^2 + \dots + a_{2r-2} x^2 y^{2r-2} + a_{2r}y^{2r} = 0.
\]
Hence, the curve $X_\avect$ has an involution $\sigma$ defined by $\sigma(x)=-x$.

\begin{prop}
	Let $\avect \in \A_\Sp(r)$ be any characteristic and let $X=X_\avect \xrightarrow{\pi}C$ be the associated spectral curve with involution $\sigma: X \rightarrow X$.  The fiber $\Hh_{\Sp,2r}^{-1}(\avect)$ of the $\Sp$-Hitchin morphism is isomorphic, via the spectral correspondence, to the equalizer $\Ee_\avect$ of the two maps 
	\begin{align*}
	\texttt{\char`_}^\chk :=\Hh om(\texttt{\char`_}, \Oo_X): \Jbar(X,d') &\rightarrow \Jbar(X,-d') \\
	\sigma^*\texttt{\char`_} \otimes \pi^* L^{1-2r}: \Jbar(X,d') 
	&\rightarrow \Jbar(X, -d'),
	\end{align*}
	where $d'=r(2r-1)\ell$.
\end{prop}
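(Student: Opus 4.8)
The plan is to build on the classical spectral correspondence for $\GL(2r)$ together with relative duality for the finite flat morphism $\pi$. Recall first that an $\Sp(2r,\C)$-Higgs pair with characteristic $\avect$ is the datum of $(E,\Phi,\omega)$, where $(E,\Phi)$ is a rank-$2r$, degree-$0$ Higgs pair whose characteristic lies in $\A_\Sp(2r)\subseteq\A(2r)$ (so all odd coefficients vanish), and $\omega\colon E\otimes E\to\Oo_C$ is a nondegenerate antisymmetric form with $\omega(\Phi v,w)=-\omega(v,\Phi w)$. By the spectral correspondence, $(E,\Phi)$ corresponds to a unique $\Mm\in\Jbar(X,d')$ with $E=\pi_*\Mm$ and $\Phi=\pi_*(\cdot x)$, functorially in the test scheme. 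So the whole task is to show that, functorially, giving $\omega$ on the Higgs side is the same as giving an isomorphism $\phi\colon\Mm^\chk\xrightarrow{\sim}\sigma^*\Mm\otimes\pi^*L^{1-2r}$, i.e. a lift of $\Mm$ to a point of the equalizer $\Ee_\avect$.

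The core of the argument is relative duality. Since $\pi$ is finite and flat and $C$ is smooth, one has $\Hhom_{\Oo_C}(\pi_*\Mm,\Oo_C)\simeq\pi_*\Hhom_{\Oo_X}(\Mm,\omega_{X/C})$; and since $\omega_X\simeq\pi^*(\omega_C\otimes L^{2r-1})$, the relative dualizing sheaf is $\omega_{X/C}\simeq\pi^*L^{2r-1}$, so $E^\vee\simeq\pi_*(\Mm^\chk\otimes\pi^*L^{2r-1})$, and under this identification the transpose Higgs field ${}^t\Phi$ becomes multiplication by $x$. The skew condition says precisely that the isomorphism $E\xrightarrow{\sim}E^\vee$ induced by $\omega$ intertwines $\Phi$ with $-{}^t\Phi$; and since the $\Oo_X$-module underlying $\sigma^*\Ff$ is $\Ff$ with the action of $x$ replaced by $-x$, the Higgs pair $(E^\vee,-{}^t\Phi)$ corresponds under the $\GL(2r)$ spectral correspondence to the sheaf $\sigma^*(\Mm^\chk\otimes\pi^*L^{2r-1})$. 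Hence $\omega$ amounts to an $\Oo_X$-linear isomorphism $\Mm\xrightarrow{\sim}\sigma^*(\Mm^\chk\otimes\pi^*L^{2r-1})$, equivalently $\phi\colon\Mm^\chk\xrightarrow{\sim}\sigma^*\Mm\otimes\pi^*L^{1-2r}$; nondegeneracy of $\omega$ corresponds exactly to $\phi$ being an isomorphism rather than merely a nonzero map. Conversely, starting from $\phi$ one recovers $\omega$ by pushing forward and invoking the reflexivity $\Mm^{\chk\chk}\simeq\Mm$, valid because $X=X_\avect$ is Gorenstein, being an effective divisor in the smooth surface $\Pp(\Oo_C\oplus L^{-1})$.

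Two points remain. First, one must check that the bilinear form reconstructed from $\phi$ is antisymmetric, i.e. that the equalizer really parametrizes symplectic and not orthogonal structures: transposing $\phi$, dualizing, and applying $\sigma^*$ returns $c\cdot\phi$ with $c^2=1$, and the claim is that $c=-1$ on the spectral curve attached to an $\avect\in\A_\Sp(2r)$. Establishing $c=-1$ is the delicate step: it requires tracking the sign in relative duality for $\pi$ under the involution $\sigma$ — equivalently, the sign by which $\sigma$ acts on the canonical trivialization of $\omega_{X/C}=\pi^*L^{2r-1}$, i.e. on the vertical differential $dx$ — and it is here that the particular shape $x^{2r}y+a_2x^{2r-2}y^2+\dots+a_{2r}y^{2r}=0$ (only even powers of $x$, so that the invariant $p_{2r}$ appears rather than a Pfaffian) enters, distinguishing this situation from an $\SO(2r)$-type one. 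Second, all the constructions above — $\pi_*$, $\Hhom$, relative duality, and $\Mm^{\chk\chk}\simeq\Mm$ — must be performed over an arbitrary test scheme $T$: this is legitimate because $\pi_T$ stays finite and flat, $\Mm$ is $T$-flat with torsion-free fibres, and $\Pp(\Oo_C\oplus L^{-1})\times T$ stays smooth over $T$, so base change causes no trouble and the pointwise equivalence upgrades to an isomorphism of stacks $\Hh_{\Sp,2r}^{-1}(\avect)\simeq\Ee_\avect$. I expect the symmetry-type computation, namely pinning down the sign $c=-1$, to be the main obstacle; the rest is bookkeeping with relative duality and the $\sigma$-twist.
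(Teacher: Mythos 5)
Your proposal follows essentially the same route as the paper: reduce to the underlying Higgs pair $(E,\Phi)$ via the $\GL(2r)$ spectral correspondence, observe that the skew-adjointness $\omega(\Phi v,w)=-\omega(v,\Phi w)$ upgrades the isomorphism $E\simeq E^\chk$ to an isomorphism of Higgs pairs $(E,\Phi)\simeq(E^\chk,-{}^t\Phi)$, translate the sign flip on the Higgs field into the involution $\sigma^*$ on spectral sheaves (because $\sigma^*$ replaces $x$ by $-x$), and use relative duality with $\omega_{X/C}\simeq\pi^*L^{2r-1}$ to account for the twist by $\pi^*L^{1-2r}$. The only packaging difference is in the forward direction: the paper dualizes the four-term exact sequence of Proposition \ref{SpectralExactSeq} and identifies the kernel of $\pi^*\Phi^t-x$ with the kernel of the $\sigma^*$-pulled-back sequence via $\omega_E$, rather than asserting abstractly that relative duality carries ${}^t\Phi$ to multiplication by $x$ on $\Mm^\chk\otimes\omega_{X/C}$; these amount to the same computation, and your version of the converse (relative duality plus projection formula plus reflexivity of $\Mm$ on the Gorenstein curve $X$) is exactly the paper's.

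Concerning the point you single out as the main obstacle --- verifying that the bilinear form reconstructed from $\lambda$ is antisymmetric rather than symmetric --- you are right that this is a genuine subtlety, and it is the one step your proposal does not actually execute. Be aware, though, that the proposition as stated imposes no symmetry condition on the isomorphism $\lambda$ defining a point of $\Ee_\avect$, and the paper's own converse direction likewise produces an isomorphism $E^\chk\simeq E$ and simply declares that it ``induces the symplectic structure $\omega$'' without checking antisymmetry. So the sign $c=-1$ you describe is precisely the point at which the paper is also silent; a fully rigorous argument would still require the residue/duality sign computation you sketch (or, alternatively, building the $\sigma$-antisymmetry of $\lambda$ into the definition of the equalizer stack). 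Apart from this shared omission, your outline is correct.
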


\subsection{$\boldsymbol{G= \GSp(2r, \C)}$} In this case, the datum of a $\GSp(2r, \C)$-Higgs pair $(P, \phi)$ corresponds uniquely, via the associated bundle construction and the translation of the Higgs field, to the datum $(E, \Phi', \omega, M, \mu )$ of a Higgs pair $(E, \Phi')$ of rank $2r$ and degree $rd$, a non-degenerate symplectic form $\omega: E \otimes E \rightarrow M$ on $E$ with values in a line bundle $M$ of degree $d$, and a global global section $\mu  \in H^0(C, L)$, such that $\Phi' \in H^0(C, \End(E) \otimes L)$ satisfies \[
\omega(\Phi' v, w) + \omega(v, \Phi' w) = 0.
\]

\vspace{1em}

The affine space $\A_\Sp(2r) \oplus H^0(C, L)$ can be taken as basis of a (translated) $\GSp$-Hitchin morphism $\widetilde{\Hh}$, defined as: \begin{align*}
\widetilde{\Hh}_{\GSp,2r,rd}: \Mm_\GSp(2r,rd) &\longrightarrow \A_\Sp(2r) \oplus H^0(C,L) \\
(P, \phi) &\longmapsto (\avect', \mu) = (a_2(E, \Phi'), a_4(E, \Phi'), \dots, a_{2r}(E, \Phi'), \mu).
\end{align*}
where $(E, \Phi)$ is the Higgs pair associated to $(P, \phi)$ via the associated bundle construction, $\mu$ is the global section $\frac{\tr\Phi}{2r} \in H^0(C, L)$ and $\Phi'=\Phi-\mu \id_E$ is the translated Higgs field.

\begin{prop}
Let $\avect' \in \A_\Sp(2r)$ be any characteristic and let $\mu \in H^0(C,L)$ be any section. Let $X=X_{\avect'} \xrightarrow{\pi}C$
be the spectral curve associated to $\avect'$, with involution $\sigma: X \rightarrow X$.
Let $d'=rd+r(2r-1)\ell$ and denote with $\mathcal{P}(d',n)=\Jbar(X,d') \times J(C,d)$ the Cartesian product of the Simpson Jacobian of degree $d'$ on $X$ and the Jacobian of degree $d$ on $C$, endowed with the projection maps $p_X$ and $p_C$ on $\Jbar(X,d')$ and $J(C,d)$ respectively. Let $\Ee_{\avect'}$ be the equalizer of the two maps 
\begin{align*}
(\Hh om_{\Oo_X}(\texttt{\char`_}, \Oo_X)\circ p_X ) \otimes (\pi^* \circ p_C): \mathcal{P}(d',d) &\rightarrow  \Jbar(X, rd-r(2r-1)\ell) \\
(\sigma^*\circ p_X) \otimes \pi^* L^{1-2r}: \mathcal{P}(d',d) 
&\rightarrow \Jbar(X,rd-r(2r-1)\ell).
\end{align*}
Then, the fiber $\widetilde{\Hh}_{\GSp,2r,rd}^{-1}(\avect', \mu)$ is isomorphic, via the spectral correspondence, to $\Ee_{\avect'}$.
\end{prop}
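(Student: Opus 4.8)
The plan is to reduce the statement to the already-established $\GL$-spectral correspondence (Proposition on the Hitchin fiber $\Hh_{r,d}^{-1}(\avect)\simeq\Jbar(X_\avect,d')$) together with the characterization of $\GSp$-Higgs pairs as tuples $(E,\Phi',\omega,M,\mu)$. First I would fix $(\avect',\mu)$ and unwind the definition of $\widetilde{\Hh}_{\GSp,2r,rd}^{-1}(\avect',\mu)$: a point is a $\GSp$-Higgs pair $(P,\phi)$ whose associated data $(E,\Phi',\omega,M,\mu)$ satisfy $\avect'=(a_2(E,\Phi'),\dots,a_{2r}(E,\Phi'))$ and $\frac{\tr\Phi}{2r}=\mu$. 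Since $\Phi'=\Phi-\mu\,\id_E$ has characteristic polynomial with vanishing odd coefficients (as $\omega(\Phi'v,w)+\omega(v,\Phi'w)=0$ forces $\Phi'$ to be skew for the $M$-valued form, hence $\wedge^{2i+1}\Phi'$ has trace zero), the pair $(E,\Phi')$ lies in $\Hh_{2r,rd}^{-1}(\avect')$ where now $\avect'$ is read inside $\A(2r)$ with all odd entries zero; this is exactly the characteristic whose spectral curve is the $\sigma$-symmetric curve $X=X_{\avect'}$. Applying the $\GL$-spectral correspondence, $(E,\Phi')$ corresponds to a torsion-free rank-$1$ sheaf $\Mm\in\Jbar(X,d')$ with $d'=rd+r(2r-1)\ell$, and the section $\mu$ is recorded separately; this already produces the map to $\mathcal{P}(d',d)=\Jbar(X,d')\times J(C,d)$, the $J(C,d)$-factor being $M=\det E\otimes(\text{twist})$, i.e. $p_C$ of the image is $M$ of the appropriate degree $d$.

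Next I would translate the symplectic datum $\omega:E\otimes E\to M$ into a condition on $\Mm$. The key computation: under $E=\pi_*\Mm$, a nondegenerate $M$-valued pairing compatible with $\Phi'$ up to sign corresponds, by adjunction for the finite flat map $\pi$ and the relation $\omega_X=\pi^*(\omega_C\otimes L^{2r-1})$ together with $\sigma^*x=-x$, to an isomorphism $\Mm\xrightarrow{\sim}\sigma^*\Hh om_{\Oo_X}(\Mm,\pi^*M\otimes\pi^*L^{1-2r})$, equivalently $\Hh om_{\Oo_X}(\Mm,\Oo_X)\otimes\pi^*M\simeq\sigma^*\Mm\otimes\pi^*L^{1-2r}$. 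Twisting through by $M=p_C$-component, this is precisely the equalizer condition in the statement: the two displayed maps $\mathcal{P}(d',d)\to\Jbar(X,rd-r(2r-1)\ell)$, namely $(\Hh om_{\Oo_X}(\texttt{\char`_},\Oo_X)\circ p_X)\otimes(\pi^*\circ p_C)$ and $(\sigma^*\circ p_X)\otimes\pi^*L^{1-2r}$, agree on $(\Mm,M)$. I would carry out this identification at the level of $T$-valued points for an arbitrary $k$-scheme $T$, using the Proposition describing the inverse direction (the four-term exact sequence for $\pi_\avect^*E$) to make the comparison of sheaves functorial, so that the bijection is an isomorphism of stacks and not merely of points.

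Finally I would check that the correspondence is compatible with automorphisms and families so as to give an isomorphism of stacks, not just a set-theoretic bijection: a morphism of $\GSp$-Higgs pairs is a morphism of the underlying $(E,\Phi')$ preserving $\omega$ up to the induced scalar on $M$, which matches morphisms of the pairs $(\Mm,M)$ in the equalizer $\Ee_{\avect'}$; and $T$-flatness is preserved on both sides by the compatibility of $\pi_*$ with base change (using $\pi$ finite flat). The main obstacle I anticipate is the precise bookkeeping in the middle step — getting the twist by $\pi^*L^{1-2r}$ and the interaction between the $M$-valued (rather than $\Oo_C$-valued) form and the duality $\Hh om(\texttt{\char`_},\Oo_X)$ exactly right, including the signs coming from $\omega(\Phi'v,w)+\omega(v,\Phi'w)=0$ versus the $\Sp$ case where one has a minus sign. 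Concretely one must verify that the dualizing-sheaf twist $\omega_X=\pi^*(\omega_C\otimes L^{2r-1})$ contributes $\pi^*L^{2r-1}$ and that the $M$-valued form shifts degrees so that both maps land in $\Jbar(X,rd-r(2r-1)\ell)$; once the $\GSp$-version is set up correctly it specializes to the $\Sp$-statement by setting $M=\Oo_C$, $d=0$, $\mu=0$, which is a useful consistency check to perform along the way.
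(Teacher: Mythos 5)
Your proposal follows essentially the same route as the paper: reduce to the $\GL$-spectral correspondence, then convert the $M$-valued symplectic form into the isomorphism $\Mm^\chk\otimes\pi^*M\simeq\sigma^*\Mm\otimes\pi^*L^{1-2r}$ by dualizing the four-term exact sequence and invoking relative duality with $\omega_{X/C}\simeq\pi^*L^{2r-1}$ together with $\sigma^*x=-x$. The only slip is the aside that $M=\det E\otimes(\text{twist})$ — in fact $\det E\simeq M^r$, so $M$ must be carried along as an independent component of the datum rather than recovered from $\det E$ — but this does not affect the argument.
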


\subsection{$\boldsymbol{G= \PSp(2r, \C)}$} In this case, any $\PSp(2r, \C)$-Higgs pair $(P, \phi)$ has a lifting $(\tilde{P}, \tilde{\phi})$ to a $\GSp(r, \C)$-Higgs pair, corresponding via the associated bundle construction to the datum $(E, \Phi, M, \omega)$ of a Higgs pair $(E, \Phi)$ of rank $2r$ and degree $rd$, a non-degenerate symplectic form  $\omega: E \otimes E \rightarrow M $ with values in a line bundle $M$ of degree $d$ on $C$, and a Higgs field $\Phi$ with trace zero satisfying the condition \[ \omega(\Phi v, w)+\omega(v, \Phi w) =0. \] 
Then, the datum of   $(P, \phi)$ corresponds uniquely to the datum of the equivalence class $[(E, \Phi, M, \omega)]$ of Higgs pairs of rank $2r$ with a non-degenerate symplectic form, under the equivalence relation $\sim_{J(C)}$ defined by: \[
(E, \Phi, M, \omega) \sim_{J(C)} (E \otimes N, \Phi \otimes 1_N, M \otimes N^2, \omega_N) \hspace{2em}\textrm{for any }  N \in J(C)
\]
where \[
\omega_N: (E \otimes N) \otimes (E \otimes N) \rightarrow M \otimes N^2
\]
is obtained by extension of scalars.

\vspace{1em}

Let $\Mm_\GSp(2r)=\coprod_{d \in \Z} \Mm_\GSp(2r,rd)$ be the moduli stack of $\GSp$-Higgs pairs of rank $2r$ in any degree and denote with $\M_\GSp^{\tr = 0}(2r)$ the closed substack of $\Mm_\GSp(2r)$ given by $\GSp$-Higgs pairs of rank $2r$ with trace zero. Then, $[(E, \Phi, M, \omega)]$ is the orbit of $(E, \Phi, M, \omega)$ under the action of $J(C)$ on $\M_\GSp^{\tr = 0}(2r)$ defined by:
\begin{align*}
\M_\GSp^{\tr = 0}(2r) \times J(C)&\longrightarrow \M_\GSp^{\tr = 0}(2r) \\
((E, \Phi, M, \omega), N ) &\longmapsto (E \otimes N, \Phi \otimes 1_N, M \otimes N^2, \omega_N).
\end{align*}

\begin{defi}
Let $(P, \phi)$ be a $\PSp$-Higgs pair and let $(E, \Phi, M, \omega)$ be the datum of a Higgs pair with trace zero and degree $rd$ endowed with a $M$-valued symplectic form $\omega$, corresponding to a lifting $(\tilde{P}, \tilde{\phi})$ of $(P, \phi)$ to a $\GSp$-Higgs pair. The \textit{degree of} $(P, \phi)$ is the congruence class $\overline{rd}  \in \Z/2r\Z$.
\end{defi}

Up to the action on $(E, \Phi, M, \omega)$ of a line bundle of degree $1$ on $C$, it is straightforward to see that the $J(C)$-orbit $[(E, \Phi, M, \omega)]$ corresponding to a $\PGL$-Higgs pair with degree $\overline{rd} \in \Z/2r\Z$ is always the orbit of a datum whose Higgs pair has degree $0$ or $rd$. If we restrict the correspondence to Higgs pairs with fixed degree $0$ or $rd$, we see that a $\PGL$-Higgs pair of degree $\overline{rd}$ is identified uniquely by the orbit of a Higgs pair with trace zero \textit{and degree }$0$ or $rd$ with respect to the action of line bundles \textit{of degree} $0$ on $C$.  
\vspace{1em}

A basis for the invariant polynomials of $\psp(2r, \C)=\spalg(2r, \C)$ is given by $\{p_{2i}\}_{i=1, \dots, r}$ where $p_{2i}(P, \phi) := \tr(\wedge^{2i} \phi )$. The corresponding $\PSp$-Hitchin morphism takes the form:
\begin{align*}
\Hh_{\PSp,2r, \overline{rd}}: \Mm_\PSp(2r, \overline{rd})  &\longrightarrow \A_\PSp(2r)=\bigoplus_{i=1}^r H^0(C, L^{2l}) \\
(P, \phi) &\longmapsto (p_2(P, \phi), p_4(P, \phi), \dots, p_{2r}(P, \phi)).
\end{align*}
As in the case of $\Sp(2r, \C)$-Higgs pairs, the curve $X_\avect$ has an involution defined by $\sigma(x)=-x$.

\begin{prop}
Let $\avect \in \A_\PSp(2r)$ be any characteristic, let $X=X_\avect \xrightarrow{\pi}C$ be the associated spectral curve with involution $\sigma: X \rightarrow X$. Let $d \in \{0, 1\}$, $d'=rd+r(2r-1)\ell$ and denote with $\mathcal{P}(d',n)=\Jbar(X,d') \times J^d(C)$ the Cartesian product of the compactified Jacobian of degree $d'$ on $X$ and the Jacobian of degree $d$ on $C$, endowed with the projection maps $p_X$ and $p_C$ on $\Jbar(X,d')$ and $J^d(C)$ respectively. Let $\Ee_{\avect'}$ be the equalizer of the two maps 
\begin{align*}
(\Hh om_{\Oo_X}(\texttt{\char`_}, \Oo_X)\circ p_X ) \otimes (\pi^* \circ p_C): \mathcal{P}(d',d) &\rightarrow \Jbar(X, rd-r(2r-1)\ell) \\
(\sigma^*\circ p_X) \otimes \pi^* L^{1-2r}: \mathcal{P}(d',d) 
&\rightarrow \Jbar(X,rd-r(2r-1)\ell).
\end{align*}
The group $J^0(C)$  of line bundles of degree $0$ on $C$ acts on $\Ee_\avect$ as follows: \begin{align*}
\Ee_\avect \times J^0(C) &\longrightarrow \Ee_\avect \\
((\Mm, M, \lambda), N) &\longmapsto (\Mm \otimes \pi^*N, M \otimes N^2, \lambda _N)
\end{align*}
where $\lambda_N$ is given by the composition of $\lambda \otimes \id_{\pi^* N}$ with the canonical isomorphisms: \begin{align*}
(\Mm \otimes \pi^*N)^\chk \otimes \pi^*(M \otimes N^2) &\xrightarrow{{}_\sim} \Mm^\chk \otimes (\pi^*N)^{-1} \otimes \pi^*M \otimes ( \pi^*N)^2  \xrightarrow{{}_\sim} \\
&\xrightarrow{{}_\sim}  \Mm^\chk \otimes \pi^*M \otimes  \pi^*N \xrightarrow{\lambda \otimes \id_{\pi^* N} } \\
&\xrightarrow{{}_\sim} \sigma^*\Mm \otimes \pi^*L^{1-2r} \otimes \pi^* N \xrightarrow{{}_\sim} \\
&\xrightarrow{{}_\sim} \sigma^*(\Mm \otimes \pi^*N) \otimes \pi^*L^{1-2r}.
\end{align*}
Then, the fiber $\Hh_{\PSp,2r, \overline{rd}}^{-1}(\avect)$ of the $\PSp$-Hitchin morphism is isomorphic, via the spectral correspondence, to the quotient $\Ee_\avect/J^0(C)$.
\end{prop}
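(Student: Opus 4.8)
The plan is to deduce the statement from the $\GSp$-spectral correspondence of the preceding Proposition together with a descent along the action of $\pi^{*}J^{0}(C)$, in analogy with the passage from $\GL$- to $\PGL$-fibers.

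The first step is to describe $\Hh_{\PSp,2r,\overline{rd}}^{-1}(\avect)$ intrinsically. By the discussion preceding the statement, every $\PSp(2r,\C)$-Higgs pair of degree $\overline{rd}$ lifts to a $\GSp(2r,\C)$-Higgs pair whose associated datum $(E,\Phi,M,\omega)$ has trace-zero Higgs field and underlying bundle of degree exactly $rd$, and two such lifts of the same $\PSp$-Higgs pair differ precisely by the action $(E,\Phi,M,\omega)\mapsto(E\otimes N,\Phi\otimes 1_{N},M\otimes N^{2},\omega_{N})$ of a line bundle $N\in J^{0}(C)$, since only degree-zero twists preserve the normalization $\deg E=rd$. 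As this lifting and the identification of orbits are compatible with base change, $\Hh_{\PSp,2r,\overline{rd}}^{-1}(\avect)$ is the stack-theoretic quotient by $J^{0}(C)$ of the substack of $\M_{\GSp}^{\tr=0}(2r,rd)$ lying over $\avect$. Since having trace zero is the same as having $\mu=\tfrac{\tr\Phi}{2r}=0$, that substack is $\widetilde{\Hh}_{\GSp,2r,rd}^{-1}(\avect,0)$, which by the preceding Proposition is isomorphic to the equalizer $\Ee_{\avect}$. It therefore suffices to show that this isomorphism carries the $J^{0}(C)$-action on $\GSp$-data to the $J^{0}(C)$-action on $\Ee_{\avect}$ written in the statement.

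For the equivariance I would compute the spectral datum attached to $(E\otimes N,\Phi\otimes 1_{N},M\otimes N^{2},\omega_{N})$. Because $N$ is a line bundle, $\pi^{*}N$ is a line bundle on $X$, and the projection formula yields $\pi_{*}(\Mm\otimes\pi^{*}N)\cong\pi_{*}(\Mm)\otimes N=E\otimes N$ compatibly with multiplication by the tautological section $x$; hence $(E\otimes N,\Phi\otimes 1_{N})$ corresponds to $\Mm\otimes\pi^{*}N$, still of degree $d'$ as $\deg N=0$. Under the $\GSp$-correspondence the datum $(M,\omega)$, with $\omega(\Phi v,w)+\omega(v,\Phi w)=0$, corresponds to an isomorphism $\lambda\colon\Mm^{\chk}\otimes\pi^{*}M\to\sigma^{*}\Mm\otimes\pi^{*}L^{1-2r}$, i.e.\ to the datum making $(\Mm,M,\lambda)$ a point of $\Ee_{\avect}$. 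The form $\omega_{N}$ is obtained from $\omega$ by tensoring with $N$; tracing this through the canonical identifications $(\Mm\otimes\pi^{*}N)^{\chk}\cong\Mm^{\chk}\otimes(\pi^{*}N)^{-1}$, $\pi^{*}(M\otimes N^{2})\cong\pi^{*}M\otimes(\pi^{*}N)^{2}$, and $\sigma^{*}(\pi^{*}N)\cong\pi^{*}N$ (the last because $\pi\circ\sigma=\pi$), the factors $(\pi^{*}N)^{-1}$ and $(\pi^{*}N)^{2}$ cancel against $\sigma^{*}(\pi^{*}N)=\pi^{*}N$, and one finds that $\omega_{N}$ corresponds exactly to $\lambda\otimes\id_{\pi^{*}N}$ read through these isomorphisms — which is precisely the composition defining $\lambda_{N}$ in the statement. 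In particular the $J^{0}(C)$-action preserves the equalizer condition, and the spectral correspondence is $J^{0}(C)$-equivariant.

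Finally, an isomorphism of algebraic stacks which is equivariant for two actions of one and the same group induces an isomorphism on the quotient stacks; applying this to $\widetilde{\Hh}_{\GSp,2r,rd}^{-1}(\avect,0)\cong\Ee_{\avect}$ and the $J^{0}(C)$-actions just matched gives
\[
\Hh_{\PSp,2r,\overline{rd}}^{-1}(\avect)\;\cong\;\bigl[\widetilde{\Hh}_{\GSp,2r,rd}^{-1}(\avect,0)\,/\,J^{0}(C)\bigr]\;\cong\;\Ee_{\avect}/J^{0}(C),
\]
as asserted. The main obstacle I expect is the technical bookkeeping in the equivariance step — in particular verifying that the extension-of-scalars form $\omega_{N}$ is carried to the composition $\lambda_{N}$ exactly, with no stray sign or twist — together with checking that the identification of $\PSp$-Higgs pairs over $\avect$ with the $J^{0}(C)$-quotient of trace-zero $\GSp$-Higgs pairs of degree $rd$ genuinely holds at the level of stacks (controlling the central $\mathbb{G}_{m}$ in the lifting $\GSp\to\PSp$).
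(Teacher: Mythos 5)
Your proposal is correct and follows essentially the same route as the paper: reduce to the $\GSp(2r,\C)$-spectral correspondence of the preceding proposition, check via the projection formula and the canonical identifications (using $\pi\circ\sigma=\pi$) that the correspondence intertwines the $J^0(C)$-action on trace-zero $\GSp$-data with the stated action on $\Ee_{\avect}$, and pass to quotient stacks. Your remark that the trace-zero locus over $\avect$ is exactly $\widetilde{\Hh}_{\GSp,2r,rd}^{-1}(\avect,0)$ and your explicit tracking of the twists in $\lambda_N$ make the equivariance step slightly more explicit than the paper's, but the argument is the same.
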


The case of $\PSp(2r, \C)$-Higgs pairs of degree $0$ deserves special attention.

\begin{prop}
Let $\avect \in \A_\PSp(2r)$ be any characteristic, let $X=X_\avect \xrightarrow{\pi}C$ be the associated spectral curve with involution $\sigma: X \rightarrow X$. Let $d'=r(2r-1)\ell$ and let $\Ee_\avect$ be the equalizer stack of the two maps:
\begin{align*}
	\texttt{\char`_}^\chk:=\Hh om_{\Oo_X}(\texttt{\char`_}, \Oo_X): \Jbar(X,d') &\rightarrow \Jbar(X,-d') \\
	\sigma^*\texttt{\char`_} \otimes \pi^* L^{1-2r}: \Jbar(X,d') 
	&\rightarrow  \Jbar(X,-d')
\end{align*}	
The group stack $J^0(C)[2]$  of 2-torsion line bundles on $C$ acts on $\Ee_\avect$ as follows: \begin{align*}
\Ee_\avect \times \pi^* J^0(C)[2] &\longrightarrow \Ee_\avect \\
((\Mm, \lambda), (\pi^*N, \pi^*\epsilon)) &\longmapsto (\Mm \otimes \pi^*N, \lambda \otimes \pi^*\epsilon).
\end{align*}
Then, the fiber $\Hh_{\PSp,2r,\overline 0}^{-1}(\avect)$ of the $\PSp$-Hitchin morphism is isomorphic, via the spectral correspondence, to the quotient $\Ee_\avect/\pi^* J^0(C)[2]$.
\end{prop}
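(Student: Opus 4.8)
The plan is to deduce the statement from the preceding Proposition (the general $\PSp(2r,\C)$ case, with $d\in\{0,1\}$) by specializing to $d=0$ and then simplifying the resulting quotient stack. Note first that the degree $\overline{0}\in\Z/2r\Z$ is obtained, among the representatives with $d\in\{0,1\}$, only for $d=0$, since $\overline{rd}=\overline 0$ forces $d$ even; so the preceding Proposition applies with $d=0$, $d'=r(2r-1)\ell$ and $\mathcal{P}(d',0)=\Jbar(X,d')\times J^0(C)$. First I would unwind the two maps of that Proposition in this case: a point of its equalizer stack — which, to avoid a clash of notation with the present $\Ee_\avect$, I will call $\widetilde{\Ee}_\avect$ — is a triple $(\Mm,M,\lambda)$ with $\Mm\in\Jbar(X,d')$, $M\in J^0(C)$ and $\lambda\colon\Mm^\chk\otimes\pi^*M\xrightarrow{{}_\sim}\sigma^*\Mm\otimes\pi^*L^{1-2r}$, and the Proposition gives $\Hh_{\PSp,2r,\overline 0}^{-1}(\avect)\simeq\widetilde{\Ee}_\avect/J^0(C)$, where $N\in J^0(C)$ acts by $(\Mm,M,\lambda)\mapsto(\Mm\otimes\pi^*N,\,M\otimes N^2,\,\lambda_N)$ with $\lambda_N$ the composition spelled out there.

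Next I would identify the equalizer stack $\Ee_\avect$ of the present statement with the closed substack $\{M\simeq\Oo_C\}\subset\widetilde{\Ee}_\avect$: setting $M=\Oo_C$ turns the first map of the preceding Proposition into the dualization $\Hh om_{\Oo_X}(-,\Oo_X)$ and the second into $\sigma^*(-)\otimes\pi^*L^{1-2r}$, which are exactly the two maps of the present statement. The key step is then to show that the inclusion induces an isomorphism of quotient stacks
\[
\Ee_\avect/\pi^*J^0(C)[2]\;\xrightarrow{{}_\sim}\;\widetilde{\Ee}_\avect/J^0(C).
\]
For this I would use the $J^0(C)$-equivariant morphism $\widetilde{\Ee}_\avect\to J^0(C)$, $(\Mm,M,\lambda)\mapsto M$, where $J^0(C)$ acts on the target through $N\cdot M=M\otimes N^2$. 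Since $C$ is a smooth projective complex curve, $J^0(C)$ is an abelian variety, so $N\mapsto N^2$ is a surjective étale isogeny with kernel $J^0(C)[2]$; hence this target action is transitive with stabilizer $J^0(C)[2]$ at $\Oo_C$. It follows that every $J^0(C)$-orbit in $\widetilde{\Ee}_\avect$ meets the substack $\Ee_\avect=\{M=\Oo_C\}$ — given $(\Mm,M,\lambda)$, pick $N$ with $N^2\simeq M^{-1}$ — and that the residual action on $\Ee_\avect$ is through $J^0(C)[2]$. Pulling back a $2$-torsion bundle $N$ together with a trivialization $\epsilon\colon N^2\xrightarrow{{}_\sim}\Oo_C$ of its square to $X$ and chasing the canonical isomorphisms that define $\lambda_N$, I would check that this residual action is precisely $((\Mm,\lambda),(\pi^*N,\pi^*\epsilon))\mapsto(\Mm\otimes\pi^*N,\lambda\otimes\pi^*\epsilon)$ — the trivialization $\epsilon$ genuinely enters, which is exactly why the acting object is the group stack of $2$-torsion line bundles with trivialized square rather than the bare group scheme $J^0(C)[2]$.

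The hard part will be the quotient-stack manipulation in the last step: the general principle that, for a group (stack) $G$ acting on $Z$ equipped with a $G$-equivariant morphism to a homogeneous space $G/H$, one has $[Z/G]\simeq[Z_{eH}/H]$, has to be applied carefully here with $G=J^0(C)$, $H=J^0(C)[2]$, $Z=\widetilde{\Ee}_\avect$ and $Z_{eH}=\Ee_\avect$, keeping in mind that $J^0(C)[2]$ appears most naturally as the ($\mu_2$-gerbe-flavored) group stack of torsion line bundles with trivialized square, and that it is precisely the surjectivity of multiplication by $2$ on $J^0(C)$ over $\C$ that supplies transitivity on the $M$-coordinate. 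Everything else — that $\Ee_\avect$ is the fibre over $\Oo_C$, and that the two residual actions coincide — is a routine unwinding of the preceding Proposition, so I do not expect further obstacles.
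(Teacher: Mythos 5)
Your proof is correct in outline, but it takes a genuinely different route from the paper's. The paper does not pass through the general $\PSp$ proposition at all for the degree-$0$ case: it uses the exact sequence $1 \to \mu_2 \to \Sp(2r,\C) \to \PSp(2r,\C) \to 1$ and the induced sequence in nonabelian cohomology to lift a degree-$0$ $\PSp$-Higgs pair directly to an $\Sp(2r,\C)$-Higgs pair, unique up to tensoring the associated bundle by a $2$-torsion line bundle with trivialized square; it then applies the $\Sp$ spectral correspondence (Proposition \ref{DataSp}) to the lifting $(E,\Phi,\omega) \leftrightarrow (\Mm,\lambda)$ and passes to the quotient by $\pi^*J^0(C)[2]$, exactly mirroring the treatment of $\PGL$ in degree $0$ via $\SL$. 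Your route instead specializes Proposition \ref{DataPSp} to $d=0$ and reduces $\widetilde{\Ee}_\avect/J^0(C)$ to $\Ee_\avect/\pi^*J^0(C)[2]$ by slicing along the equivariant map $(\Mm,M,\lambda)\mapsto M$ to $J^0(C)$ with $J^0(C)$ acting through squaring. What the paper's approach buys is that the appearance of the group stack of $2$-torsion bundles with trivialized square is explained at the source (it is literally the ambiguity of the $\Sp$-lifting), and no stack-quotient gymnastics are needed. What your approach buys is economy: you reuse the general case rather than redoing the cohomological lifting analysis. The price is that you must invoke the divisibility of $J^0(C)$ (fine over $\C$, since it is an abelian variety), carry out the $[Z/G]\simeq[Z_{eH}/H]$ slice argument at the level of stacks with some care about whether the fiber over $[\Oo_C]$ is literally $\Ee_\avect$ or only equivalent to it after absorbing the automorphisms of $\Oo_C$ into $\lambda$, and verify that the residual action induced by the isomorphisms $\lambda_N$ of Proposition \ref{DataPSp} collapses to $(\Mm\otimes\pi^*N,\lambda\otimes\pi^*\epsilon)$ when $M=\Oo_C$; you correctly flag all three points, and none of them is an obstruction.
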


\chapter{Preliminaries}\label{ChapterPrelim}

\section{Rank and degree of coherent sheaves}

Let $X$ be any Noetherian scheme of pure dimension $1$. In this section, we study classes of coherent sheaves on $X$ for which the notions of rank on $X$ are well defined. Moreover, we discuss the notions of degree for sheaves with well-defined rank.

\vspace{1em}
The simplest class is clearly given by vector bundles or, equivalently, locally free sheaves of constant rank. If $\Ff$ is a locally free sheaf on $X$, we say that $\Ff$ has \textit{(free) rank} $r$ if $\Ff_x \simeq \Oo_{X,x}^{\oplus r}$ for any point $x$ of $X$.

\vspace{1em}
For any coherent sheaf $\Ff$ on $X$, we give now the following definitions.

\begin{defi}
The \textit{support} of $\Ff$ is the closed set \[
\Supp(\Ff) = \{ x \in X| \Ff_x \neq 0 \} \subseteq X.
\] The \textit{dimension} of $\Ff$ is the dimension of its support and is denoted $\dim(\Ff)$.
\end{defi}

\begin{defi}
The \textit{torsion subsheaf} of $\Ff$ is the maximal subsheaf  $T(\Ff) \subseteq \Ff$ of dimension 0. If $\Ff = T(\Ff)$, we say that $\Ff$ is a \textit{torsion sheaf}. If $T(\Ff)=0$ we say that $\Ff$ is \textit{torsion-free}.
\end{defi}
Equivalently, $\Ff$ is torsion-free if $\dim(\Ee)=1$ for any non-trivial coherent subsheaf $\Ee \subseteq \Ff$, i.e. $\Ff$ is pure of dimension 1. For any sheaf $\Ff$, the quotient sheaf $\Ff/T(\Ff)$ is either zero or torsion-free.

\vspace{1em}
The first class of torsion-free sheaves with well-defined rank is given by generalized vector bundles.
\begin{defi}
$\Ff$ is a \textit{generalized vector bundle} if it is torsion-free and there exists a positive integer $r$ such that $\Ff_\xi \simeq \Oo_\xi^{\oplus r}$ for any generic point $\xi$ of $X$, where $\Ff_\xi$ denotes the stalk of $\Ff$ at $\xi$. The integer $r$ is called the \textit{rank} of the generalized vector bundle $\Ff$. A generalized vector bundle of rank $1$ is also called a \textit{generalized line bundle}.
\end{defi}
Clearly, a vector bundle of rank $r$ is also a generalized vector bundle of the same rank.
\begin{rmk}
Generalized line bundles were introduced in \cite{BayEis} in order to study limit linear series on a ribbon. As pointed out in \cite{EisGreen}, generalized line bundles are strictly related to generalized divisors introduced indipendently by Hartshone in \cite{Har86} (followed by \cite{Har94} and \cite{Har07}). See also Chapter \ref{ChapterNorm}, \ref{SectionReviewDivisors} for details. The generalization to any rank $r$ is due to \cite{Savarese}.
\end{rmk}

\vspace{1em}

We introduce now the notion of rank for any coherent sheaf. Denote with $X_1, \dots, X_s$ the irreducible components of $X$ and with $\xi_i$ the generic point of each irreducible component $X_i$. Denote with $C_i = X_{i,red}$ the reduced subscheme underlying $X_i$ for each $i$.
\begin{defi}
The \textit{multiplicity} of $C_i$ in $X$ is defined as the positive integer: \[
\mult_X(C_i) := \ell_{\Oo_{X, \xi_i}}(\Oo_{X, \xi_i}).
\]
where $\ell_{\Oo_{X, \xi_i}}$ denotes the length as $\Oo_{X, \xi_i}$-module.
\end{defi}

\begin{defi}\label{RankDef}(Rank and multirank of a coherent sheaf)
The \textit{rank} of $\Ff$ on $X_i$  is defined as the positive rational number \[
\rk_{X_i}(\Ff) = \frac{\ell_{\Oo_{X, \xi_i}}(\Ff_{\xi_i}) }{  \mult_X(C_i) }.
\]

The \textit{multirank} of $\Ff$ on $X$ is the $n$-uple $\rvect=(r_1, \dots, r_n)$ where $r_i$ is the rank  of $\Ff$ at $X_i$. The sheaf $\Ff$ has \textit{rank} $r$ on $X$ if it has rank $r$ at each $X_i$.
\end{defi}

Note that, since length of modules is additive in short exact sequences, the rank and multirank of coherent sheaves are additive in short exact sequences too.

\begin{prop}
A generalized vector bundle of rank $r$ on $X$ has rank $r$ as a coherent sheaf. If $X$ is reduced, any torsion-free sheaf of rank $r$ is a generalized vector bundle of the same rank.
\end{prop}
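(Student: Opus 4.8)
The plan is to unwind the definitions in each of the two assertions; both reduce entirely to the behaviour of the stalks at the generic points $\xi_1,\dots,\xi_s$ of the irreducible components $X_1,\dots,X_s$ of $X$.

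For the first assertion, I would start from a generalized vector bundle $\Ff$ of rank $r$. By definition $\Ff$ is torsion-free and $\Ff_{\xi_i}\simeq\Oo_{X,\xi_i}^{\oplus r}$ for every $i$. Since length of modules is additive on finite direct sums, this gives $\ell_{\Oo_{X,\xi_i}}(\Ff_{\xi_i})=r\cdot\ell_{\Oo_{X,\xi_i}}(\Oo_{X,\xi_i})=r\cdot\mult_X(C_i)$, hence $\rk_{X_i}(\Ff)=r$ for each $i$; that is, $\Ff$ has rank $r$ as a coherent sheaf in the sense of Definition \ref{RankDef}.

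For the second assertion, suppose $X$ is reduced and $\Ff$ is torsion-free of rank $r$. The key observation is that for each generic point $\xi_i$ the local ring $\Oo_{X,\xi_i}$ is a field: it is zero-dimensional, being the local ring at a generic point of a scheme of pure dimension $1$, and it is reduced, being a localization of a reduced ring; an Artinian reduced local ring is a field, equal to its residue field $\kappa(\xi_i)$. Consequently $\mult_X(C_i)=1$ for every $i$, and therefore $\rk_{X_i}(\Ff)=\ell_{\Oo_{X,\xi_i}}(\Ff_{\xi_i})=\dim_{\kappa(\xi_i)}(\Ff_{\xi_i})=r$. Thus each stalk $\Ff_{\xi_i}$ is a $\kappa(\xi_i)$-vector space of dimension $r$, hence $\Ff_{\xi_i}\simeq\Oo_{X,\xi_i}^{\oplus r}$; combined with the torsion-freeness of $\Ff$ already in the hypothesis, this is exactly the statement that $\Ff$ is a generalized vector bundle of rank $r$.

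The argument is entirely formal. The only point needing a brief justification is the reducedness claim used in the second part — that a localization of a reduced Noetherian ring remains reduced and that an Artinian reduced local ring is a field — which is standard commutative algebra, so no serious obstacle is expected.
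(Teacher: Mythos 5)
Your proof is correct. The first half coincides with the paper's own argument (additivity of length under direct sums, i.e.\ isomorphic modules have equal length). In the second half you take a genuinely different, and in fact more elementary, route: the paper invokes the standard fact that a torsion-free sheaf on an \emph{integral} curve is locally free on a dense open subset, concludes that the generic stalk is free of rank $\rk_X(\Ff)$, and then reduces the reducible case to this by passing to an irreducible component. You instead observe that on a \emph{reduced} curve the local ring at each generic point $\xi_i$ is a zero-dimensional reduced Noetherian local ring, hence a field equal to $\kappa(\xi_i)$; then $\mult_X(C_i)=1$, the rank is just the $\kappa(\xi_i)$-dimension of the stalk, and a vector space of dimension $r$ over a field is automatically free of rank $r$. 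This avoids both the generic local freeness input and the slightly delicate reduction to irreducible components (where one must check that restricting to a component does not change the generic stalk), at the cost of nothing; the only hypothesis you use is reducedness, which is exactly what the statement assumes. The one implicit point worth flagging is that $r$ must be a positive integer for the conclusion "generalized vector bundle of rank $r$" to parse, since the rank of Definition \ref{RankDef} is a priori only rational; this is implicit in the statement and harmless.
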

\begin{proof}
The first statement follows from \ref{RankDef} and the fact that isomorphic modules have the same length. For the second assertion, note that on an integral curve $X$ there exists for any torsion-free sheaf $\Ff$ an open dense subset $U \subset X$ such that $\Ff_{|U}$ is locally free, hence the stalk of $\Ff$ at the generic point of $X$ is a free $\Oo_{X, \xi}$-module of rank $\rk_X(\Ff)$. If $X$ is reducible, the assertion follows by considering any irreducible component.
\end{proof}

\begin{rmk}
There are in literature other notions for the rank of a coherent sheaf on a irreducible component. A classical notion is the \textit{reduced rank} of $\Ff$ on $X_i$: \[
\rr_{X_i}(\Ff) = \dim_{\kappa(\xi_i)}(\Ff_{\xi_i} \otimes_{\Oo_{X,\xi}} \kappa(\xi_i)).
\]
This definition computes the rank of the sheaf restricted to its reduced support; it agrees with the other definitions for generalized vector bundles. However, there are cases when the reduced rank of a torsion-free sheaf differs from its rank, as it happens for quasi-locally free sheaves on a ribbon (see \cite[§1.4]{Savarese}). They also provide examples of torsion-free sheaves which are not generalized vector bundles, even if their rank is well defined.

The definition of $\rk_H(\Ff)$ comes at least from \cite[Définition 1.2]{Schaub98}, where it is given in the context of projective $k$-schemes without embedded points. The notion of \textit{generalized rank} introduced in \cite{Drez04} and \cite{Savarese} is essentially equivalent. Finally, a common definition in projective algebraic geometry is the polarized rank, that we will discuss later on in the present section.
\end{rmk}

In order to introduce the degree of coherent sheaves, from now on we \textbf{ assume that $X$ is projective curve over a base field $k$}. Recall that the Euler characteristic of a coherent sheaf $\Ff$ is \[
\chi(\Ff) :=  \sum (-)^i \dim_k H^i(X, \Ff).
\]

\begin{defi}\label{DegreeDef}(Degree of a coherent sheaf)
Suppose that $\Ff$ has rank $r$ on $X$. Then, the \textit{degree} of $\Ff$ on $X$ is defined as the fractional number: \[
\deg \Ff = \chi(X, \Ff) - r \chi(X, \Oo_X).
\]
\end{defi}
\begin{rmk}
If the rank of $\Ff$ is integer, the degree is integer as well. If $X$ is integral, the degree of a sheaf is an integer and coincides with the classical notion of degree for sheaves on integral curves. 
\end{rmk}
The following technical lemma is very useful.

\begin{lemma}\label{ChiOfTensor}
Assume that $X$ is irreducible. Let $\Ff$ be a coherent sheaf of rank $r$ on $X$ and let $\Ee$ be a locally free sheaf of rank $n$ on $X$. Then \[
\chi(X, \Ff \otimes \Ee) = r\deg(\Ee) + n\chi(X, \Ff).
\]
\end{lemma}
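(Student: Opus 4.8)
The plan is to reduce the statement to a standard fact about Euler characteristics of line bundles on an irreducible projective curve by a dévissage on $\Ee$. First I would treat the case $\Ee = \Oo_X$, where the claim reads $\chi(X,\Ff) = \chi(X,\Ff)$ and is trivial; more usefully, the case $\Ee = L$ a line bundle. For a line bundle $L$ on an irreducible projective curve, one can write $L \simeq \Oo_X(D-D')$ for effective Cartier divisors $D, D'$ supported in the smooth locus (since $X$ is irreducible, hence its smooth locus is a dense open, and any line bundle is a difference of very ample ones, which can be cut out by divisors avoiding the finitely many singular points). Then tensoring the structure sequences $0 \to \Oo_X(-D) \to \Oo_X \to \Oo_D \to 0$ (and similarly for $D'$) with $\Ff$, and using additivity of $\chi$, reduces the computation of $\chi(X, \Ff \otimes L) - \chi(X,\Ff)$ to computing $\chi(\Ff \otimes \Oo_D)$ for $D$ a Cartier divisor in the smooth locus. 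Near such points $\Ff$ is locally free of rank equal to $\rk_X(\Ff) = r$ — here one uses the Proposition just above that on the (irreducible, in particular on an open dense subset reduced) locus a torsion-free sheaf is locally free; but $\Ff$ need not be torsion-free, so instead I would argue that $\Ff_\xi$ is free of rank $r$ over $\Oo_{X,\xi} $ and spreads out to a locally free sheaf on a dense open $U$ containing $D$, whence $\chi(\Ff \otimes \Oo_D) = r \cdot \deg D = r \cdot \deg L|_{\text{that part}}$, giving $\chi(X, \Ff \otimes L) = \deg(L)\, r + \chi(X,\Ff)$.

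Next I would bootstrap from line bundles to arbitrary locally free $\Ee$ of rank $n$ by induction on $n$. For $n \geq 2$, choose a line subbundle $L \hookrightarrow \Ee$ with locally free quotient $\Ee'$ of rank $n-1$ (again possible up to twisting: on a projective curve any vector bundle has a sub-line-bundle with locally free quotient, e.g. by taking a general section of $\Ee(m)$ for $m \gg 0$ and untwisting). Tensoring the short exact sequence $0 \to L \to \Ee \to \Ee' \to 0$ with $\Ff$ stays exact because $\Ee'$ is locally free, so $\Tor_1(\Ff, \Ee') = 0$; additivity of $\chi$ and of $\deg$ (and of rank) then yields
\[
\chi(X, \Ff \otimes \Ee) = \chi(X, \Ff \otimes L) + \chi(X, \Ff \otimes \Ee') = \big(r \deg L + \chi(X,\Ff)\big) + \big(r \deg \Ee' + (n-1)\chi(X,\Ff)\big),
\]
and since $\deg L + \deg \Ee' = \deg \Ee$ this is $r \deg \Ee + n \chi(X,\Ff)$, completing the induction.

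The main obstacle I anticipate is the bookkeeping in the line-bundle case: one must be careful that $\Ff$ is only assumed coherent of rank $r$, not torsion-free, so $\Ff \otimes \Oo_X(-D)$ need not be a subsheaf of $\Ff$ and the structure sequence tensored with $\Ff$ could have a $\Tor_1$ term supported at the torsion of $\Ff$. Choosing the Cartier divisors $D$ in the smooth locus of $X$ and, crucially, disjoint from the (zero-dimensional) support of $T(\Ff)$ kills this $\Tor_1$: there $\Oo_X(-D)$ is locally free and $\Ff$ is locally free of rank $r$, so the sequence stays exact and the local computation of $\chi(\Ff \otimes \Oo_D) = r \deg D$ goes through. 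The rest is routine additivity of $\chi$, $\deg$, and $\rk$ in short exact sequences, already noted in the text.
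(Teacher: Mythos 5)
Your induction on the rank of $\Ee$ is fine, but the base case breaks down in exactly the situation this lemma is needed for: $X$ is assumed irreducible, \emph{not} integral. In this chapter a curve may be non-reduced (the lemma is invoked in the proof of Theorem \ref{PolarizedRankVsRank} after reducing to each irreducible, possibly non-reduced, component, and the surrounding statements explicitly involve $\mult_X(C)$). For a non-reduced irreducible curve --- a ribbon, say --- the smooth locus is empty, so you cannot place your Cartier divisors $D,D'$ there; and, more seriously, the stalk $\Ff_\xi$ at the generic point is a module over the Artinian local ring $\Oo_{X,\xi}$, which has length $\mult_X(C)>1$, and such a module need not be free, so $\Ff$ need not be locally free on any dense open. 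Indeed the rank $r=\ell_{\Oo_{X,\xi}}(\Ff_\xi)/\mult_X(C)$ of Definition \ref{RankDef} can be a non-integer (e.g.\ $r=3/2$ for a sheaf generically isomorphic to $\Oo_X\oplus\Oo_C$ on a ribbon), in which case no open subset where $\Ff$ is ``locally free of rank $r$'' can exist. So the key step $\chi(\Ff\otimes\Oo_D)=r\deg D$ does not make sense as you set it up, and the $\operatorname{Tor}_1$ term you hoped to kill by moving $D$ cannot be avoided.

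The repair is to keep that term: tensoring $0\to\Oo_X(-D)\to\Oo_X\to\Oo_D\to 0$ with $\Ff$ gives $0\to\operatorname{Tor}_1(\Ff,\Oo_D)\to\Ff(-D)\to\Ff\to\Ff\otimes\Oo_D\to 0$, and what must be shown is $\chi(\Ff\otimes\Oo_D)-\chi(\operatorname{Tor}_1(\Ff,\Oo_D))=r\deg D$. Locally this is the assertion that $M\mapsto\ell(M/fM)-\ell\big(\ker(f\colon M\to M)\big)$ is additive in $M$ and vanishes on finite-length modules, hence is determined by the generic stalk --- which is itself a d\'evissage. This is why the paper proves the lemma directly by d\'evissage on $\Ff$ (Stacks Project Tag 01YI): the asserted formula satisfies two-out-of-three in short exact sequences, so it suffices to check it for $\Oo_C$ (where it is the definition of $\deg(\Ee_{|C})$ combined with $\deg\Ee=\mult_X(C)\deg\Ee_{|C}$) and for skyscraper sheaves at closed points (where both sides reduce to $n\chi$). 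Your argument as written is correct when $X$ is integral, or more generally whenever $\Ff$ is generically locally free of integer rank, but not in the generality in which the lemma is stated and used.
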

\begin{proof}
The proof is inspired by \cite[\href{https://stacks.math.columbia.edu/tag/0AYV}{Tag 0AYV}]{stacks-project} and uses devissage for coherent sheaves as stated in \cite[\href{https://stacks.math.columbia.edu/tag/01YI}{Tag 01YI}]{stacks-project}. Let $\mathcal{P}$ be the property of coherent sheaves $\Ff$ on $X$ expressing that the formula of the Lemma holds. By additivity of rank and Euler characteristic in short exact sequences, $\mathcal{P}$ satisfy the two-out-of-three property. The integral subschemes $Z$ of $X$ are the reduced subscheme $C$ with support equal to $X$, and the closed points of $X$. For $Z=C$, the formula of the Lemma for $\Oo_{C}$ is: \[
\chi(X, \Ee \otimes \Oo_{C}) =\frac{\deg (\Ee)}{\mult_X C} + n \chi(X, \Oo_{C}). \]
This is true by definition of degree of $\Ee \otimes \Oo_{C} = \Ee_{|C}$ on $C$ and the fact that $\deg(\Ee)=\mult_X (C)\deg(\Ee_{|C})$. Then $\mathcal{P}(\Oo_C)$ holds. If $i: Z \hookrightarrow X$ is a closed point, the formula of the Lemma is true for $i_*\Oo_Z$ since it is a torsion sheaf of rank 0 and $\chi(X, \Ee \otimes i_*\Oo_Z) = n\chi(X, i_*\Oo_Z)$. Then $\mathcal{P}(i_*\Oo_Z)$ holds.
\end{proof}

\vspace{1em}
Let $H$ be any polarization of $X$; then, the notion of polarized rank and degree of sheaves can be defined. Denote with $H_{|C_i}$ the restriction of $H$ to the closed subscheme $C_i \subseteq X$. Since $H$ is locally free of rank $1$, the degree of $H$ on $X$ is related to the degrees of each  $H_{|C_i}$ on $C_i$ by the formula: \[
\deg H = \sum_{i=1}^s \mult_X( C_i) \deg H_{|C_i}.
\]

\begin{defi}\label{PolarizedRankDef}(Polarized rank and degree of a torsion-free sheaf)
Let $H$ be any polarization of $X$ with degree 	$\deg H = \delta$ and let $\Ff$ be a torsion-free sheaf on $X$. The \textit{polarized rank and 	degree} of $\Ff$ are the rational numbers $r_H(\Ff)$ and $d_H(\Ff)$ determined by the Hilbert polinomial of $\Ff$ with respect to $H$: \[
P(\Ff, n, H) := \chi(\Ff \otimes \Oo_X(nH)) = \delta r_H(\Ff) n + d_H(\Ff) + r_H(\Ff) \chi(\Oo_X). 
\]
\end{defi}

The polarized rank and degree of a sheaf depend strictly on the degrees of the restrictions $H_{|C_i}$, as the following theorem shows.

\begin{thm}\label{PolarizedRankVsRank}
Let $H$ be a polarization of $X$ and let $\Ff$ be a torsion-free sheaf on $X$. The polarized rank of $\Ff$ is related to the multirank of $\Ff$ by the formula \[
r_H(\Ff)=\frac{\sum_{i=1}^s \rk_{X_i}(\Ff) \mult_X (C_i) \deg H_{|C_i} }{ \sum_{i=1}^s \mult_X( C_i) \deg H_{|C_i} }.
\]
\end{thm}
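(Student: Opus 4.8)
The plan is to reduce the statement to the case of an irreducible (but possibly non-reduced) curve via the behaviour of the Hilbert polynomial under the decomposition of $X$ into its irreducible components, and then to apply Lemma \ref{ChiOfTensor}. First I would fix the polarization $H$ and a torsion-free sheaf $\Ff$, and observe that the polarized rank $r_H(\Ff)$ is read off from the leading coefficient of $P(\Ff, n, H) = \chi(\Ff \otimes \Oo_X(nH))$, namely $r_H(\Ff) = \frac{1}{\delta}\lim_{n\to\infty} P(\Ff,n,H)/n$ where $\delta = \deg H$. Since $\deg H = \sum_i \mult_X(C_i)\deg H_{|C_i}$ by the formula recalled just before Definition \ref{PolarizedRankDef}, it suffices to prove that the coefficient of $n$ in $P(\Ff,n,H)$ equals $\sum_{i=1}^s \rk_{X_i}(\Ff)\, \mult_X(C_i)\, \deg H_{|C_i}$.

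Next I would compute this leading coefficient componentwise. Let $Y_i$ be the closed subscheme of $X$ supported on $X_i$ which is the ``$X_i$-primary part'' (equivalently, restrict scalars so that $\Ff$ is a module over $\Oo_X$ localized away from the other generic points); more concretely, one filters $\Ff$ by a composition series whose graded pieces are supported on individual irreducible components together with torsion sheaves supported at closed points. Torsion sheaves supported at points contribute $0$ to the leading coefficient since they are $0$-dimensional, so only the graded pieces supported generically on some $X_i$ matter, and the multiplicity with which the ``generic piece'' of $X_i$ appears in this filtration is precisely $\ell_{\Oo_{X,\xi_i}}(\Ff_{\xi_i})$. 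Thus, up to lower-order (in $n$) terms, $\chi(\Ff\otimes\Oo_X(nH))$ has the same leading coefficient as $\sum_i \ell_{\Oo_{X,\xi_i}}(\Ff_{\xi_i})\cdot \chi(\Oo_{C_i}\otimes\Oo_X(nH))$. Here I am using that a sheaf generically isomorphic to a direct sum of copies of the structure sheaf of the reduced component $C_i$ has the same Hilbert polynomial leading term as that direct sum, which follows from Lemma \ref{ChiOfTensor} applied on the integral curve $C_i$: $\chi(C_i, \Oo_{C_i}(nH_{|C_i})) = n\deg H_{|C_i} + \chi(\Oo_{C_i})$, so the coefficient of $n$ in the contribution of $C_i$ is $\deg H_{|C_i}$.

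Assembling these, the coefficient of $n$ in $P(\Ff,n,H)$ is $\sum_{i=1}^s \ell_{\Oo_{X,\xi_i}}(\Ff_{\xi_i})\,\deg H_{|C_i}$. By Definition \ref{RankDef}, $\ell_{\Oo_{X,\xi_i}}(\Ff_{\xi_i}) = \rk_{X_i}(\Ff)\,\mult_X(C_i)$, so the coefficient becomes $\sum_{i=1}^s \rk_{X_i}(\Ff)\,\mult_X(C_i)\,\deg H_{|C_i}$. Dividing by $\delta = \sum_i \mult_X(C_i)\deg H_{|C_i}$ yields the claimed formula for $r_H(\Ff)$.

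The main obstacle I anticipate is the careful bookkeeping in the dévissage/composition-series step: one must argue that restricting $\Ff$ to (or filtering it along) each irreducible component does not change the leading term of the Euler characteristic, and that the embedded/torsion contributions are genuinely of lower degree in $n$. This is where one needs to be precise that $\Ff$ torsion-free on $X$ still only guarantees purity of dimension $1$, not local freeness on components, so one works at the level of generic lengths $\ell_{\Oo_{X,\xi_i}}(\Ff_{\xi_i})$ rather than with actual free rank; Lemma \ref{ChiOfTensor}, together with additivity of $\chi$ and of length in short exact sequences, is exactly what makes this rigorous. Once the leading-coefficient identity is established, the rest is the elementary division recalled above.
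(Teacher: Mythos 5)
Your proof is correct and follows the same overall strategy as the paper's: read $r_H(\Ff)$ off the leading coefficient of the Hilbert polynomial, reduce to integral pieces, and conclude with Lemma \ref{ChiOfTensor} together with the identity $\ell_{\Oo_{X,\xi_i}}(\Ff_{\xi_i})=\rk_{X_i}(\Ff)\,\mult_X(C_i)$ from Definition \ref{RankDef}. The one genuine organizational difference is the reduction step: the paper uses the restriction sequence $0 \to \Ff \to \bigoplus_i \Ff_{|X_i} \to T \to 0$ to pass to the (possibly non-reduced) irreducible components $X_i$, and then applies Lemma \ref{ChiOfTensor} on each $X_i$ to the whole sheaf $\Ff_{|X_i}$; you instead run a single global d\'evissage of $\Ff$ into graded pieces that are generically rank one on the reduced components $C_i$ plus skyscrapers, and count that exactly $\ell_{\Oo_{X,\xi_i}}(\Ff_{\xi_i})$ pieces lie over $\xi_i$. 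In effect you are inlining the d\'evissage that the paper keeps packaged inside the proof of Lemma \ref{ChiOfTensor}. Your route has the small advantage of working only with generic lengths, so you never have to identify $\rk_{X_i}(\Ff)$ with the rank of the restriction $\Ff_{|X_i}$ (which need not be torsion-free); the price is that you must invoke explicitly the existence of such a filtration (the d\'evissage result the paper cites in proving Lemma \ref{ChiOfTensor}) and the fact that a coherent sheaf on the integral curve $C_i$ that is generically of rank one contributes leading coefficient $\deg H_{|C_i}$ --- both standard, and your write-up correctly isolates the torsion pieces as contributing only to the constant term. Both arguments are sound and yield the same formula after dividing by $\delta=\sum_i \mult_X(C_i)\deg H_{|C_i}$.
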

\begin{proof}
Since the Hilbert polynomial of $\Ff$ with respect to $H$ has degree $1$, its leading term can be computed in terms of Euler characteristic as \[
\chi(X, \Ff \otimes \Oo_X(nH))  - \chi(X, \Ff).
\]
Since $\delta=\sum_{i=1}^n \mult_X( C_i) \deg_{C_i}H $ is the degree of $H$ on $X$, by Definition \ref{PolarizedRankDef} we have to prove: \[
\chi(X, \Ff \otimes \Oo_X(nH))  - \chi(X, \Ff) = n \sum_{i=1}^s \rk_{X_i}(\Ff) \mult_X (C_i) \deg_{C_i}H.
\]
First, we reduce to the case of $X$ irreducible. Consider the exact sequence: \[
0 \rightarrow \Ff \rightarrow \bigoplus_i \Ff_{|X_i} \rightarrow T \rightarrow 0
\]
where $T$ is a torsion sheaf supported only at the intersections of the irreducible components. Tensoring by $nH$, we obtain another exact sequence: \[
0 \rightarrow \Ff  \otimes \Oo_X(nH)  \rightarrow \bigoplus_i (\Ff  \otimes \Oo_X(nH) )_{|X_i} \rightarrow T \otimes \Oo_X(nH) \rightarrow 0.
\]
By additivity of the Euler characteristic with respect to exact sequences we have:
\begin{align*}
\sum_i &\left( \chi(X_i, (\Ff \otimes \Oo_X(nH))_{|X_i})  - \chi(X_i, \Ff_{|X_i}) \right)=\\
&=\chi(X, \Ff \otimes \Oo_X(nH))  - \chi(X, \Ff )+\chi(X, T \otimes \Oo_X(nH))  - \chi(X,T ).
\end{align*}
Since $T$ has dimension 0, the characteristics $\chi(X, T \otimes \Oo_X(nH))$ and $\chi(X,T )$ are the same, hence: \[
\chi(X, \Ff \otimes \Oo_X(nH))  - \chi(X, \Ff) = \sum_i \left( \chi(X_i, (\Ff \otimes \Oo_X(nH))_{|X_i})  - \chi(X_i, \Ff_{|X_i}) \right).
\]
We are then reduced to prove the statement on each irreducible component $X_i$. In other words we have to prove that, if $X$ is a irreducible (possibly non-reduced) curve with reduced structure $C$, then: \[
\chi(X, \Ff \otimes \Oo_X(nH))  - \chi(X, \Ff) = n \deg_X (H) \rk_X (\Ff).
\]
This is exactly the content of Lemma \ref{ChiOfTensor}, with $\Ee=\Oo_X(nH)$.
\end{proof}

\begin{cor}
Let $\Ff$ be any torsion-free sheaf with well-defined rank. Then, its polarized rank $r_H(\Ff)$ and degree $d_H(\Ff)$ are equal respectively to $\rk_X(\Ff)$ and $\deg_X(\Ff)$ for any polarization $H$ on $X$. f $X$ is irreducible, the rank and degree of a sheaf coincide with the polarized counterparts, for any polarization.
\end{cor}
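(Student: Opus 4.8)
The plan is to deduce both assertions directly from Theorem \ref{PolarizedRankVsRank} together with the defining equation of the Hilbert polynomial recorded in Definition \ref{PolarizedRankDef}; no genuinely new computation is needed, so the ``proof'' is really just a matching of normalizations.

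First I would treat the polarized rank. By hypothesis $\Ff$ has well-defined rank, so if we set $r := \rk_X(\Ff)$ then $\rk_{X_i}(\Ff) = r$ for every irreducible component $X_i$. Substituting this into the formula of Theorem \ref{PolarizedRankVsRank}, the constant $r$ factors out of the numerator and cancels against the denominator $\sum_i \mult_X(C_i)\deg H_{|C_i}$, which is positive since $H$ is a polarization; hence $r_H(\Ff) = r = \rk_X(\Ff)$ for every polarization $H$.

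Next I would handle the degree. Evaluating the Hilbert polynomial of Definition \ref{PolarizedRankDef} at $n = 0$ gives $\chi(X,\Ff) = d_H(\Ff) + r_H(\Ff)\,\chi(X,\Oo_X)$, so $d_H(\Ff) = \chi(X,\Ff) - r_H(\Ff)\,\chi(X,\Oo_X)$. Using the identity $r_H(\Ff) = r$ just established, the right-hand side becomes $\chi(X,\Ff) - r\,\chi(X,\Oo_X)$, which is exactly $\deg_X(\Ff)$ by Definition \ref{DegreeDef}. Thus $d_H(\Ff) = \deg_X(\Ff)$, again for every polarization.

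Finally, the irreducible case. When $X$ is irreducible there is a single component ($s=1$), so every coherent sheaf automatically has well-defined rank $\rk_X(\Ff) = \rk_{X_1}(\Ff)$, and the weighted average in Theorem \ref{PolarizedRankVsRank} is a sum with one term that collapses tautologically to $r_H(\Ff) = \rk_X(\Ff)$; the degree identity then follows verbatim from the argument of the previous paragraph. I do not expect any real obstacle: the only points requiring a moment's care are the observation that ``well-defined rank'' is precisely what makes the weighted average of Theorem \ref{PolarizedRankVsRank} constant, and the observation that the constant term of the Hilbert polynomial is, by Definition \ref{PolarizedRankDef}, the quantity denoted $\deg_X$ in Definition \ref{DegreeDef}. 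Both are bookkeeping, and they have already been dissolved by Theorem \ref{PolarizedRankVsRank}.
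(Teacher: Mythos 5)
Your proof is correct and follows essentially the same route as the paper: the rank statement is the specialization $r_i=r$ in Theorem \ref{PolarizedRankVsRank}, the degree statement is the identity $d_H(\Ff)=\chi(X,\Ff)-r_H(\Ff)\chi(X,\Oo_X)$ combined with Definition \ref{DegreeDef}, and the irreducible case is the observation that a single component makes the rank automatically well defined. Nothing to add.
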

\begin{proof}
The statement about the rank follows from Theorem \ref{PolarizedRankVsRank}, where $r_i = r$ for each irreducible component $X_i$. To prove the statement about the degree, note that for any polarization $H$: \[
d_H(\Ff) = \chi(X, \Ff) - r_H(\Ff)\chi(X, \Oo_X).
\]
If $r_H(\Ff) = \rk(\Ff)$, then $d_H(\Ff)= \deg(\Ff)$ by Definition \ref{DegreeDef}. The last statement follows from the fact that the $\rk_X(\Ff)$ is well-defined for any torsion-free sheaf when $X$ has only one irreducible component.
\end{proof}

\begin{rmk}
For reducible curves, the definitions of polarized rank and degree are the most general. As pointed out in \cite[§2]{Lop}, being of polarized rank $1$ for a torsion-free sheaf does not ensure that the sheaf is supported on the whole curve. Moreover, there are cases of torsion-free sheaves of polarized rank $1$ whose polarized rank on the restrictions to the irreducible components is different from $1$ and not an integer, even for reduced curves.
\end{rmk}

\section{Moduli spaces of torsion-free sheaves ``of rank 1''}\label{SectionModuliOfSheaves}
In this section, $X$ is a (possibly reducible, non-reduced) projective curve over a base field $k$, with irreducible components $X_1, \dots, X_s$. We introduce the definition of several moduli spaces parametrizing classes of torsion-free sheaves ``of rank 1'' (with different meanings) that will play an important role in the present work. We start by recalling the definition of the moduli space parametrizing line bundles on $X$.

\begin{defi}\label{JacobianDef}
Let $d \in \Z$ be an integer number. The \textit{Jacobian scheme} of degree $d$ on $X$ is the algebraic scheme $J^d(X)$ representing the sheafification of the functor that associates to any $k$-scheme $T$ the set of isomorphism classes of line bundles of degree $d$ on $X \times_k T$. The union of the Jacobians of all degrees is denoted as $J(X)$.
\end{defi}
\begin{rmk}
The existence of the representing scheme $J^d(X)$ is a classical result in algebraic geometry, see for example \cite[Corollary 4.18.3]{Kle}. Following a number of authors (see for example \cite{MRVFine} or \cite{Kass}) we prefer to use the term ``Jacobian'' instead of ``Picard'' even if we consider line bundles in any degree; this is motivated only by the simpler notation.
\end{rmk}

Since we are dealing with moduli spaces, we recall also the definition of slope and stability for torsion-free sheaves.

\begin{defi}
Let $H$ be a polarization on $\Ff$ and $\Ff$ be a torsion-free sheaf on $X$ with polarized rank $r_H(\Ff)$ and polarized degree $d_H(\Ff)$.

The \textit{slope} of $\Ff$ with respect to $H$ is defined as the rational number $\mu_H(\Ff)=d_H(\Ff)/r_H(\Ff)$.

A coherent sheaf $\Ff$ on $X$ is $H$-\textit{stable} (respectively $H$-\textit{semi-stable}) if it is torsion-free and for any proper subsheaf $\Ee \subset \Ff$ the equality $\mu_H(\Ee) < \mu_H(\Ff)$ holds (respectively $\leq$).
\end{defi}

We start from the more general moduli space, parametrizing for torsion-free sheaves of polarized rank 1.
\begin{deflemma}\label{SimpsonJacobianDef}
Let $H$ be a polarization on $X$ and let $d \in  \Z$ be an integer number. The \textit{Simpson Jacobian stack} of degree $d$ on $X$ is the algebraic stack  $\Jtf(X,H,d)$  such that for any $k$-scheme $T$, $\Jtf(X,d)(T)$ is the groupoid of $T$-flat coherent sheaves on $X \times_k T$ whose fibers over $T$ are torsion-free sheaves  of polarized rank 1 and polarized degree $d$ on $X \simeq X \times_k \{t\}$ with respect to $H$.  

The \textit{Simpson Jacobian} of degree $d$ is the projective scheme $\JtfSch(X,H,d)$ which is the good moduli space representing S-equivalence classes of $H$-semistable torsion-free sheaves of polarized rank 1 and polarized degree $d$ on $X$.

The union of the Simpson Jacobians of all degrees is denoted $\Jtf(X,H)$ (resp. $\JtfSch(X,H)$).
\end{deflemma}
\begin{proof}
The algebraicity of $\Jtf(X,H,d)$ follows from the algebraicity of the stack of coherent sheaves on $X$ (see \cite[2.1]{Lieb} and \cite[Theorem 7.20]{CMW}). The moduli space of semistable sheaves with fixed polarized rank and degree was constructed by Simpson in \cite{SimpI}.
\end{proof}

In the following proposition, we show that the multirank of sheaves splits $\Jtf(X,H,d)$ in unions of connected components.

\begin{lemma}\label{CoverForJtf}
Let $H$ be a fixed polarization on $X$. For any  $\rvect=(r_1, \dots, r_s) \in \mathbb{Q}_{\geq 0}^s$, let $W_\rvect \subseteq \Jtf(X,H,d)$ be the substack parametrizing torsion-free sheaves with multirank equal to $\rvect$. Then $\{W_\rvect | W_\rvect \neq \emptyset \}_{\rvect \in \mathbb{Q}_{\geq 0}^s}$ is a finite collection of pairwise disjoint substacks of $\Jtf(X,H,d)$ that covers the whole space. Moreover, each $W_\rvect$ is open and closed, hence union of connected components of $\Jtf(X,H,d)$.
\end{lemma}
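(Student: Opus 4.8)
The first two assertions are formal. The substacks $W_{\rvect}$ are pairwise disjoint and cover $\Jtf(X,H,d)$ because every torsion-free sheaf has a well-defined multirank (Definition~\ref{RankDef}). For finiteness, I would observe that if $\Ff$ has polarized rank $1$ then Theorem~\ref{PolarizedRankVsRank} gives $\sum_{i=1}^s \rk_{X_i}(\Ff)\,\mult_X(C_i)\deg H_{|C_i}=\deg H$, where each weight $\mult_X(C_i)\deg H_{|C_i}$ is a positive integer and each $\rk_{X_i}(\Ff)\,\mult_X(C_i)=\ell_{\Oo_{X,\xi_i}}(\Ff_{\xi_i})$ is a non-negative integer; hence $0\le \ell_{\Oo_{X,\xi_i}}(\Ff_{\xi_i})\le\deg H$ for every $i$, so only finitely many multiranks occur.

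For the substance — that each $W_{\rvect}$ is open and closed, hence (being clopen) a union of connected components — it suffices to show that the multirank is locally constant in families: for every $k$-scheme $T$ and every $T$-flat coherent sheaf $\Ff$ on $X\times_k T$ whose fibres are torsion-free of polarized rank $1$ and degree $d$, the map $t\mapsto\rvect(\Ff_t)$ should be constant on connected components of $T$ (the stack statement then follows by pulling back the universal family along a smooth atlas). The plan is to probe $\rvect(\Ff_t)$ using several auxiliary polarizations. For any polarization $H'$ on $X$ and any $n$, the Euler characteristic $\chi\bigl(\Ff_t\otimes\Oo_X(nH')\bigr)$ is locally constant in $t$ — the standard local constancy of fibrewise Euler characteristics for a sheaf flat over the base and proper over $k$ — so the polarized rank $r_{H'}(\Ff_t)$ is locally constant in $t$ by Definition~\ref{PolarizedRankDef}. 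By Theorem~\ref{PolarizedRankVsRank} this says precisely that the fixed linear form $\rvect\mapsto\sum_i \mult_X(C_i)\deg H'_{|C_i}\,r_i$ takes a locally constant value at $\rvect(\Ff_t)$.

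It remains to see that finitely many of these linear forms determine $\rvect\in\mathbb{Q}^s$, equivalently that the multidegree vectors $\bigl(\deg H'_{|C_1},\dots,\deg H'_{|C_s}\bigr)$ span $\mathbb{Q}^s$ as $H'$ ranges over polarizations. I would deduce this from surjectivity of the multidegree map $\Pic(X)\to\Z^s$, $\mathcal L\mapsto(\deg\mathcal L|_{C_i})_i$, which factors as $\Pic(X)\twoheadrightarrow\Pic(X_{\mathrm{red}})\twoheadrightarrow\prod_i\Pic(C_i)\twoheadrightarrow\Z^s$: the first surjection because the obstruction lies in $H^2$ of the nilradical sheaf (zero on a curve), the second because the obstruction lies in $H^1$ of a sheaf supported on the finite set $\bigcup_{i\ne j}(C_i\cap C_j)$, the third because a reduced irreducible curve has line bundles of every degree. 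Twisting a fixed polarization $H_0$ by a large power of itself and by line bundles of prescribed multidegree keeps it a polarization and realizes an open cone of multidegrees, hence in particular a basis $v^{(1)},\dots,v^{(s)}$ of $\mathbb{Q}^s$; the associated linear forms $\rvect\mapsto\sum_i \mult_X(C_i)\deg H^{(j)}_{|C_i}\,r_i$ ($j=1,\dots,s$) are then linearly independent, since the $v^{(j)}$ are and coordinatewise rescaling by the positive integers $\mult_X(C_i)$ is invertible. Therefore local constancy in $t$ of these $s$ forms evaluated at $\rvect(\Ff_t)$ forces $\rvect(\Ff_t)$ to be locally constant, completing the argument. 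The only genuinely non-formal ingredient is this span statement — the surjectivity of $\Pic(X)\to\Z^s$ for reducible, possibly non-reduced $X$; everything else is bookkeeping, up to the routine check that $\rvect$ and the weights $\mult_X(C_i)\deg H'_{|C_i}$ are unaffected by the field extensions $k\hookrightarrow\kappa(t)$ implicit in passing to fibres.
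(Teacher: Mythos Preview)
Your proof is correct but takes a genuinely different route from the paper's. The paper argues directly that each $W_{\rvect}$ is \emph{closed}: given a flat family over $T$ whose fibre at a generic point $\eta$ has multirank $\rvect$, upper semicontinuity of length gives $r_i(t)\ge r_i$ at any specialization $t$ of $\eta$, while the single fixed constraint $\sum_i r_i(t)\,\mult_X(C_i)\deg H_{|C_i}=\deg H$ (polarized rank $=1$) forces $r_i(t)=r_i$ for all $i$. Openness then comes for free from closedness together with finiteness and disjointness. Your approach instead proves local constancy of the multirank outright, by testing against a family of polarizations $H'$ whose multidegree vectors span $\mathbb{Q}^s$, and reading off each $r_i$ from the resulting locally constant linear forms; this requires the surjectivity of the multidegree map $\Pic(X)\to\Z^s$, which you sketch correctly. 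The paper's argument is more economical --- it uses only the given polarization $H$ and a semicontinuity-plus-sandwich step, avoiding entirely the detour through $\Pic(X)\to\Z^s$. Your argument, on the other hand, does not use the constraint $r_H=1$ in any essential way and would establish local constancy of the multirank on the whole stack of torsion-free sheaves with fixed Hilbert polynomial, not just those of polarized rank $1$.
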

\begin{proof}
The multi-rank of a sheaf is well-defined, hence the substacks $W_\rvect$ are pairwise disjoint.

Let $\Ff$ be a sheaf with polarized rank $r_H$ equal to $1$. By Theorem \ref{PolarizedRankVsRank} its multirank $\rvect$ must satisfy: \[
1 = r_H(\Ff) = \frac{\sum_{i=1}^s r_i(t) \mult_X (C_i) \deg_{C_i}H }{ \sum_{i=1}^n \mult_X( C_i) \deg_{C_i}H }.
\]
Since each $r_i$ is a non-negative fraction with integer numerator and denominator equal to $\mult_X( C_i)$, the possible values for $\rvect$ form a finite subset of $\mathbb{Q}_{\geq 0}^s$, once that $H$ and $r_H$ are fixed. Then, only a finite number of $W_\rvect$ is non-empty, hence  $\{W_\rvect | W_\rvect \neq \emptyset \}$ is a finite collection.

We claim that each $W_\rvect$ is closed; since the collection is finite and the $W_\rvect$'s are pairwise disjoint, this implies that each $W_\rvect$ is open and closed, and hence union of connected components.

To prove that $W_\rvect$ is closed, let $T$ be any $k$-scheme and let $\Ff$  be any family of torsion-free sheaves of polarized rank $1$ on $X \times_k T \xrightarrow{\pi} T$. Suppose that $\Ff$ has generically multirank $\rvect$ on $X$, meaning that $\rk_{X_i} (\Ff_{|\pi^{-1}(\eta)}) = r_i$ for each irreducible component $X_i$ of $X$ and any generic point $\eta$  of $T$.
Let $t$ be any point in the closure of $\{\eta\}$. Since the length is upper-semicontinuous, the rank of $\Ff_{|\pi^{-1}(t)}$ at each $X_i$ is a rational number $r_i(t)$ greater or equal than $r_i$. On the other hand, $\Ff_{|\pi^{-1}(t)}$ have polarized rank $1$; hence, thanks to Theorem \ref{PolarizedRankVsRank}, we can write: \begin{align*}
1 = r_H(\Ff) &= \frac{\sum_{i=1}^s r_i(t) \mult_X (C_i) \deg_{C_i}H }{ \sum_{i=1}^n \mult_X( C_i) \deg_{C_i}H } \geq \\
&\geq \frac{\sum_{i=1}^s  r_i \mult_X (C_i) \deg_{C_i}H }{ \sum_{i=1}^n \mult_X( C_i) \deg_{C_i}H }  = 1.
\end{align*}
By difference we obtain: \[
\frac{\sum_{i=1}^s (r_i(t)-r_i) \mult_X (C_i) \deg_{C_i}H }{ \sum_{i=1}^n \mult_X( C_i) \deg_{C_i}H } = 0.
\]
Since all the terms of the expression are non-negative, the only possibility is that $r_i(t)=r_i$ for each $i$. We conclude that the whole family $\Ff$ belongs to $W_\rvect$. 
\end{proof}

We come now to torsion-free sheaves with well-defined rank 1 on $X$.
\begin{deflemma}\label{CompJacobianDef}
Let $d \in \Z$ be an integer number. The \textit{compactified Jacobian stack} of degree $d$ on $X$ is the algebraic stack $\Jbar(X,d)$ such that for any $k$-scheme $T$, $\Jbar(X,d)(T)$ is the groupoid of $T$-flat coherent sheaves on $X \times_k T$ whose fibers over $T$ are torsion-free sheaves of rank 1 and degree $d$ on $X \simeq X \times_k \{t\}$.

Let $H$ be a polarization on $X$. The \textit{compactified Jacobian scheme} of degree $d$ is the good moduli space $\JbarSch(X,H,d)$ representing S-equivalence classes of $H$-semistable torsion-free sheaves of rank 1 and degree $d$ on $X$.

The union of the compactified Jacobians of all degrees is denoted $\Jbar(X)$ (resp. $\JbarSch(X,H)$).
\end{deflemma}
\begin{proof}
The property of having rank $1$ on $X$ implies that the polarized rank is equal to $1$ for any polarization; moreover, the degree is well defined and equal to the polarized degree for any polarization. Fix any polarization $H$ on $X$. Then, $\Jbar(X,d)$ can be seen as the open and closed substack $W_{(1, \dots, 1)}$ of $\Jtf(X, H, d)$ (Lemma \ref{CoverForJtf}), and hence is algebraic. The same argument holds for $\JbarSch(X,H,d)$, with the difference that the notion of semi-stability strictly depends on the choice of $H$.
\end{proof}

The following proposition shows that $\JbarSch(X,H,d)$ is actually a compactification of $J^d(X)$.

\begin{prop}
For any polarization $H$ on $X$, $\JbarSch(X,H,d)$ is a projective scheme,  union of connected components of $\JtfSch(X,H,d)$, and contains $J^d(X)$ as an open subscheme.
\end{prop}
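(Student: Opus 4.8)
The plan is to bootstrap each assertion from the corresponding fact about the Simpson Jacobian $\JtfSch(X,H,d)$ together with the identification, established in Definition/Lemma~\ref{CompJacobianDef}, of $\Jbar(X,d)$ with the open and closed substack $W_{(1,\dots,1)}$ of $\Jtf(X,H,d)$. First I would recall that $\JtfSch(X,H,d)$ is projective: this is Simpson's construction of the moduli space of semistable pure sheaves with fixed Hilbert polynomial (\cite{SimpI}, \cite{SimpII}), and for a curve $X$ of pure dimension $1$ the data of polarized rank $1$ and polarized degree $d$ fix the Hilbert polynomial, so the moduli space is a projective scheme. Hence it suffices to exhibit $\JbarSch(X,H,d)$ as a union of connected components of $\JtfSch(X,H,d)$, since a closed (equivalently, open-and-closed) subscheme of a projective scheme is projective.

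For that, I would transport the multirank stratification of Lemma~\ref{CoverForJtf} down to good moduli spaces. The substack $W_{(1,\dots,1)}\subseteq \Jtf(X,H,d)$ is open and closed, so its semistable locus $W_{(1,\dots,1)}^{\mathrm{ss}}$ is open and closed in the semistable locus of $\Jtf(X,H,d)$; passing to good moduli spaces — which sends open-and-closed substacks of the semistable locus to open-and-closed subschemes, because the good moduli space morphism is universally closed and submersive — shows that $\JbarSch(X,H,d)$, being the good moduli space of $W_{(1,\dots,1)}^{\mathrm{ss}}$, is an open and closed subscheme of $\JtfSch(X,H,d)$, i.e.\ a union of connected components. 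One subtlety to address here is that semistability is computed with $r_H$ and $d_H$, and on the stratum $W_{(1,\dots,1)}$ these coincide with the honest rank and degree by the Corollary following Theorem~\ref{PolarizedRankVsRank}; this is what makes the notion of semistability for rank-$1$ torsion-free sheaves match the polarized one, so the two moduli problems literally agree on this stratum.

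Finally, for the last clause I would identify $J^d(X)$ with the locus of line bundles inside $\JbarSch(X,H,d)$. A line bundle of degree $d$ is a rank-$1$ torsion-free sheaf, and it is automatically $H$-stable (it has no proper nonzero subsheaves of the same polarized rank, and any proper subsheaf has strictly smaller polarized rank, hence — after a short check comparing slopes — strictly smaller slope, or alternatively one invokes that line bundles on curves are stable with respect to any polarization). Stable points are closed points of the good moduli space with trivial stabilizer reduction, and the locus of line bundles is open in $\Jbar(X,d)$ because being locally free is an open condition on the fibers of a flat family; taking good moduli spaces, the image is the open subscheme $J^d(X)\subseteq \JbarSch(X,H,d)$, and it represents the same functor of degree-$d$ line bundles as in Definition~\ref{JacobianDef}. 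The main obstacle is the bookkeeping around good moduli spaces: one must make sure that "open and closed substack of the semistable locus" descends to "open and closed subscheme of the good moduli space," and that the line bundle locus is genuinely an \emph{open} subscheme and not merely a subset — both follow from standard properties of good moduli space morphisms (universal closedness, submersiveness, and the fact that they commute with restriction to open substacks), so no new input is required beyond carefully citing these.
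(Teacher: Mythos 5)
Your proof follows the same route as the paper's: identify $\Jbar(X,d)$ with the open-and-closed stratum $W_{(1,\dots,1)}$ of Lemma~\ref{CoverForJtf}, descend open-and-closedness to the good moduli spaces to exhibit $\JbarSch(X,H,d)$ as a union of connected components of the projective scheme $\JtfSch(X,H,d)$, and use openness of the locally-free condition for the last clause; your extra care about why open-and-closed substacks descend to open-and-closed subschemes of the good moduli space is a welcome elaboration of the paper's terser argument. The one shaky point is your parenthetical justification that a line bundle is automatically $H$-stable: ``any proper subsheaf has strictly smaller polarized rank, hence strictly smaller slope'' is not a valid implication (the slope is $d_H/r_H$, so a subsheaf of small polarized rank can have arbitrarily large slope, and on a reducible curve a line bundle of sufficiently unbalanced multidegree genuinely fails to be semistable) --- the paper's own proof makes the same unsupported assertion that local freeness implies stability, so this does not distinguish your argument from the paper's, but neither version actually establishes that every degree-$d$ line bundle lies in the semistable locus when $X$ is reducible.
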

\begin{proof}
Since $\Jbar(X,d)$ is an open and closed substack of $\Jtf(X,H,d)$, then $\JbarSch(X,H,d)$ is an open and closed subscheme of $\JtfSch(X, H, d)$ and union of connected components. It is non-empty, as it contains the structure sheaf $\Oo_X$. Moreover $\JtfSch(X,H,d)$ is projective, hence $\JbarSch(X,H,d)$ is projective too. Finally, the condition of being locally free is open and implies both stability and rank 1; hence, $J^d(X)$ is an open subscheme of $\JbarSch(X,H,d)$.
\end{proof}

We come now to the last moduli space, which parametrizes generalized line bundles.

\begin{deflemma}\label{GenJacobianDef}
Let $d \in \Z$ be an integer number. The \textit{generalized Jacobian stack} of degree $d$ on $X$ is the algebraic stack $\Jgen(X,d)$ such that, for any $k$-scheme $T$, $\Jgen(X,d)(T)$ is the groupoid of $T$-flat coherent sheaves on $X \times_k T$ whose fibers over $T$ are generalized line bundles of degree $d$ on $X \simeq X \times_k \{t\} $.

Let $H$ be a polarization on $X$. The \textit{generalized Jacobian scheme} of degree $d$ is the good moduli space $\JgenSch(X,H,d)$ representing S-equivalence classes of $H$-semistable generalized line bundles of degree $d$ on $X$.

The union of the generalized Jacobians of all degrees is denoted $\Jgen(X)$ (resp. $\JgenSch(X,H)$).
\end{deflemma}
\begin{proof}
Generalized line bundles are torsion-free sheaves of rank 1; moreover, the condition of being generically locally free is open, hence $\Jgen(X,d)$ can be seen as an open substack of $\Jbar(X,d)$. In particular, $\Jgen(X,d)$ is an algebraic stack. Similarly, $\JgenSch(X,H,d)$ is an open subscheme of $\JbarSch(X,H,d)$.
\end{proof}

\vspace{1em}

To sum up, we defined a number of moduli spaces satisfying the following chain of inclusions: \[
J^d(X) \stackrel{(1)}{\subseteq} \Jgen(X,d) \stackrel{(2)}{\subseteq} \Jbar(X,d) \stackrel{(3)}{\subseteq} \Jtf(X,H,d),
\]
and: \[
J^d(X) \stackrel{(1')}{\subseteq} \JgenSch(X,H,d) \stackrel{(2')}{\subseteq} \JbarSch(X,H,d) \stackrel{(3')}{\subseteq} \JtfSch(X,H,d).
\]
\begin{rmk}\label{JacobianInclusions}
Inclusions $(1)$ and $(2)$ (resp. $(1')$ and $(2')$) above are open embeddings; inclusion $(3)$ (resp. $(3')$) is an open and closed embedding. In general they are strict and not dense, in the sense that for any of them there exists a (possibly non-reduced or reducible) curve $X$ such that the closure of the smaller space in the bigger one is contained strictly. Anyway, when $X$ satisfies additional conditions, some of the inclusions above are actually equalities.
\begin{itemize}
	\item If $X$ is irreducible, inclusion $(3)$ (resp. $(3)'$) is an equality.
	\item If $X$ is reduced, inclusion $(2)$ (resp. $(2')$) is an equality.
	\item  If $X$ is integral and smooth, inclusions $(1)$, $(2)$ and $(3)$ (resp. $(1')$, $(2')$ and $(3')$) are equalities.
\end{itemize}
\end{rmk}

\section{Moduli spaces of Higgs pairs}

In this section, following \cite[Chapter 2]{SchapIntro} and \cite[§10]{MRVAutoI}, we review the definitions of Higgs pairs and $G$-Higgs pairs and of the fibrations involving their moduli spaces. Throughout this section, $C$ denotes a fixed smooth curve over the field of complex numbers and $L$ a fixed line bundle on $C$ with degree $\ell = \deg L$.

\subsection{Classical Higgs pairs}
 We begin with the case of classical Higgs pairs.
\begin{defi}

A \textit{($L$-twisted) Higgs pair} (or simply a \textit{Higgs pair}) is a pair $(E, \Phi)$ consisting of a vector bundle $E$ on $C$ and a section $\Phi \in H^0(\Sigma, \End(E) \otimes L)$. The map $\Phi$ is called the \textit{Higgs field}. The degree and the rank of the Higgs pair are the degree and the rank of the underlying bundle $E$.
\end{defi}

\begin{defi}\label{HitchinMorphismDef}
The algebraic stack $\M(r,d)$ of all $L$-twisted Higgs pairs $(E, \Phi)$ on $C$ of rank $r$ and degree $d$ is endowed with a morphism, called the \textit{Hitchin morphism}, defined as:
\begin{align*}
\Hh_{r,d} = \Hh: \M(r,d) &\rightarrow \A(r) = \bigoplus_{i=1}^r H^0(C, L^i) \\
(E, \Phi) &\mapsto (a_1(E, \Phi), \dots, a_r(E, \Phi))
\end{align*}
where $L^i=L^{\otimes i}$,  $a_i(E, \Phi) := (-1)^i \tr(\wedge^i\Phi)$ and $\A=\A(r)$ is called the \textit{Hitchin base}.
\end{defi}

\begin{defi}
A vector subbundle $F \subseteq E$ such that $\Phi(F) \subseteq F \otimes L$ is said to be a $\Phi$-\textit{invariant subbundle} of $E$.

A Higgs pair $(E, \Phi)$ is  said \textit{stable} (respectively \textit{semi-stable}) if for each proper $\Phi$-invariant subbundle $F \subset E$ one has $\mu(F) < \mu(E)$ (respectively $\mu(F) \leq \mu(E)$).
\end{defi}

\begin{defi}
The good moduli space $M(r,d)$ of S-equivalence classes of semistable $L$-twisted Higgs pairs of rank $r$ and degree $d$ is endowed with a flat projective morphism, called the \textit{Hitchin fibration}:
\begin{align*}
H_{r,d}=H: M(r,d)=M &\rightarrow \A=\A(r) = \bigoplus_{i=1}^r H^0(C, L^i) \\
(E, \Phi) &\mapsto (a_1(E, \Phi), \dots, a_r(E, \Phi)).
\end{align*}
\end{defi}

\subsection{G-Higgs pairs}

Let $G$ be a complex reductive Lie group.

\begin{defi}
A \textit{($L$-twisted) $G$-Higgs pair} is a pair $(P, \phi)$ where $P$ is a principal $G$-bundle over $C$ and the $G$-Higgs field $\phi$ is a section of $H^0(\ad(P) \otimes L)$.
\end{defi}

\begin{recall}
When $G \xhookrightarrow{\rho} \GL(r, \C)$, to any $G$-principal bundle $P$ is associated a vector bundle of rank $r$, defined as: \[
E:= P \times_\rho \C^r = (P \times \C^r) / \sim
\]
where $(s \cdot g, v) \sim (s, \rho(g) \cdot v)$ for any section $s$ of $P$ and any vector $v \in \C^r$. In terms of cocycles, if $\{U_\alpha\}_A$ is a trivializing cover for $P$ on $C$ and $\{ g_{\alpha\beta} \}$ is a collection of transition elements, then $E$ is the vector bundle defined by the $\rho(g_{\alpha\beta})$ as cocycles. 

When, $G=\GL(r, \C)$ the construction can be reversed; indeed, given any vector bundle $E$ of rank $r$ on $C$,  the frame bundle of all ordered basis $P=\Fr(E)$ is a $\GL(r,\C)$-principal bundle such that $E \simeq P \times_\rho \C^r$.
\end{recall}

\begin{recall}
Let $P$ be any $G$-principal bundle and $E=P \times_\rho \C^r$ as above. The adjoint bundle $\ad(P)$ is defined as $P \times_{\ad_G} \g$ where $\g=\Lie(G)$ is the Lie algebra associated to the Lie group $G$ and $\ad_G: G \rightarrow \Aut(\g)$ is the adjoint representation. If $G \xhookrightarrow{\rho} \GL(r, \C)$, then the embedding $\rho$ induces at the level of Lie algebras a $G$-equivariant map $\psi:=\ad\rho: \g \hookrightarrow \gl(r, \C)= \End(\C^r)$; hence, we obtain a morphism \[
\ad P = P \times_{\ad_G} \g \xrightarrow{1 \times \psi} P \times_{\psi(\ad_G)} \End(\C^r) = \End(E).
\]

When $G=\GL(r, \C)$, the adjoint bundle $\ad P$ is canonically isomorphic to $\End(E)$.
\end{recall}

The two remarks above allows us to formulate the following:

\begin{prop}\label{GHiggsGivesRise}
Let $G \xhookrightarrow{\rho} \GL(r, \C)$ be a complex reductive Lie group emebedding in $\GL(r,\C)$. Then, any $G$-Higgs pair $(P, \phi)$ gives rise via the associated bundle construction to  a classical Higgs pair $(E, \Phi)$ of rank $r$, where $E = P \times_\rho \C^r$ and $\Phi$ is the image of $\phi$ with respect to the morphism $\ad(P) \rightarrow \End(E)$ induced by $\rho$.

When $G = \GL(r, \C)$, $G$-Higgs pairs are in one-to-one correspondence, via the associated bundle construction, with classical Higgs pairs of rank $r$.
\end{prop}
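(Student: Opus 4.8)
The plan is to read off the statement from the two preceding Recalls together with the classical dictionary between $\GL(r,\C)$-bundles and rank-$r$ vector bundles. For the first assertion, start with a $G$-Higgs pair $(P,\phi)$, so that $\phi \in H^0(C,\ad(P)\otimes L)$. The associated bundle construction applied to $P$ along $\rho$ yields the vector bundle $E = P\times_\rho \C^r$, which has rank $r$ because $\rho$ lands in $\GL(r,\C)$. Tensoring the morphism $\ad(P)\to \End(E)$ of the second Recall (induced by $\psi = \ad\rho \colon \g\hookrightarrow\gl(r,\C)$) with the line bundle $L$ gives a morphism of locally free sheaves $\ad(P)\otimes L\to \End(E)\otimes L$; I would then define $\Phi$ to be the image of $\phi$ under the induced map on global sections. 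This produces a classical Higgs pair $(E,\Phi)$ of rank $r$ exactly as in the statement, and since both the associated-bundle and adjoint-bundle constructions are functorial in $P$, an isomorphism of $G$-Higgs pairs is carried to an isomorphism of the associated classical Higgs pairs.

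For the second assertion, take $G=\GL(r,\C)$ with $\rho$ the identity, and exhibit a quasi-inverse. Given a classical Higgs pair $(E,\Phi)$ of rank $r$, set $P=\Fr(E)$, the $\GL(r,\C)$-bundle of ordered frames; by the first Recall, $E\simeq P\times_\rho\C^r$ canonically, and by the second Recall $\ad(P)$ is canonically isomorphic to $\End(E)$, whence $H^0(C,\ad(P)\otimes L)\simeq H^0(C,\End(E)\otimes L)$. I would let $\phi$ correspond to $\Phi$ under this identification, producing a $\GL(r,\C)$-Higgs pair $(P,\phi)$. It then remains to check that the two assignments are mutually inverse up to canonical isomorphism: that $\Fr(P\times_\rho\C^r)\simeq P$ for every $\GL(r,\C)$-bundle $P$ --- the standard fact that $\Fr(-)$ and $-\times_\rho\C^r$ are inverse equivalences between $\GL(r,\C)$-bundles and rank-$r$ vector bundles --- and that these isomorphisms intertwine the canonical identifications $\ad(-)\simeq\End(-)$ so that the Higgs fields are matched.

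The only points requiring care are the compatibility (functoriality) statements just mentioned, and there is no substantive obstacle: everything follows immediately from the two Recalls and the classical equivalence of categories. If one wants the correspondence at the level of moduli stacks rather than of isomorphism classes, one would upgrade the assignments above to an equivalence of groupoids, indeed of stacks over $(\operatorname{Sch}/\C)$, by observing that all the constructions commute with base change; this is again automatic.
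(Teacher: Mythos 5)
Your proposal is correct and follows exactly the route the paper intends: the paper gives no separate proof, stating only that "the two remarks above allow us to formulate" the proposition, and your writeup is precisely the routine verification (associated bundle along $\rho$, tensoring $\ad(P)\to\End(E)$ with $L$, and the frame-bundle quasi-inverse for $G=\GL(r,\C)$) that the paper leaves implicit.
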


Similarly to Higgs pairs, the moduli space of $G$-Higgs pairs admits a morphism to an affine space, which plays an important role in its study.

\begin{defi}
Let $G$ be a complex reductive Lie group, let $\delta \in \pi_1(G)$ and let $p_1, \dots, p_k$ be  a homogeneous basis for the algebra of invariant polynomials of
the Lie algebra $\mathfrak g$ of $G$, such that $p_i$ has degree $d_i$. Then, the algebraic moduli stack $\M_G(\delta)$ of all $L$-twisted $G$-Higgs pairs $(P, \phi)$ such that $P$ has topological type $\delta$ is endowed with a morphism, called the $G$-\textit{Hitchin morphism}, defined as:
\begin{align*}
\Hh_{G,\delta}^{p_1, \dots, p_k} = \Hh_{G,\delta}: \M_G(\delta) &\rightarrow \A_G = \bigoplus_{i=1}^k H^0(C, L^{d_i}) \\
(P, \phi) &\mapsto (p_1(\phi), \dots, p_k(\phi)).
\end{align*}
\end{defi}

When $G \xhookrightarrow{\rho} \GL(r, \C)$, the associated bundle construction induces a morphism of algebraic stack: \[
\M_G(\delta) \xrightarrow{\alpha} \M(r,d).
\]
where $d$ corresponds to $\delta$ by the canonical map $\pi_1(G) \xrightarrow{\pi_1(\rho)} \pi_1(\GL(r, \C))$.

\begin{defi}
Let $G \xhookrightarrow{\rho} \GL(r, \C)$. We denote as  \[ 
\M_G(r,d) := \coprod_{\pi_1(\rho)(\delta)=d} \M_G(\delta) \]
the moduli space of $G$-Higgs pairs whose associated vector bundle is of rank $r$ and degree $d$. The $G$-Hitchin morphism on $\M_G(r,d)$ is denoted as $\Hh_{G,r,d}$. When $G=\GL(r, \C)$, the moduli space of $\GL$-Higgs pairs $\M_\GL(r,d)$ is isomorphic via the associated bundle construction to the moduli space $\M(r,d)$ of classical Higgs pairs of rank $r$ and degree $d$. 
\end{defi}

\begin{rmk}
For classical groups like $\SL, \Sp, \SO$, the basis $p_1, \dots, p_k$ can be chosen such that  there is a commutative diagram: \[
\begin{tikzcd}
\M_G(r,d) \arrow{rr}{\alpha} \arrow{d}{\Hh_{G,r,d}} &  & \M(r,d) \arrow{d}{\Hh_{r,d}} \\
\A_G(r) & \subseteq & \A(r).
\end{tikzcd}
\]
\end{rmk}

Finally, the notion of stability can be defined for $G$-Higgs pairs, such that it is compatible with the notion of stability for the associated Higgs pairs  when $G \subseteq \GL(r,\C)$ (see \cite[Definition 2.4]{Schap13} for details). Then, we have the following:

\begin{defi}
Let $G$ be a complex reductive Lie group, let $\delta \in \pi_1(G)$ and denote with $p_i$ for $i=1, \dots, k$ a homogeneous basis for the algebra of invariant polynomials of the Lie algebra $\mathfrak g$ of $G$, with degrees $d_i$.
The good moduli space $M_G(\delta)$ of S-equivalence classes of semistable $G$-Higgs pairs $(P, \phi)$ such that $P$ has topological type $\delta$ is endowed with a flat projective morphism, called the $G$-\textit{Hitchin fibration}:
\begin{align*}
H_{G,\delta}^{p_1, \dots, p_k} = H_{G,\delta}: M_G(\delta) &\rightarrow \A_G = \bigoplus_{i=1}^k H^0(C, L^{d_i}) \\
(P, \phi) &\mapsto (p_1(\phi), \dots, p_k(\phi)).
\end{align*}
\end{defi}

\section{The spectral correspondence for Higgs pairs}\label{SectionReviewSpectral}
In this section, following \cite{MRVAutoI}, \cite{ChLau} and \cite{DeCat17}, we resume known facts about the spectral correspondence, which  describes any fiber of the Hitchin morphism $\Hh$ in terms of torsion-free rank-1 sheaves on an associated spectral curve.

\vspace{1em}

Let $\avect \in \A(r)$ be any characteristic. The spectral curve $X_\avect \xrightarrow{\pi_\avect} C $ is the projective scheme defined in the total space of $L$, $P=\Pp(\Oo_C \oplus L^{-1}) \xrightarrow{p} C$, by the homogeneous equation \[
x^r + p^*(a_1)x^{r-1}y + \dots + p^*(a_r)y^r=0
\]
where $x$ is the section of $\Oo_P(1) \otimes p^*(L)$ whose pushforward via $p$ corresponds to the constant section $(0,1)$ of $L \oplus \Oo_C$ and $y$ is the section of $\Oo_P(1)$ whose pushforward via $p$ corresponds to the constant section $(1,0)$ of  $\Oo_C \oplus L^{-1}$. Note that the restriction of $y$ to $X_\avect$ is everywhere non-zero and hence the restriction of $\Oo_P(1)$ to $X_\avect$ is trivial. Moreover, the restriction of $x$ to $X_\avect$ can be considered as a section of $\pi_\avect^*(L)$. 

We can now compute the canonical sheaf of $X_\avect$.

\begin{lemma}\label{CanonicalSheafOfX}{\normalfont (Canonical sheaf of the spectral curve)}
The canonical sheaf of $X_\avect$ is equal to \[
\omega_{X_\avect} = \pi_\avect^*(\omega_c \otimes L^{r-1}).
\]
\end{lemma}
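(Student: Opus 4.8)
The plan is to realize $X_\avect$ as an effective Cartier divisor inside the smooth projective surface $P = \Pp(\Oo_C \oplus L^{-1})$ and apply adjunction. Since $P$ is a $\Pp^1$-bundle over the smooth curve $C$ it is a smooth surface, and $X_\avect$ is by construction the zero scheme of a section of a line bundle on $P$, hence an effective Cartier divisor and a local complete intersection; so its dualizing sheaf is a line bundle, given by adjunction as $\omega_{X_\avect} \simeq \left(\omega_P \otimes \Oo_P(X_\avect)\right)\big|_{X_\avect}$. The proof then reduces to computing the three ingredients $\Oo_P(X_\avect)$, $\omega_P$, and the restriction to $X_\avect$, and multiplying them together.

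First I would identify $\Oo_P(X_\avect)$. Write the defining equation as $\sum_{i=0}^r p^*(a_i)\, x^{r-i} y^{i}$ with $a_0 = 1$ and $a_i \in H^0(C, L^i)$. Since $x$ is a section of $\Oo_P(1)\otimes p^*L$ and $y$ of $\Oo_P(1)$, the monomial $p^*(a_i)\,x^{r-i}y^{i}$ is a section of $p^*L^i \otimes \bigl(\Oo_P(1)\otimes p^*L\bigr)^{\otimes(r-i)} \otimes \Oo_P(i) \simeq \Oo_P(r)\otimes p^*L^{r}$, independently of $i$. Hence the equation is a section of $\Oo_P(r)\otimes p^*L^r$ and $\Oo_P(X_\avect)\simeq \Oo_P(r)\otimes p^*L^r$. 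Next I would compute $\omega_P = \omega_{P/C}\otimes p^*\omega_C$: for a $\Pp^1$-bundle $\Pp(\Ee)\xrightarrow{p}C$ with $\Ee$ locally free of rank $2$, the relative Euler sequence gives $\omega_{P/C}\simeq \Oo_P(-2)\otimes p^*\det\Ee$, and with $\Ee = \Oo_C\oplus L^{-1}$, so $\det\Ee = L^{-1}$, this reads $\omega_{P/C}\simeq \Oo_P(-2)\otimes p^*L^{-1}$. Combining, $\omega_P\otimes\Oo_P(X_\avect)\simeq \Oo_P(r-2)\otimes p^*(\omega_C\otimes L^{r-1})$.

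Finally I would restrict to $X_\avect$. The section $y$ of $\Oo_P(1)$ restricts to a nowhere-vanishing section of $\Oo_P(1)\big|_{X_\avect}$: if $y$ vanished at a point of $X_\avect$ the defining equation would force $x$ to vanish there too (all terms other than $x^r$ carry a factor of $y$), which is impossible since the zero loci of $x$ and $y$ are the two disjoint sections of $p$. Hence $\Oo_P(1)\big|_{X_\avect}\simeq \Oo_{X_\avect}$, so the factor $\Oo_P(r-2)$ trivializes; together with $p\big|_{X_\avect} = \pi_\avect$, adjunction yields $\omega_{X_\avect}\simeq \pi_\avect^*(\omega_C\otimes L^{r-1})$.

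The step requiring the most care is fixing the $\Pp(\Ee)$-convention so that $\omega_{P/C}$ comes out as $\Oo_P(-2)\otimes p^*\det\Ee$ and not with $\det\Ee$ inverted; here the convention is forced by the identification $p_*\Oo_P(1)\simeq \Oo_C\oplus L^{-1}$ implicit in the description of $x$ and $y$. It is also worth noting that the adjunction formula is legitimate even when $X_\avect$ is singular or non-reduced, precisely because $X_\avect$ is a Cartier divisor in a smooth surface. A consistency check confirms the normalization: for $r=1$ the spectral curve is a section of $p$, so $X_\avect\simeq C$ via $\pi_\avect$ and the formula reads $\omega_{X_\avect}\simeq \pi_\avect^*\omega_C$; and for $L=\omega_C$ one recovers Hitchin's $\omega_{X_\avect}\simeq \pi_\avect^*\omega_C^{\otimes r}$.
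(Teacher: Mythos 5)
Your proof is correct and follows essentially the same route as the paper: adjunction for the Cartier divisor $X_\avect$ in the smooth surface $P$, the standard formula $\omega_P \simeq \Oo_P(-2)\otimes p^*(\omega_C\otimes L^{-1})$, and the triviality of $\Oo_P(1)|_{X_\avect}$ coming from the non-vanishing of $y$ on the spectral curve. The only difference is that you derive $\omega_{P/C}$ from the relative Euler sequence and verify the non-vanishing of $y$ explicitly, where the paper cites Hartshorne and records the latter fact just before the lemma; your consistency checks at $r=1$ and $L=\omega_C$ are a nice addition but not needed.
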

\begin{proof}
By \cite[Chap. V, Lemma 2.10]{Har77} the canonical sheaf of $P$ is equal to \[
\omega_P= \Oo_P(-2) \otimes p^*(\omega_C \otimes L^{-1})
\]
The canonical sheaf of $X_\avect$ can be computed with the adjunction formula \[
\omega_{X_\avect} = \left(\omega_P(X_\avect)\right)_{|X_\avect} = \left( \Oo_P(r-2)+ p^*(\omega_c \otimes L^{r-1})  \right)_{|X_\avect}.
\]
Since $\Oo_P(1)_{|X_\avect}$ is trivial, we conclude that \[
\omega_{X_\avect} = \pi_\avect^*(\omega_c \otimes L^{r-1}).
\] 
\end{proof}

\begin{cor}
Denote with $g$ the genus of $C$ and with $\ell$ the degree of $L$ on $C$. Then \[
\chi(X_\avect) = r(1-g)- \frac{r(r-1)}{2}\ell.
\]
\end{cor}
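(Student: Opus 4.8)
The plan is to reduce the computation of $\chi(X_\avect):=\chi(X_\avect,\Oo_{X_\avect})$ to an Euler characteristic computation on the smooth base curve $C$. Since $\pi_\avect\colon X_\avect\to C$ is finite, hence affine, one has $R^j\pi_{\avect,*}\Oo_{X_\avect}=0$ for $j>0$, so that $\chi(X_\avect,\Oo_{X_\avect})=\chi(C,\pi_{\avect,*}\Oo_{X_\avect})$.

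First I would identify the pushforward sheaf. The defining equation of $X_\avect$ in $P=\Pp(\Oo_C\oplus L^{-1})$ is monic of degree $r$ in $x$ (recall $y$ restricts to a trivialization of $\Oo_P(1)|_{X_\avect}$), so $\pi_{\avect,*}\Oo_{X_\avect}$ is a locally free $\Oo_C$-module of rank $r$ which is filtered by order in $x$; since $x|_{X_\avect}$ is a section of $\pi_\avect^*L$, multiplication by $x^i$ identifies the $i$-th graded piece with $L^{-i}$, giving
\[
\pi_{\avect,*}\Oo_{X_\avect}\;\cong\;\bigoplus_{i=0}^{r-1}L^{-i}
\]
(for the Euler characteristic only the associated graded matters, so the filtration already suffices). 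Then I would simply add up, using Riemann--Roch on $C$: $\chi(C,L^{-i})=\deg(L^{-i})+1-g=-i\ell+1-g$, whence
\[
\chi(X_\avect)=\sum_{i=0}^{r-1}(-i\ell+1-g)=r(1-g)-\ell\,\frac{(r-1)r}{2}=r(1-g)-\frac{r(r-1)}{2}\ell.
\]

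An alternative route uses Lemma \ref{CanonicalSheafOfX} directly: $X_\avect$ is an effective Cartier divisor in the smooth surface $P$, hence a local complete intersection and in particular Gorenstein with invertible dualizing sheaf $\omega_{X_\avect}$. For a Gorenstein projective curve, Serre duality gives $h^0(\omega_{X_\avect})=h^1(\Oo_{X_\avect})$ and $h^1(\omega_{X_\avect})=h^0(\Oo_{X_\avect})$, i.e. $\chi(\omega_{X_\avect})=-\chi(\Oo_{X_\avect})$, while Riemann--Roch for the line bundle $\omega_{X_\avect}$ gives $\chi(\omega_{X_\avect})=\deg\omega_{X_\avect}+\chi(\Oo_{X_\avect})$; combining these yields $\chi(\Oo_{X_\avect})=-\tfrac12\deg\omega_{X_\avect}$. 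By Lemma \ref{CanonicalSheafOfX}, $\omega_{X_\avect}=\pi_\avect^*(\omega_C\otimes L^{r-1})$, and since $\pi_\avect$ is finite flat of degree $r$ one has $\deg_{X_\avect}\pi_\avect^*(\omega_C\otimes L^{r-1})=r\bigl((2g-2)+(r-1)\ell\bigr)$; substituting gives the claimed formula.

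The computation itself is routine; the only point that deserves a word of justification is the structural input feeding it — either the identification of $\pi_{\avect,*}\Oo_{X_\avect}$ with $\bigoplus_{i=0}^{r-1}L^{-i}$ coming from the monicity of the spectral equation in $x$, or, in the second approach, the fact that the spectral curve is Gorenstein together with the degree-of-pullback formula $\deg_X\pi^*N=(\deg\pi)\cdot\deg_C N$ for a finite flat morphism $\pi$. Neither is difficult, but I would spell out whichever one I adopt.
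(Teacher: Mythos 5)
Your proposal is correct, and in fact it contains the paper's own argument as your ``alternative route'': the paper deduces the formula from Lemma \ref{CanonicalSheafOfX} by computing $\deg(\omega_{X_\avect}) = r\deg(\omega_C)+r(r-1)\deg L = 2r(g-1)+r(r-1)\ell$ via the degree-of-pullback formula for a finite map, and then invoking $\chi(X_\avect)=-\deg(\omega_{X_\avect})/2$. Your primary route --- identifying $\pi_{\avect,*}\Oo_{X_\avect}$ with $\bigoplus_{i=0}^{r-1}L^{-i}$ (up to filtration) from the monicity of the spectral equation and summing Riemann--Roch on $C$ --- is genuinely different and is the classical BNR-style computation; it has the advantage of needing nothing about $X_\avect$ beyond finiteness and affineness of $\pi_\avect$ (no dualizing sheaf, no Serre duality on a possibly non-reduced curve), at the cost of the extra structural input about the pushforward algebra. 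The paper's route leans on the already-established Lemma \ref{CanonicalSheafOfX} and on the fact that $X_\avect$, being a divisor in a smooth surface, is Gorenstein, which is exactly the justification you flag as needing to be spelled out; note that the identity $\chi(\omega_{X_\avect})=\deg\omega_{X_\avect}+\chi(\Oo_{X_\avect})$ you use there is immediate from the paper's Definition \ref{DegreeDef} of degree, so nothing further is required. Either version is a complete proof.
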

\begin{proof}
The pullback of a line bundle by a finite map has degree equal to the degree of the bundle times the degree of the morphism. Then \[
\deg(\omega_{X_\avect}) = r \deg(\omega_C) + r(r-1) \deg(L) = 2r(g-1)+ r(r-1)\ell.
\]
The statement follows then by the fact that $\chi(X_\avect) = -\deg(\omega_{X_\avect}) /2$.
\end{proof}

\begin{rmk}
The spectral curve $X_\avect$ has pure dimension 1 and is embedded in the smooth surface $P$, hence it has only locally planar singularities. The Hitchin base $\A$ admits three notable open subsets $\Asm \subseteq \Aell \subseteq \Areg \subseteq \A$, called respectively the smooth locus, the elliptic locus and the regular locus, defined as: \begin{align*}
\Asm &:= \{ \avect \in \A : X_\avect \textrm{ is smooth and connected} \}, \\
\Aell &:= \{ \avect \in \A : X_\avect \textrm{ is integral} \}, \\
\Areg &:= \{ \avect \in \A : X_\avect \textrm{ is reduced and connected} \}.
\end{align*}
\end{rmk}

We come now to the spectral correspondence for Higgs pairs.  The following spectral correspondence is due to \cite{Schaub98}, \cite{ChLau} and \cite{DeCat17}, as a generalization of the spectral correspondence stated in \cite{BNR} for smooth characteristics.

\begin{prop} \label{Spectral}{\normalfont (Spectral correspondence)}
Let $\avect \in \A(r)$ be any characteristic and let $X_\avect \xrightarrow{\pi_\avect} C$ be the associated spectral curve.  Let $\Oo_C(1)$ be an ample line bundle on $C$; let $\Oo_{X_\avect}(1) := \pi_\avect^*\big(\Oo_C(1)\big)$ be the ample line bundle on $X_\avect$ obtained by pullback and denote with $H$ the associated polarization on $X_\avect$.
\begin{enumerate}
\item For any integer $d$, there is an isomorphism of stacks:
\[\
\Hh_{r,d}^{-1}(\avect) \xrightarrow[\Pi]{ \hspace{1em} {}_\sim \hspace{1em} } \Jbar(X_\avect, d'),
\]
where $d'=d+r(1-g)-\chi(\Oo_{X_\avect})=d+\frac{r(r-1)}{2}\ell$ and $g=g(C)$ is the genus of $C$. If $\Mm$ is a torsion-free sheaf of rank 1 on $X_\avect$, then $\Pi(\Mm) := (E, \Phi)$ where \begin{align*}
E &=\pi_{\avect,*} (\Mm) \\
\Phi &= \pi_{\avect,*}(\cdot x): \pi_{\avect,*}(\Mm) \rightarrow L \otimes \pi_{\avect,*}(\Mm) \simeq \pi_{\avect,*}(\pi_\avect^* L \otimes \Mm).
\end{align*}

\item A torsion-free sheaf of rank 1 $\Mm$ on $X_\avect$ is $H$-semistable if and only if the associated Higgs pair $\Pi(\Mm)$ is semistable on $C$. Hence, the above correspondence yields an isomorphism of schemes: \[
H_{r,d}^{-1}(\avect) \simeq \JbarSch(X_\avect, H, d'). 
\]
\end{enumerate}
\end{prop}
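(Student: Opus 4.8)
The plan is to establish the spectral correspondence in two stages, following the classical approach of Beauville--Narasimhan--Ramanan adapted to arbitrary (possibly reducible, non-reduced) spectral curves. The first stage constructs the two mutually inverse maps and checks they are inverse; the second stage analyzes semistability. A crucial preliminary observation, which I would isolate at the start, is that since $y$ restricts to a nowhere-vanishing section on $X_\avect$, the section $x/y$ gives a well-defined endomorphism ``multiplication by $x$'' on any $\Oo_{X_\avect}$-module, valued in $\pi_\avect^*L$, and that the coordinate ring $\pi_{\avect,*}\Oo_{X_\avect}$ is, locally over $C$, the quotient $\Oo_C[x]/(x^r + a_1 x^{r-1} + \dots + a_r)$; this is what makes $\pi_\avect$ finite flat of degree $r$ and lets us go back and forth between $\Oo_{X_\avect}$-module structures and pairs (coherent sheaf on $C$, endomorphism satisfying the characteristic equation).

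\textbf{Construction of $\Pi$ and its inverse.} For a torsion-free rank-$1$ sheaf $\Mm$ on $X_\avect$, I would first check that $E := \pi_{\avect,*}\Mm$ is a vector bundle of rank $r$ on the smooth curve $C$: flatness of $\pi_\avect$ gives that $\pi_{\avect,*}$ of a flat sheaf is flat, hence locally free over $C$, and the rank is computed at the generic point using that $\Mm$ has rank $1$ and $\pi_\avect$ has degree $r$. Then $\Phi := \pi_{\avect,*}(\cdot x)$ is an $L$-twisted endomorphism of $E$, and the Cayley--Hamilton identity for multiplication by $x$ on the algebra $\pi_{\avect,*}\Oo_{X_\avect}$ shows $\mathcal H_{r,d}(E,\Phi) = \avect$, i.e.\ $\Pi$ lands in the correct Hitchin fiber. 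Conversely, given $(E,\Phi)$ with $\mathcal H_{r,d}(E,\Phi)=\avect$, the section $\Phi$ makes $E$ a module over $\operatorname{Sym}(L^{-1})$ that factors through $\pi_{\avect,*}\Oo_{X_\avect}$ precisely because $\Phi$ satisfies its characteristic polynomial, yielding a coherent sheaf $\Mm$ on $X_\avect$ with $\pi_{\avect,*}\Mm = E$; one checks $\Mm$ is torsion-free of rank $1$ by a generic-point computation plus the fact that $X_\avect$ has pure dimension $1$ and $E$ is torsion-free on $C$. The degree bookkeeping $d' = d + \frac{r(r-1)}{2}\ell$ comes from $\chi(X_\avect,\Mm) = \chi(C,\pi_{\avect,*}\Mm) = \chi(C,E)$ together with Lemma~\ref{CanonicalSheafOfX}'s corollary computing $\chi(\Oo_{X_\avect})$ and Definition~\ref{DegreeDef}. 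Both constructions should be carried out in families over an arbitrary base $T$ (using that everything is compatible with base change since $\pi_\avect$ is flat and affine), giving an isomorphism of stacks; the exact sequence in the Proposition preceding Section~1.2 is exactly the family-wise resolution of $\Mm$ that makes the inverse construction canonical.

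\textbf{Semistability.} For part (2), the key is a $\Phi$-invariant-subbundle versus subsheaf dictionary: a $\Phi$-invariant subbundle $F \subseteq E = \pi_{\avect,*}\Mm$ is the same as an $\Oo_{X_\avect}$-submodule $\mathcal N \subseteq \Mm$ with $\pi_{\avect,*}\mathcal N = F$, and conversely any subsheaf of $\Mm$ pushes forward to a $\Phi$-invariant subsheaf of $E$ (saturating to a subbundle only increases the slope, so it suffices to test subsheaves). Since $\pi_\avect$ is finite flat of degree $r$ and $\operatorname{rk}$ is additive, a subsheaf $\mathcal N$ of rank $\rho$ (as a polarized-rank quantity on $X_\avect$) pushes to $F$ of rank $r\rho$ on $C$, and $\chi$ is preserved, so $\mu_H(\mathcal N) \leq \mu_H(\Mm)$ translates, after the affine change of variables relating degrees on $X_\avect$ and $C$, precisely into $\mu(F) \leq \mu(E)$. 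Running this for all subsheaves establishes the equivalence of semistability; the analogous statement for stability is identical with strict inequalities. This gives the induced isomorphism of good moduli spaces $H_{r,d}^{-1}(\avect) \simeq \JbarSch(X_\avect,H,d')$, using that S-equivalence on one side matches S-equivalence on the other since the correspondence is an isomorphism of the full stacks and is slope-preserving.

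\textbf{Main obstacle.} The delicate point is \emph{not} the generic case but handling non-reduced or reducible $X_\avect$: one must verify that $\pi_{\avect,*}$ really does send torsion-free rank-$1$ sheaves on $X_\avect$ to \emph{locally free} rank-$r$ sheaves on $C$ (this uses crucially that $C$ is smooth, so torsion-free $=$ locally free on $C$, together with flatness of $\pi_\avect$), and conversely that the module $\Mm$ produced from $(E,\Phi)$ is genuinely torsion-free of rank $1$ on $X_\avect$ even when $X_\avect$ has embedded-free behavior along non-reduced components — here one argues that $T(\Mm)$ would push forward to a torsion subsheaf of the torsion-free sheaf $E$, forcing $T(\Mm)=0$, and the rank-$1$ condition is checked componentwise at each generic point $\xi_i$ of $X_\avect$ using the length formula of Definition~\ref{RankDef} and the flatness of $\pi_\avect$. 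The rest is careful but routine bookkeeping with Euler characteristics and the slope inequalities.
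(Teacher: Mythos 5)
The paper does not actually prove Proposition \ref{Spectral}: it is quoted as a known result, with references to Schaub, Chaudouard--Laumon, de Cataldo and (for smooth characteristics) Beauville--Narasimhan--Ramanan, so there is no internal proof to compare against. Your sketch reconstructs the standard argument from those sources and is essentially sound: the identification of $\pi_{\avect,*}\Oo_{X_\avect}$ with $\Oo_C[x]/(x^r+a_1x^{r-1}+\cdots+a_r)$, the Cayley--Hamilton argument in both directions, the Euler-characteristic bookkeeping via $\chi(X_\avect,\Mm)=\chi(C,E)$ and the corollary to Lemma \ref{CanonicalSheafOfX}, and the slope dictionary (polarized rank multiplies by $r$ under $\pi_{\avect,*}$ while $\chi$ is preserved, so $\mu_H$ and $\mu$ differ by an increasing affine transformation) are all the right ingredients, and you correctly isolate the two delicate points for non-integral $X_\avect$, namely local freeness of $E$ and the multirank $(1,\dots,1)$ of the sheaf recovered from $(E,\Phi)$.

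One wording slip worth fixing: the claim that ``flatness of $\pi_\avect$ gives that $\pi_{\avect,*}$ of a flat sheaf is flat, hence locally free over $C$'' is not the right mechanism, since a torsion-free rank-$1$ sheaf $\Mm$ is generally not flat (i.e.\ not locally free) over $X_\avect$. The correct argument, which you do state in your final paragraph, is that $\pi_{\avect,*}\Mm$ is torsion-free on $C$ (the torsion subsheaf of $\pi_{\avect,*}\Mm$ is stable under multiplication by sections of $\pi_{\avect,*}\Oo_{X_\avect}$, hence is the pushforward of a torsion subsheaf of $\Mm$, which vanishes), and torsion-free coherent sheaves on the smooth curve $C$ are locally free; in the relative setting one then upgrades fiberwise local freeness to local freeness on $C\times_k T$ as the paper does elsewhere via flatness over $T$. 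I would also spell out, at the generic point of $C$, why the multiplicity of each irreducible factor of the characteristic polynomial of $\Phi$ equals the length of the corresponding localization of $\Mm$, since that is precisely what forces rank exactly $1$ on every irreducible component rather than merely total rank $r$.
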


Finally, in the following proposition, inspired by \cite[Remark 3.7]{BNR}, we show that the torsion-free sheaf $\Mm$ corresponding to an Higgs pair $(E, \Phi)$ can be always resumed, up to twisting, as the eigenspace of $\pi^*\Phi$ with eigenvalue $x$.

\begin{prop}\label{SpectralExactSeq}
Let $\avect \in \A(r)$ be any characteristic and let $X_\avect \xrightarrow{\pi_\avect} C$ be the associated spectral curve.  Let $\Mm \in \Jbar(X_\avect,d')$ be a torsion-free rank-1 sheaf on $X_\avect$ corresponding to the Higgs pair $(E, \Phi) \in \Hh_{r,d}^{-1}(\avect)$ on $C$. Then, the following exact sequence holds: \[
0 \rightarrow \Mm \otimes \pi_\avect^*(L^{1-r}) \rightarrow \pi_\avect^* E \xrightarrow{\pi_\avect^*(\Phi) - x} \pi_\avect^* E \otimes \pi_\avect^* L \xrightarrow{\ev} \Mm \otimes \pi_\avect^* L \rightarrow 0
\]
where $\ev$ is induced by the evaluation map $\pi_\avect^*\pi_{\avect,*}(\Mm) \rightarrow \Mm$.
\end{prop}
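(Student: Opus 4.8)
The plan is to obtain the four-term sequence by applying the long exact $\mathcal{T}or$-sequence to a two-term locally free resolution living on the surface that contains $X:=X_\avect$, and to identify the two $\mathcal{T}or$-sheaves that appear with twists of $\Mm$. Write $\pi:=\pi_\avect$. Since $y$ is nowhere zero on $X$, the curve $X$ lies in the total space $V$ of $L$, realized as the open subset $P\setminus\{y=0\}$, which is a smooth surface affine over $C$ via the restriction $\rho\colon V\to C$ of $p$; on $V$ the section $x$ becomes a global section $\tau\in\Gamma(V,\rho^*L)$ (dehomogenize by $y$), with $i^*\tau=x$ for the inclusion $i\colon X\hookrightarrow V$. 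Rewriting the homogeneous spectral equation in terms of $\tau$ exhibits $X$ inside $V$ as the zero scheme of a section of $\rho^*L^{\otimes r}$; hence $X\hookrightarrow V$ is a regular embedding of codimension $1$ (an effective Cartier divisor) with ideal sheaf $\Oo_V(-X)\simeq\rho^*L^{-r}$ and conormal bundle $\mathcal{N}^{\vee}_{X/V}\simeq\pi^*L^{-r}$.

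First I would establish on $V$ the exact sequence
\begin{equation*}
0\longrightarrow\rho^*E\otimes\rho^*L^{-1}\xrightarrow{\ \rho^*\Phi-\tau\ }\rho^*E\xrightarrow{\ \ev\ }i_*\Mm\longrightarrow 0,\tag{$\star$}
\end{equation*}
where $\ev$ is the counit $\rho^*\rho_*(i_*\Mm)=\rho^*E\to i_*\Mm$ (using $\rho_*i_*\Mm=\pi_*\Mm=E$) and $\rho^*\Phi-\tau$ denotes the difference of $\rho^*\Phi\otimes\id_{\rho^*L^{-1}}$ and multiplication by $\tau$, both viewed as maps $\rho^*E\otimes\rho^*L^{-1}\to\rho^*E$. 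This is a local statement over $C$: over a trivializing affine $U\subseteq C$ one has $V|_U\cong\Spec\Oo_U[t]$ with $\tau=t$, while $E|_U$ is free of rank $r$ and $\Phi$ is a matrix, so $(\star)$ becomes the classical companion-matrix presentation $0\to\Oo_U[t]^{\oplus r}\xrightarrow{tI-\Phi}\Oo_U[t]^{\oplus r}\to E(U)\to 0$ of $E(U)$ as the $\Oo_U[t]$-module on which $t$ acts by $\Phi$; here $\ev$ is $p(t)\mapsto p(\Phi)$, which is surjective, and $tI-\Phi$ is injective because $\det(tI-\Phi)$ is a monic polynomial of degree $r$ in $t$, hence a nonzerodivisor. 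These presentations glue, once the $\rho^*L^{-1}$-twist on the left-hand term is accounted for, and exactness is a local condition.

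Next I would apply $(-)\otimes_{\Oo_V}\Oo_X$ to $(\star)$ and read off the long exact $\mathcal{T}or^{\Oo_V}(-,\Oo_X)$-sequence. The first two terms of $(\star)$ are locally free, so all their higher $\mathcal{T}or$ against $\Oo_X$ vanish, and $\mathcal{T}or^{\Oo_V}_2(i_*\Mm,\Oo_X)=0$ because $X\hookrightarrow V$ is a Cartier divisor (the Koszul resolution $0\to\rho^*L^{-r}\to\Oo_V\to\Oo_X\to 0$ has length $1$); hence the long exact sequence collapses to
\begin{equation*}
0\longrightarrow\mathcal{T}or^{\Oo_V}_1(i_*\Mm,\Oo_X)\longrightarrow\pi^*E\otimes\pi^*L^{-1}\xrightarrow{\ \pi^*\Phi-x\ }\pi^*E\xrightarrow{\ \ev\ }\Mm\longrightarrow 0,
\end{equation*}
using $i^*\rho^*E=\pi^*E$, $(\rho^*\Phi-\tau)|_X=\pi^*\Phi-x$ and $i^*i_*\Mm=\Mm$. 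Since the defining section of $X$ annihilates $i_*\Mm$, the Koszul resolution above computes $\mathcal{T}or^{\Oo_V}_1(i_*\Mm,\Oo_X)=\Mm\otimes_{\Oo_X}\mathcal{N}^{\vee}_{X/V}=\Mm\otimes\pi^*L^{-r}$. Tensoring this four-term sequence with $\pi^*L$ then yields exactly the assertion, the last arrow being $\ev\otimes\id_{\pi^*L}$, i.e.\ the map induced by the evaluation $\pi^*\pi_*\Mm\to\Mm$.

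The homological ingredients are standard; the step requiring care is the bookkeeping of line-bundle twists — verifying that the conormal bundle of $X$ in $V$ is $\pi^*L^{-r}$, which is what produces the exponent $1-r$ after the final twist by $\pi^*L$, and checking that the local companion-matrix presentations in $(\star)$ patch with the correct $\rho^*L^{-1}$-twist on the source. This argument is the relative, global-surface incarnation of the computation in \cite[Remark 3.7]{BNR}.
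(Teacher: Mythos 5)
Your proof is correct, but it takes a genuinely different route from the one in the paper. The paper works entirely on $X_\avect$ and over $C$: it writes down an explicit local map $Q=\sum_{i} x^i\sum_{j}\overline{x}^{\,r-1-i-j}\overline{a}_j$ from $M$ into $S\otimes_R M$, verifies by hand that $\Psi\circ Q=0$, that $Q$ is injective and $S$-linear, and that $\ker\Psi\subseteq\Image(Q)$ by a descending induction on the coefficients, and only then globalizes by tracking the twists by $\pi_\avect^*L$; the cokernel part is delegated to Bourbaki. You instead pass to the ambient smooth surface $V=\Tot(L)=P\setminus\{y=0\}$, establish the two-term companion-matrix resolution $0\to\rho^*E\otimes\rho^*L^{-1}\xrightarrow{\rho^*\Phi-\tau}\rho^*E\to i_*\Mm\to 0$ (a local statement, correctly reduced to the standard presentation of $M_\Phi$ over $\Oo_U[t]$, with injectivity from the monic leading term), and then obtain the four-term sequence on $X_\avect$ as the $\operatorname{Tor}$ long exact sequence of $(-)\otimes_{\Oo_V}\Oo_{X_\avect}$, identifying the kernel term as $\operatorname{Tor}_1^{\Oo_V}(i_*\Mm,\Oo_{X_\avect})\simeq\Mm\otimes\Nn^\vee_{X/V}\simeq\Mm\otimes\pi_\avect^*L^{-r}$ via the Koszul resolution of the Cartier divisor $X_\avect\subset V$. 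All the identifications you need ($i^*i_*\Mm\simeq\Mm$, compatibility of the counits for $\rho$ and $\pi_\avect=\rho\circ i$, vanishing of $\operatorname{Tor}_1$ on the locally free terms, $\Oo_V(-X_\avect)\simeq\rho^*L^{-r}$ because the spectral equation is a section of $\rho^*L^{\otimes r}$) are sound. What your approach buys is conceptual economy: the injectivity of the first arrow and the exact identification of its source come for free from the homological algebra, and the exponent $1-r$ is forced by the conormal bundle rather than by bookkeeping; what the paper's approach buys is an explicit formula for the inclusion $\Mm\otimes\pi_\avect^*L^{1-r}\hookrightarrow\pi_\avect^*E$, which is occasionally useful in its own right. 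The only point worth stating more carefully in a final write-up is that $E=\pi_{\avect,*}\Mm$ is locally free of rank $r$ (true since $C$ is smooth and $\Mm$ is torsion-free of rank $1$ on a degree-$r$ cover), which is what lets you invoke the companion-matrix presentation locally.
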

\begin{proof}
We first prove the exact sequence locally. Let $U=\Spec R \subseteq C$ be any open affine subset where $L$ is trivial.
The preimage $V=\pi^{-1}(U)$ is then equal to $\Spec(R[x]/P(x))$ where $P(x)=x^r+ a_1 x^{r-1} + \dots +   a_{r-1}x +  a_r$ and the $ a_i$'s denote (with a slight abuse of notation) the elements in $R$ corresponding to the entries of $\avect$ after choosing the trivialization of $L$.  Denote $R[x]/P(x)=S$ and note that the inclusion $R \subset S$ makes $S$ into an $R$-module with basis $1, x, \dots, x^{r-1}$.

Recall that $E=\pi_{\avect,*}(\Mm)$ and $\Phi=\pi_{\avect,*}(\cdot x)$. Denote with $M$ the $S$-module corresponding to $\Mm$ on $V=\Spec S$. The pushforward $\pi_{\avect,*} (\Mm)$ corresponds on $V$ to the restriction of scalars of $M$ from $S$ to $R$. Then, $\pi_\avect^* E$ correspond on $V$ to the $S$-module $S \otimes_R M$, with the structure of $S$-module given by multiplication on the left-hand side of the tensor product. On the other hand, any element $p(x) \in S$ acts on $S \otimes_R M$ also on the right-hand side. We denote such action as
\begin{align*}
\overline{p(x)}: S \otimes_R M &\longrightarrow S \otimes_R M \\
q(x) \otimes m &\longmapsto q(x) \otimes p(x)\cdot m.
\end{align*}
Regarding the evaluation map, it can be defined at the level of $S$-modules as \begin{align*}
\ev: S \otimes_R M &\longrightarrow M \\
p(x) \otimes m &\longmapsto p(x)\cdot m.
\end{align*}

\vspace{1em}

With this notation, we are left to prove that the following exact sequence of $S$-modules holds: \[
0 \rightarrow M \rightarrow S \otimes_R M \xrightarrow{\Psi=\overline{x}-x} S \otimes_R M \xrightarrow{\ev} M \rightarrow 0.
\]

\vspace{1em}

The proof of the exactness on the right follows exactly the proof of \cite[III 10, Proposition 18]{Bourb}, with the difference that $S \otimes_R M$ has a finite $S$-basis.

\vspace{1em}
To prove the exactness on the left, first consider the following morphism of $R$-modules: \begin{align*}
Q=\sum_{i=0}^{r-1} x^i \sum_{j=0}^{r-1-i} \overline{x}^{r-1-i-j} \overline{a}_j: M &\longrightarrow S \otimes_R M \\
m &\longmapsto \sum_{i=0}^{r-1} x^i \otimes \left( \sum_{j=0}^{r-1-i} (x^{r-1-i-j} a_j) \cdot m \right)
\end{align*}
where we put $a_0=1$.  We prove first that $\Psi \circ Q = 0$. Let $m \in M$ be any element. If we compute $x \cdot Q(m)$, recalling that $x^r=-\sum_{i=0}^{r-1}a_{r-i}x^i$, we obtain
\begin{align*}
x \cdot Q(m) &= x \cdot \left( \sum_{i=0}^{r-1} x^i \otimes \left( \sum_{j=0}^{r-1-i} (x^{r-1-i-j} a_j) \cdot m \right) \right)  = \\
&= \sum_{i=0}^{r-2} x^{i+1} \otimes \left( \sum_{j=0}^{r-1-i} (x^{r-1-i-j} a_j) \cdot m \right) - 
\sum_{i=0}^{r-1} \left( a_{r-i} x^i \right) \otimes m.
\end{align*}
Changing $i+1$ with $i$ in the first sum and moving the $a_{r-i}$ by linearity, we obtain
\begin{align*}
x \cdot Q(m) &= \sum_{i=1}^{r-1} x^{i} \otimes \left( \sum_{j=0}^{r-i} (x^{r-i-j} a_j) \cdot m \right) - 
\sum_{i=0}^{r-1}  x^i  \otimes (a_{r-i} \cdot m ) = \\
&= \sum_{i=1}^{r-1} x^{i} \otimes \left( \sum_{j=0}^{r-i-1} x^{r-i-j} a_j + a_{r-i} \right) \cdot m  - 1 \otimes (a_r\cdot m) = \\
&= \sum_{i=1}^{r-1} x^{i} \otimes \left( \sum_{j=0}^{r-i-1} (x^{r-i-j} a_j) \cdot m \right) - 1 \otimes (a_r\cdot m).
\end{align*}
Finally, recalling that $-a_r = \sum_{j=0}^{r-1} x^{r-j}a_j $, we can write \[
x \cdot Q(m) = \sum_{i=0}^{r-1} x^{i} \otimes \left( \sum_{j=0}^{r-i-1} (x^{r-i-j} a_j) \cdot m \right).
\]
On the other hand, the computation of $\overline{x}(Q(m))$ gives
\begin{align*}
\overline{x}(Q(m)) &= \overline{x}\left( \sum_{i=0}^{r-1} x^i \otimes \left( \sum_{j=0}^{r-1-i} (x^{r-1-i-j} a_j) \cdot m \right) \right)  = \\
&= \sum_{i=0}^{r-1} x^i \otimes \left( \sum_{j=0}^{r-1-i} (x^{r-i-j} a_j) \cdot m \right).
\end{align*}
We conclude that $\Psi(Q(m))= \overline{x}(Q(m))  - x \cdot Q(m)  = 0$ for arbitrary $m \in M$; then $\Psi \circ Q = 0$. 

We can now prove that $Q$ is a morphism of $S$-modules. Since $Q$ is $R$-linear by definition, it remains only to show that $Q(x \cdot m) = x \cdot Q(m)$; but it is clear by the definition of $\overline{x}$ that $Q(x \cdot m) = \overline{x}(Q(m))$, which is equal to $x \cdot Q(m)$ by the previous computation.

Note also that $Q$ is injective. Indeed, after choosing the monomials $1, x, \dots, x^{r-1}$ as a $R$-basis for $S$, we have a canonical isomorphism $S \simeq \bigoplus_{i=0}^{r-1} Rx^i$ that induces an isomorphism $S \otimes_R M \simeq  \bigoplus_{i=0}^{r-1} (x^i \otimes M)$. To see that $Q$ is injective, let $m \in M$ be any element such that $m \neq 0$. Then, the component of degree $r-1$ of $Q(m)$ is $x^{r-1} \otimes m$, which is different from $0$; we conclude that $Q(m) \neq 0$.

We are left to prove that $\ker(\Psi) \subseteq \Image(Q)$. Let $z = \sum_{i=0}^{r-1} x^i \otimes m_i$ be any element in $S \otimes_R M$ such that $\Psi(z)=0$ or, equivalently, \[
\sum_{i=0}^{r-1} x^{i+1} \otimes m_i = \sum_{i=0}^{r-1} x^i \otimes x \cdot m_i.
\]
Recalling that $x^r=-\sum_{i=0}^{r-1}a_{r-i}x^i$, by $R$-linearity we obtain
\begin{align}\label{CondizioneKerPsi}
\sum_{i=1}^{r-1} x^i \otimes (m_{i-1} - a_{r-i} \cdot m_{r-1}) - a_r \otimes m_{r-1} = \sum_{i=0}^{r-1} x^i \otimes x \cdot m_i.
\end{align}

Let $m := m_{r-1}$. Looking at the components in degree $r-1$ of Equation \ref{CondizioneKerPsi}, we obtain \[
-a_1 \cdot m + m_{r-2} = x \cdot m, \]
or, explicitely, \[
 m_{r-2} = (x + a_1) \cdot m = \sum_{j=0}^{1} (x^{1-j} a_j) \cdot m 
\]
Proceeding by induction on $i$ decreasing from $r-2$ to $1$, suppose that the equality \[
m_i=\sum_{j=0}^{r-1-i} (x^{r-1-i-j} a_j) \cdot m 
\]
is proven. Then, looking at the components in degree $i$ of Equation \ref{CondizioneKerPsi}, we obtain: \[
m_{i-1} - a_{r-i} \cdot m = x \cdot m_{i},
\]
hence: 
\begin{align*}
m_{i-1} &= x \sum_{j=0}^{r-1-i} (x^{r-1-i-j} a_j) \cdot m  +  a_{r-i} \cdot m = \\
&= \sum_{j=0}^{r-i} (x^{r-i-j} a_j) \cdot m.
\end{align*}
Hence, we conclude that $m_i=\sum_{j=0}^{r-1-i} (x^{r-1-i-j} a_j) \cdot m $ for any $i$ from $0$ to $r-1$; in other terms: \[
z= \sum_{i=0}^{r-1} x^i \otimes \left( \sum_{j=0}^{r-1-i} (x^{r-1-i-j} a_j) \cdot m \right)=Q(m) \in \Image(Q).
\]

\vspace{1em}

Finally, note that the map $Q=\sum_{i=0}^{r-1} x^i \sum_{j=0}^{r-1-i} \overline{x}^{r-1-i-j} \overline{a}_j$ has been defined at the level of $S$-modules, so it defines only a local map of sheaves on $V$. In order to define a global map of sheaves, note that any power of $x$ and $\overline{x}$, considered as global maps, introduces a twist by $\pi_\avect^*L$ up to the same tensor power, and any $\overline{a}_j$ introduces a twist by $\pi_\avect^*(L^{\otimes j})$. The sum of these twistings in the definition of $Q$, for any $i$ and $j$ in the sum, is equal to $i+(r-1-i-j)+j=r-1$. Hence, in order to extend the map $Q$ globally, one has to consider a map \[
\Q: \Mm \otimes \pi_\avect^*( L^{1-r}) \rightarrow \pi_\avect^* E
\]
whose local expression equals the one of $Q$ on any affine open $V=\pi_\avect^{-1}(U)$ such that $L$ is trivial on $U$. Then, such map fits the global exact sequence: \[
0 \rightarrow \Mm \otimes \pi_\avect^*(L^{1-r}) \xrightarrow{\Q} \pi_\avect^* E \xrightarrow{\pi_\avect^*(\Phi) - x} \pi_\avect^* E \otimes \pi_\avect^* L \xrightarrow{\ev} \Mm \otimes \pi_\avect^* L \rightarrow 0.
\]

\end{proof}

\begin{rmk}
Proposition \ref{SpectralExactSeq} suggests that the spectral correspondence stated in \cite[Proposition 6.1]{HP} is somehow misstated. Indeed, by definition of the spectral curve, the morphism $\pi_\avect^* \Phi - x$ has non-trivial kernel at each point of $X_\avect$. In particular, any spectral data $\Mm$ fitting in the exact sequence above needs to be supported on the whole $X_\avect$. This is in general not true for torsion-free sheaves of polarized rank equal to 1, even for connected planar curves, as showed in \cite[§2]{Lop}.
\end{rmk}

\chapter{The Norm map on the compactified Jacobian}\label{ChapterNorm}

In this chapter we study how to generalize the Norm map for line bundles to the compactified Jacobian stack. We look first at the geometric counterpart of a smaller problem, regarding the direct image of Cartier divisors and generalized divisors,   in order to understand how to proceed in the general situation of torsion-free sheaves with rank 1.

\vspace{1em}

Let $X \xrightarrow{\pi} Y$ be any finite, flat morphism between embeddable noetherian schemes of pure dimension 1. In the first part of this chapter (Sections \ref{SectionReviewNorm}, \ref{SectionReviewDivisors}), we review the classical Norm map and the theory of generalized divisors on curves, in relation with the objects introduced in Chapter \ref{ChapterPrelim}. In the second part (Section \ref{SectionDirectImageSet}) we propose the definitions of the direct and inverse image for generalized divisors and generalized line bundles, and we study their properties. In the third part of the chapter (Sections \ref{SectionDirectImageFamilies} and \ref{SectionNormOnJbar}), restricting to the case when $X$ and $Y$ are projective curves over a field, we consider the same notions for families. On one hand, families of effective generalized divisors on a curve $X$ are essentially families of subschemes of finite length, hence they are parametrized by the Hilbert scheme. On the other hand, families of generalized line bundles on $X$ are parametrized by the generalized Jacobian $\Jgen(X)$, and in this case the direct image map is called \textit{Norm map} in analogy with the Norm map defined between the Jacobians of the curves. In both cases, we show that giving the definitions of the direct image and the Norm map on such moduli spaces is possible only when the curve $Y$ is smooth. Finally, we show that the Norm map can be extended under the same hypothesis to the compactified Jacobian $\Jbar(X)$.

\vspace{1em}

\textbf{Notation}. In the rest of the chapter, in the absence of further specifications, by \textit{curve} we refer to a noetherian scheme of pure dimension 1 which is embeddable (i.e. it can be embedded as a closed subscheme of a regular scheme). This implies that the canonical (or dualizing) sheaf $\omega$ of the curve is well defined. 

\section{Review of the Norm map}\label{SectionReviewNorm}
We resume now the definition and properties of the direct and inverse image for Cartier divisors and the Norm map for line bundles, associated to a finite, flat morphism between curves. For a complete treatment, the standard reference is \cite[§21]{EgaIV4} together with \cite[§6.5]{EgaII}. We start with the definition of the norm at the level of sheaves of algebras.

\vspace{1em}

Let $\pi: X \rightarrow Y$ be a finite, flat morphism between curves of degree $n$. Since $Y$ is noetherian, this is equivalent to require that $f$ is finite and locally free, i.e. that  $\pi_* \Oo_X$ is a locally free $\Oo_Y$-algebra \cite[\href{https://stacks.math.columbia.edu/tag/02K9}{Tag 02K9}]{stacks-project}.
\begin{defi}\label{NormOfSheaves}
The sheaf $\pi_* \Oo_X$ is endowed with a homomorphism of $\Oo_Y$-modules, called the norm and defined on local sections by:
\begin{align*}
\Nn_{Y/X}: \hspace{2em} \pi_* \Oo_X &\longrightarrow \Oo_Y \\
s &\longmapsto \det(\cdot s)
\end{align*}
where $\cdot s: \pi_* \Oo_X \rightarrow  \pi_* \Oo_X $  is the multiplication map induced by $s$ and $\det(\cdot s)$ is given locally by the determinant of the matrix with entries in $\Oo_Y$ associated to $\cdot s$.
\end{defi}
 By the standard properties of determinants, for local sections $s,s'$  of $\pi_*\Oo_X$ and any section $\mu$ of $\Oo_Y$, we have:
\begin{align}
\Nn_{Y/X}(s \cdot s') = \Nn_{Y/X}(s) \cdot \Nn_{Y/X}(s'), \hspace{3em} \Nn_{Y/X}(\mu s)= \mu^n\Nn_{Y/X}(s). \label{normpr}
\end{align}

\vspace{1em}

Before giving the definitions of the present section, we need a technical lemma.
\begin{lemma}\label{TrivialOnSaturated}
	Let $\pi: X \rightarrow Y$ be a finite, flat morphism of degree $n$ between curves and let $\Ll$ be an invertible $\Oo_X$-module. Then, $\pi_*\Ll$ is an invertible $\pi_*\Oo_X$-module and there exists an open affine cover  $\{ V_i \}_{i \in I}$ of $Y$ s.t. $\pi_*\Oo_X$ is trivial on each $V_i$ and $\pi_*\Ll$ is trivial both as a $\pi_*\Oo_X$-module and as an $\Oo_Y$-module on each $V_i$.
\end{lemma}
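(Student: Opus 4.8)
The plan is to reduce the statement to a purely commutative-algebra assertion over the local rings of $Y$ and then spread it out.

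\textbf{Step 1 (reduction to affine opens; the invertibility claim).} Since $\pi$ is finite it is affine, so for any affine open $V=\Spec A\subseteq Y$ we have $\pi^{-1}(V)=\Spec B$ with $B$ a finite, locally free $A$-algebra of rank $n$, and $\Ll|_{\pi^{-1}(V)}$ corresponds to an invertible $B$-module $M$. By the definition of pushforward along an affine morphism, $\pi_*\Ll|_V$ is the sheaf on $V$ associated to $M$ regarded as a module over $\pi_*\Oo_X|_V=\widetilde B$. Thus $\pi_*\Ll$ is, locally on $Y$, the quasi-coherent sheaf attached to an invertible $\pi_*\Oo_X$-module; as invertibility is a local condition, $\pi_*\Ll$ is an invertible $\pi_*\Oo_X$-module globally. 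Moreover $\pi_*\Ll$ and $\pi_*\Oo_X$ are coherent over $\Oo_Y$ because $\pi$ is finite and $Y$ is noetherian.

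\textbf{Step 2 (triviality on a suitable cover).} Fix $y\in Y$ and put $A=\Oo_{Y,y}$, $B=(\pi_*\Oo_X)_y$, $M=(\pi_*\Ll)_y$. Then $B$ is a finite $A$-algebra with $A$ local, hence $B$ is \emph{semilocal}: its maximal ideals are the finitely many primes of the Artinian ring $B/\m_A B$. Over a semilocal ring every finitely generated projective module of constant rank is free; since $M$ is an invertible $B$-module it has rank $1$ at every prime of $B$, so $M\cong B$ as a $B$-module, and in particular $M$ is free of rank $n$ over $A$. Choose $m\in M$ with $Bm=M$. Picking first an affine neighbourhood of $y$ on which $\pi_*\Oo_X$ is free of rank $n$ over $\Oo_Y$ — possible since $\pi_*\Oo_X$ is finite locally free — the germ $m$ is represented by a section of $\pi_*\Ll$ over some affine $V_0\ni y$, giving a $\pi_*\Oo_X|_{V_0}$-linear multiplication map $\pi_*\Oo_X|_{V_0}\to\pi_*\Ll|_{V_0}$ whose stalk at $y$ is the isomorphism $B\xrightarrow{\sim}M$. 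Its kernel and cokernel are coherent $\Oo_{V_0}$-modules with vanishing stalk at $y$, hence vanish on a smaller affine neighbourhood $V_y\ni y$. On $V_y$ the map is an isomorphism, so $\pi_*\Ll|_{V_y}\cong\pi_*\Oo_X|_{V_y}$ as $\pi_*\Oo_X|_{V_y}$-modules, and both are isomorphic to $\Oo_{V_y}^{\oplus n}$ as $\Oo_{V_y}$-modules. The family $\{V_y\}_{y\in Y}$ is the required open affine cover.

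\textbf{Main obstacle.} The only non-formal point is that an invertible $\pi_*\Oo_X$-module, which a priori is trivial only Zariski-locally on $X$, is in fact trivial Zariski-locally on $Y$. This rests on the semilocality of the stalks $(\pi_*\Oo_X)_y$ — and hence on the vanishing of their Picard groups — which uses the finiteness of $\pi$ in an essential way and would fail for a merely quasi-finite morphism. Everything else is a routine noetherian spreading-out argument that I would not spell out in detail.
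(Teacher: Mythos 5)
Your proof is correct, and it takes a more self-contained route than the paper's. The paper's proof is essentially a two-line citation argument: it invokes \cite[\href{https://stacks.math.columbia.edu/tag/02K9}{Tag 02K9}]{stacks-project} for the local freeness of $\pi_*\Oo_X$ over $\Oo_Y$ and \cite[Proposition 6.1.12]{EgaII} for the fact that $\pi_*\Ll$ is an invertible $\pi_*\Oo_X$-module (trivializable on an open cover \emph{of $Y$}), and then simply intersects the two trivializing covers. What you do instead is prove the content of that EGA citation from scratch: you isolate the genuinely non-formal point — that an invertible sheaf on $X$, a priori trivial only Zariski-locally on $X$, pushes forward to something trivial Zariski-locally on $Y$ — and derive it from the semilocality of the stalks $(\pi_*\Oo_X)_y$ as finite algebras over the local rings $\Oo_{Y,y}$, the triviality of the Picard group of a semilocal ring, and a standard noetherian spreading-out of a generating section. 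Your Step 1 phrase ``invertibility is a local condition'' would be circular if ``invertible $\pi_*\Oo_X$-module'' were read as ``trivial on an open cover of $Y$,'' but you explicitly flag and close that loop in Step 2, so there is no gap. The paper's version buys brevity at the cost of hiding the mathematical content in a reference; yours makes visible exactly where finiteness of $\pi$ is used and why quasi-finiteness would not suffice.
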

\begin{proof}
	By \cite[\href{https://stacks.math.columbia.edu/tag/02K9}{Tag 02K9}]{stacks-project},  $\pi_*\Oo_X$ is a locally free $\Oo_Y$-module; denote with $\{ W_{\alpha} \}_{\alpha \in A}$ an open affine cover such that  ${\pi_*\Oo_X}_{|W_{\alpha}} \simeq {\Oo_Y^n}_{|W_{\alpha}}$ for each $\alpha \in A$. By \cite[Proposition 6.1.12]{EgaII}, $\pi_*\Ll$ is an invertible $\pi_*\Oo_X$-module; denote with $\{ W'_\beta \}_{\beta \in B}$ an open affine cover such that $ {\pi_*\Ll}_{|W'_\beta} \simeq {\pi_*\Oo_X}_{|W'_\beta} $ for each $\beta \in B$. Let $ \{ V_{\alpha, \beta} = W_\alpha \cap W'_\beta \}_{(\alpha, \beta) \in A \times B}$ be a common refinement. Then, for $I=A \times B$, $\{ V_i \}_{i \in I}$ is an open affine cover of $Y$ such that $\pi_*\Oo_X$ is trivial on each open of the cover and $\pi_*\Ll$ is trivial both as $\pi_*\Oo_X$-module and as  $\Oo_Y$-module on each open.
\end{proof}

\subsection{Direct and inverse image of Cartier divisors}
We recall now the definitions of direct and inverse image for Cartier divisors. For any curve $X$, denote with $\Kk_X$ the sheaf of total quotient rings of the curve. Recall that the set of \textit{Cartier divisors} on $X$ is the set of global sections of the quotient sheaf of multiplicative groups $\Kk_X^*/\Oo_X^*$: \[
\CDiv(X) = \Gamma(X, \Kk_X^*/\Oo_X^*).
\]
Although the group operation on $\Kk_X^*/\Oo_X^*$ is multiplication, the group operation on $\CDiv(X)$ is denoted additively.  The group of Cartier divisors of $X$ contains the subgroup $\Prin(X)$ of \textit{principal divisors} defined as the image of the canonical homomorphism \[
\Gamma(X, \Kk_X^*) \longrightarrow \Gamma(X, \Kk_X^*/\Oo_X^*).
\]
It is well known \cite[§21.2]{EgaIV4}  that the set of Cartier divisors is in one-to-one correspondence with the set of invertible fractional ideals, i.e. the set of subsheaves $\Ii \subseteq \Kk_X$ that are also invertible $\Oo_X$-modules \cite[Proposition 21.2.6]{EgaIV4}. If $D \in \Gamma(X, \Kk_X^*/\Oo_X^*)$ is represented by an open cover $\{U_i\}_{i \in I}$ and a collection of sections $f_i \in \Gamma(U_i, \Kk_X^*)$ such that $f_i/f_j \in \Gamma(U_i \cap U_j, \Oo_X^*)$, the corresponding fractional ideal $\Ii_D$ is the sub $\Oo_X$-module of $\Kk_X$ equal to $\Oo_{X|U_i}\cdot f_i$ on any $U_i$.\footnote{Differently from Grothendieck's EGA, we pick $f_i$ instead of $f_i^{-1}$. This does not affect the other results.} Under this correspondence, the sum of Cartier divisors corresponds to the multiplication of fractional ideals.

\begin{deflemma}\label{DirectInverseImageForCartDef}
Let $\pi: X \rightarrow Y$ be a finite, flat morphism between curves and let $\pi^\sharp: \Oo_Y \rightarrow \pi_* \Oo_X$ be the associated canonical map of sheaves of modules.

Let $D \in \CDiv(X)$ be a Cartier divisor on $X$ corresponding to the invertible fractional ideal $\Ii$ and let $\{ V_i \}_{i \in I}$ be an affine cover as in Lemma \ref{TrivialOnSaturated}. Then, on each $V_i$, $\pi_*\Ii_{|V_i}$ is equal to the subsheaf $h_i \cdot (\pi_*\Oo_X)_{|V_i}$ of $(\pi_*\Kk_X)_{|V_i}$ generated by a meromorphic regular section $h_i=f_i/g_i$, with $f_i, g_i \in \Gamma(V_i, \pi_* \Oo_X^*)$. The \textit{direct image of $D$}, denoted $\pi_*(D)$, is the Cartier divisor on $Y$ corresponding to the fractional ideal generated on any $V_i$ by the meromorphic regular section $\Nn_{Y/X}(f_i)/\Nn_{Y/X}(g_i) \in \Gamma(V_i,\Kk_Y)$.

Let $M \in \CDiv(Y)$ be a Cartier divisor on $Y$ corresponding to the invertible fractional ideal $\Jj \subseteq \Kk_Y$, and let $\{ V_i \}_{i \in I}$ be an affine cover of $Y$ such that, on each $V_i$, $\Jj_{|V_i}$ is equal to the fractional ideal of $\Oo_{Y|V_i}$-modules generated by a meromorphic regular section $u_i=s_i/t_i$ with $s_i, t_i \in \Gamma(V_i, \Oo_Y^*)$. The \textit{inverse image} of $M$, denoted $\pi^*(M)$, is the Cartier divisor on $X$ corresponding to the fractional ideal generated on any $U_i = \pi^{-1}(V_i)$ by the meromorphic regular section $\pi^\sharp(s_i)/\pi^\sharp(t_i) \in \Gamma(U_i, \Kk_X)$.
\end{deflemma}
\begin{proof}
The definition of direct image is a reformulation of the one given in \cite[§21.5.5]{EgaIV4}. The fact that $\Nn_{Y/X}(f_i)/\Nn_{Y/X}(g_i)$ is a regular meromorphic section follows from the discussion in \cite[§21.5.3]{EgaIV4}; the definition is independent of the choice of the $h_i$'s since the norm of sheaves is multiplicative.

The definition of inverse image is a reformulation of \cite[Definition 21.4.2]{EgaIV4}, obtained as a consequence of \cite[§21.4.3]{EgaIV4}, together with the result of \cite[Proposition 21.4.5]{EgaIV4} that ensures that $\pi^\sharp(s_i)/\pi^\sharp(t_i)$ is regular.
\end{proof}

\begin{prop}
Let $\pi: X \rightarrow Y$ be a finite, flat morphism of curves of degree $n$. The direct and inverse image for Cartier divisors induce homorphisms of groups:
\begin{align*}
\pi_*: \CDiv(X) &\longrightarrow \CDiv(Y) \\
\pi^*: \CDiv(Y) &\longrightarrow \CDiv(X),
\end{align*}
such that $\pi_* \circ \pi^*$ is the multiplication map by $n$.
\end{prop}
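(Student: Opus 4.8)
The plan is to work throughout with the dictionary between Cartier divisors and invertible fractional ideals recalled above, under which the additive group law on $\CDiv$ corresponds to multiplication of fractional ideals; each of the three assertions then reduces to a multiplicative identity. That $\pi_*$ and $\pi^*$ are well defined as maps of sets — sending local data glued from regular meromorphic sections to genuine Cartier divisors — is already the content of Definition/Lemma \ref{DirectInverseImageForCartDef} and the results of \cite{EgaIV4} cited there, so what remains is only the bookkeeping of the group structure.

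I would handle $\pi_*$ first: given $D, D' \in \CDiv(X)$ with fractional ideals $\Ii, \Ii'$, pass to a common refinement $\{V_i\}_{i \in I}$ of the affine covers produced by Lemma \ref{TrivialOnSaturated} for $\Ii$ and for $\Ii'$. On each $V_i$ the modules $\pi_*\Ii$ and $\pi_*\Ii'$ are generated over $\pi_*\Oo_X$ by regular meromorphic sections $h_i$ and $h_i'$, so $\pi_*(\Ii \cdot \Ii')$ is generated by $h_i h_i'$; passing to the sheaf of total quotient rings, where each $h_i$ is a ratio of regular sections of $\pi_*\Oo_X$, the multiplicativity \eqref{normpr} of the norm of sheaves gives $\Nn_{Y/X}(h_i h_i') = \Nn_{Y/X}(h_i)\,\Nn_{Y/X}(h_i')$, which is precisely $\pi_*(D+D') = \pi_*(D) + \pi_*(D')$, while $\Nn_{Y/X}(1) = 1$ gives $\pi_*(0) = 0$. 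For $\pi^*$, representing $M, M'$ on a common cover by $u_i = s_i/t_i$ and $u_i' = s_i'/t_i'$ with $s_i, t_i, s_i', t_i'$ regular sections of $\Oo_Y$, the divisor $\pi^*(M+M')$ is generated on $\pi^{-1}(V_i)$ by $\pi^\sharp(s_i s_i')/\pi^\sharp(t_i t_i')$, and this equals $\big(\pi^\sharp(s_i)\pi^\sharp(s_i')\big)/\big(\pi^\sharp(t_i)\pi^\sharp(t_i')\big)$ because $\pi^\sharp$ is a homomorphism of sheaves of rings, so $\pi^*(M+M') = \pi^*(M) + \pi^*(M')$. An additive map of abelian groups is automatically a homomorphism, so both $\pi_*$ and $\pi^*$ are group homomorphisms.

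For the composite, I would take $M \in \CDiv(Y)$ represented, on a cover $\{V_i\}$ chosen via Lemma \ref{TrivialOnSaturated} so that $\pi_*\Oo_X$ is free of rank $n$ on each $V_i$, by $u_i = s_i/t_i$ with $s_i, t_i$ regular sections of $\Oo_Y$. Then $\pi^*(M)$ is generated on $U_i = \pi^{-1}(V_i)$ by $\pi^\sharp(s_i)/\pi^\sharp(t_i)$, and since $\pi^\sharp(s_i), \pi^\sharp(t_i) \in \Gamma(V_i, \pi_*\Oo_X)$ are regular, $\pi_*(\pi^*M)$ is generated on $V_i$ by $\Nn_{Y/X}(\pi^\sharp(s_i))/\Nn_{Y/X}(\pi^\sharp(t_i))$. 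The one computation that matters is that for a section $a$ of $\Oo_Y$ the multiplication map by $\pi^\sharp(a)$ on the rank-$n$ locally free $\Oo_Y$-module $\pi_*\Oo_X$ is exactly multiplication by the scalar $a$, so its determinant is $a^n$, i.e. $\Nn_{Y/X}(\pi^\sharp(a)) = a^n$. Applying this with $a = s_i$ and with $a = t_i$ shows that $\pi_*(\pi^*M)$ is generated on $V_i$ by $s_i^n/t_i^n = u_i^n$, which is exactly the fractional ideal of $nM$; hence $\pi_* \circ \pi^*$ is the multiplication map by $n$, as claimed.

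I expect the difficulty here to be entirely organizational rather than substantive: the real mathematical input is just \eqref{normpr} together with the determinant identity $\Nn_{Y/X}(\pi^\sharp(a)) = a^n$. The only points needing care are the reduction to a single affine cover of $Y$ on which the triviality hypotheses of Lemma \ref{TrivialOnSaturated} hold simultaneously for both divisors in play, and the verification that the local generators written down are regular meromorphic sections, so that Definition/Lemma \ref{DirectInverseImageForCartDef} (and hence the cited results of \cite{EgaIV4}) applies without change.
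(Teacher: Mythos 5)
Your argument is correct, and it is genuinely more self-contained than the paper's proof, which simply delegates all three assertions to EGA IV (\S 21.5.5.1 for additivity of $\pi_*$ and of $\pi^*$, and Proposition 21.5.6 for the composite). What you do instead is unpack those citations into the two local computations that actually drive everything: multiplicativity of the norm of sheaves, $\Nn_{Y/X}(ss')=\Nn_{Y/X}(s)\Nn_{Y/X}(s')$, applied to local generators $h_i=f_i/g_i$ of the pushed-forward invertible fractional ideals (which is legitimate once you observe, as you do, that a common refinement of the covers from Lemma \ref{TrivialOnSaturated} makes both ideals simultaneously principal over $\pi_*\Oo_X$ and that the product ideal is then generated by $h_ih_i'$); and the scalar identity $\Nn_{Y/X}(\pi^\sharp(a))=a^n$, which is the second formula of (\ref{normpr}) with $s=1$ and is exactly what turns $\pi_*(\pi^*M)$ into $nM$. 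The trade-off is the usual one: the paper's proof is shorter and leans on the general EGA machinery for regular meromorphic sections, while yours makes visible that the only substantive inputs are the determinant identities already recorded in (\ref{normpr}), at the cost of the cover-refinement bookkeeping you correctly flag. The one point worth stating explicitly if you write this up is that the generator $h_ih_i'$ of $\pi_*(\Ii\cdot\Ii')$ is again a \emph{regular} meromorphic section (a ratio of nonzerodivisors of $\pi_*\Oo_X$), so that Definition \ref{DirectInverseImageForCartDef} applies to the sum; this follows because a product of nonzerodivisors is a nonzerodivisor, but it is the hypothesis under which the whole dictionary operates.
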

\begin{proof}
The direct image is a homomorphism thanks to \cite[§21.5.5.1]{EgaIV4}, while the inverse image is a homomorphism as a consequence of \cite[Definition 21.5.5.1]{EgaIV4}. The result on $\pi_* \circ \pi^*$ is stated in \cite[Proposition 21.5.6]{EgaIV4}
\end{proof}

\subsection{Norm of line bundles}
We come now to the definition of the Norm map for invertible $\Oo_X$-modules.

\begin{deflemma}
Let $\pi: X \rightarrow Y$ be a finite, flat morphism between curves and let $\Ll$ be an invertible $\Oo_X$-module. Let $\{ V_i \}_{i \in I}$ be an affine cover of $Y$ as in Lemma \ref{TrivialOnSaturated}. In particular, there is for any $i \in I$ an isomorphism $\lambda_i: (\pi_* \Ll)_{|V_i} \rightarrow (\pi_*\Oo_X)_{|V_i}$. For any $i,j \in I$, the isomorphism $\omega_{ij} :=
\lambda_i \circ \lambda_j^{-1}$ can be 
interpreted as a section of $\pi_* \Oo_X$ over $V_i \cap V_j$. The collection of norms $\{\Nn_{Y/X}
(\omega_{ij})\}_{i,j \in I}$ is a 1-cocycle with values in $\Oo_Y^*$.

The cocyle $\{\Nn_{Y/X}
(\omega_{ij})\}_{i,j \in I}$ defines up to isomorphism an invertible $\Oo_Y$-module, which is called the \textit{Norm of }$\Ll$ \textit{relative to} $\pi$ and  is denoted as $\Nm_\pi(\Ll)$ or $\Nm_{Y/X}(\Ll)$.
\end{deflemma}
\begin{proof}
	If $\Ll'$ is an invertible $\Oo_X$-module isomorphic to $\Ll$ through an isomorphism $h: \Ll' \rightarrow \Ll$, then a local trivialization for $\pi_*\Ll'$ over $V_i$ is given by $ \lambda_i \circ (\pi_* h) $. Running over all  $i\in I$, the resulting 1-cocycle $\{\Nn_{Y/X}(\lambda_i \circ \pi_*h \circ \pi_*h^{-1}\circ  \lambda_j^{-1} ) \}_{i,j \in I}$ is the same as for $\Ll$.
\end{proof}

Recall that, for any curve $X$, the Picard group of $X$ is the set $\Pic(X)$ of isomorphism classes of invertible $\Oo_X$-modules, endowed with the operation of tensor product. 

\begin{prop}
	Let $\pi: X \rightarrow Y$ be a finite, flat morphism of curves of degree $n$. The Norm and the inverse image map for line bundles induce homomorphism of groups:
\begin{align*}
\Nm_\pi: \Pic(X) &\longrightarrow \Pic(Y) \\
\pi^*: \Pic(Y) &\longrightarrow \Pic(X),
\end{align*}
such that $\Nm_\pi \circ \pi^*$ is the $n$-th tensor power.
\end{prop}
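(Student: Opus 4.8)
The plan is to work entirely with \v{C}ech $1$-cocycles, using the explicit descriptions of $\Nm_\pi$ and $\pi^*$ from the preceding Definition/Lemmas, and to reduce all three assertions to the multiplicativity relations \eqref{normpr} of the sheaf-theoretic norm $\Nn_{Y/X}$. After passing to a sufficiently fine common affine cover of $Y$, each claim becomes an elementary identity between transition functions.

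First I would dispatch the inverse image. Given $\Nn,\Nn' \in \Pic(Y)$ trivialized on an affine cover $\{V_i\}$ with transition functions $g_{ij},g'_{ij} \in \Gamma(V_i\cap V_j,\Oo_Y^*)$, the bundle $\pi^*\Nn$ is trivialized on $\{U_i=\pi^{-1}(V_i)\}$ with transition functions $\pi^\sharp(g_{ij})$; since $\pi^\sharp$ is a ring homomorphism, $\pi^\sharp(g_{ij}g'_{ij})=\pi^\sharp(g_{ij})\pi^\sharp(g'_{ij})$ and $\pi^\sharp(1)=1$, so $\pi^*\colon \Pic(Y)\to\Pic(X)$ is a group homomorphism (compatible with the one already defined for Cartier divisors). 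For $\Nm_\pi$: given invertible $\Oo_X$-modules $\Ll,\Ll'$, choose by Lemma \ref{TrivialOnSaturated} (and by refining) a single affine cover $\{V_i\}$ of $Y$ on which $\pi_*\Oo_X$, $\pi_*\Ll$, $\pi_*\Ll'$ and $\pi_*(\Ll\otimes\Ll')$ are all trivial as $\pi_*\Oo_X$-modules, via $\lambda_i$, $\lambda'_i$ and $\nu_i$ respectively. The key point is the canonical isomorphism $\pi_*(\Ll\otimes_{\Oo_X}\Ll')\simeq \pi_*\Ll\otimes_{\pi_*\Oo_X}\pi_*\Ll'$ (checked locally on $Y$, where it is the identity on the corresponding module tensor products over the finite $\Oo_Y$-algebra), under which we may take $\nu_i=\lambda_i\otimes\lambda'_i$; then the transition section of $\pi_*(\Ll\otimes\Ll')$ over $V_i\cap V_j$ is the product $\omega_{ij}\omega'_{ij}$ in $\pi_*\Oo_X$ of those of $\pi_*\Ll$ and $\pi_*\Ll'$. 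Applying \eqref{normpr} gives $\Nn_{Y/X}(\omega_{ij}\omega'_{ij})=\Nn_{Y/X}(\omega_{ij})\Nn_{Y/X}(\omega'_{ij})$, so the defining cocycle of $\Nm_\pi(\Ll\otimes\Ll')$ is the product of those of $\Nm_\pi(\Ll)$ and $\Nm_\pi(\Ll')$; since moreover $\Nm_\pi(\Oo_X)=\Oo_Y$ (the trivial cocycle has trivial norm), $\Nm_\pi$ is a group homomorphism. (That each $\omega_{ij}$ is a unit section of $\pi_*\Oo_X$, so its norm lies in $\Oo_Y^*$, again follows from \eqref{normpr}.)

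For $\Nm_\pi\circ\pi^*=(\,\cdot\,)^{\otimes n}$: take $\Nn\in\Pic(Y)$ with transition functions $g_{ij}$ on a cover $\{V_i\}$ that also trivializes $\pi_*\Oo_X$. By the projection formula $\pi_*(\pi^*\Nn)\simeq \pi_*\Oo_X\otimes_{\Oo_Y}\Nn$, so $\pi_*(\pi^*\Nn)$ is trivial as a $\pi_*\Oo_X$-module on each $V_i$, with transition section over $V_i\cap V_j$ equal to the scalar section $\pi^\sharp(g_{ij})$ of $\pi_*\Oo_X$. Since $\Nn_{Y/X}(1)=\det(\mathrm{id})=1$, the second relation in \eqref{normpr} yields $\Nn_{Y/X}(\pi^\sharp(g_{ij}))=\Nn_{Y/X}(g_{ij}\cdot 1)=g_{ij}^{\,n}$. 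Hence $\Nm_\pi(\pi^*\Nn)$ has defining cocycle $\{g_{ij}^{\,n}\}$, which is a cocycle for $\Nn^{\otimes n}$, proving the claim.

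The main obstacle is the bookkeeping with covers and trivializations: justifying the canonical isomorphism $\pi_*(\Ll\otimes\Ll')\simeq\pi_*\Ll\otimes_{\pi_*\Oo_X}\pi_*\Ll'$ and the projection formula at the level of $\pi_*\Oo_X$-modules (both reduce, after restricting to an affine open $\Spec B\subseteq Y$ with $\pi^{-1}(\Spec B)=\Spec A$, to routine statements about module tensor products over the finite $B$-algebra $A$), and then reconciling the various local trivializations on a common refinement. Once these identifications are in place, the argument is a direct transcription of \eqref{normpr}. Alternatively, for a coordinate-free treatment one may invoke \cite[§21.5]{EgaIV4}, since the Norm map on $\Pic$ defined above coincides with the one constructed there.
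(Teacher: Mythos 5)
Your proof is correct and follows essentially the same route as the paper, which likewise reduces everything to the multiplicativity relations (\ref{normpr}) and the fact that tensor product of line bundles corresponds to multiplication of defining cocycles. You simply supply the bookkeeping (the identification $\pi_*(\Ll\otimes\Ll')\simeq\pi_*\Ll\otimes_{\pi_*\Oo_X}\pi_*\Ll'$ and the projection formula) that the paper leaves implicit.
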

\begin{proof}
The inverse image for line bundles is a homomorphism since it commutes obviously with tensor products. The other results follow from (\ref{normpr}), using the fact that tensor product of line bundles corresponds to multiplication at the level of defining cocycles.
\end{proof}

\begin{prop}\label{DetFormula}
Let $\pi: X \rightarrow Y$ be a finite, flat morphism between curves. For any invertible $\Oo_X$-modules $\Ll$, we have
\begin{align}
\Nm_\pi(\Ll) \simeq \det(\pi_* \Ll) \otimes \det(\pi_* \Oo_X)^{-1}.
\end{align}
\end{prop}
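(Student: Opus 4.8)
The plan is to compare both sides by their \v{C}ech transition cocycles on a single affine cover, and to observe that the comparison reduces exactly to the defining property of the sheaf-level norm of Definition \ref{NormOfSheaves}. So I would fix, once and for all, an affine cover $\{V_i\}_{i\in I}$ of $Y$ as produced by Lemma \ref{TrivialOnSaturated}: on each $V_i$ there is an isomorphism $\lambda_i \colon (\pi_*\Ll)_{|V_i} \xrightarrow{\sim} (\pi_*\Oo_X)_{|V_i}$ of $\pi_*\Oo_X$-modules, and an isomorphism $\mu_i \colon (\pi_*\Oo_X)_{|V_i} \xrightarrow{\sim} \Oo_{Y|V_i}^{\,n}$ of $\Oo_Y$-modules. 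By the Definition/Lemma introducing the Norm, $\Nm_\pi(\Ll)$ is the invertible $\Oo_Y$-module with transition cocycle $\{\Nn_{Y/X}(\omega_{ij})\}_{i,j}$, where $\omega_{ij} := \lambda_i\circ\lambda_j^{-1}$ is viewed as a unit section in $\Gamma(V_i\cap V_j,(\pi_*\Oo_X)^*)$.

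\textbf{Cocycle computations.} Next I would write down transition cocycles for the two determinant line bundles on this same cover. Using the $\Oo_Y$-trivializations $\mu_i$, the locally free rank-$n$ sheaf $\pi_*\Oo_X$ has transition matrices $n_{ij} := \mu_i\circ\mu_j^{-1}$, so $\det(\pi_*\Oo_X) = \wedge^n(\pi_*\Oo_X)$ has cocycle $\{\det(n_{ij})\} = \{\det(\mu_i)\cdot\det(\mu_j)^{-1}\}$. On the other hand, $\mu_i\circ\lambda_i$ is an $\Oo_Y$-trivialization of $\pi_*\Ll$ over $V_i$, so $\det(\pi_*\Ll)$ has cocycle $\{\det(m_{ij})\}$ with $m_{ij} := (\mu_i\circ\lambda_i)\circ(\mu_j\circ\lambda_j)^{-1} = \mu_i\circ(\,\cdot\,\omega_{ij})\circ\mu_j^{-1}$, where $\cdot\,\omega_{ij}$ is the $\Oo_Y$-linear automorphism of $(\pi_*\Oo_X)_{|V_i\cap V_j}$ given by multiplication by the unit $\omega_{ij}$. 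Multiplicativity of $\det$ then gives $\det(m_{ij}) = \det(\mu_i)\cdot\det(\,\cdot\,\omega_{ij})\cdot\det(\mu_j)^{-1}$.

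\textbf{Conclusion.} The heart of the matter is the identity $\det(\,\cdot\,\omega_{ij}) = \Nn_{Y/X}(\omega_{ij})$, which is precisely Definition \ref{NormOfSheaves}: the norm of a local section $s$ of $\pi_*\Oo_X$ is by definition the determinant of the multiplication map $\cdot\, s$ on the rank-$n$ free $\Oo_Y$-module $\pi_*\Oo_X$. Feeding this into the previous line, the cocycle of $\det(\pi_*\Ll)\otimes\det(\pi_*\Oo_X)^{-1}$ is $\{\det(\mu_i)\,\Nn_{Y/X}(\omega_{ij})\,\det(\mu_j)^{-1}\cdot\det(\mu_i)^{-1}\det(\mu_j)\} = \{\Nn_{Y/X}(\omega_{ij})\}$, which is exactly the defining cocycle of $\Nm_\pi(\Ll)$; hence $\Nm_\pi(\Ll)\simeq\det(\pi_*\Ll)\otimes\det(\pi_*\Oo_X)^{-1}$.

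\textbf{Where the care goes.} The only points to be careful about — none of them a real obstacle — are: that $\det$ of a locally free sheaf of rank $n$ is genuinely computed by taking determinants of a transition cocycle of $n\times n$ matrices (standard, since $\det = \wedge^n$ and $\wedge^n$ is functorial); that $\omega_{ij}$, being an automorphism of $(\pi_*\Oo_X)_{|V_i\cap V_j}$ \emph{as a module over itself}, is indeed multiplication by a unit section of $\pi_*\Oo_X$, so that Definition \ref{NormOfSheaves} applies to it verbatim; and that Lemma \ref{TrivialOnSaturated} delivers \emph{one} cover trivializing everything simultaneously, so all cocycle manipulations stay on a fixed cover and no passage to a common refinement is needed. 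Beyond this bookkeeping there is nothing substantive to prove.
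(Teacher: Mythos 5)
Your proof is correct, and it is worth noting that the paper itself does not prove this proposition at all: its ``proof'' is the single line ``See [HP, Corollary 3.12]''. So your cocycle computation is a genuine, self-contained argument where the thesis only offers a citation, and it is exactly the right one: the whole content of the formula is the identity $\det(\cdot\,\omega_{ij})=\Nn_{Y/X}(\omega_{ij})$, which is Definition \ref{NormOfSheaves} verbatim, combined with the fact that Lemma \ref{TrivialOnSaturated} supplies one cover on which $\pi_*\Oo_X$ is $\Oo_Y$-free and $\pi_*\Ll$ is $\pi_*\Oo_X$-free simultaneously, so that $\mu_i\circ\lambda_i$ trivializes $\pi_*\Ll$ over $\Oo_Y$ and the two cocycles can be compared on the nose.

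One presentational caveat: the intermediate expression $\det(m_{ij})=\det(\mu_i)\cdot\det(\cdot\,\omega_{ij})\cdot\det(\mu_j)^{-1}$ is an abuse, since $\mu_i$ is an isomorphism between two \emph{different} free modules and ``$\det(\mu_i)$'' by itself is not a well-defined unit. The clean way to say what you mean is to factor $m_{ij}=\bigl(\mu_i\circ(\cdot\,\omega_{ij})\circ\mu_i^{-1}\bigr)\circ n_{ij}$ and use that the determinant of an endomorphism is conjugation-invariant, giving $\det(m_{ij})=\Nn_{Y/X}(\omega_{ij})\cdot\det(n_{ij})$ directly; the final cancellation you perform is exactly this identity in disguise, so nothing is actually wrong. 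You are also right to flag that $\omega_{ij}$ is multiplication by a unit section of $\pi_*\Oo_X$ (it is a $\pi_*\Oo_X$-linear automorphism of $\pi_*\Oo_X$ itself), which is what licenses applying $\Nn_{Y/X}$ to it.
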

\begin{proof}
See \cite[Corollary 3.12]{HP}.
\end{proof}

As we show in the next proposition, the direct image for Cartier divisors and the Norm map for line bundles are related, as well as the inverse image maps. Recall that, on any curve $X$, the Picard group is canonically isomorphic to the group of Cartier divisors modulo the subgroup $\Prin(X)$ of principal divisors. This gives rise to a canonical quotient of groups: 
\begin{align*}
q_X: \CDiv(X) &\longrightarrow \CDiv(X)/\Prin(X) = \Pic(X) \\
D &\longmapsto \Ii_D.
\end{align*}
\begin{prop}
Let $\pi: X \rightarrow Y$ be a finite, flat morphism between curves. The direct and inverse image for Cartier divisors are compatible via the quotient maps $q_X$ and $q_Y$ respectively with the Norm and the inverse image map for line bundles; i.e. the following diagrams of groups are commutative: \[
\begin{tikzcd}
\CDiv(X) \arrow{r}{\pi_*} \arrow{d}{q_X}  & \CDiv(Y) \arrow{d}{q_Y} \\
\Pic(X) \arrow{r}{\Nm_\pi} & \Pic(Y)
\end{tikzcd}
\hspace{2em}
\begin{tikzcd}
\CDiv(Y) \arrow{r}{\pi^*} \arrow{d}{q_Y}  & \CDiv(X)
\arrow{d}{q_X} \\
\Pic(Y) \arrow{r}{\pi^*} & \Pic(X).
\end{tikzcd}
\]
\end{prop}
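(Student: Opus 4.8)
The plan is to check each of the two squares by passing to explicit local data and comparing the cocycles that define the line bundles to be identified. For the left square, i.e. $q_Y\circ\pi_* = \Nm_\pi\circ q_X$: let $D\in\CDiv(X)$ and put $\Ll:=\Ii_D$, so that $q_X(D)=\Ll$. By Lemma~\ref{TrivialOnSaturated} I choose an affine cover $\{V_i\}_{i\in I}$ of $Y$ on which $\pi_*\Oo_X$ is free over $\Oo_Y$ and $\pi_*\Ll$ is free both over $\pi_*\Oo_X$ and over $\Oo_Y$; since neither the construction of $\pi_*(D)$ nor that of $\Nm_\pi(\Ll)$ depends on the choice of such a cover, after a common refinement I may use the same $\{V_i\}$ for both. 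Write $\pi_*\Ll|_{V_i}=h_i\cdot(\pi_*\Oo_X)|_{V_i}\subseteq(\pi_*\Kk_X)|_{V_i}$ with $h_i=f_i/g_i$ a meromorphic regular section, as in Definition/Lemma~\ref{DirectInverseImageForCartDef}. On overlaps $h_j/h_i\in\Gamma(V_i\cap V_j,\pi_*\Oo_X^*)$, and, taking the trivialization of $\pi_*\Ll|_{V_i}$ to be division by $h_i$, the cocycle $\{\omega_{ij}\}$ appearing in the definition of $\Nm_\pi$ equals $\{h_j/h_i\}$; hence, by multiplicativity of the norm of sheaves, $\Nm_\pi(\Ll)$ is the line bundle with defining cocycle $\{\Nn_{Y/X}(h_j)/\Nn_{Y/X}(h_i)\}$, where $\Nn_{Y/X}(h_i):=\Nn_{Y/X}(f_i)/\Nn_{Y/X}(g_i)$ is a well-defined meromorphic regular section by the same multiplicativity. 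On the other side, Definition/Lemma~\ref{DirectInverseImageForCartDef} says that $\pi_*(D)$ is the Cartier divisor whose fractional ideal is generated on each $V_i$ by $\Nn_{Y/X}(h_i)$, so $q_Y(\pi_*(D))=\Ii_{\pi_*(D)}$ is exactly the line bundle with transition cocycle $\{\Nn_{Y/X}(h_j)/\Nn_{Y/X}(h_i)\}$ relative to these generators. The two cocycles coincide, whence $q_Y(\pi_*(D))\simeq\Nm_\pi(q_X(D))$, naturally in $D$.

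For the right square, $q_X\circ\pi^* = \pi^*\circ q_Y$: let $M\in\CDiv(Y)$ with attached fractional ideal $\Jj=\Ii_M$, and pick an affine cover $\{V_i\}$ of $Y$ on which $\Jj|_{V_i}=u_i\cdot\Oo_Y|_{V_i}$ with $u_i=s_i/t_i$, $s_i,t_i\in\Gamma(V_i,\Oo_Y)$ regular; set $U_i:=\pi^{-1}(V_i)$. The sheaf map $\pi^\sharp:\Oo_Y\to\pi_*\Oo_X$ carries regular sections to regular sections (this is the input from \cite[Proposition~21.4.5]{EgaIV4} already used in Definition/Lemma~\ref{DirectInverseImageForCartDef}), hence it induces a map of total quotient sheaves and, on overlaps, sends the transition cocycle $\{u_j/u_i\}$ of $\Jj$ to $\{\pi^\sharp(s_j)\pi^\sharp(t_i)/(\pi^\sharp(t_j)\pi^\sharp(s_i))\}$, which by definition is the transition cocycle of the pullback line bundle $\pi^*(\Jj)=\pi^*(q_Y(M))$. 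But Definition/Lemma~\ref{DirectInverseImageForCartDef} also says that $\pi^*(M)$ has fractional ideal generated on $U_i$ by $\pi^\sharp(s_i)/\pi^\sharp(t_i)$, so $q_X(\pi^*(M))=\Ii_{\pi^*(M)}$ has precisely that cocycle; therefore $q_X(\pi^*(M))\simeq\pi^*(q_Y(M))$.

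The argument is essentially bookkeeping and I do not expect a serious obstacle. The two points that need care are (i) that a single affine cover of $Y$ can serve every construction at once, which rests on the independence of the choices of cover and of local generators established in Lemma~\ref{TrivialOnSaturated} and Definition/Lemma~\ref{DirectInverseImageForCartDef}, together with passage to a common refinement; and (ii) that the norm of sheaves $\Nn_{Y/X}$ is multiplicative and unit-preserving, by (\ref{normpr}), so that it descends to a well-defined operation on the gluing cocycles (equivalently, on ratios of meromorphic regular sections of $\pi_*\Oo_X$). Once these are in place, both squares reduce to unwinding the definitions, modulo keeping track of the $f_i$-versus-$f_i^{-1}$ convention for $\Ii_D$ flagged in the footnote.
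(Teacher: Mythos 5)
Your argument is correct and is essentially the paper's own proof written out in full: the paper disposes of both squares by appealing to the correspondence $D\leftrightarrow\Ii_D$, the definitions of $\pi_*$ and $\Nm_\pi$, and the EGA references, while you make the same comparison explicit at the level of local generators and gluing cocycles. The only point worth double-checking in your write-up — that ``division by $h_i$'' is a legitimate choice of $\lambda_i$ in the definition of $\Nm_\pi$ — is fine, since it is a $\pi_*\Oo_X$-module trivialization of $\pi_*\Ll$ on the cover supplied by Lemma \ref{TrivialOnSaturated}, which is exactly what that definition requires.
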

\begin{proof}
The commutativity of the first diagram follows from correspondence between  $D$ and $\Ii_D$ and the definitions of $\pi_*$ and $\Nm_\pi$. The second diagram is commutative as consequence of \cite[Picture 21.4.2.1]{EgaIV4} together with \cite[Proposition 21.4.5]{EgaIV4}.
\end{proof}

\vspace{1em}

We are finally interested in reviewing the Norm map for families of line bundles.\footnote{For the sake of completeness, we note that a complete discussion regarding families of Cartier divisors and their direct and inverse images is done in \cite[§21.15]{EgaIV4}. } To study families of line bundles, we \textbf{assume that $X$ and $Y$ are projective curves over a base field $k$} and that $\pi: X \rightarrow Y$ is a finite, flat morphism of degree $n$ defined over $k$. Families of line bundles are parametrized by the Jacobian schemes of $X$ and $Y$ (denoted $J(X)$ and $J(Y)$, see Chapter \ref{ChapterPrelim}, Section \ref{SectionModuliOfSheaves}).

\begin{deflemma}\label{NormOnJacDef}
Let  $\pi: X \rightarrow Y$ be a finite, flat morphism of projective curves over a base field $k$. For any $k$-scheme $T$, the \textit{Norm map for line bundles associated to $\pi$} is defined  on the $T$-valued points of the Jacobian of $X$ as:
\begin{align*}
\Nm_\pi(T): \hspace{2em} J(X)(T) &\longrightarrow J(Y)(T) \\
\Ll &\longmapsto \det\left( \pi_{T,*} (\Ll) \right) \otimes \det\left( \pi_{T,*}\Oo_{X\times_k T} \right)^{-1}.
\end{align*}
where $\pi_T: X \times_k T \rightarrow Y \times_k T$ is induced by pullback from $\pi$.
\end{deflemma}
\begin{proof}
We need to check that the map is well defined. Since $\Ll$ is a line bundle on $X \times_k T$, its pushforward $\pi_{T,*} (\Ll)$ is a locally free sheaf of rank $n$ on $Y \times_k T$, as for $\pi_{T,*}\Oo_{X\times_k T}$. Taking determinants produces line bundles, so $\det\left( \pi_{T,*} (\Ll) \right) \otimes \det\left( \pi_{T,*}\Oo_{X\times_k T} \right)^{-1}$ is a well-defined line bundle on $Y \times_k T$. The whole construction is functorial, so the above definition gives a well-defined morphism of schemes \[
\Nm_\pi: \hspace{2em} J(X) \longrightarrow J(Y).
\]\end{proof}

\section{Review of generalized divisors}\label{SectionReviewDivisors}

In this section, we review known facts about generalized divisors and generalized line bundles on curves. Moreover, we compare their moduli spaces with the compactified Jacobian.

\begin{rmk}
The theory of generalized divisors is developed by Hartshorne in his papers \cite{Har86}, \cite{Har94} and \cite{Har07}, in order to generalize the notion of Cartier divisor on schemes satisfying condition $S_2$ of Serre. Since we are dealing with schemes of dimension 1, the condition $S_2$ of Serre coincides with the condition $S_1$, which in turn coincides with the fact of not having embedded components (i.e. embedded points). In particular, any scheme of pure dimension 1 is also $S_1$, and hence $S_2$. Similarly, a coherent sheaf on a curve satisfies condition $S_2$ if and only if it is torsion-free.
\end{rmk}

\subsection{Generalized divisors}

\begin{defi}
Let $X$ be a curve and let $\Kk_X$ be the sheaf of total quotient rings on $X$. A \textit{generalized divisor} on $X$ is a nondegenerate fractional ideal of $\Oo_X$-modules, i.e. a subsheaf $\Ii \subseteq \Kk_X$ that is a coherent sheaf of $\Oo_X$-modules and such that $\Ii_\eta = \Kk_{X, \eta}$ for any generic point $\eta \in X$. It is \textit{effective} if $\Ii \subseteq \Oo_X$. It is \textit{Cartier} if $\Ii$ is an invertible $\Oo_X$-module, or equivalently locally principal. It is  \textit{principal} if $\Ii = \Oo_X \cdot f$ (also denoted $(f)$) for some global section $f \in \Gamma(X, \Kk^*_X)$. 
\end{defi}

The set of generalized divisors on $X$ is denoted with $\GDiv(X)$, the subset of Cartier divisors with $\CDiv(X)$ and the set of principal divisors with $\Prin(X)$. 

Using the usual notion of subscheme associated to a sheaf of ideals, the set $\GDiv^+(X)$ of effective generalized divisors on $X$ is in one-to-one correspondence with the set of closed subschemes $D \subset X$ of pure codimension one (i.e. of dimension zero). With a slight abuse of notation, also for non-effective divisors, we denote with $D$ the generalized divisor and we refer to $\Ii$ (or $\Ii_D$) as the \textit{fractional ideal} of $D$ (also called \textit{defining ideal} of $D$, or \textit{ideal sheaf} of $D$ if $D$ is effective).

\vspace{1em} 

Let $D_1, D_2$ be generalized divisors on $X$, with fractional ideals $\Ii_1, \Ii_2$. The \textit{sum} $D_1 + D_2$ is the generalized divisor associated to the fractional ideal $\Ii_1 \cdot \Ii_2 \subseteq \Kk_X $. The sum is commutative, associative, with neutral element $0$ defined by the trivial ideal $\Oo_X$. The \textit{inverse} of a generalized divisor $D$ associated to $\Ii$ is the generalized divisor $-D$ associated to the fractional ideal $\Ii^{-1}$, i.e. the sheaf which on any open subset $U$ of $X$ is defined as  $\{ f \in \Gamma(U,\Kk_X) \hspace{.3em} | \hspace{.3em} \Ii(U) \cdot f \subseteq \Oo_X(U) \}$.

The inverse operation behaves well with the sum only for Cartier divisors. For any pair of generalized divisors $D,E$ on $X$ with $E$ Cartier, $-(D+E)= (-D)+(-E)$, but $D + (-D)=0$ if and only if $D$ is a Cartier divisor. As a consequence, the set $\GDiv(X)$ of all generalized divisors over $X$ endowed with the sum operation is not a group, but the subset $\CDiv(X)$ of Cartier divisors is. The set $\Prin(X)$ of principal divisors is a subgroup of $\CDiv(X)$ and both the groups act by addition on the set $\GDiv(X)$. 

Tthe following Lemma, inspired by \cite[Proposition 2.11]{Har94}, shows that any generalized divisor is equal to the sum of an effective generalized divisor and the inverse of an effective Cartier divisor, as 

\begin{lemma}\label{DifferenceOfEffective}
Let $X$ be a curve and let $D \in \GDiv(X)$ be any generalized divisor on $X$. Then, there exist an effective generalized divisor $D'$ and an effective Cartier divisor $E$ on $X$ such that $D=D'+(-E)$.
\end{lemma}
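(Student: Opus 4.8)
The plan is to work with the fractional ideal $\Ii:=\Ii_D\subseteq\Kk_X$ of $D$, which is coherent and nondegenerate (so $\Ii_\eta=\Kk_{X,\eta}$ at every generic point $\eta$ of $X$). Producing $D'$ and $E$ as claimed is equivalent to finding an effective Cartier divisor $E$ with the property that $D+E$ is effective: if $\Ii\cdot\Ii_E\subseteq\Oo_X$, set $D':=D+E$, which is then an effective generalized divisor, and since $E$ is Cartier one has $E+(-E)=0$, hence $D'+(-E)=D$. (That $D'$ is again nondegenerate is checked at the generic points, where $\Ii$ and $\Ii_E$ both have stalk $\Kk_{X,\eta}$.) So the statement reduces to: there exists an effective Cartier divisor $E$ with $\Ii_E\subseteq(\Oo_X:\Ii)$.

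To produce this $E$, I would replace $(\Oo_X:\Ii)$ by $\Jj:=(\Oo_X:\Ii)\cap\Oo_X$, a coherent sheaf of ideals on $X$. It is nondegenerate: at a generic point $\eta$ the stalk $\Oo_{X,\eta}$ is artinian local, so $\Kk_{X,\eta}=\Oo_{X,\eta}$, whence $\Ii_\eta=\Oo_{X,\eta}$, $(\Oo_X:\Ii)_\eta=\Oo_{X,\eta}$ and $\Jj_\eta=\Oo_{X,\eta}$. Thus $\Jj$ is the ideal of an effective generalized divisor and $\Oo_X/\Jj$ is supported on a finite set of closed points. It now suffices to find an effective Cartier divisor $E$ with $\Ii_E\subseteq\Jj$, because then $\Ii\cdot\Ii_E\subseteq\Ii\cdot\Jj\subseteq\Ii\cdot(\Oo_X:\Ii)\subseteq\Oo_X$.

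Such an $E$ is built from the observation that $\Jj$ locally contains a non-zerodivisor: at any point $p$ the associated primes of $\Oo_{X,p}$ are its minimal primes (the curve has no embedded points), $\Jj_p$ is contained in none of them, and prime avoidance yields a non-zerodivisor $g_p\in\Jj_p$, so that $(g_p)\simeq\Oo_{X,p}$ is invertible and contained in $\Jj_p$. Globalizing this local statement is the substantive point. If $X$ is affine, any non-zerodivisor $g\in\Gamma(X,\Jj)$ gives $E=\{g=0\}$ with $\Ii_E=(g)\subseteq\Jj$. If $X$ is projective, fix an ample $\Oo_X(1)$; for $n\gg 0$ the twist $\Jj(n)$ is generated by global sections, hence (prime avoidance again, applied to the finitely many proper subspaces of sections vanishing at a generic point of $X$) it admits a global section $s$ which is a non-zerodivisor; then $s$ cuts out an effective Cartier divisor $E$ with $\Oo_X(E)\simeq\Oo_X(n)$ and $\Ii_E=s\cdot\Oo_X(-n)\subseteq\Jj(n)\cdot\Oo_X(-n)=\Jj$. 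In either case $D':=D+E$ and $E$ are the desired divisors.

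The main obstacle I anticipate is precisely this last globalization step: passing from the locally available non-zerodivisors in $\Jj$ to a single effective Cartier divisor dominated by $\Jj$, which is where one must invoke an ample line bundle together with a general-position (prime-avoidance) argument for the choice of a regular section. Everything before that — the passage to fractional ideals, the reduction to $(\Oo_X:\Ii)$, and the cancellation of $E$ using that it is Cartier — is formal manipulation with the fractional-ideal dictionary and the sum of generalized divisors recalled above.
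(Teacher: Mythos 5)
Your reduction (find an effective Cartier divisor $E$ with $D+E$ effective, then cancel $E$ using that Cartier divisors are invertible under the sum) and your analysis of $\Jj=(\Oo_X:\Ii)\cap\Oo_X$ are correct, but the globalization step is where you part ways with the paper, and it is also where your argument is heavier and less general than necessary. The paper does not build a single Cartier divisor dominated by $\Jj$. It covers $X$ by finitely many affines $U_i=\Spec(A_i)$, picks on each a non-zerodivisor $f_i\in A_i$ clearing the denominators of $\Ii|_{U_i}$, and uses the key observation that the closed subscheme $Y_i\subset U_i$ cut out by $f_i$ is $0$-dimensional, hence already closed in $X$, and is an effective Cartier divisor of all of $X$ (defined by $f_i$ on $U_i$ and by the unit ideal elsewhere). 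Setting $E=\sum_i Y_i$ finishes: on each $U_j$ the product $\Ii\cdot\prod_i\Ii_{Y_i}$ lies in $\Oo_{U_j}$ because the factor $\Ii\cdot(f_j)$ already does and the remaining factors are honest ideals. This is Hartshorne's argument from \cite[Proposition 2.11]{Har94} and needs no ampleness, no global generation, and no choice of a section in general position.

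Two concrete consequences of the difference. First, your dichotomy ``$X$ affine or $X$ projective'' does not cover the stated generality: the lemma is asserted for any curve in the sense of this chapter (an embeddable noetherian scheme of pure dimension $1$), and the paper's local patching is insensitive to this, whereas your projective branch invokes twisting by an ample sheaf. Second, your general-position step for choosing a regular global section of $\Jj(n)$ fails as written over a finite base field, where a finite-dimensional vector space can be a finite union of proper subspaces; this is repairable but not by the prime-avoidance one-liner. If you wish to keep your route, the single missing observation that restores full generality is exactly the paper's: an effective Cartier divisor of an affine chart of a curve is automatically an effective Cartier divisor of the whole curve, so a finite sum of chart-local divisors already provides the required $E$, with no global section argument at all.
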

\begin{proof}
Cover $X$ by open affines $U_i=\Spec(A_i)$, $i=1, \dots, r$. For each $i$, denote with $I_i$ the fractional ideal of $D$ restricted to $U_i$. This is a finitely generated $A_i$-module, so there exists a nonzero-divisor $f_i \in A_i$ such that $f \cdot I_i \subseteq A$. Let $Y_i \subset U_i$ be the closed subscheme defined by $f_i$, which is an effective Cartier divisor of $U_i$; after the composition with $U_i \hookrightarrow X$, $Y_i$ becomes an effective Cartier divisor of $X$. Now, the sum of divisors $D+\sum_{i=1}^r Y_i$ is effective as it is sum of effective divisors on each $U_i$. By putting $E= \sum_{i=1}^r Y_i$ and $D'=D+E$ we get the result.
\end{proof}

\vspace{1em} 

Two generalized divisors $D_1, D_2$ over $X$ are \textit{linearly equivalent} (written $D_1 \sim D_2$) if there is a divisor $(f) \in \Prin(X)$ such that $D_1 + (f) = D_2$. We define the \textit{generalized Picard of X}  as the set of divisors on $X$ modulo linear equivalence: \[
\GPic(X) = \GDiv(X)/\Prin(X).
\]  
We have the following commutative diagram of sets, where the vertical maps are quotients by $\Prin(X)$:
\begin{center}
	\begin{tikzcd}[column sep=.7em]
	\CDiv(X) \arrow[twoheadrightarrow]{d} & \subseteq & \GDiv(X)\arrow[twoheadrightarrow]{d} \\
	\Pic(X)  & \subseteq & \GPic(X)
\end{tikzcd}

\end{center}
Taking inverse and sums preserve linear equivalence, so the two operations are well defined also on $\GPic(X)$ and the subset $\Pic(X) \subseteq \GPic(X) $ is a group that acts on $\GPic(X)$ by addition. For a curve $X$, the condition $\GDiv(X) = \CDiv(X)$ is also equivalent to $\GPic(X)= \Pic(X)$, which is also equivalent to the curve $X$ being smooth.

\vspace{1em}

The set $\GPic(X)$  has an alternative description in terms of generalized line bundles. Let $D$ be a generalized divisor on $X$; then, its fractional ideal $\Ii$ is a generalized line bundle. If  $D'$ is another generalized line bundle, it is linearly equivalent to $D$ if and only if its fractional ideal $\Ii'$ is a generalized line bundle isomorphic to $X$ as an $\Oo_X$-module. Viceversa, any generalized line bundle $\Ff$ on $X$ is isomorphic to the fractional ideal of some generalized divisor $D$ \cite[Proposition 2.4]{Har07}. Then, \textit{$\GPic(X)$ can be also defined as the set of generalized line bundles of $X$, up to isomorphism}.
Under this description, classes of Cartier divisors correspond to isomorphism classes of line bundles, and the operations of sum by a Cartier divisor and inverse of a divisor are replaced with tensor product and taking dual of the corresponding sheaves.

\subsection{Degree of generalized divisors}

We now \textbf{assume that $X$ is a projective curve over a base field $k$}. In this case, the notion of \textit{degree of a divisor} can be introduced. 

\begin{deflemma}\label{DegreeGenDivDef}
Let $D \in \GDiv^+(X)$ be an effective generalized divisor on $X$ with ideal sheaf $\Ii _D \subseteq \Oo_X$, and let $x \in X$ be any point of $X$ in codimension 1. We define the \textit{degree of $D$ at $x$} as the non-negative integer: \[
\deg_x(D) = \ell_{\Oo_{X,x}}\big( \Oo_{X,x}/\Ii_{D,x} \big)[\kappa(x):k].
\] 

The degree of any generalized divisor $D \in \GDiv(X)$ at $x$ is defined as $\deg_x(D)=\deg_x(E)-\deg_x(F)$, where $D=E-F$ with $E,F$ effective generalized divisors and $F$ Cartier by Lemma \ref{DifferenceOfEffective}. 

The degree of $D$ on $X$ (also denoted as $\deg(D)$ when there is no ambiguity) is equal to the sum of the degrees of $D$ at all points of $X$ in codimension 1: \[
\deg_X(D)= \sum_{x \textrm{ cod 1}} \deg_x(D).
\]
\end{deflemma}
\begin{proof}
First, note that the local ring $\Oo_{X,x}/\Ii_{D,x} $ in nonzero if and only if $x$ is in the support of $D$. In such case the local ring has Krull dimension 0, so it is Artinian and hence has finite length.

The definition of degree at a point of any generalized divisor is well-given thanks to \cite[Lemma 2.17]{Har94}, and $\deg_x(-D) = -\deg_x(D)$.

Finally, the supporty of $D$ contains only finitely many points, hence the degree of $D$ on $X$ is a well-defined integer.
\end{proof}

The degree of a generalized divisor is strictly related to the degree of its fractional ideal, as the following proposition shows.

\begin{prop}
Let $D$ be a generalized divisor on $X$ with fractional ideal $\Ii_D \subseteq \Kk_X$. Then, the degree of $\Ii_D$ as a torsion-free sheaf is well defined and equal to $-\deg_X(D)$. Moreover, linearly equivalent divisors have the same degree on $X$.
\end{prop}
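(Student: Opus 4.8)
The plan is to split the statement into three parts: (i) $\deg\Ii_D$ is well defined; (ii) $\deg\Ii_D=-\deg_X(D)$, proved first for effective divisors and then reduced to that case by Lemma~\ref{DifferenceOfEffective}; (iii) invariance under linear equivalence, which drops out of (ii). For (i), note that $\Ii_D$ is a subsheaf of the torsion-free sheaf $\Kk_X$, hence torsion-free, and for each generic point $\xi$ of $X$ one has $(\Ii_D)_\xi=\Kk_{X,\xi}=\Oo_{X,\xi}$ (the local ring at a generic point is Artinian, hence equal to its total quotient ring). Thus $\Ii_D$ has well-defined rank $1$ on $X$, and Definition~\ref{DegreeDef} applies: $\deg\Ii_D=\chi(X,\Ii_D)-\chi(X,\Oo_X)$.

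For $D$ effective, $\Ii_D\subseteq\Oo_X$ and the exact sequence $0\to\Ii_D\to\Oo_X\to\Oo_X/\Ii_D\to 0$ gives $\deg\Ii_D=-\chi(X,\Oo_X/\Ii_D)$. Since $\Oo_X/\Ii_D$ is supported in dimension $0$, its Euler characteristic is $\sum_x\dim_k(\Oo_{X,x}/\Ii_{D,x})=\sum_x\ell_{\Oo_{X,x}}(\Oo_{X,x}/\Ii_{D,x})[\kappa(x):k]=\sum_x\deg_x(D)=\deg_X(D)$ by Definition/Lemma~\ref{DegreeGenDivDef}, so $\deg\Ii_D=-\deg_X(D)$. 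For general $D$ I would write $D=D'-E$ with $D'$ an effective generalized divisor and $E$ an effective Cartier divisor (Lemma~\ref{DifferenceOfEffective}); since $\Ii_E$ is invertible, $\Ii_D\cong\Ii_{D'}\otimes_{\Oo_X}\Ii_E^{-1}$, and tensoring $0\to\Oo_X\to\Ii_E^{-1}\to\Ii_E^{-1}\otimes\Oo_E\to 0$ with $\Ii_{D'}$ yields
\[
0\to\Ii_{D'}\to\Ii_D\to\Ii_D\otimes_{\Oo_X}\Oo_E\to 0,
\]
where left-exactness holds because multiplication by a local generator of $\Ii_E$ is injective on the torsion-free sheaf $\Ii_{D'}$. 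Hence $\deg\Ii_D=\deg\Ii_{D'}+\chi(X,\Ii_D\otimes\Oo_E)$, and, granting $\chi(X,\Ii_D\otimes\Oo_E)=\deg_X(E)$, the effective case together with $\deg_x(D)=\deg_x(D')-\deg_x(E)$ gives $\deg\Ii_D=-\deg_X(D')+\deg_X(E)=-\deg_X(D)$.

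The identity $\chi(X,\Ff\otimes\Oo_E)=\deg_X(E)$ for a torsion-free rank-$1$ sheaf $\Ff$ (here $\Ff=\Ii_D$) is the one genuinely non-formal point, and I expect it to be the main obstacle, since $\Ff$ need not be locally free. It is a local statement: writing $\Ii_{E,x}=(f_x)$ with $f_x$ a nonzerodivisor, one must show $\ell_{\Oo_{X,x}}(\Ff_x/f_x\Ff_x)=\ell_{\Oo_{X,x}}(\Oo_{X,x}/f_x\Oo_{X,x})$. I would first multiply $\Ff_x$ by a suitable nonzerodivisor to reduce to the case where $\Ff_x=J\subseteq\Oo_{X,x}$ is an ideal with $\Oo_{X,x}/J$ of finite length, and then compare the Tor long exact sequences of $0\to J\to\Oo_{X,x}\to\Oo_{X,x}/J\to 0$ and of $\cdot f_x$ on $\Oo_{X,x}$: the difference $\ell(J/f_xJ)-\ell(\Oo_{X,x}/f_x)$ equals (up to sign) $\ell(\ker)-\ell(\operatorname{coker})$ of the endomorphism $f_x$ on the finite-length module $\Oo_{X,x}/J$, which vanishes. (Alternatively one may invoke the reducible-curve analogue of Lemma~\ref{ChiOfTensor}, obtained by the same dévissage as in the proof of Theorem~\ref{PolarizedRankVsRank}, namely $\chi(X,\Ff\otimes N)=\rk_X(\Ff)\deg N+\chi(X,\Ff)$ for $N$ a line bundle, applied with $N=\Ii_E^{-1}$.)

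Finally, for linear equivalence: if $D_2=D_1+(f)$ with $f\in\Gamma(X,\Kk_X^*)$, then $\Ii_{D_2}=f\cdot\Ii_{D_1}$ and multiplication by $f$ is an isomorphism $\Ii_{D_1}\xrightarrow{\ \sim\ }\Ii_{D_2}$ of $\Oo_X$-modules, whence $\chi(X,\Ii_{D_1})=\chi(X,\Ii_{D_2})$ and therefore $\deg_X(D_1)=-\deg\Ii_{D_1}=-\deg\Ii_{D_2}=\deg_X(D_2)$ by the formula just established.
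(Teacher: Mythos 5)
Your proof is correct, but the reduction from the effective case to the general case runs along a genuinely different track from the paper's. The paper writes $D=E-F$ with $F$ effective Cartier and tensors the two structure-sheaf sequences $0\to\Ii_F\to\Oo_X\to T\to0$ and $0\to\Ii_E\to\Oo_X\to Q\to0$ by the \emph{line bundle} $\Ii_F^{-1}$; the only Euler-characteristic identities it then needs are of the form $\chi(\Ii_F^{-1}\otimes T)=\chi(T)$ for $T$ zero-dimensional, which are immediate because a line bundle is locally trivial. Your sequence $0\to\Ii_{D'}\to\Ii_D\to\Ii_D\otimes\Oo_E\to0$ instead puts the weight on the quotient term $\Ii_D\otimes\Oo_E$, i.e.\ a zero-dimensional sheaf tensored with a possibly non-locally-free rank-$1$ sheaf, and this forces you to prove the local identity $\ell_{\Oo_{X,x}}(\Ff_x/f_x\Ff_x)=\ell_{\Oo_{X,x}}(\Oo_{X,x}/f_x\Oo_{X,x})$. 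You correctly flag this as the non-formal point, and your argument for it is sound: after reducing to an ideal $J$ of finite colength (for the actual application one may simply take $J=(\Ii_{D'})_x$, and replace $\Ii_D\otimes\Oo_E$ by $\Ii_{D'}\otimes\Oo_E$ at the cost of a harmless line-bundle twist), the snake lemma applied to multiplication by $f_x$ on $0\to J\to\Oo_{X,x}\to\Oo_{X,x}/J\to0$ gives a four-term exact sequence of finite-length modules whose alternating sum of lengths vanishes, and $\ell(\ker)=\ell(\coker)$ for an endomorphism of a finite-length module finishes it. So your route buys a genuinely stronger intermediate statement — $\chi(\Ff\otimes N)=\deg N+\chi(\Ff)$ for $\Ff$ torsion-free of rank $1$ and $N$ a line bundle, the reducible-curve extension of Lemma~\ref{ChiOfTensor} — at the price of an extra lemma that the paper's more economical bookkeeping never has to confront. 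The remaining parts (well-definedness of $\deg\Ii_D$ via $(\Ii_D)_\xi=\Kk_{X,\xi}=\Oo_{X,\xi}$, the effective case, and invariance under linear equivalence via the isomorphism $\Ii_{D_1}\xrightarrow{\cdot f}\Ii_{D_2}$) coincide with the paper's.
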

\begin{proof}
The fractional ideal $\Ii_D$ is a generalized line bundle, hence it has rank $1$ on $X$ and well-defined degree as a torsion-free sheaf. In order to compute $\deg(\Ii_D)$ consider the short exact sequence \[
0 \rightarrow \Ii_D \rightarrow \Oo_X \rightarrow \Oo_X/\Ii_D \rightarrow 0.
\]
By definition of degree, $\deg(\Ii_D) = \chi(\Ii_D) - \chi(\Oo_X)$. Then, by additivity of Euler characteristic with respect to exact sequences, we deduce that $\deg(\Ii_D) = - \chi(\Oo_X/\Ii_D)$. Since $\Oo_X/\Ii_D$ has dimension 0, its Euler characteristic is equal to $h^0(\Oo_X/\Ii_D):=\dim_k H^0(X, \Oo_X/\Ii_D)$. The support of $\Oo_D=\Oo_X/\Ii_D$ is by definition the support of $D$ and $H^0(X, \Oo_X/\Ii_D) = \oplus_{x \in \Supp D} \Oo_{D,x}$. Then we compute: \begin{align*}
\deg(\Ii_D) &=  -\chi(\Oo_X/\Ii_D)=-\sum_{x \in \Supp D} \dim_k \Oo_{D,x} = \\
&= -\sum_{x \in \Supp D} \dim_{\kappa(x)} \big( \Oo_{D,x} \big) [\kappa(x):k] = \\
&= -\sum_{x \in \Supp D} \ell_{\Oo_X,x} \big( \Oo_{D,x} \big) [\kappa(x):k] = \\
&= - \deg_X(D).
\end{align*}
Consider now a generalized divisor $D$ with fractional ideal $\Ii_D$ and let $D=E-F$ with $E,F$ effective and $F$ Cartier by Lemma \ref{DifferenceOfEffective}. Since $F$ is effective, consider first the short exact sequence: \begin{align}\label{DegreeExSeq1}
0 \rightarrow \Ii_F \rightarrow \Oo_X \rightarrow T \rightarrow 0
\end{align}
where $T$ is a sheaf of dimension 0. Since $F$ is Cartier, we can tensor by $\Ii_F^{-1}$ to obtain: \begin{align}\label{DegreeExSeq2}
0 \rightarrow \Oo_X \rightarrow \Ii_F^{-1} \rightarrow \Ii_F^{-1} \otimes T \rightarrow 0.
\end{align}
Using addivity of Euler characteristics in the exact sequences \ref{DegreeExSeq1} and \ref{DegreeExSeq2} and the fact that $\chi(\Ii_F^{-1} \otimes T)=\chi(T)$, we see that: \[
\chi(\Ii_F^{-1})-\chi(\Oo_X ) = \chi(\Oo_X)-\chi(\Ii_F).
\]
Also $E$ is effective, hence there is a short exact sequence:\begin{align}\label{DegreeExSeq3}
0 \rightarrow \Ii_E \rightarrow \Oo_X \rightarrow Q \rightarrow 0
\end{align} 
where $Q$ is a sheaf of dimension 0. Tensoring again by $\Ii_F^{-1}$ we obtain: \begin{align}\label{DegreeExSeq4}
0 \rightarrow \Ii_D \rightarrow \Ii_F^{-1} \rightarrow \Ii_F^{-1} \otimes Q \rightarrow 0
\end{align}
where $\Ii_D \simeq \Ii_E \otimes \Ii_F^{-1}$ since $D=E-F$ and $F$ is Cartier. Using addivity of Euler characteristics in the exact sequences \ref{DegreeExSeq3} and \ref{DegreeExSeq4} and the fact that $\chi(\Ii_F^{-1} \otimes Q)=\chi(Q)$, we can compute the degree of $\Ii_D$: \begin{align*}
\deg(\Ii_D) &= \chi(\Ii_D)-\chi(\Oo_X) = \\
&= \chi(\Ii_F^{-1})-\chi(\Ii_F^{-1} \otimes Q)-\chi(\Oo_X) = \\
&= \chi(\Ii_F^{-1})-\chi(Q)-\chi(\Oo_X) = \\
&= \chi(\Ii_F^{-1}) -2\chi(\Oo_X) + \chi(\Ii_E) = \\
&=\chi(\Oo_X) - \chi(\Ii_F) +\chi(\Ii_E) - \chi(\Oo_X).
\end{align*}
Since $E$ and $F$ are effective, we conclude that: \[
\deg(\Ii_D) = - (\deg_X(E)-\deg_X(F)) = -\deg_X(D).
\]
To prove the last statement, let $D$ and $D'$ be linearly equivalent divisors; then, their fractional ideal $\Ii_D$ and $\Ii_{D'}$ are isomorphic. Hence, \[
\deg_X(D) = - \deg(\Ii_D) = - \deg(\Ii_D') = \deg_X(D').
\]
\end{proof}


\subsection{Moduli of generalized divisors and the Abel Map}
We start with considering the moduli space for generalized divisors. Since we are dealing with families of sheaves and related moduli problems, we \textbf{assume that $X$ is a projective curve over a base field $k$}.

\begin{rmk}
In general $\GDiv(X)$ cannot be represented by a geometric object of finite type, even for fixed degrees. Also, it is not easy to give a correct definition for flat families of non-effective generalized divisors. Hence, only families of effective divisors are considered, using the Hilbert scheme.
\end{rmk}

\begin{defi}\label{HilbertDef} The \textit{Hilbert scheme of effective generalized divisors of degree $d$ on $X$} is the Hilbert scheme $\Hilb_X^d$ parametrizing 0-dimensional subschemes of $X$, with Hilbert polynomial equal to a costant integer $d$. Recall that, given a $k$-scheme $T$, a $T$-valued point of $\Hilb_X^d$ is a $T$-flat subscheme $D \subset X \times_k T$ such that $D$ restricted to the fiber over any $t \in T$ is a 0-dimensional subscheme of $X \times_k \{t\} \simeq X$, of degree $d$. In other words, the corresponding ideal sheaf $\Ii_D \subset \Oo_{X\times_k T}$ restricted to any fiber over $T$ defines a  generalized divisor $D_t$ on $X$.

The \textit{Hilbert scheme of effective Cartier divisors of degree $d$} is the open subscheme $\lHilb_X^d \subseteq \Hilb_X^d$ parametrizing subschemes of $X$ of degree $d$ whose ideal sheaf is locally principal.
\end{defi}
Note that $\lHilb_X^d=\Hilb^d_X$ when $X$ is smooth.

Families of effective generalized divisors can be added with families of Cartier divisors, giving rise to a morphism: 
\begin{align*}
\Hilb_X^d \times \lHilb_X^e &\longrightarrow \Hilb_X^{d+e}\\
(D,E) \hspace{2em} &\longmapsto \Zz(\Ii_D \cdot \Ii_E).
\end{align*}

Effective generalized divisors on $X$ up to linear equivalence are generalized line bundles, hence they are parametrized by the generalized Jacobian $\Jgen(X)$.

By Remark \ref{JacobianInclusions}, the Jacobian scheme $J(X)$ of $X$ is contained in $\Jgen(X)$ as an open subscheme. The operations of tensor products and taking inverse are well defined on $J(X)$ and make  $J(X)$ into an algebraic group, acting on $\Jgen(X)$ via tensor product: \begin{align*}
\Jgen(X) \times J(X) &\longrightarrow \Jgen(X) \\
(\Ll, \Ee) \hspace{1.5em}  &\longmapsto \Ll \otimes \Ee.
\end{align*}

\vspace{1em}

Finally, we recall the definition of the geometric Abel map. Let $\Hilb_X = \bigsqcup_{d \geq 0} \Hilb^d_X$ be the Hilbert scheme of effective generalized divisor of any degree on $X$.
\begin{defi}\label{TwistedAbel}
Let $M$ be a line bundle of degree $e$ on $X$. The \textit{($M$-twisted) Abel map} of degree $d$ is defined as:
\begin{align*}
\A_M^d: \hspace{2em} \Hilb_X^d &\longrightarrow \Jgen(X,-d+e) \\
D &\longmapsto \Ii_D \otimes M.
\end{align*}
The $M$-twisted Abel map in any degree is defined similary as a map: \[
\A_M: \hspace{1em} \Hilb_X \longrightarrow \Jgen(X)
\]
\end{defi}

The restriction of $\A_M^d$ to $\lHilb_X^d $ takes values in $J(X,-d+e)$. Taking care of the twisting, the Abel map is equivariant with respect to the sum of effective Cartier divisors: for any pair of line bundles $M$ and $N$ on $X$ and for any $D \in \Hilb_X$, $E \in \lHilb_X$, \[\A_{M\otimes N}(D+E) \simeq \A_M(D) \otimes \A_N(E). \]

\vspace{1em}

\section{The direct and inverse image for generalized divisors and generalized line bundles}\label{SectionDirectImageSet}
Let $\pi: X \rightarrow Y$ be a finite, flat morphism of degree $n$ between curves. In the present section, we extend the notion of direct and inverse image from Cartier divisors to generalized divisors and generalized line bundles.

\subsection{The Fitting ideal}\label{ReviewFitting}
A fundamental tool for the definition of the direct image for generalized divisors is the \textit{Fitting ideal of a module} (and \textit{of a sheaf of modules}), of which we recall here the definition. See \cite[Chapter 20]{Eis},  \cite[Chapter 2.4]{Vas}  and \cite[Chapter V.1.3]{EisHar} for a detailed treatment.

\begin{deflemma}\label{FittingDef}
Let $X$ be a scheme and let $\Ff$ be a coherent sheaf on $X$. Let \[
\Ee_1 \xrightarrow{\hspace{1em}\psi\hspace{1em}} \Ee_0 \longrightarrow \Ff \rightarrow 0
\]
be any finite presentation of $\Ff$, with $\Ee_0$ locally free of rank $r$. The \textit{$0$-th Fitting ideal} of $\Ff$, denoted $\Fitt_0(\Ff)$, is defined as the image of the map: \[
\textstyle{\bigwedge^r }\Ee_1 \otimes \left(\textstyle	{\bigwedge^r } \Ee_0\right)^{-1} \xrightarrow{\hspace{1em}\Psi\hspace{1em}} \Oo_X
\]
induced by $\bigwedge^r\psi: \bigwedge^r\Ee_1 \rightarrow \bigwedge^r\Ee_0$ and is independent of the choiche of the presentation for $\Ff$. If $\Ee_1$ is locally free, then $\psi$ can be locally represented by a matrix and the $0$-th Fitting ideal is generated locally by the minors of size $r$ of such matrix, with the convention	 that the determinant of the $0 \times 0$ matrix is $1$.
\end{deflemma}
\begin{proof}
	The definition is the sheaf-theoretic reformulation of \cite[Corollary-Definition 20.4]{Eis}.
\end{proof}

As a consequence of the definition, the formation of Fitting ideals commutes with restrictions and with base change \cite[Corollary 20.5]{Eis}.

\begin{deflemma}
Let $X$ be a scheme and $\Ff$ be a coherent sheaf on $X$. The \textit{$0$-th Fitting scheme} of $\Ff$ is the subscheme of $X$ defined as the zero locus of $\Fitt_0(\Ff) \subseteq \Oo_X$ in $X$.  The $0$-th Fitting scheme contains the support of $\Ff$, as a closed subscheme with the same underlying topological space.
\end{deflemma}
\begin{proof}
The result is stated in \cite[Definition V-10]{EisHar}.
\end{proof}


\subsection{The direct image}

Here, we define the notion of direct image for generalized divisors. First, we start with the case of an effective generalized divisor. Let $D \in \GDiv^+(X)$ be an effective generalized divisor on $X$, with ideal sheaf $\Ii$. Since $\pi$ is finite and flat, the pushforward $\pi_*( \Oo_X / \Ii )$ is a coherent $\Oo_Y$-module.

\begin{deflemma}{(Direct image of an effective generalized divisor)}
Let $D \in \GDiv^+(X)$ be an effective generalized divisor on $X$, with ideal sheaf $\Ii \subseteq \Oo_X$. The \textit{direct image of D with respect to } $\pi$, denoted with $\pi_*(D)$, is the effective generalized divisor on $Y$ defined by the $0$-th Fitting ideal of  $\pi_*( \Oo_X / \Ii )$.
\end{deflemma}

\begin{proof}
Let us prove that the 0-th Fitting ideal of  $\pi_*( \Oo_X / \Ii )$ defines an effective generalized divisor on $Y$. First note that $\Fitt_0 \pi_*( \Oo_X / \Ii )$ is a subsheaf of $\Oo_Y$, and is a coherent $\Oo_Y$-module; hence, it is an effective fractional ideal. To prove that  $\Fitt_0 \pi_*(\Oo_X / \Ii)$  is nondegenerate, consider a generic point $\eta \in Y$. Since the map $\pi$ is dominant, the preimage $\pi^{-1}(\eta) = \{ \eta_i \}$ is a finite set of generic points of $X$. Then,
\begin{align*}
\Fitt_0(\pi_*(\Oo_X / \Ii))_\eta &= \Fitt_0(\pi_*(\Oo_X / \Ii)_\eta) = \\
&=  \Fitt_0( (0)_{\eta} ) = \Oo_{X,\eta} = \Kk_{X, \eta}.
\end{align*}
\end{proof}

\begin{lemma}\label{linear} {\normalfont(Linearity w.r.t effective Cartier divisors)}
Let $D \in \GDiv^+(X)$ be an effective generalized divisor and $E \in \CDiv^+(X)$ be an effective Cartier divisor on $X$. Then, $\pi_*(E)$ is a Cartier divisor on $Y$ and $\pi_*(D+E) = \pi_*(D)+\pi_*(E)$.
\end{lemma}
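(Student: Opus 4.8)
The plan is to reduce the whole statement to a local computation with Fitting ideals. Since $\pi$ is finite and flat, $\pi_*\Oo_X$ is locally free of rank $n$, so after shrinking $Y$ I may assume $Y=\Spec R$ and $X=\Spec S$ with $S$ free of rank $n$ over $R$. As $D$ is effective I write its fractional ideal locally as an ideal $I\subseteq S$, and as $E$ is effective Cartier I write the fractional ideal of $E$ as $fS$ for a nonzerodivisor $f\in S$; then the fractional ideal of $D+E$ is $\Ii_D\cdot\Ii_E=fI$, and I must prove that $\Fitt_0(\pi_*(S/fS))$ is invertible and that $\Fitt_0(\pi_*(S/fI))=\Fitt_0(\pi_*(S/I))\cdot\Fitt_0(\pi_*(S/fS))$; the latter equality is exactly $\pi_*(D+E)=\pi_*(D)+\pi_*(E)$, because the fractional ideal of a sum of generalized divisors is the product of their fractional ideals.

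For the first assertion I would push forward the exact sequence $0\to S\xrightarrow{\cdot f}S\to S/fS\to0$ (exact since $f$ is a nonzerodivisor on $S$) and read it as a finite free presentation of $\pi_*(S/fS)$ by the square matrix $F$ of multiplication by $f$ on $R^n\cong S$. Hence $\Fitt_0(\pi_*(S/fS))$ is the principal ideal generated by $\det_R(F)=\Nn_{Y/X}(f)$, which is generated by a regular element by the discussion in \cite[§21.5.3]{EgaIV4} already invoked in Definition/Lemma \ref{DirectInverseImageForCartDef}; so $\pi_*(E)$ is an effective Cartier divisor, and in passing the Fitting-ideal definition of $\pi_*(E)$ agrees with the classical one of \ref{DirectInverseImageForCartDef}.

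For linearity I would use that multiplication by $f$ gives an isomorphism $S/I\xrightarrow{\ \sim\ }fS/fI$; together with the inclusions $fI\subseteq fS\subseteq S$ this yields a short exact sequence $0\to S/I\xrightarrow{\cdot f}S/fI\to S/fS\to0$, which stays exact after the exact functor $\pi_*$, i.e. $0\to\pi_*(S/I)\to\pi_*(S/fI)\to\pi_*(S/fS)\to0$. Choosing a surjection $R^q\twoheadrightarrow I$ and composing with $I\hookrightarrow S=R^n$ gives a presentation $R^q\xrightarrow{\,B\,}R^n\to\pi_*(S/I)\to0$ with $\coker B=\pi_*(S/I)$, so the $n\times n$ minors of $B$ generate $\Fitt_0(\pi_*(S/I))$ by the definition of $\Fitt_0$; here $q\ge n$ because $I$ is a nondegenerate fractional ideal, so $\pi_*I$ has rank $n$. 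Splicing this presentation with the square presentation of $\pi_*(S/fS)$ (horseshoe lemma, using projectivity of $R^n$ to lift) produces a presentation $R^{q+n}\xrightarrow{\,M\,}R^{2n}\to\pi_*(S/fI)\to0$ with $M=\bigl(\begin{smallmatrix}B&C\\0&F\end{smallmatrix}\bigr)$ for a suitable $C$. A Laplace expansion along the bottom $n$ rows, using that the lower-left block of $M$ vanishes, shows that every nonzero maximal ($2n\times2n$) minor of $M$ must select all $n$ columns of the $F$-block and exactly $n$ columns of the $B$-block, and hence equals $\pm\det_R(F)$ times an $n\times n$ minor of $B$. Therefore $\Fitt_0(\pi_*(S/fI))=\det_R(F)\cdot\Fitt_0(\pi_*(S/I))=\Fitt_0(\pi_*(S/fS))\cdot\Fitt_0(\pi_*(S/I))$, which is the fractional ideal of $\pi_*(D)+\pi_*(E)$.

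I expect the main obstacle to be purely organizational: verifying that the spliced block matrix genuinely presents $\pi_*(S/fI)$, and carrying out the minor bookkeeping cleanly — in particular, justifying that any maximal minor using fewer than $n$ columns of the $B$-block vanishes, which is precisely what forces the product structure $\Fitt_0=\det_R(F)\cdot\Fitt_0(\pi_*(S/I))$. I would also remark that no smoothness hypothesis on $Y$ is needed for this lemma: the argument only uses that $\pi_*(S/fS)$ admits a length-one free resolution because $E$ is Cartier, together with exactness of $\pi_*$.
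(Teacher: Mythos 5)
Your proof is correct, and the first half (pushing forward $0\to S\xrightarrow{\cdot f}S\to S/fS\to 0$ to get a square presentation with determinant $\Nn_{Y/X}(f)$, whence $\pi_*(E)$ is Cartier) is exactly what the paper does. For the multiplicativity step the two arguments diverge in the choice of presentation of $\pi_*(\Oo_X/\Ii\Jj)$. The paper takes generators $s_1,\dots,s_r$ of $\Ii$ as an $\Oo_X$-module, so that $\Ii\Jj$ is generated by $hs_1,\dots,hs_r$ and the local presentation matrix is literally the product $H\cdot\left[\,S_1\,|\,\cdots\,|\,S_r\,\right]$; since an $n\times n$ submatrix of $HN$ is $H$ times the corresponding submatrix of $N$, the identity $\Fitt_0(\pi_*\Oo_X/\Ii\Jj)=\det(H)\cdot\Fitt_0(\pi_*\Oo_X/\Ii)$ is immediate, with no splicing and no minor bookkeeping. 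You instead exploit the filtration $fI\subseteq fS\subseteq S$, i.e.\ the extension $0\to S/I\to S/fI\to S/fS\to 0$, splice a presentation of the middle term via the horseshoe lemma, and read off the factor $\det_R(F)$ from the block-triangular shape of the spliced matrix; this is sound (the vanishing of minors using fewer than $n$ columns of the $F$-block is forced by the zero lower-left block, exactly as you say), just longer. A small compensating advantage of your version is that it presents $S/I$ by generators of $I$ as an $R$-module rather than as an $S$-module, and makes the extension structure of $S/fI$ explicit. Your closing remark is also consistent with the paper: this lemma sits in the part of the chapter where no smoothness of $Y$ is assumed, and neither proof uses it.
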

\begin{proof}
Let $\Jj$ be the ideal sheaf of $E$ and let $\{V_i\}$ be an open affine cover of $Y$ such that $\pi_* \Oo_X$ and $\pi_* \Jj$ are trivial on each $V_i$ as in Lemma \ref{TrivialOnSaturated}. The sheaf $\pi_* \Oo_X/\Jj$ is locally presented by the exact sequence:
\[ (\pi_* \Oo_X)_{|V_i} \xrightarrow{\cdot h_i} (\pi_* \Oo_X)_{|V_i} \rightarrow (\pi_* \Oo_X/\Jj)_{|V_i} \rightarrow 0
\]
where $h_i \in \Gamma(\pi^{-1}(V_i), \Oo_X)$ is a local generator for $\Jj \subset \Oo_X$ on $\pi^{-1}(V_i)$. Since $(\pi_* \Oo_X)_{|V_i}$ is a free $(\Oo_Y)_{|V_i}$-module of rank $n$, there is a $n \times n$ matrix $H_i$ with entries in $(\Oo_Y)_{|V_i}$ representing the multiplication by $h_i$. Then,  by Definition \ref{FittingDef},  the 0-th Fitting ideal of $\pi_* \Oo_X/\Jj$ is the principal ideal generated locally by $\det (\cdot h_i)$ on $V_i$.  In particular, $\pi_*(E)$ is a Cartier divisor.

Let $\Ii$ be the ideal sheaf of $D$, so that $\Ii \cdot \Jj$ is the ideal sheaf of $D+E$. To prove the remaining part of the thesis, we show that the equality \[
\Fitt_0 (\pi_* \Oo_X/\Ii\Jj) = \Fitt_0 (\pi_* \Oo_X/\Ii)\cdot \Fitt_0(\pi_* \Oo_X/\Jj)
\]
holds locally around any point $y \in Y$. Let $V$ be an open neighborhood of $y$ such that $(\pi_* \Oo_X)_{|V}$ is a free $\Oo_Y$-module, $\pi_* \Ii_{|V}$ is generated by sections $s_1, \dots, s_r$ of  $\Gamma(V, \pi_* \Oo_X)$ and $\pi_* \Jj_{|V}$ is a principal ideal generated by a section $h$ of $\Gamma(V, \pi_* \Oo_X)$.  In terms of exact sequences:
\begin{align*}
(\pi_* \Oo_X)_{|V}^{\oplus r} \xrightarrow{(\cdot s_1, \dots, \cdot s_r)}  (\pi_* \Oo_X)_{|V} \rightarrow (\pi_* \Oo_X/\Ii)_{|V} \rightarrow 0 \\
(\pi_* \Oo_X)_{|V} \xrightarrow{\cdot h} (\pi_* \Oo_X)_{|V} \rightarrow (\pi_* \Oo_X/\Jj)_{|V} \rightarrow 0
\end{align*}
The ideal sheaf $\Ii \cdot \Jj$ is equal to $\Jj \cdot \Ii$, which is generated on $V$ by the sections $h s_1 , \dots, h s_n $. In terms of exact sequences:
\begin{align*}
(\pi_* \Oo_X)_{|V}^{\oplus r} \xrightarrow{( \cdot h s_1, \dots, \cdot h s_r )}  (\pi_* \Oo_X)_{|V} \rightarrow (\pi_* \Oo_X/\Ii\Jj)_{|V} \rightarrow 0 
\end{align*}
Denote with $S_i$ and $H$ the $(\Oo_Y)_{|V}$-matrices representing the multiplication by $s_i$ and $h$ respectively.  The map $( \cdot h s_1, \dots,  \cdot h s_r )$ in the previous exact sequence is represented by the $n \times nr$ matrix
\[ M= 
\left[
\begin{array}{c|c|c}
H S_1 & \dots & H S_r
\end{array}
\right]
 = H \left[
 \begin{array}{c|c|c}
  S_1 & \dots & S_r
 \end{array}
 \right]
\]
The 0-th Fitting ideal of $\pi_* \Oo_X/\Ii\Jj$, restricted to $V$, is the ideal of $(\pi_* \Oo_X)_{|V}$ generated by the $n \times n$ minors of the matrix $M$. Any such minor is equal to a $n \times n$  minor of the matrix $\left[
\begin{array}{c|c|c}
S_1 & \dots & S_r
\end{array}
\right]$ multiplied by $\det H$. Then, by Definition \ref{FittingDef}, 
\[
\Fitt_0 (\pi_* \Oo_X/\Ii\Jj)_{|V} = \Fitt_0 (\pi_* \Oo_X/\Ii)_{|V} \cdot \Fitt_0(\pi_* \Oo_X/\Jj)_{|V}
\]
\end{proof}

\begin{deflemma}\label{DirectImageDef} {(Direct image of a generalized divisor)}
Let $D \in \GDiv(X)$ be a generalized divisor on $X$, such that $D=D'-E$ with  $D' \in \GDiv^+(X)$ and $E \in \CDiv^+(X)$ by Lemma \ref{DifferenceOfEffective}. The \textit{direct image of D with respect to $\pi$}, denoted with $\pi_*(D)$, is the generalized divisor $\pi_*(D') - \pi_*(E)$.
\end{deflemma}
\begin{proof}
To prove that it is well defined, let $D=D'-E=\widetilde{D}'-\widetilde{E}$, with $D', \widetilde{D}'$ effective and $E, \widetilde{E}$ effective Cartier. 
Since $E, \widetilde{E}$ are Cartier, $D'+ \widetilde{E}=\widetilde{D}' +E$; then, by Lemma \ref{linear} we have:
	\[\pi_*(D') + \pi_*(\widetilde{E}) = \pi_*(\widetilde{D}') + \pi_*(E).\]
Since $\pi_*(E)$ and $\pi_*(\widetilde{E})$ are also Cartier by Lemma \ref{linear}, they can be subtracted from each side in order to obtain:
\[ \pi_*(D') - \pi_*(E) = \pi_*(\widetilde{D}') - \pi_*(\widetilde{E}). \]
This shows that $\pi_*(D)$ does not depend on the choice of $D'$ and $E$.
\end{proof}

We study now some properties of the direct image for generalized divisors.

\begin{prop}{\normalfont (Properties of direct image) }\label{DirectImageProperties}
\begin{enumerate}
	\item Let $D \in \CDiv(X)$ be a Cartier divisor. Then, $\pi_*(D)$ is a Cartier divisor and $\pi_*(-D)=-\pi_*(D)$. Moreover, $\pi_*(D)$ coincides with $\pi_*(D)$ of Definition \ref{DirectInverseImageForCartDef}.
	\item Let $D,E \in \GDiv(X)$ be generalized divisors, such that $E$ is Cartier. Then, $\pi_*(D+E) = \pi_*(D) + \pi_*(E)$.
	\item Let $V \subset Y$ be an open subset, and denote with $\pi_U$ the restriction of $\pi$ to $U = \pi^{-1}(V) \subset X$. Let $D \in \GDiv(X)$ be a generalized divisor on $X$. Then,
\[ (\pi_{U})_*(D_{|U}) = \pi_*(D)_{|V} \]
	\item Let $D, D' \in \GDiv(X)$ be generalized divisors such that $D \sim D'$. Then, $\pi_*(D) \sim \pi_*(D')$.
\end{enumerate}
\end{prop}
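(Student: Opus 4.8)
The plan is to deduce all four properties from Lemma~\ref{linear} (linearity with respect to effective Cartier divisors) and Definition/Lemma~\ref{DirectImageDef}, systematically using the decomposition $D = D' - E$ with $D'$ effective generalized and $E$ effective Cartier furnished by Lemma~\ref{DifferenceOfEffective}. The recurring observation is that whenever a generalized divisor under consideration is itself Cartier, the ``$D'$-part'' of such a decomposition can be taken to be effective \emph{Cartier}, so that Lemma~\ref{linear} applies to both summands.

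For part (1), given $D$ Cartier I would write $D = D' - E$ as above; since $D$ and $E$ are Cartier, so is $D' = D + E$, hence it is effective Cartier. Lemma~\ref{linear} then makes $\pi_*(D')$ and $\pi_*(E)$ Cartier, so $\pi_*(D) = \pi_*(D') - \pi_*(E)$ is Cartier, and applying the same to $-D = E - D'$ gives $\pi_*(-D) = \pi_*(E) - \pi_*(D') = -\pi_*(D)$. For the comparison with Definition~\ref{DirectInverseImageForCartDef}: both assignments are additive on $\CDiv(X)$ (the $\mathrm{EGA}$ one by the homomorphism property recalled earlier, the Fitting-ideal one by Lemma~\ref{linear}), and every Cartier divisor is a difference of effective Cartier ones, so it suffices to match them on an effective Cartier $E$ with $\Jj = \Ii_E$ locally generated by $h_i \in \Oo_X$ over a cover $\{V_i\}$ as in Lemma~\ref{TrivialOnSaturated}. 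There, exactly as in the proof of Lemma~\ref{linear}, $\pi_*(\Oo_X/\Jj)$ has the local free presentation $(\pi_*\Oo_X)_{|V_i} \xrightarrow{\,\cdot h_i\,} (\pi_*\Oo_X)_{|V_i}$ of rank $n$, so by Definition~\ref{FittingDef} its $0$-th Fitting ideal is generated on $V_i$ by $\det(\cdot h_i) = \Nm_{Y/X}(h_i)$ (Definition~\ref{NormOfSheaves}), which is precisely the local generator appearing in Definition~\ref{DirectInverseImageForCartDef}.

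For part (2) I would write $D = D' - F$ and $E = E' - E''$ with $D', E', F, E''$ effective and $F, E''$ Cartier; since $E$ is Cartier, $E' = E + E''$ is effective Cartier. Then $D + E = (D' + E') - (F + E'')$ with $D' + E'$ effective generalized and $F + E''$ effective Cartier, and applying Lemma~\ref{linear} to both sums and rearranging yields $\pi_*(D+E) = \pi_*(D) + \pi_*(E)$. Part (3) is local: the formation of $0$-th Fitting ideals commutes with restriction to opens and $\pi_*$ commutes with restriction along $V \hookrightarrow Y$ (so $(\pi_*\Ff)_{|V} = (\pi_U)_*(\Ff_{|U})$), which settles the effective case directly from the definition; for a general $D = D' - F$ one restricts the decomposition (restriction preserving effectivity and the Cartier condition) and uses the additivity from part (2). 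For part (4), linear equivalence $D' = D + (f)$ with $(f) \in \Prin(X) \subseteq \CDiv(X)$ together with part (2) gives $\pi_*(D') = \pi_*(D) + \pi_*((f))$, and $\pi_*((f))$ is principal because, by part (1), it agrees with the Cartier direct image of $(f)$, whose class in $\Pic(Y)$ equals $\Nm_\pi$ of the (trivial) class of $(f)$ in $\Pic(X)$ by the compatibility of $\pi_*$ with $\Nm_\pi$ through $q_X, q_Y$ established earlier; hence $\pi_*(D') \sim \pi_*(D)$.

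The only input that is not pure bookkeeping is Lemma~\ref{linear}, already in hand, together with the elementary identification in part (1) of the $0$-th Fitting ideal of $\pi_*(\Oo_X/\Jj)$ with the norm ideal generated by $\Nm_{Y/X}(h_i)$ in the locally principal case. The one point to watch in parts (2)--(4) is that the various decompositions of the generalized divisors must be chosen so that their ``denominators'' are Cartier — which is exactly what Lemma~\ref{DifferenceOfEffective} provides — since only then does Lemma~\ref{linear} apply and does the subtraction of direct images make sense.
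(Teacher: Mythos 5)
Your proposal is correct and follows essentially the same route as the paper: reduce everything to Lemma~\ref{linear} via the decomposition of Lemma~\ref{DifferenceOfEffective}, observing that when the divisor in question is Cartier its effective part can be taken Cartier as well. The only cosmetic difference is in part (4), where the paper directly glues the local sections $\det[\cdot g_i]/\det[\cdot h_i]$ into a global meromorphic function generating $\pi_*((f))$, whereas you deduce principality of $\pi_*((f))$ from part (1) together with the compatibility of the EGA direct image with $\Nm_\pi$ via $q_X, q_Y$ — both are valid.
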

\begin{proof}
To prove (1), consider $D=E-F$ with $E,F$ effective and $F$ Cartier by Lemma \ref{DifferenceOfEffective}. Since $D$ is Cartier and $E=D+F$, then also $E$ is Cartier. By Definition \ref{DirectImageDef}, \[
\pi_*(D)=\pi_*(E)-\pi_*(F).
\]
By Lemma \ref{linear}, it is a difference of Cartier divisors and hence it is Cartier. To compute $\pi_*(-D)$, note that since $F$ is Cartier then $-D=F-E$, and it is a difference of effective Cartier divisors; then, apply Definition \ref{DirectImageDef} to obtain \begin{align*}
\pi_*(-D) &= \pi_*(F)-\pi_*(E) = \\
&= -\pi_*(D).
\end{align*}
To compare $\pi_*(D)$ with Definition \ref{DirectInverseImageForCartDef}, let $\Ii$ and $\Jj$ be the ideal sheaves of $E$ and $F$ and let $\{V_i\}_{i \in I}$ be an open cover of $Y$ such that $\pi_*\Ii_{|V_i}$  and $\pi_*\Jj_{|V_i}$ are non-degenerate principal ideals of $(\pi_*\Oo_X)_{|V_i}$-modules generated by the regular sections $f_i$ and $g_i$ of $\Gamma(V, \pi_*\Oo_X)$, respectively on each $i \in I$. The fractional ideal of $D$ is then generated on each $V_i$ by the meromorphic regular section $h_i = f_i/g_i$ of $\Gamma(V_i, \pi_*\Kk_X^*)$.
By the proof of Lemma \ref{linear}, the ideal sheaves of $\pi_*(E)$ and $\pi_*(F)$ are generated on each $V_i$ by $\det(\cdot f_i)$ and $\det(\cdot g_i)$ , so the fractional ideal of $\pi_*(D)$ is generated on each $V_i$ by the meromorphic regular section $\det(\cdot f_i) / \det(\cdot g_i)$ of $\Gamma(V, \Kk_Y^*)$. Using the same cover in Definition \ref{DirectInverseImageForCartDef} and applying Definition \ref{NormOfSheaves}, the sheaf $\pi_*(D)$ defined above and the sheaf $\pi_*(D)$ defined in Section \ref{SectionReviewNorm} have the same local generators, hence they are equal.

To prove (2), consider $D=D_1-D_2$ and $E=E_1-E_2$, with $D_1, D_2, E_1, E_2$ effective and $D_2, E_2$ Cartier. Note that $D+E=(D_1+E_1)-(D_2+E_2)$, and it is a difference of effective divisors with $D_2+E_2$ Cartier. Then, applying Definition \ref{DirectImageDef} and Lemma \ref{linear}, we obtain:
\begin{align*}
	\pi_*(D+E) &= \pi_*((D_1 +E_1)-(D_2+E_2)) = \\
	&=\pi_*(D_1+E_1)-\pi_*(D_2+E_2)=\\
	&=\pi_*(D_1)+\pi_*(E_1)-\pi_*(D_2)-\pi_*(E_2)=\\
	&=(\pi_*(D_1)-\pi_*(D_2))-(\pi_*(E_1)-\pi_*(E_2) ) =\\
	&=\pi_*(D)+\pi_*(E).
\end{align*}

To prove (3), if $D$ is effective, the result follows from Definition \ref{FittingDef}. Then, observe that the operations of product and inverse of fractional ideals commute with restrictions.

To prove (4), let $f \in \Gamma(X, \Kk_X)$ be a global section that generates a principal divisor $E=(f) \in \Prin(X)$. By linearity, it is sufficient to prove that $\pi_*(E)$ is again principal. Let $\{ V_i \}$ be an affine open cover of $Y$, such that locally \[
f_{|\pi^{-1}(V_i)}  = g_i / h_i, \hspace{3em} g_i, h_i \in \Gamma(\pi^{-1}(V_i), \Oo_X)\]
In terms of divisors, this means that $(f)_{|\pi^{-1}(V_i)}=(g_i)-(h_i)$, and it is a difference of effective Cartier divisors on $\pi^{-1}(V_i)$. Then, applying Part (3) and reasoning simiarly to Part (1), we obtain: \begin{align*}
\pi_*( (f) )_{|V} &= (\pi_i)_*( (g_i) ) - (\pi_i)_*( (h_i) ) =  \\
&= 	( \det [\cdot g_i] ) - ( \det [\cdot h_i] )  = \\
&= 	( \det [\cdot g_i] / \det [\cdot h_i] ).
\end{align*}
Since taking determinants is multiplicative, the local sections $\det [\cdot g_i] / \det [\cdot h_i]$ glue together to give a global section $\widetilde f$ of $\Kk_Y$, such that $\pi_*( E ) = ( \widetilde{f} )$.
\end{proof}

We are now ready to define the direct image for generalized line bundles. 

\begin{deflemma}{(Direct image for generalized line bundles)}\label{DirectImageForGPic}
The direct image for generalized divisors induce a direct image map between the sets of generalized line bundles, defined as:
\begin{align*}
[\pi_*]: \hspace{2em} \GPic(X) &\longrightarrow \GPic(Y) \\
[D] &\mapsto [\pi_*(D)].
\end{align*}
\end{deflemma} 
\begin{proof}
Recall that for any curve $X$, the set $\GPic(X)$ can be seen equivalently as the set of generalized line bundles or as the set of generalized divisors modulo linear equivalence. Here, $[\pi_*]$ is defined in terms of generalized divisors modulo linear equivalence. By Proposition \ref{DirectImageProperties}, the direct images of linearly equivalent divisors are linearly equivalent, hence  $[\pi_*]$ is well defined.
\end{proof}

In the remaining part of this subsection, we study an alternative formula for $[\pi_*]$ in terms of generalized line bundles. First, we need a technical lemma.

\begin{lemma}\label{TorsionVsDoubleDual}
Let $\Ff$ be a coherent sheaf on a curve $X$ which is locally free of rank 1 at any generic point and let $\omega=\omega_X$ be the canonical, or dualizing sheaf of $X$. Let $T(\Ff)$ be the torsion subsheaf of $\Ff$ and $\Ff^{\omega\omega}=\Hhom(\Hhom(\Ff, \omega), \omega)$ be the double $\omega$-dual. There, there is a canonical isomorphism 
\[
\Ff^{tf}= \Ff/T(\Ff) \xrightarrow{\hspace{1em}\sim\hspace{1em}} \Ff^{\omega\omega}. 
\]
\end{lemma}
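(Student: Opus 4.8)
The plan is to work locally on $X$ and use the theory of $S_2$-ification via $\omega$-dualizing. First I would reduce to the affine case: let $U = \Spec A \subseteq X$ be an open affine and $M$ the finitely generated $A$-module corresponding to $\Ff|_U$; since $X$ is a curve of pure dimension $1$, the torsion submodule $T(M)$ is the submodule of elements supported in dimension $0$, and $M^{tf} = M/T(M)$ is torsion-free, hence of depth $1$ (i.e. satisfies $S_1$, which for a sheaf on a curve is the same as $S_2$). The key input is Grothendieck--Serre duality for the finite morphism $U \to \Spec k$, or rather the characterization of the dualizing module: $\omega_X$ is $S_2$ and for any coherent $\Ff$ the natural map $\Ff \to \Hhom(\Hhom(\Ff,\omega),\omega) = \Ff^{\omega\omega}$ is an isomorphism precisely when $\Ff$ is $S_2$ (equivalently torsion-free on a curve). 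Concretely, $\omega$-dualizing kills torsion: $\Hhom(T(\Ff),\omega) = 0$ because $T(\Ff)$ has $0$-dimensional support while $\omega$ is torsion-free.

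The steps I would carry out, in order, are: (1) Apply $\Hhom(-,\omega)$ to the short exact sequence $0 \to T(\Ff) \to \Ff \to \Ff^{tf} \to 0$. Since $\Hhom(-,\omega)$ is left exact and $\Hhom(T(\Ff),\omega)=0$, we get an isomorphism $\Hhom(\Ff^{tf},\omega) \xrightarrow{\sim} \Hhom(\Ff,\omega)$. (2) Apply $\Hhom(-,\omega)$ again to this isomorphism to obtain $\Ff^{\omega\omega} \xrightarrow{\sim} (\Ff^{tf})^{\omega\omega}$. (3) Show that for the torsion-free (hence $S_2$) sheaf $\Ff^{tf}$ the biduality map $\Ff^{tf} \to (\Ff^{tf})^{\omega\omega}$ is an isomorphism; this is the standard fact that $\omega$ realizes the $S_2$-ification and that $S_2$ sheaves are reflexive with respect to $\omega$-duality on a curve (one checks it at the generic points, where everything is locally free of rank $1$ by hypothesis, and then uses that both sides are $S_2$ and agree on a dense open, so the map is an isomorphism in codimension $0$ and its cokernel and kernel, being supported in dimension $0$ yet mapping into/out of torsion-free sheaves, must vanish). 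Composing the isomorphism of (2) with the inverse of the isomorphism of (3) gives the desired canonical $\Ff^{tf} \xrightarrow{\sim} \Ff^{\omega\omega}$; canonicity follows because every map used is the natural biduality or functoriality map.

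The main obstacle I expect is step (3): establishing that a torsion-free sheaf on an embeddable curve is reflexive for $\omega$-duality. On a Gorenstein (in particular smooth) curve $\omega$ is a line bundle and this is classical, but here $X$ is only assumed embeddable (so $\omega$ exists as the dualizing sheaf but need not be invertible, e.g. on non-Gorenstein singular or non-reduced curves). The argument must therefore go through the local duality / depth characterization: $\Hhom(\Ff^{tf},\omega)$ is again $S_2$, applying $\Hhom(-,\omega)$ twice is the identity on the category of $S_2$ (maximal Cohen--Macaulay, since we are on a curve) sheaves — this is the key property of a dualizing complex concentrated in one degree on a $1$-dimensional scheme. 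One could alternatively cite a reference such as Hartshorne's generalized divisors papers (\cite{Har94}, \cite{Har07}) where exactly this $S_2$-ification statement via $\omega$ is proved, which is presumably the intended route given the remark preceding the lemma.
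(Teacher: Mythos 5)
Your proposal is correct and follows essentially the same route as the paper: both arguments reduce to Hartshorne's result (\cite{Har07}, Propositions 1.5--1.6) that torsion-free (i.e.\ $S_1=S_2$) sheaves on a curve are $\omega$-reflexive, and both identify $\Ff^{\omega\omega}$ with $(\Ff^{tf})^{\omega\omega}$ --- your way of getting that identification, by dualizing $0 \to T(\Ff) \to \Ff \to \Ff^{tf} \to 0$ and using $\Hhom(T(\Ff),\omega)=0$, is if anything cleaner than the paper's direct analysis of the map $q^{\omega\omega}$, and yields the same canonical morphism. One small caveat: in your parenthetical sketch of step (3), the claim that a cokernel supported in dimension $0$ of a map \emph{into} a torsion-free sheaf must vanish is false in general (the inclusion of the ideal sheaf of a closed point into $\Oo_X$ has skyscraper cokernel), so surjectivity of the biduality map genuinely requires the local-duality/$S_2$ input that you rightly defer to Hartshorne --- which is exactly the citation the paper uses.
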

\begin{proof}
The sheaf $\Ff^{tf}$ is endowed with a quotient map $q: \Ff \twoheadrightarrow \Ff^{tf}$, which is universal among all arrows from $\Ff$ to torsion-free sheaves (i.e. pure of dimension 1). Note that, since $\Ff$ is locally free of rank 1 at any generic point of $X$, then $q$ is generically an isomorphism. Let $\alpha(\Ff): \Ff \rightarrow \Ff^{\omega\omega}$ be the canonical map from $\Ff$ to its double $\omega$-dual. Since taking double $\omega$-duals is functorial, there is a commutative diagram of homomorphism of sheaves:
\[
\begin{tikzcd}
\Ff \arrow{r}{q} \arrow{d}{\alpha(\Ff)}  & \Ff^{tf} \arrow{d}{\alpha(\Ff^{tf})} \\
\Ff^{\omega\omega} \arrow{r}{q^{\omega\omega}} & (\Ff^{tf})^{\omega\omega}.
\end{tikzcd}
\]
Using \cite[Proposition 1.5]{Har07} and recalling that $S_1 = S_2$ for sheaves on curves, we note that a sheaf on $X$ is $\omega$-reflexive if and only if it is 
$S_1$. In particular, $\Ff^{\omega\omega}$ is $\omega$-reflexive by  \cite[Proposition 1.6]{Har07} and hence it is $S_1$. Then, by the universal property of $q$, there is a unique map $\psi: \Ff^{tf} \rightarrow \Ff^{\omega\omega}$ such that $\psi \circ q = \alpha(\Ff)$. Note that \[
q^{\omega\omega} \circ \psi \circ q = q^{\omega\omega}  \circ \alpha(\Ff) = \alpha(\Ff^{tf}) \circ q,\] so by surjectivity of $q$ we conclude that $q^{\omega\omega} \circ \psi = \alpha(\Ff^{tf})$.

We show moreover that $\psi$ is an isomorphism. By construction $\Ff^{tf}$ is pure, hence it is $\omega$-reflexive and $\alpha(\Ff^{tf})$ is an isomorphism. Moreover, $q^{\omega\omega}$  is surjective and generically an isomorphism since $q$ is surjective and generically an isomorphism.  Then, the kernel $K$ of $q^{\omega\omega}$ is a subsheaf of $\Ff^{\omega\omega}$ which is generically zero. Since $\Ff^{\omega\omega}$ is pure, we conclude that $K$ is everywhere zero and then $q^{\omega\omega}$ is an isomorphism. This shows that $\psi=(q^{\omega\omega})^{-1} \circ \alpha(\Ff^{tf})$ is an isomorphism.
\end{proof}

In order to study the formula for $[\pi_*]$, we study a preliminary formula for $\pi_*$ in the case of effective generalized divisors.

\begin{lemma}\label{FormulaForEffective}
Let $D \in \GDiv^+(X)$ be an effective generalized divisor with ideal sheaf $\Ii \subset \Oo_X$. Then, there is an injection: \[
\left(\textstyle{\bigwedge^n}\big( \pi_* \Ii \big)\right)^{\omega\omega} \otimes \det( \pi_* \Oo_X)^{-1} \hookrightarrow \det( \pi_* \Oo_X) \otimes \det( \pi_* \Oo_X)^{-1} \xrightarrow{{}_\sim} \Oo_Y
\]
whose image in $\Oo_Y$ is the $0$-th Fitting ideal of  $\pi_*\Oo_X/\Ii$.
\end{lemma}
\begin{proof}
Consider the short exact sequence: \[
0 \rightarrow \Ii \rightarrow \Oo_X \rightarrow \Oo_D \rightarrow 0.
\]
Since $\pi$ is finite and flat, the pushforward induces a a short exact sequence:\[
0 \rightarrow \pi_*\Ii \xrightarrow{\varphi} \pi_*\Oo_X \rightarrow \pi_*(\Oo_X/\Ii) \rightarrow 0.
\]
In particular, the last exact sequence is a finite presentation of $\pi_*(\Oo_X/\Ii)$ whose middle term is locally free. Hence, by Definition \ref{FittingDef}, the $0$-th Fitting ideal of $\pi_*(\Oo_X/\Ii)$ is equal to the image of the morphism: \[
\textstyle{\bigwedge^n}\big( \pi_* \Ii \big) \otimes \det \big( \pi_* \Oo_X \big)^{-1} \xrightarrow{\hspace{.5em}\det\varphi \otimes 1 \hspace{.5em}} \det( \pi_* \Oo_X) \otimes \det( \pi_* \Oo_X)^{-1} \xrightarrow{{}_\sim} \Oo_Y.
\]
Consider now the determinant map $\det\varphi$. Since $\det(\pi_*\Oo_X)$ is also a pure sheaf, applying the universal property of the torsion-free quotient together with Lemma \ref{TorsionVsDoubleDual} we obtain the following commutative diagram:
\[
\begin{tikzcd}
\bigwedge^n\big( \pi_* \Ii \big)  \arrow{r}{\det\varphi} \arrow{d}{\alpha}  & \det(\pi_*\Oo_X)  \\
\left(\bigwedge^n\big( \pi_* \Ii \big)\right)^{\omega\omega} \arrow{ru}[swap]{\beta} & {}
\end{tikzcd}
\]
The canonical map $\alpha$ is surjective by Lemma \ref{TorsionVsDoubleDual}. Since $\Ii$ is locally free of rank 1 at the generic points of $X$, both $\det\phi$ and $\alpha$ are generically isomorphisms; hence, the map $\beta$ is generically an isomorphism. Since its domain $\left(\bigwedge^n\big( \pi_* \Ii \big)\right)^{\omega\omega} $ is pure, we conclude that its kernel is zero, hence $\beta$ is injective. We have then factorized $\det\varphi$ as the composition of a surjective map $\alpha$ followed by an injective map $\beta$. 

Tensoring by $\det (\pi_* \Oo_X)^{-1}$, we obtain then the following commutative diagram:
\[
\begin{tikzcd}
\bigwedge^n\big( \pi_* \Ii \big) \otimes \det (\pi_* \Oo_X)^{-1} \arrow{r}{\det\varphi \otimes 1} \arrow{d}{\alpha \otimes 1}  & \det(\pi_*\Oo_X) \otimes \det (\pi_* \Oo_X)^{-1} \arrow{r}{\sim}[swap]{\eta} & \Oo_Y \\
\left(\bigwedge^n\big( \pi_* \Ii \big)\right)^{\omega\omega} \otimes \det (\pi_* \Oo_X)^{-1} \arrow{ru}[swap]{\beta \otimes 1} & {} & {}
\end{tikzcd}
\]
The map $\alpha \otimes 1$ is surjective, so the composition $\eta \circ (\beta \otimes 1)$ is an injective map whose image in $\Oo_Y$ is equal to the image of the map $\eta \circ (\det\varphi \otimes 1)$. By the previous remark, such image coincides with the $0$-th Fitting ideal of $\pi_*( \Oo_X / \Ii )$, proving the lemma.
\end{proof}

We are now ready to give a sheaf-theoretic formula for $[\pi_*]$. Recall that, for any curve $X$, the set $\GPic(X)$ can be seen as the set of isomorphism classes of generalized line bundles on on $X$.

\begin{prop}{\normalfont (Formula for the direct image of generalized line bundles) } \label{GeneralizedDetFormula}
Let $\Ll$ be a generalized line bundle on $X$. Then, \[
[\pi_*](\Ll) \simeq  \left(\textstyle{\bigwedge^n}\big( \pi_* \Ll \big)\right)^{\omega\omega} \otimes \det( \pi_* \Oo_X)^{-1}  .
\]
\end{prop}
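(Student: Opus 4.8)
\emph{Proof plan.}

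The plan is to reduce to the effective case of Lemma~\ref{FormulaForEffective} and then correct for a twist by an invertible sheaf. First I would represent $\Ll$ as the fractional ideal of a generalized divisor $D$ and, by Lemma~\ref{DifferenceOfEffective}, write $D=D'-E$ with $D'\in\GDiv^+(X)$ and $E\in\CDiv^+(X)$. By Definition/Lemma~\ref{DirectImageDef}, Definition/Lemma~\ref{DirectImageForGPic} and Lemma~\ref{linear} we have $\pi_*(D)=\pi_*(D')-\pi_*(E)$ with $\pi_*(E)$ effective Cartier, so
\[
[\pi_*](\Ll)\;\cong\;\Ii_{\pi_*(D')}\otimes\Ii_{\pi_*(E)}^{-1}
\]
as generalized line bundles. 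Writing $N:=\Ii_E$ (an effective invertible ideal) we have $\Ii_{D'}=\Ii_D\cdot\Ii_E\cong\Ll\otimes N$; moreover $\pi_*N$ is locally free of rank $n$ because $\pi$ is finite flat of degree $n$ and $N$ is invertible, so $\bigwedge^n\pi_*N=\det\pi_*N$ is already $\omega$-reflexive. Applying Lemma~\ref{FormulaForEffective} to $D'$ (with ideal $\Ll\otimes N$) and to $E$ (with ideal $N$) then gives
\[
\Ii_{\pi_*(D')}\cong\bigl(\textstyle\bigwedge^n\pi_*(\Ll\otimes N)\bigr)^{\omega\omega}\otimes\det(\pi_*\Oo_X)^{-1},\qquad \Ii_{\pi_*(E)}\cong\det(\pi_*N)\otimes\det(\pi_*\Oo_X)^{-1},
\]
whence $[\pi_*](\Ll)\cong\bigl(\bigwedge^n\pi_*(\Ll\otimes N)\bigr)^{\omega\omega}\otimes\det(\pi_*N)^{-1}$.

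It then suffices to prove the twisting formula: for every invertible sheaf $N$ on $X$,
\[
\bigl(\textstyle\bigwedge^n\pi_*(\Ll\otimes N)\bigr)^{\omega\omega}\;\cong\;\bigl(\textstyle\bigwedge^n\pi_*\Ll\bigr)^{\omega\omega}\otimes\Nm_\pi(N),
\]
for then substituting $\Nm_\pi(N)\cong\det(\pi_*N)\otimes\det(\pi_*\Oo_X)^{-1}$ from Proposition~\ref{DetFormula} collapses the previous line to the asserted formula. I would prove the twisting formula by a cocycle computation. Set $\mathcal{A}:=\pi_*\Oo_X$; since $\pi$ is affine, $\pi_*$ identifies quasi-coherent $\Oo_X$-modules with $\mathcal{A}$-modules compatibly with tensor products, so $\pi_*(\Ll\otimes_{\Oo_X}N)\cong\pi_*\Ll\otimes_{\mathcal{A}}\pi_*N$, and $\pi_*N$ is an invertible $\mathcal{A}$-module by \cite[Proposition 6.1.12]{EgaII}. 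Choose an affine cover $\{V_i\}$ of $Y$ as in Lemma~\ref{TrivialOnSaturated} trivializing $\mathcal{A}$ and $\pi_*N$ over $\mathcal{A}$, with transition sections $u_{ij}\in\Gamma(V_i\cap V_j,\mathcal{A}^*)$, so that $\Nm_\pi(N)$ is the line bundle with cocycle $\{\Nn_{Y/X}(u_{ij})\}$. These trivializations identify $\pi_*(\Ll\otimes N)|_{V_i}$ with $\pi_*\Ll|_{V_i}$ with transition maps given by multiplication by $u_{ij}$; applying $\bigwedge^n$ and passing to the torsion-free quotient, which is the double $\omega$-dual by Lemma~\ref{TorsionVsDoubleDual}, the transition maps act on the rank-one sheaf $(\bigwedge^n\pi_*\Ll)^{\omega\omega}$ as multiplication by the $\Oo_Y$-determinant of multiplication by $u_{ij}$ on $\pi_*\Ll$. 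The decisive identity is that this determinant equals $\Nn_{Y/X}(u_{ij})$: stalkwise at $y\in Y$ one writes $(\pi_*\Ll)_y=\bigoplus_{x\in\pi^{-1}(y)}\Ll_x$, and since $\Ll$ is a \emph{generalized} line bundle each $\Ll_x$ becomes isomorphic to $\Oo_{X,x}$ after inverting the nonzerodivisors of $\Oo_{Y,y}$, compatibly with multiplication by $u_{ij}$; hence the determinant over $\Oo_{Y,y}$ of multiplication by $u_{ij}$ on $\Ll_x$ equals the one on $\Oo_{X,x}$, and the product over $x\in\pi^{-1}(y)$ recovers $\Nn_{Y/X}(u_{ij})$ by Definition~\ref{NormOfSheaves}. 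Thus $(\bigwedge^n\pi_*(\Ll\otimes N))^{\omega\omega}$ and $(\bigwedge^n\pi_*\Ll)^{\omega\omega}\otimes\Nm_\pi(N)$ carry the same cocycle and are therefore isomorphic.

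The main obstacle is exactly this twisting formula. The naive statement $\pi_*(\Ll\otimes N)\cong\pi_*\Ll$ fails globally, and since $\bigwedge^n\pi_*\Ll$ need not be locally free one cannot formally cancel it; the cocycle argument sidesteps both points, the crucial input being that $\Nn_{Y/X}$ of a unit is unchanged when $\mathcal{A}$ is replaced by a module with the same generic behaviour over $Y$, which holds precisely because $\Ll$ is a generalized line bundle. The auxiliary verifications—exactness of the pushforwards used, and that $\bigwedge^n$ of multiplication by $u_{ij}$ induces multiplication by the corresponding $\Oo_Y$-determinant on the torsion-free quotient—are routine and I would only sketch them.
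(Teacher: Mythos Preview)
Your argument is correct and follows essentially the same route as the paper. Both proofs reduce to the effective case via Lemma~\ref{DifferenceOfEffective}, apply Lemma~\ref{FormulaForEffective} to the two effective pieces, and then establish a twisting identity by choosing local trivializations of the invertible ideal and running a cocycle/gluing argument. Your formulation packages the twisting identity as $(\bigwedge^n\pi_*(\Ll\otimes N))^{\omega\omega}\cong(\bigwedge^n\pi_*\Ll)^{\omega\omega}\otimes\Nm_\pi(N)$ and then invokes Proposition~\ref{DetFormula}, whereas the paper writes the equivalent statement directly in terms of $\det(\pi_*\Jj)$ and $\det(\pi_*\Oo_X)$; these are the same computation. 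The one noteworthy difference is in justifying why the obstruction cocycle vanishes: the paper appeals to functoriality of $\bigwedge^n(\pi_*\texttt{\char`_})$ and $(\texttt{\char`_})^{\omega\omega}$ to cancel the two factors of $\alpha_i\alpha_j^{-1}$, while you argue more explicitly that the induced automorphism of $(\bigwedge^n\pi_*\Ll)^{\omega\omega}$ by multiplication by $u_{ij}$ is multiplication by $\Nn_{Y/X}(u_{ij})$, checking this at the generic points of $Y$ where $\Ll$ becomes free. Your justification is arguably more transparent, since the paper's ``functoriality'' really does rely on the induced scalar being the same on $(\bigwedge^n\pi_*\Ii)^{\omega\omega}$ as on $\det(\pi_*\Oo_X)$, which is exactly the generic computation you spell out.
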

\begin{proof}
By the surjectivity of $\GDiv(X) \twoheadrightarrow \GPic(X)$, we can pick a generalized divisor $D \in \GDiv(X)$ with fractional ideal $\Ii$ isomorphic to $\Ll$; then, \[
[\pi_*](\Ll) = [\pi_*]([D]) = [\pi_*(D)] 
\] by Definition \ref{DirectImageForGPic}. By Lemma \ref{DifferenceOfEffective}, there are effective generalized divisors $E, F$ on $X$ such that $D=E-F$ and $F$ Cartier. Denote with $\Ii'$ the ideal sheaf of $E$ and with $\Jj$ the ideal sheaf of $F$. Since $F$ is Cartier, the condition $D=E-F$ can be rewritten as $E=D+F$, or in terms of sheaves: \[ \Ii' = \Ii \cdot \Jj . \]
Consider the direct images of $E$ and $F$. By Lemma \ref{FormulaForEffective}, the ideal sheaf of $\pi_*(E)$ is isomorphic to \[\left(\textstyle{\bigwedge^n}\big( \pi_* \Ii' \big)\right)^{\omega\omega} \otimes \det( \pi_* \Oo_X)^{-1},\] while the ideal sheaf of $\pi_*(F)$ is isomorphic to \[\left(\textstyle{\bigwedge^n}\big( \pi_* \Jj \big)\right)^{\omega\omega} \otimes \det( \pi_* \Oo_X)^{-1} \simeq \textstyle{\bigwedge^n}\big( \pi_* \Jj \big)\otimes \det( \pi_* \Oo_X)^{-1}. \]
By Definition \ref{DirectImageDef}, $\pi_*(D)=\pi_*(E)-\pi_*(F)$. Then, the fractional ideal of $\pi_*(D)$ is isomorphic to: \begin{align*}
\left(\textstyle{\bigwedge^n}\big( \pi_* \Ii' \big)\right)^{\omega\omega} \otimes \det( \pi_* \Oo_X)^{-1} \otimes \left(\textstyle{\bigwedge^n}\big( \pi_* \Jj \big)\right)^{-1} \otimes \det( \pi_* \Oo_X),
\end{align*}
which in turn is isomorphic to:  \[ \left(\textstyle{\bigwedge^n}\big( \pi_* \Ii' \big)\right)^{\omega\omega}  \otimes \left(\textstyle{\bigwedge^n}\big( \pi_* \Jj \big)\right)^{-1}.\]
Then, we are left to prove that: \[
\left(\textstyle{\bigwedge^n}\big( \pi_* \Ii' \big)\right)^{\omega\omega}  \otimes 
\left(\textstyle{\bigwedge^n}\big( \pi_* \Jj \big)\right)^{-1} \simeq
\left(\textstyle{\bigwedge^n}\big( \pi_* \Ii \big)\right)^{\omega\omega} \otimes \det( \pi_* \Oo_X)^{-1},
\]
or, equivalently, that: \[
\left(\textstyle{\bigwedge^n}\big( \pi_* \Ii' \big)\right)^{\omega\omega}  \otimes  \det( \pi_* \Oo_X) \simeq
\left(\textstyle{\bigwedge^n}\big( \pi_* \Ii \big)\right)^{\omega\omega} \otimes 
\left(\textstyle{\bigwedge^n}\big( \pi_* \Jj \big)\right),
\]
under the assumptions $\Ii' = \Ii \cdot \Jj$ and $\Jj$ locally principal. Consider an open cover $\{V_i\}_{i \in I}$ of $Y$ such that $\Jj$ is trivial on each $U_i = \pi^{-1}(V_i)$, i.e. there is an isomorphism $\lambda_i: \Jj_{|U_i} = (f_i) \xrightarrow{{}_\sim} \Oo_{X|U_i}$ 
for each $i \in I$. On the intersections $U_i \cap U_j$, the collection $\{\lambda_i \circ \lambda_j^{-1}\}$ of automorphisms of  ${\Oo_X}_{|U_i \cap U_j}$ is a cochain of elements of ${\Oo_X^*}_{|U_i \cap U_j}$ that measures the obstruction for the $\lambda_i$'s to glue to a global isomorphism. We define now an isomorphism \[
\alpha: \left(\textstyle{\bigwedge^n}\big( \pi_* \Ii' \big)\right)^{\omega\omega}  \otimes  \det( \pi_* \Oo_X) \longrightarrow
\left(\textstyle{\bigwedge^n}\big( \pi_* \Ii \big)\right)^{\omega\omega} \otimes 
\left(\textstyle{\bigwedge^n}\big( \pi_* \Jj \big)\right)
\] by glueing a collection of isomorphisms $\alpha_i$ defined on each $V_i$. To do so, we define each $\alpha_i$ as the following composition of arrows:
\vspace{1em}

\begin{tikzcd}
\left(\left(\bigwedge^n\big( \pi_* \Ii' \big)\right)^{\omega\omega} \otimes \det(\pi_*\Oo_X)\right)_{|V_i} \arrow{r}{\alpha_i} \arrow{d}[phantom, anchor=center,rotate=-90,yshift=-1ex]{=}  &  \left( \left( \bigwedge^n \big( \pi_* \Ii \big) \right)^{\omega\omega} \otimes \bigwedge^n\big( \pi_* \Jj \big)\right)_{|V_i} \\
\left(\bigwedge^n\big( \pi_* \Ii'_{|U_i} \big)\right)^{\omega\omega} \otimes \det(\pi_*\Oo_{X|U_i}) \arrow{d}[swap]{\left(\bigwedge^n (\pi_*\lambda_i)\right)^{\omega\omega} \otimes \id }  & \left(\bigwedge^n \big( \pi_* \Ii_{|U_i} \big) \right)^{\omega\omega} \otimes \left( \bigwedge^n\big( \pi_* \Jj_{|U_i} \big) \right)^{\omega\omega} \arrow{u}[anchor=center,rotate=-90,yshift=1ex]{\simeq}  \\
\left(\bigwedge^n\big( \pi_* \Ii_{|U_i} \big)\right)^{\omega\omega} \otimes \det(\pi_*\Oo_{X|U_i}) \arrow[r, phantom, "\simeq"] &
 \left(\bigwedge^n\big( \pi_* \Ii_{|U_i} \big)\right)^{\omega\omega} \otimes \left(\bigwedge^n\big(\pi_*\Oo_{X|U_i} \big)\right)^{\omega\omega}
 \arrow{u}[swap]{\id \otimes \left(\bigwedge^n (\pi_* \lambda_i^{-1})\right)^{\omega\omega}} 
\end{tikzcd}
\vspace{1em}

\noindent i.e. $\alpha_i := \left(\bigwedge^n\big(\pi_*\lambda_i\big)\right)^{\omega\omega} \otimes \left( \bigwedge^n \big(\pi_* \lambda_i^{-1}\big) \right)^{\omega\omega}$. Since $\bigwedge^n\big(\pi_*\_ \big)$ and $(\_)^{\omega\omega}$ are functorial, the obstruction $\alpha_i \circ \alpha_j^{-1}$ is trivial on any $V_i \cap V_j$, whence the $\alpha_i$'s glue together to a global isomorphism $\alpha$.
\end{proof}
\begin{cor}\label{NmVsPiStar} Let $\Ll$ be a line bundle on $X$. Then, \[
\Nm_\pi(\Ll) = [\pi_*](\Ll).
\]
\end{cor}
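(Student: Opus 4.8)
The statement is an immediate consequence of the two determinantal formulas already established. The idea is simply to observe that the formula of Proposition~\ref{GeneralizedDetFormula} for $[\pi_*]$ specializes, on the subset of honest line bundles, to the formula of Proposition~\ref{DetFormula} for $\Nm_\pi$.

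First, recall that a line bundle is in particular a generalized line bundle, so Proposition~\ref{GeneralizedDetFormula} applies to $\Ll$ and gives
\[
[\pi_*](\Ll) \simeq \left(\textstyle{\bigwedge^n}(\pi_*\Ll)\right)^{\omega\omega} \otimes \det(\pi_*\Oo_X)^{-1}.
\]
On the other hand, Proposition~\ref{DetFormula} gives $\Nm_\pi(\Ll)\simeq \det(\pi_*\Ll)\otimes\det(\pi_*\Oo_X)^{-1}$. Thus it suffices to identify $\left(\bigwedge^n(\pi_*\Ll)\right)^{\omega\omega}$ with $\det(\pi_*\Ll)$.

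The point is that since $\pi$ is finite and flat of degree $n$ and $\Ll$ is locally free of rank $1$, the pushforward $\pi_*\Ll$ is locally free of rank $n$ on $Y$; hence $\bigwedge^n(\pi_*\Ll)=\det(\pi_*\Ll)$ is already an invertible $\Oo_Y$-module. An invertible sheaf is torsion-free, hence pure of dimension $1$, hence $\omega$-reflexive (by the characterization of $\omega$-reflexive sheaves as the $S_1$ sheaves used in the proof of Lemma~\ref{TorsionVsDoubleDual}, cf.\ \cite[Proposition 1.5 and 1.6]{Har07}), so the canonical map $\det(\pi_*\Ll)\to(\det(\pi_*\Ll))^{\omega\omega}$ is an isomorphism. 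Substituting this into the formula for $[\pi_*](\Ll)$ yields exactly $\det(\pi_*\Ll)\otimes\det(\pi_*\Oo_X)^{-1}\simeq\Nm_\pi(\Ll)$, which is the claim.

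There is essentially no obstacle here: the only thing to check is that the double $\omega$-dual is trivial on line bundles, which follows from $\pi_*\Ll$ being locally free (so that the exterior power is already invertible). One should also remark that the identification is natural enough that it is compatible with the quotient $\GDiv(X)\twoheadrightarrow\GPic(X)$, so that it makes sense to write the equality at the level of isomorphism classes in $\GPic(Y)\supseteq\Pic(Y)$; but this is automatic from the way both sides were constructed.
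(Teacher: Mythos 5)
Your proof is correct and follows essentially the same route as the paper: combine Proposition \ref{DetFormula} with Proposition \ref{GeneralizedDetFormula} and note that $\pi_*\Ll$ is locally free of rank $n$ (via Lemma \ref{TrivialOnSaturated}), so $\det(\pi_*\Ll)$ is already an invertible, hence pure, hence $\omega$-reflexive sheaf, killing the double $\omega$-dual. Nothing to add.
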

\begin{proof}
By Lemma \ref{DetFormula}, \[
\Nm_\pi(\Ll) \simeq \det(\pi_* \Ll) \otimes \det(\pi_* \Oo_X)^{-1}.
\]
On the other side, by Proposition \ref{GeneralizedDetFormula}, \[
[\pi_*](\Ll) \simeq \left( \det(\pi_* \Ll)\right)^{\omega\omega} \otimes \det(\pi_* \Oo_X)^{-1}.
\]
Since $\Ll$ is locally free, $\det(\pi_* \Ll)$ is a line bundle and in particular is a pure coherent sheaf. Then, $\det(\pi_* \Ll) \simeq \left(\det(\pi_* \Ll)\right)^{\omega\omega}$, proving the thesis.
\end{proof}

\begin{cor}
	Let $\Ll$ be a generalized line bundle on $X$. Suppose that $Y$ is smooth.  Then: \[
	[\pi_*](\Ll) \simeq \textstyle{\bigwedge^n}\big( \pi_* \Ll \big) \otimes \det( \pi_* \Oo_X)^{-1}.
	\]
\end{cor}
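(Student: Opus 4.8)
The plan is to deduce the statement directly from Proposition \ref{GeneralizedDetFormula}, which already gives, for an arbitrary generalized line bundle $\Ll$ and with no hypothesis on $Y$, the formula $[\pi_*](\Ll) \simeq \big(\bigwedge^n(\pi_*\Ll)\big)^{\omega\omega}\otimes\det(\pi_*\Oo_X)^{-1}$. Hence the only thing to verify is that, once $Y$ is smooth, the double $\omega$-dual on the factor $\bigwedge^n(\pi_*\Ll)$ can be dropped, i.e. that $\big(\bigwedge^n(\pi_*\Ll)\big)^{\omega\omega}\simeq \bigwedge^n(\pi_*\Ll)$. This is the exact analogue of the reduction carried out for line bundles in Corollary \ref{NmVsPiStar}.

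First I would show that $\pi_*\Ll$ is a torsion-free sheaf on $Y$. Since $\Ll$ is torsion-free on $X$ and $\pi$ is finite and flat, a local section of $\pi_*\Ll$ over an open $V\subseteq Y$ killed by a non-zero-divisor $f\in\Oo_Y(V)$ corresponds to a section of $\Ll$ over $\pi^{-1}(V)$ killed by $\pi^\sharp(f)$; flatness of $\pi$ makes multiplication by $f$ (equivalently by $\pi^\sharp(f)$) injective on $\Oo_X$, so $\pi^\sharp(f)$ is a non-zero-divisor and acts injectively on the torsion-free sheaf $\Ll$, forcing the section to vanish. Thus $T(\pi_*\Ll)=0$.

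Next, since $Y$ is smooth it is regular of dimension $1$, so every torsion-free coherent sheaf on $Y$ is locally free; in particular $\pi_*\Ll$ is locally free, of rank $n$ (its length at each generic point of $Y$ equals that of $\pi_*\Oo_X$, namely $n$, by finiteness and flatness, while each stalk $\Ll_\xi$ is free of rank $1$ at the generic points $\xi$ of $X$). Consequently $\bigwedge^n(\pi_*\Ll)$ is an invertible $\Oo_Y$-module, in particular a pure sheaf of dimension $1$; by Lemma \ref{TorsionVsDoubleDual} a pure sheaf coincides with its double $\omega$-dual, so $\big(\bigwedge^n(\pi_*\Ll)\big)^{\omega\omega}\simeq \bigwedge^n(\pi_*\Ll)$. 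Substituting this into Proposition \ref{GeneralizedDetFormula} yields
\[
[\pi_*](\Ll)\simeq \textstyle{\bigwedge^n}\big(\pi_*\Ll\big)\otimes\det(\pi_*\Oo_X)^{-1},
\]
as claimed. There is essentially no obstacle: all the real content sits in Proposition \ref{GeneralizedDetFormula}, and this corollary merely records that smoothness of $Y$ (equivalently $\GDiv(Y)=\CDiv(Y)$) annihilates the reflexive hull; the only point deserving a line of care is that $\pi_*$ preserves torsion-freeness, which is where flatness of $\pi$ is used.
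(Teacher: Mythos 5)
Your proposal is correct and follows essentially the same route as the paper: invoke Proposition \ref{GeneralizedDetFormula} and then observe that smoothness of $Y$ makes $\pi_*\Ll$ locally free of rank $n$, so that $\bigwedge^n(\pi_*\Ll)$ is a line bundle, hence pure, hence isomorphic to its double $\omega$-dual. The only difference is that you spell out the torsion-freeness of $\pi_*\Ll$ and the regularity argument in detail where the paper simply cites a reference; both versions are fine.
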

\begin{proof}
	First note that, by Proposition \ref{GeneralizedDetFormula}, \[
	[\pi_*](\Ll) \simeq \left(\textstyle{\bigwedge^n}\big( \pi_* \Ll \big)\right)^{\omega\omega} \otimes \det( \pi_* \Oo_X)^{-1}  . \]
	Second, observe that the pure sheaf $\pi_* \Ll$ is locally free of rank $n$ since $Y$ is smooth (see \cite[Example 1.1.16]{Huy}), and $\textstyle{\bigwedge^n}\big( \pi_* \Ll \big)$ is a line bundle. Then, \[
	\left(\textstyle{\bigwedge^n}\big( \pi_* \Ll \big)\right)^{\omega\omega} \simeq \textstyle{\bigwedge^n}\big( \pi_* \Ll \big).
	\]
	Combining  these two facts, we have proved the thesis.
\end{proof}

\subsection{The inverse image}
In this subsection, we define the inverse image for generalized divisors and generalized line bundles and we study the relation of the inverse image with the direct image. We start from the case of effective divisors.

\begin{deflemma} {(Inverse image of an effective generalized divisor)}\label{InverseImageEffDef}
	Let $D \in \GDiv^+(Y)$ be an effective generalized divisor with ideal sheaf $\Ii \subseteq \Oo_Y$. The \textit{inverse image of $D$ relative to $\pi$}, denoted $\pi^*(D)$, is the effective generalized divisor with ideal sheaf $\pi^{-1}(\Ii) \cdot \Oo_X$.
\end{deflemma}
\begin{proof}
The	inverse image ideal $\pi^{-1}(\Ii)$ is an ideal sheaf of $\pi^{-1}(\Oo_Y)$-modules and can be extended to $\Oo_X$-modules via the canonical map of sheaves of rings $\pi^\sharp: \pi^{-1}(\Oo_Y) \rightarrow \Oo_X$. It is coherent since $\Ii$ is coherent. If $\eta$ is a generic point of $X$, then $\pi(\eta)$ is a generic point of $Y$, hence \[
(\pi^{-1}(\Ii)\cdot \Oo_X)_\eta =  \Ii_{\pi(\eta)} \cdot \Oo_{X,\eta} = \Oo_{X, \eta}
\] since $\Ii_{\pi(\eta)} = \Oo_{Y, \pi(\eta)}$.
\end{proof}

\begin{rmk}\label{InverseImageVsPullback}
In the setting of Definition \ref{InverseImageEffDef}, consider the short exact sequence:
	\[ 0 \rightarrow \Ii \rightarrow \Oo_Y \rightarrow \Oo_D \rightarrow 0. \]
	Since $\pi$ is flat and surjective, the pullback functor $\_ \otimes_{\pi^{-1}\Oo_Y}\Oo_X$ is exact as well as the inverse image functor $\pi^{-1}$.  Then, the previous exact sequence induces the following exact sequence:
	\[ 0 \rightarrow \pi^* \Ii \rightarrow \pi^*\Oo_Y \rightarrow \pi^*\Oo_D \rightarrow 0. \]
	Since $\pi^* \Oo_Y = \Oo_X$, the pullback sheaf $\pi^*(\Ii)$ has a canonical injection in $\Oo_X$, whose image is exactly the inverse image ideal $\pi^{-1}(\Ii) \cdot \Oo_X$ 
\end{rmk}

\begin{lemma}\label{InverseLinear}
	Let $D \in \GDiv^+(Y)$ be a generalized effective divisor and $E \in \CDiv^+(Y)$ be an effective Cartier divisor on $Y$. Then, the inverse image divisor $\pi^*(E)$ is Cartier and $\pi^*(D+E) = \pi^*(D)+\pi^*(E)$.
\end{lemma}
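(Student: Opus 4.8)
The plan is to reduce everything to the local description of the inverse image recalled in Definition/Lemma \ref{InverseImageEffDef}: for an effective generalized divisor $F$ on $Y$ with ideal sheaf $\mathcal{K}\subseteq\Oo_Y$, the divisor $\pi^*(F)$ has ideal sheaf $\pi^{-1}(\mathcal{K})\cdot\Oo_X$, and by Remark \ref{InverseImageVsPullback} this coincides with the image of the canonical injection $\pi^*\mathcal{K}\hookrightarrow\pi^*\Oo_Y=\Oo_X$. All assertions are then short computations; the only thing to keep straight is the distinction between the ``sheafy'' operation $\mathcal{K}\mapsto\pi^{-1}(\mathcal{K})\cdot\Oo_X$ and the tensorial operation $\mathcal{K}\mapsto\pi^*\mathcal{K}$, and one uses flatness of $\pi$ (via Remark \ref{InverseImageVsPullback} and exactness of $\pi^*$) to pass between them.

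First I would show that $\pi^*(E)$ is Cartier. Let $\Jj\subseteq\Oo_Y$ be the ideal sheaf of $E$. Since $E$ is Cartier, $\Jj$ is invertible, hence $\pi^*\Jj$ is an invertible $\Oo_X$-module; by Remark \ref{InverseImageVsPullback} the ideal sheaf $\pi^{-1}(\Jj)\cdot\Oo_X$ of $\pi^*(E)$ is isomorphic to $\pi^*\Jj$, so it is invertible, i.e.\ locally principal, and therefore $\pi^*(E)$ is Cartier. Concretely, if $V\subseteq Y$ is affine with $\Jj|_V=(g)$ for a regular element $g\in\Gamma(V,\Oo_Y)$, then on $U=\pi^{-1}(V)$ the ideal $\pi^{-1}(\Jj)\cdot\Oo_X$ is generated by $\pi^\sharp(g)$, which is a nonzerodivisor because $\pi$ is flat (equivalently, tensoring the injection $\Oo_Y\xrightarrow{\,g\,}\Oo_Y$ with the flat $\Oo_Y$-algebra $\Oo_X$ preserves injectivity).

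Next I would check additivity. Let $\Ii$ be the ideal sheaf of $D$, so that $\Ii\cdot\Jj$ is the ideal sheaf of $D+E$. The claim amounts to the equality of ideal sheaves of $\Oo_X$
\[
\pi^{-1}(\Ii\cdot\Jj)\cdot\Oo_X \;=\; \bigl(\pi^{-1}(\Ii)\cdot\Oo_X\bigr)\cdot\bigl(\pi^{-1}(\Jj)\cdot\Oo_X\bigr),
\]
which may be verified on an affine cover. Over an affine open $V=\Spec A$ of $Y$ on which $\Ii$ corresponds to an ideal $I\subseteq A$ and $\Jj|_V=(g)$ with $g\in A$ a nonzerodivisor, and with $U=\pi^{-1}(V)=\Spec B$, the left-hand side restricts to the ideal $(gI)B$ of $B$ while the right-hand side restricts to $(IB)(gB)=g(IB)=(gI)B$; hence they agree, and gluing gives the global equality. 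Combining this with the first step and Definition/Lemma \ref{InverseImageEffDef} yields $\pi^*(D+E)=\pi^*(D)+\pi^*(E)$.

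Alternatively, one can argue sheaf-theoretically: since $\Jj$ is invertible the multiplication map induces an isomorphism $\Ii\otimes\Jj\xrightarrow{\sim}\Ii\cdot\Jj$, so $\pi^*(\Ii\cdot\Jj)\cong\pi^*\Ii\otimes\pi^*\Jj$, and pushing both sides into $\Oo_X$ through the canonical injections of Remark \ref{InverseImageVsPullback} (legitimate because $\pi^*\Jj$ is invertible) produces exactly the displayed equality. Either way the argument is routine; I do not expect a genuine obstacle, the only mild care being the bookkeeping between generated-ideal and pullback-sheaf operations noted above.
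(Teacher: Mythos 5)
Your proof is correct and follows essentially the same route as the paper's: local principality of $\Jj$ gives that $\pi^{-1}(\Jj)\cdot\Oo_X$ is locally principal, and additivity reduces to the multiplicativity $\pi^{-1}(\Ii\cdot\Jj)\cdot\Oo_X=(\pi^{-1}(\Ii)\cdot\Oo_X)\cdot(\pi^{-1}(\Jj)\cdot\Oo_X)$, which you verify affine-locally. Your explicit remark that flatness of $\pi$ keeps the local generator $\pi^\sharp(g)$ a nonzerodivisor is a small point the paper's proof leaves implicit, and is worth having.
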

\begin{proof}
Let $\Ii$ and $\Jj$ be the ideal sheaves of $D$ and $E$ respectively. The ideal sheaf $\Jj$ is locally principal, hence  its inverse image $\pi^{-1}(\Jj) \cdot \Oo_X$ is again locally principal; then, $\pi^*(E)$ is Cartier. The generalized divisor $D+E$ is defined by the ideal sheaf $\Ii \cdot \Jj$, whose inverse image is: \begin{align*}
\pi^{-1}(\Ii \cdot \Jj) \cdot \Oo_X &= (\pi^{-1}(\Ii) \cdot \pi^{-1}(\Jj)) \cdot \Oo_X = \\ 
&=(\pi^{-1}(\Ii) \cdot \Oo_X) \cdot (\pi^{-1}(\Jj) \cdot \Oo_X),
\end{align*}
which is the defining ideal of $\pi^*(D) + \pi^*(E)$.
\end{proof}

Now, we can extend the definition of inverse image to any generalized divisor and study its properties.

\begin{deflemma} {(Inverse image of a generalized divisor)}\label{InverseImageDef}
Let $D \in \GDiv(Y)$ be any generalized divisor and let $D=E-F$ with $E,F$ effective generalized divisors and $F$ Cartier by Lemma \ref{DifferenceOfEffective}. The \textit{inverse image of $D$ relative to $\pi$}, denoted $\pi^*(D)$, is the generalized divisor $\pi^*(E)-\pi^*(F)$.
\end{deflemma}
\begin{proof}
To prove that it is well defined, let $D=D'-E=\widetilde{D}'-\widetilde{E}$, with $D', \widetilde{D}'$ effective and $E, \widetilde{E}$ effective Cartier. 
Since $E, \widetilde{E}$ are Cartier, $D'+ \widetilde{E}=\widetilde{D}' +E$; then, by Lemma \ref{InverseLinear} we have:
	\[\pi^*(D') + \pi^*(\widetilde{E}) = \pi^*(\widetilde{D}') + \pi^*(E).\]
Again by Lemma \ref{InverseLinear} $\pi^*(E)$ and $\pi^*(\widetilde{E})$ are Cartier, so they can be subtracted from each side in order to obtain:
\[ \pi^*(D') - \pi^*(E) = \pi^*(\widetilde{D}') - \pi^*(\widetilde{E}), \]
so $\pi^*(D)$ does not depend on the choice of $D'$ and $E$.
\end{proof}

\begin{prop} {\normalfont (Properties of inverse image) }\label{InverseImageProperties}
\begin{enumerate}
	\item Let $D \in \CDiv(Y)$ be a Cartier divisor on $Y$. Then, $\pi^*(D)$ is a Cartier divisor and $\pi^*(-D)=-\pi^*(D)$. Moreover, $\pi^*(D)$ coincides with $\pi^*(D)$ of Definition \ref{DirectInverseImageForCartDef}.
	\item Let $D,E \in \GDiv(Y)$ be generalized divisors, such that $E$ is Cartier. Then, $\pi^*(D+E) = \pi^*(D) + \pi^*(E)$.
	\item Let $V \subset Y$ be an open subset, and denote with $\pi_U$ the restriction of $\pi$ to $U = \pi^{-1}(V) \subset X$. Let $D \in \GDiv(Y)$ be a generalized divisor on $Y$. Then,
\[ (\pi_{U})^*(D_{|V}) = \pi^*(D)_{|U} \]
	\item Let $D, D' \in \GDiv(Y)$ be generalized divisors such that $D \sim D'$. Then, $\pi^*(D) \sim \pi^*(D')$.
\end{enumerate}
\end{prop}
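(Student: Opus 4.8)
The plan is to prove the four items in the same style as Proposition~\ref{DirectImageProperties}, replacing Lemma~\ref{linear} by Lemma~\ref{InverseLinear} and Definition~\ref{DirectImageDef} by Definition~\ref{InverseImageDef}, and exploiting that the inverse image ideal $\pi^{-1}(\Ii)\cdot\Oo_X$ is exactly the image of $\pi^*\Ii$ in $\pi^*\Oo_Y=\Oo_X$ (Remark~\ref{InverseImageVsPullback}). For (1), I would write $D=E-F$ with $E,F\in\GDiv^+(Y)$ and $F$ Cartier by Lemma~\ref{DifferenceOfEffective}; since $D$ is Cartier and $E=D+F$, also $E$ is Cartier, so $\pi^*(D)=\pi^*(E)-\pi^*(F)$ is a difference of Cartier divisors by Lemma~\ref{InverseLinear}, hence Cartier. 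Because $F$ is Cartier, $-D=F-E$ is again a difference of effective Cartier divisors, and Definition~\ref{InverseImageDef} gives $\pi^*(-D)=\pi^*(F)-\pi^*(E)=-\pi^*(D)$. To match Definition~\ref{DirectInverseImageForCartDef} I would take a cover $\{V_i\}$ of $Y$ on which $D$ is represented by a regular meromorphic section $u_i=s_i/t_i$ with $s_i,t_i$ regular sections of $\Oo_Y$, so that $D_{|V_i}=(s_i)-(t_i)$ as generalized divisors; by part~(3) together with Definition~\ref{InverseImageEffDef}, the ideal sheaf of $\pi^*((s_i))$ on $U_i=\pi^{-1}(V_i)$ is $\pi^\sharp(s_i)\cdot\Oo_X$ (and similarly for $t_i$), so the fractional ideal of $\pi^*(D)$ on $U_i$ is generated by $\pi^\sharp(s_i)/\pi^\sharp(t_i)$, which is precisely the local generator prescribed in Definition~\ref{DirectInverseImageForCartDef}.

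For (2), writing $D=D_1-D_2$ and $E=E_1-E_2$ with $D_i,E_i$ effective and $D_2,E_2$ Cartier, one has $D+E=(D_1+E_1)-(D_2+E_2)$ with $D_2+E_2$ Cartier; applying Definition~\ref{InverseImageDef} and the additivity of Lemma~\ref{InverseLinear} to the effective pieces yields $\pi^*(D+E)=\pi^*(D_1)+\pi^*(E_1)-\pi^*(D_2)-\pi^*(E_2)=\pi^*(D)+\pi^*(E)$, exactly as in part~(2) of Proposition~\ref{DirectImageProperties}. For (3), when $D$ is effective the assertion reduces to the fact that forming $\pi^{-1}(\Ii)\cdot\Oo_X$ commutes with restriction to open subsets of $Y$ (equivalently, that pullback of sheaves commutes with restriction, via Remark~\ref{InverseImageVsPullback}); the general case then follows because products and inverses of fractional ideals commute with restriction, so that $\pi^*(D)_{|V}=\pi^*(E)_{|V}-\pi^*(F)_{|V}=(\pi_U)^*(E_{|U})-(\pi_U)^*(F_{|U})=(\pi_U)^*(D_{|U})$ after a decomposition $D=E-F$ as above.

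For (4), by (2) it suffices to show that $\pi^*$ carries principal divisors to principal divisors, since $D\sim D'$ means $D'=D+(f)$ with $(f)$ principal, hence Cartier. Given $f\in\Gamma(Y,\Kk_Y^*)$, I would pick an affine cover $\{V_i\}$ with $f_{|V_i}=g_i/h_i$, $g_i,h_i\in\Gamma(V_i,\Oo_Y)$ regular, so $(f)_{|V_i}=(g_i)-(h_i)$; by (3) and Definition~\ref{InverseImageEffDef} the inverse image of $(g_i)$ has ideal sheaf $\pi^\sharp(g_i)\cdot\Oo_X$ on $U_i$, hence $\pi^*((f))_{|U_i}$ is generated by $\pi^\sharp(g_i)/\pi^\sharp(h_i)$, and these glue to the global regular meromorphic section obtained by pulling back $f$ along $\pi^\sharp$. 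Thus $\pi^*((f))$ is principal, and $\pi^*(D')=\pi^*(D)+\pi^*((f))\sim\pi^*(D)$.

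The computational content is light and everything parallels the direct-image case, so the only point I would slow down on is the gluing used in (1) and (4): that $\pi$ carries regular meromorphic sections to regular meromorphic sections, i.e.\ that $\pi^\sharp(s)$ is a non-zero-divisor in $\Oo_X$ whenever $s$ is one in $\Oo_Y$. This is where finiteness and flatness (forcing associated points of $X$ to map onto associated points of $Y$) enter, and it is already available from \cite[Proposition~21.4.5]{EgaIV4}, the same result used to make Definition~\ref{DirectInverseImageForCartDef} well posed; so I expect no genuine obstacle, only careful bookkeeping with fractional ideals.
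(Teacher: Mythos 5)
Your proposal is correct and follows essentially the same route as the paper: the same decomposition $D=E-F$ via Lemma \ref{DifferenceOfEffective}, the same use of Lemma \ref{InverseLinear} and Definition \ref{InverseImageDef} for parts (1)--(3), and the same reduction of (4) to showing that $\pi^*$ sends principal divisors to principal divisors (the paper simply notes $\pi^*((f))=(\pi^\sharp(f))$ globally, while you glue local generators $\pi^\sharp(g_i)/\pi^\sharp(h_i)$, which amounts to the same thing). The only blemish is a typo at the end of your part (3), where the restrictions of $E$, $F$, $D$ should be to $V\subset Y$ rather than to $U$.
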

\begin{proof}
	To prove (1), consider $D=E-F$ with $E,F$ effective and $F$ Cartier by Lemma \ref{DifferenceOfEffective} on $Y$. Since $D$ is Cartier and $E=D+F$, then also $E$ is Cartier. By Definition \ref{InverseImageDef}, \[
	\pi^*(D)=\pi^*(E)-\pi^*(F).
	\]
	By Lemma \ref{InverseLinear}, it is a difference of Cartier divisors and hence it is Cartier. To compute $\pi(-D)$, note that since $F$ is Cartier then $-D=F-E$, and it is a difference of effective Cartier divisors; then, apply Definition \ref{InverseImageDef} to obtain \begin{align*}
	\pi^*(-D) &= \pi^*(F)-\pi^*(E) = \\
	&= -\pi^*(D).
	\end{align*}
	To compare $\pi^*(D)$ with Definition \ref{DirectInverseImageForCartDef}, let $\Ii$ and $\Jj$ be the ideal sheaves of $E$ and $F$ and let $\{V_i\}_{i \in I}$ be an open cover of $Y$ such that $\Ii_{|V_i} and $ $\Jj_{|V_i}$ are principal ideals of $\Oo_{Y|V_i}$-modules generated by regular sections $s_i$ and $t_i$ of $\Gamma(V_i, \Oo_Y)$, respectively on each $i \in I$. The fractional ideal of $D$ is generated on each $V_i$ by the meromorphic regular section $u_i=s_i/t_i$ of $\Gamma(V_i, \Kk_Y)$. By Definition \ref{InverseImageEffDef}, the ideal sheaves of $\pi^*(E)$ and $\pi^*(F)$ are generated on each $U_i = \pi^{-1}(V_i)$ by $\pi_{U_i}^\sharp(s_i)$ and $\pi_{U_i}^\sharp(t_i)$ respectively. Then, by Definition \ref{InverseImageDef}, the fractional ideal of $\pi^*(D)$ is generated on each $U_i$ by the meromorphic regular section $\pi_{U_i}^\sharp(s_i)/\pi_{U_i}^\sharp(t_i)$ of $\Gamma(U_i, \Kk_X)$. These are exactly the local generators for $\pi^*(D)$ as defined in Definition \ref{DirectInverseImageForCartDef}.

	To prove (2), consider $D=D_1-D_2$ and $E=E_1-E_2$, with $D_1, D_2, E_1, E_2$ effective and $D_2, E_2$ Cartier. Note that $D+E=(D_1+E_1)-(D_2+E_2)$, and it is a difference of effective divisors with $D_2+E_2$ Cartier. Then, applying Definition \ref{InverseImageDef} and Lemma \ref{InverseLinear}, we obtain:
	\begin{align*}
	\pi^*(D+E) &= \pi^*((D_1 +E_1)-(D_2+E_2)) = \\
	&=\pi^*(D_1+E_1)-\pi^*(D_2+E_2)=\\
	&=\pi^*(D_1)+\pi^*(E_1)-\pi^*(D_2)-\pi^*(E_2)=\\
	&=(\pi^*(D_1)-\pi^*(D_2))-(\pi^*(E_1)-\pi^*(E_2) ) =\\
	&=\pi^*(D)+\pi^*(E).
	\end{align*}
	
	To prove (3), if $D$ is effective, the result follows the fact that the inverse image functor $\pi^{-1}$ commutes with restrictions. Then, observe that the operations of product and inverse of fractional ideals also commute with restrictions.
	
	To prove (4), let $f \in \Gamma(Y, \Kk_Y)$ be a global section that generates a principal divisor $E=(f) \in \Prin(Y)$ such that $D=D'+E$. The inverse image $\pi^*(E)$ is the principal divisor $(\pi^\sharp(f)) \in \Prin(X)$. Then, by Lemma \ref{InverseLinear}, $\pi^*(D)=\pi^*(D')+(\pi^\sharp(f))$; so $\pi^*(D)$ and $\pi^*(D')$ are linearly equivalent.
\end{proof}

We are now ready to define the inverse image for generalized line bundles.

\begin{deflemma}{(Inverse image for generalized line bundles)}\label{InverseImageForGPic}
	The inverse image for generalized divisors induce a inverse image map between the sets of generalized line bundles, defined as:
	\begin{align*}
	[\pi^*]: \hspace{2em} \GPic(Y) &\longrightarrow \GPic(X) \\
	[D] &\mapsto [\pi^*(D)].
	\end{align*}
\end{deflemma} 
\begin{proof}
	Recall that for any curve $X$, the set $\GPic(X)$ can be seen equivalently as the set of generalized line bundles or as the set of generalized divisors modulo linear equivalence. Here, $[\pi^*]$ is defined in terms of generalized divisors modulo linear equivalence. By Proposition \ref{InverseImageProperties}, the direct images of linearly equivalent divisors are linearly equivalent, hence  $[\pi^*]$ is well defined.
\end{proof}

\begin{rmk}\label{InverseImageVsPullbackBundles}
If $\Ll$ is a generalized line bundle on $Y$ and $D$ is a generalized divisor with fractional ideal $\Ii$ isomorphic to $\Ll$, then the inverse image divisor $\pi^*(D)$ has fractional ideal isomorphic to the pullback sheaf $\pi^*(\Ii)$ by Remark \ref{InverseImageVsPullback}. Since $\pi^*(\Ii)$ is isomorphic to $\pi^*(\Ll)$ as abstract $\Oo_X$-modules, we conclude that the inverse image $[\pi^*](\Ll)$ of the generalized line bundle $\Ll$ is equal to the pullback sheaf $\pi^*(\Ll)$.
\end{rmk}

We prove now a result on the composition of the direct image with the inverse image of generalized divisors.

\begin{prop}{\normalfont (Composition of the direct image with the inverse image)}\label{DirectAndInverseImage}
Let $D \in \GDiv(Y)$ be a generalized divisor on $Y$. Then, \[ \pi_*(\pi^*(D)) = n \cdot D. \]
\end{prop}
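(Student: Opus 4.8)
The plan is to reduce the statement to the case of Cartier divisors, for which the analogous identity $\pi_*(\pi^*(M)) = n\cdot M$ is already recorded in Section \ref{SectionReviewNorm} (namely the proposition stating that $\pi_* \circ \pi^*$ is multiplication by $n$ on $\CDiv$). So the first step is to use Lemma \ref{DifferenceOfEffective} to write $D = E - F$ with $E, F \in \GDiv^+(Y)$ effective and $F$ Cartier. Then, by Definition \ref{InverseImageDef}, $\pi^*(D) = \pi^*(E) - \pi^*(F)$, where $\pi^*(F)$ is Cartier by Lemma \ref{InverseLinear}. Applying Proposition \ref{DirectImageProperties}(2) (linearity of $\pi_*$ with respect to Cartier divisors — note $\pi^*(F)$ is Cartier), we get $\pi_*(\pi^*(D)) = \pi_*(\pi^*(E)) - \pi_*(\pi^*(F))$. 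By the same token $n\cdot D = n\cdot E - n\cdot F$, so it suffices to establish the identity separately for $E$ (effective generalized) and for $F$ (effective Cartier).

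For the effective Cartier divisor $F$, this is exactly the classical statement: $\pi^*(F)$ is Cartier, $\pi_*$ restricted to Cartier divisors agrees with the classical direct image by Proposition \ref{DirectImageProperties}(1), $\pi^*$ agrees with the classical inverse image by Proposition \ref{InverseImageProperties}(1), and hence $\pi_*(\pi^*(F)) = n\cdot F$ by the classical result from EGA IV quoted in Section \ref{SectionReviewNorm}. The genuinely new case is an effective generalized divisor $E$ with ideal sheaf $\Ii \subseteq \Oo_Y$. Here I would work locally: since the statement is local on $Y$ by Proposition \ref{DirectImageProperties}(3) and Proposition \ref{InverseImageProperties}(3), pick an affine open $V = \Spec A \subseteq Y$ over which $\pi_*\Oo_X$ is free of rank $n$, say $\pi^{-1}(V) = \Spec B$ with $B$ a free $A$-module of rank $n$, and $\Ii_{|V}$ generated by $a_1,\dots,a_r \in A$. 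The inverse image $\pi^*(E)$ has ideal sheaf $(\pi^\sharp(a_1),\dots,\pi^\sharp(a_r))\cdot B \subseteq B$, i.e. the ideal of $B$ generated by the images of the $a_j$. Its direct image is defined by $\Fitt_0\big(\pi_*(B/\Ii B)\big) = \Fitt_0\big(B/\Ii B\big)$ viewed as an $A$-module via restriction of scalars.

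The heart of the matter — and the step I expect to be the main obstacle — is the purely commutative-algebra computation that, for a free $A$-module $B$ of rank $n$ and an ideal $\Ii = (a_1,\dots,a_r)$ of $A$, one has $\Fitt_0^A(B/\Ii B) = \Ii^n$ as ideals of $A$. One direction follows from presenting $B/\Ii B$ over $A$: choosing an $A$-basis $e_1,\dots,e_n$ of $B$, the module $B/\Ii B$ is presented by the $n \times (rn)$ matrix whose blocks are $a_j \cdot \mathrm{id}_n$, so its $n\times n$ minors are products $a_{j_1}\cdots a_{j_n}$ (up to the determinant of a permutation-type minor, which is $\pm 1$ times such a product) — hence $\Fitt_0^A(B/\Ii B) \subseteq \Ii^n$, and a closer look at which minors actually occur gives the reverse inclusion $\Ii^n \subseteq \Fitt_0^A(B/\Ii B)$ as well, since every monomial of degree $n$ in the $a_j$ arises as such a minor. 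Once this local identity $\Fitt_0(B/\Ii B) = \Ii^n$ is in hand, it globalizes (Fitting ideals commute with localization by Definition/Lemma \ref{FittingDef}) to the statement that the ideal sheaf of $\pi_*(\pi^*(E))$ equals $\Ii^n$, which is precisely the ideal sheaf of $n\cdot E$. Reassembling, $\pi_*(\pi^*(D)) = n\cdot E - n\cdot F = n\cdot D$, completing the proof. A remark worth making is that an alternative, more conceptual route for the effective case uses the formula $[\pi_*](\Ll) \simeq (\bigwedge^n \pi_*\Ll)^{\omega\omega} \otimes \det(\pi_*\Oo_X)^{-1}$ from Proposition \ref{GeneralizedDetFormula} together with $[\pi^*](\Nn) = \pi^*(\Nn)$ from Remark \ref{InverseImageVsPullbackBundles} and the projection formula $\pi_*\pi^*\Ll \simeq \Ll \otimes \pi_*\Oo_X$; but that computes $\pi_*\pi^*$ only up to linear equivalence, whereas the divisor-level statement $\pi_*(\pi^*(D)) = n\cdot D$ (an equality of generalized divisors, not merely of classes) really does require the Fitting-ideal computation above.
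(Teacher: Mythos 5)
Your proposal is correct and follows essentially the same route as the paper: reduce to the case of an effective divisor by linearity with respect to Cartier divisors (Lemma \ref{DifferenceOfEffective} plus Propositions \ref{DirectImageProperties} and \ref{InverseImageProperties}), then verify locally that $\Fitt_0^A(B/\Ii B) = \Ii^n$ via the $n \times nr$ presentation matrix with blocks $a_j \cdot \mathrm{id}_n$, whose nonzero $n \times n$ minors are exactly (up to sign) the degree-$n$ monomials in the generators of $\Ii$. Your additional care in checking both inclusions of the Fitting-ideal identity, and your closing remark that the sheaf-theoretic formula of Proposition \ref{GeneralizedDetFormula} would only give the statement up to linear equivalence, are both accurate.
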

\begin{proof}
Since both of the terms are linear with respect to the sum of Cartier divisor, we can suppose that $D$ is effective with ideal sheaf $\Ii \subseteq \Oo_Y$. First note that, from the exact sequence:
\[ 0 \rightarrow \pi^* \Ii \rightarrow \pi^*\Oo_Y \rightarrow \pi^*\Oo_D \rightarrow 0 \]
together with Remark \ref{InverseImageVsPullback}, we obtain $\Oo_X/(\pi^{-1}\Ii \cdot \Oo_X) = \Oo_X/\pi^*\Ii = \pi^*(\Oo_Y/\Ii)$. To prove the thesis, we show that the equality \[
\Fitt_0\big(\pi_* (\Oo_X/\pi^*\Ii)\big)= \Ii^n
\] holds locally around any point $y \in Y$. Let $V \subseteq Y$ be an open neighborhood of $y$ such that  $(\pi_* \Oo_X)_{|V} \simeq (\Oo_{Y|V})^{\oplus n}$ and $\Ii_{|V}$ is generated by sections $s_1, \dots, s_r$ of $\Gamma(V, \Ii)$. Then, consider the following presentation:
\[ \Oo_{Y|V}^{\oplus r}  \xrightarrow{(\cdot s_1, \dots, \cdot s_r)} \Oo_{Y|V} \rightarrow (\Oo_Y/\Ii)_{|V} \rightarrow 0. \]
Pulling back with $\pi^*$, we obtain the following exact sequence on $U = \pi^{-1}(V)$:
\[ \Oo_{X|U}^{\oplus r}  \xrightarrow{(\cdot s_1, \dots, \cdot s_r)} \Oo_{X|U} \rightarrow (\Oo_X/\pi^*\Ii)_{|U} \rightarrow 0. \]
In order to compute $\Fitt_0\big(\pi_* (\Oo_X/\pi^*\Ii)\big)_{|V}$, we consider then the pushforward sequence:
\[ \big(\pi_*\Oo_X^{\oplus r}\big)_{|V}  \xrightarrow{(\cdot s_1, \dots, \cdot s_r)} \big(\pi_*\Oo_X\big)_{|V} \rightarrow \pi_*(\Oo_X/\pi^*\Ii)_{|V} \rightarrow 0. \]
Since $\pi_*(\Oo_X/\pi^*\Ii)_{|V} \simeq \big(\Oo_Y^{\oplus n}\big)_{|V}$, the map on the left is represented by the following $n \times nr$ matrix with entries in $\Gamma(V, \Oo_Y)$:
\[ M= 
\left[
\begin{array}{c|c|c}
\begin{matrix}
s_1 &  &   \\
 & \ddots &  \\
&  & s_1
\end{matrix} & \dots & \begin{matrix}
s_n &  &   \\
& \ddots &  \\
&  & s_n
\end{matrix}
\end{array}
\right].
\]	
Now, $\Fitt_0\big(\pi_* (\Oo_X/\pi^*\Ii)\big)_{|V}$ is generated by the $n \times n$ minors of $M$, i.e. all the possible products of $n$ generators of $\Ii$ on $V$, with ripetitions. This shows that $\Fitt_0\big(\pi_* (\Oo_X/\pi^*\Ii)\big)_{|V} = \Ii_{|V}^n$.
\end{proof}

\subsection{Relation with the degree}\label{DegreeOfDirectImage}

In this subsection we show that $\pi_*$ preserves the degree of divisors under the condition that $Y$ is smooth over $k$. In general, however, the direct image of a generalized divisor $D$ may not have the same degree of $D$, as the following example shows. Since we are dealing with degrees, we \textbf{assume that $X$ and $Y$ are projective curves over a base field} $k$.
\vspace{1em}


\begin{ex}\label{EsempioTacnode}
Fix $k$ an algebraically closed field. Let $A=k[x,y]/(y^2-x^4)$ be the affine coordinate ring of a curve $X=\Spec A$ with a tacnode at the point $P$ corresponding to the maximal ideal $\p=(x,y)$.

\noindent The involution $\sigma$ on $A$ defined by $x \mapsto -x, y \mapsto y$ induces a involution $\sigma_X$ on the curve $X$. The geometric quotient $Y=X/\sigma_X$ is an affine curve with coordinate ring equal to the ring of invariants $A^\sigma = k[x^2,y]/(y^2-x^4)$, that is isomorphic to $B=k[s,t]/(t^2-s^2)$ putting $s \mapsto x^2 $ and $t \mapsto y$. The quotient curve $Y$ has a simple node at the point $Q$ corresponding to the maximal ideal $\q=(s,t)$.

\noindent The inclusion map $A^\sigma \subset A$ gives to $A$ the structure of free $B$-module, with basis $\{1,x\}$; so, the corresponding morphism of curve $\pi: X \rightarrow Y = X/\sigma_X$ is a  finite, locally free map of degree $2$ sending $P$ to $Q$.

\noindent Let $D$ be the generalized divisor on $X$ defined by the ideal $I=(x^2,y) \subset A$; note that $D$ is supported only on the tacnode $P$. Since we want to compare $\deg(D)= \deg_P(D)$ with $\deg(\pi_*(D)) = \deg_Q(\pi_*(D))$, we can restrict to work locally around $P$ and $Q$. Let $B_\q$ be the local ring of $Y$ at $Q$. The induced map $B_\q \rightarrow A_\p$ makes $ A_\p$ a free $B_\q$-module of rank 2.

\noindent Let $E=A_\p/I_\p \simeq k[x]/(x^2)$ be the local ring of $D$ at the $P$. We have: \[
\deg_P(D) = \ell(E) = 2.
\]

\noindent Observe that $E$ has the following free presentation as $A_\p$-module:
\[
A_\p^{\oplus 2} \xrightarrow{(\cdot x^2, \cdot y)} A_\p \rightarrow E \rightarrow 0.
\]

\noindent Since $A_\p$ is a free $B_\q$-module of rank 2, $E$ has also a presentation as $B_\q$-module:
\[
B_\q^{\oplus 4} \xrightarrow{\varphi} B_\q^{\oplus 2} \rightarrow E \rightarrow 0
\]
where \[
\varphi = \left[\begin{array}{cc|cc}
s & 0 & t & 0 \\
0 & s & 0 & t
\end{array}\right].
\]
Then, the 0-th Fitting ideal of $E$ as $B_\q$-module is $F_0(E) = (s^2, t^2, st) \subset B_\q$. We have: \[
\deg_Q(\pi_*(D)) = \ell(B_\q/F_0(E)) = 3.
\]

\vspace{1em}

\noindent Note that there are divisors of degree 2 on $X$, supported at $P$, whose direct image has degree $2$ on $Y$. For example, take $D'=(x)$.

\end{ex}

\begin{rmk}
The previous example shows, in particular, that Proposition 2.33 in \cite{Vas} is false.
\end{rmk}

We now prove that the degree is preserved if the direct image is Cartier. We show first a proposition that computes the degree at any point where the direct image is locally principal.

\begin{prop}{\normalfont (Degree of the direct image of a generalized divisor)}\label{FiberDegree} 
Let $D \in \GDiv(X)$ be a  generalized divisor on $X$ and let $y \in Y$ be a point in codimension 1 of the support of  $\pi_*(D)$. Suppose that $\pi_*(D)$ is locally principal at $y$. Then,  \[
\deg_y(\pi_*(D)) = \sum_{\pi(x)=y} \deg_x(D).
\]
\end{prop}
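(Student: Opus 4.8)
The plan is to work locally at the point $y$, reducing everything to a statement about finite modules over the local ring $\Oo_{Y,y}$. First I would use Proposition \ref{DirectImageProperties}(2) and the linearity of both sides of the claimed formula with respect to effective Cartier divisors (together with Lemma \ref{DifferenceOfEffective}) to reduce to the case in which $D$ is an \emph{effective} generalized divisor, with ideal sheaf $\Ii \subseteq \Oo_X$. Indeed, if $D = D' - E$ with $D'$ effective and $E$ effective Cartier, then $\pi_*(E)$ is Cartier by Lemma \ref{linear}, so $\deg_y(\pi_*(E))$ contributes additively on the left, while $\deg_x(E)$ contributes additively on the right; moreover $E$ being Cartier means $\sum_{\pi(x)=y}\deg_x(E) = \deg_y(\pi_*(E))$ by the classical compatibility of direct image with degree for Cartier divisors (this is already contained in the EGA-based results recalled in Section \ref{SectionReviewNorm}). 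So it suffices to prove the formula for $D$ effective.

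Next, localize at $y$: write $R = \Oo_{Y,y}$, which is a $1$-dimensional local ring, and let $S = (\pi_*\Oo_X)_y$, a free $R$-module of rank $n$ (finite and flat). Since $\pi$ is finite, the points $x$ with $\pi(x)=y$ correspond to the maximal ideals $\m_1,\dots,\m_t$ of $S$, and $S \cong \prod_i S_{\m_i}$ with $S_{\m_i} = \Oo_{X,x_i}$. Let $N := (\pi_*(\Oo_X/\Ii))_y = S/I$ where $I = \Ii_y \subseteq S$; this is a finite $R$-module supported only at $\m_R$, hence of finite length over $R$. By Deflemma (direct image of an effective generalized divisor) and the compatibility of Fitting ideals with localization, the ideal of $\pi_*(D)$ at $y$ is $\Fitt_0^R(N) \subseteq R$. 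The hypothesis says $\Fitt_0^R(N)$ is principal, generated by some nonzerodivisor $f \in R$ (nonzerodivisor because $\pi_*(D)$ is a generalized divisor, so its ideal is nondegenerate, hence contains a nonzerodivisor; a principal nondegenerate ideal is generated by one). Then by definition $\deg_y(\pi_*(D)) = \ell_R(R/(f))\,[\kappa(y):k]$; since $k$ may be taken to be the residue field after a harmless base change, or one carries the factor $[\kappa(y):k]$ through, the heart of the matter is the purely local identity
\[
\ell_R\bigl(R/\Fitt_0^R(N)\bigr) \;=\; \ell_R(N).
\]
This is precisely the statement that when the zeroth Fitting ideal of a finite-length module $N$ over the local $1$-dimensional ring $R$ happens to be principal, its colength equals the length of $N$ — which holds because $N$ then admits a presentation $R^m \xrightarrow{\Psi} R^m \to N \to 0$ by a square matrix (the module is a torsion module over a ring where Fitting$_0$ principal forces the two ranks in a minimal presentation to agree), and $\ell_R(R/\det\Psi) = \ell_R(\coker\Psi)$ by the standard determinant-colength formula (e.g. via the Cayley--Hamilton/Smith-normal-form argument, or \cite[Tag 0AYV]{stacks-project} as used in Lemma \ref{ChiOfTensor}). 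I would state this as a short lemma.

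Finally, I reconcile the two sides: $\ell_R(N) = \ell_R(S/I) = \sum_{i=1}^t \ell_R(S_{\m_i}/I_{\m_i}) = \sum_{i=1}^t \ell_R(\Oo_{D,x_i})$, and $\ell_R(\Oo_{D,x_i}) = \ell_{\Oo_{X,x_i}}(\Oo_{D,x_i})\,[\kappa(x_i):\kappa(y)]$ by multiplicativity of length along the finite extension $R \to S_{\m_i}$; multiplying through by $[\kappa(y):k]$ and using $[\kappa(x_i):\kappa(y)][\kappa(y):k] = [\kappa(x_i):k]$ gives exactly $\sum_{\pi(x)=y}\deg_x(D)$ by Deflemma \ref{DegreeGenDivDef}. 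Assembling these identities yields $\deg_y(\pi_*(D)) = \sum_{\pi(x)=y}\deg_x(D)$. The main obstacle I anticipate is the purely commutative-algebra lemma $\ell_R(R/\Fitt_0^R(N)) = \ell_R(N)$ \emph{under the principality hypothesis}: without that hypothesis it is false (Example \ref{EsempioTacnode}), so the argument must genuinely use that a principal Fitting ideal forces the module to be ``square-presentable'', and care is needed to see that the presenting matrix can be taken square (this uses that $N$ is torsion of finite length over a $1$-dimensional ring, so generically zero, so the two Betti numbers in a minimal presentation coincide once $\Fitt_0$ is principal).
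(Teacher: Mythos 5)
Your proposal is correct and follows essentially the same route as the paper's proof: reduce to the effective case by linearity, localize at $y$, identify $\deg_y(\pi_*(D))$ with $\ell_R(R/\Fitt_0(N))$ for $N=(\pi_*\Oo_D)_y$, invoke the colength identity $\ell_R(R/\Fitt_0(N))=\ell_R(N)$ under the invertibility hypothesis, and then decompose $\ell_R(N)$ over the points of the fiber using multiplicativity of length along residue field extensions. The only difference is that the paper outsources the key commutative-algebra step to \cite[Proposition 2.32]{Vas} (and to \cite[\href{https://stacks.math.columbia.edu/tag/02M0}{Tag 02M0}]{stacks-project} for the fiber decomposition), whereas you sketch it via a square presentation --- which is indeed the content of that citation, though making the ``square-presentable'' claim rigorous requires the Cramer/adjugate argument you correctly flag as the delicate point.
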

\begin{proof} 
First, suppose that $D$ is effective. Let $V=\Spec(B)$ be an affine open neighborhood of $y$ with affine pre-image $U=\pi^{-1}(V)=\Spec(A)$, and let $I \subset A$ denote the ideal of $D$ restricted to $U$. The coordinate ring of $D$ on $U$ is the Artin ring  $A/I$ whose spectrum is equal to $\Spec(A) \cap \Supp(D) = \{ \p_1, \dots,  \p_s \}$; hence we have: \[ A/I = \prod_{i=1}^s (A/I)_{\p_i}. \] Let $\q \subset B$ denote the maximal ideal corresponding to $y$ in $\Spec(B)$ and let $B_\q$ be the associated local ring of dimension 1. Then, the localization of $A/I$ at $\q$, denoted $E$, is the coordinate ring of $D$ restricted to the fiber of $y$: \[
E = (A/I)_\q = \prod_{\pi^{-1}(\p_i)=\q} (A/I)_{\p_i}.
\]	

\vspace{1em}

\noindent Since $\pi_*(D)$ is effective, the degree of $\pi_*(D)$ at $y$ is: \begin{align*}
\deg_y(\pi_*(D)) &= \ell\big( \Oo_{Y,y} / F_0(\pi_* \Oo_D)_y \big)[\kappa(y):k] = \\
&= \ell\big( B_\q / F_0 (E)  \big)[\kappa(y):k].
\end{align*}
By hypothesis $F_0(E)$ is an invertible module, so by \cite[Proposition 2.32]{Vas}, we have $\ell\big( B_\q / F_0 (E) \big) = \ell(E)$.
 On the other hand, thanks to \cite[\href{https://stacks.math.columbia.edu/tag/02M0}{Tag 02M0}]{stacks-project}, 
note that: 
\begin{align*}
\ell(E) &= \ell \left( \prod_{\pi^{-1}(\p_i)=\q} (A/I)_{\p_i} \right) = \\
&= \sum_{\pi^{-1}(\p_i)=\q}\ell( A_{\p_i}/I_{\p_i} )][k(\p_i):k(\q)] = \\
&= \sum_{\pi(x)=y} \ell \big( \Oo_{X,x}/\Ii_x \big)[\kappa(x):\kappa(y)] = \\
&= \sum_{\pi(x)=y} \deg_x(D)[\kappa(x):\kappa(y)].
\end{align*}

\vspace{1em}
\noindent 
Putting everything together we have: 
\begin{align*}
\deg_y(\pi_*(D)) &= \ell(E) [\kappa(y):k] = \\
&= \left(\sum_{\pi(x)=y} \deg_x(D)[\kappa(x):\kappa(y)]\right)[\kappa(y):k]  = \\
&= \sum_{\pi(x)=y} \deg_x(D)[\kappa(x):k] = \\
&= \sum_{\pi(x)=y} \deg_x(D)
\end{align*}
for $D$ effective generalized divisor on $X$. 

\vspace{1em}

\noindent Let now $D=E-F$ be a generalized divisor, written as a difference of effective generalized divisors with $F$ Cartier by Lemma \ref{DifferenceOfEffective}. Since $\pi_*(F)$ is Cartier, using the result for effective divisors estabilished before, we have: \begin{align*}
\deg_y(\pi_*(D)) &= \deg_y(\pi_*(E)-\pi_*(F)) = \\
&= \deg_y(\pi_*(E)) - \deg_y(\pi_*(F)) = \\
&= \sum_{\pi(x)=y} \left[ \deg_x(E)-\deg_x(F) \right] = \\
&= \sum_{\pi(x)=y} \deg_x(D).
\end{align*}
\end{proof}

\begin{cor}\label{DegreeCor1}
Let $D \in \GDiv(X)$ be a generalized divisor on $X$ such that $\pi_*(D)$ is Cartier. Then, \[
\deg_Y(\pi_*(D)) = \deg_X(D).
\]
\end{cor}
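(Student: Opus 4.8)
The plan is to reduce the global statement to the local computation already carried out in Proposition \ref{FiberDegree}. By Definition/Lemma \ref{DegreeGenDivDef}, the degree of a generalized divisor on a projective curve is the sum of its local degrees over all points in codimension 1, so I would write
\[
\deg_Y(\pi_*(D)) = \sum_{y \textrm{ cod } 1} \deg_y(\pi_*(D)).
\]
The support of $\pi_*(D)$ is finite, and at every point $y$ in that support the hypothesis that $\pi_*(D)$ is Cartier means precisely that $\pi_*(D)$ is locally principal at $y$; at points $y$ not in the support both sides vanish trivially. Hence Proposition \ref{FiberDegree} applies at every relevant $y$, giving $\deg_y(\pi_*(D)) = \sum_{\pi(x)=y} \deg_x(D)$.

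Substituting this into the sum, I would then reorganize the double sum over $y$ and over the fibers $\pi^{-1}(y)$ into a single sum over all points $x$ of $X$ in codimension 1:
\[
\deg_Y(\pi_*(D)) = \sum_{y \textrm{ cod } 1} \sum_{\pi(x)=y} \deg_x(D) = \sum_{x \textrm{ cod } 1} \deg_x(D) = \deg_X(D),
\]
where the middle equality uses that $\pi$ is finite, so that every point $x$ of $X$ in codimension 1 maps to a unique point $y = \pi(x)$ of $Y$ in codimension 1, and conversely the fibers $\pi^{-1}(y)$ partition the codimension-1 points of $X$ (finiteness of $\pi$ guarantees $\pi(x)$ has codimension 1 whenever $x$ does, and that each fiber is finite). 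The last equality is again Definition/Lemma \ref{DegreeGenDivDef}, this time applied on $X$.

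There is essentially no obstacle here: the content is entirely in Proposition \ref{FiberDegree}, and the corollary is just the act of summing that fiberwise identity over $Y$. The only minor point to be careful about is the bookkeeping of the reindexing of the double sum and the observation that points outside $\Supp(\pi_*(D))$ (respectively $\Supp(D)$) contribute zero on both sides, so that extending the sums over all codimension-1 points does not change anything. I would also note explicitly that no smoothness of $Y$ is needed in this statement — only that $\pi_*(D)$ happens to be Cartier — in contrast with the situation of Example \ref{EsempioTacnode}, where the direct image fails to be Cartier and the degree jumps.
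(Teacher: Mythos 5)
Your proposal is correct and follows essentially the same route as the paper: apply Definition \ref{DegreeGenDivDef}, invoke Proposition \ref{FiberDegree} at each point of the support (where the Cartier hypothesis gives local principality), and reindex the double sum over fibers into a single sum over codimension-one points of $X$, with the observation that points outside the supports contribute zero on both sides (the paper phrases this via the fact that $\pi_*(D)$ is supported on the image of $\Supp(D)$). Your added remark that smoothness of $Y$ is not needed here is consistent with how the paper separates this corollary from Corollary \ref{DegreeCor}.
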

\begin{proof}
Applying Definition \ref{DegreeGenDivDef} and Proposition \ref{FiberDegree}, we get:
\begin{align*}
\deg_Y(\pi_*(D)) &= \sum_{y \textrm{ cod 1}} \deg_y(\pi_*(D)) = \\
&= \sum_{y \textrm{ cod 1}} \sum_{\pi(x)=y} \deg_x(D).
\end{align*}
By the properties of the Fitting image, $\pi_*(D)$ is supported on the set-theoretic image of the support of $D$; hence, the $x$ appearing in the last sum are all the $x$ for which $\deg_x(D)$ is not zero. Then, the previous sum gives:
\begin{align*}
\deg_Y(\pi_*(D)) &= \sum_{x \textrm{ cod 1}} \deg_x(D) = \deg_X(D).
\end{align*}
\end{proof}

\begin{cor}\label{DegreeCor}
Suppose that $Y$ is smooth. Then, for any generalized divisor $D$ on $X$,  \[
\deg_Y(\pi_*(D)) = \deg_X(D).
\]
\end{cor}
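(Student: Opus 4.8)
The plan is to deduce this at once from Corollary \ref{DegreeCor1}, whose only hypothesis is that $\pi_*(D)$ be a Cartier divisor. So the single point to verify is that, under the assumption that $Y$ is smooth, every generalized divisor on $Y$ is automatically Cartier, and in particular so is $\pi_*(D)$. First I would invoke the characterization recorded in Section \ref{SectionReviewDivisors}: for a curve $Y$ the equality $\GDiv(Y) = \CDiv(Y)$ holds if and only if $Y$ is smooth. Equivalently, and more concretely, at a smooth point $y \in Y$ the local ring $\Oo_{Y,y}$ is a discrete valuation ring, hence in particular a principal ideal domain; thus any nondegenerate fractional ideal $\Ii \subseteq \Kk_Y$ is locally principal at $y$, and letting $y$ range over $Y$ this shows that the fractional ideal of $\pi_*(D)$ is invertible, i.e. $\pi_*(D)$ is Cartier.

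Once $\pi_*(D)$ is known to be Cartier, Corollary \ref{DegreeCor1} applies verbatim and gives $\deg_Y(\pi_*(D)) = \deg_X(D)$, with no further computation required. There is thus no real obstacle in this statement: it is a one-line consequence of the previous corollary together with the fact that smoothness of a curve is equivalent to every generalized divisor being Cartier. The only conceptually substantive point nearby is that the smoothness hypothesis on $Y$ cannot be dropped, which is exactly what Example \ref{EsempioTacnode} is designed to exhibit.
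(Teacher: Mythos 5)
Your proposal is correct and follows exactly the paper's own argument: since $Y$ is smooth every generalized divisor on $Y$ is Cartier, so $\pi_*(D)$ is Cartier and Corollary \ref{DegreeCor1} applies directly. The extra justification you give via the local rings being discrete valuation rings is a fine (if slightly more detailed) way of recording the standard fact the paper simply cites.
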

\begin{proof}
Let $D$ be a generalized divisor on $X$. The direct image $\pi_*(D)$ is a generalized divisor on $Y$ smooth, hence Cartier. Then, apply Corollary \ref{DegreeCor1}.
\end{proof}

Proposition \ref{FiberDegree} yields another useful corollary about the surjectivity of the direct image morphism.

\begin{cor}{\normalfont (Surjectivity of the direct image for effective divisors)}\label{SurjDirectImage}
Suppose that $Y$ is smooth and $k$ is algebraically closed. Then, the direct image for effective divisors: \[
\pi_*^+: \GDiv^+(X) \rightarrow \CDiv^+(Y)
\]
is surjective.
\end{cor}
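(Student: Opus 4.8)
The plan is to reduce the statement to producing, for a single closed point $y\in Y$ and a single multiplicity $m\ge 1$, an effective generalized divisor on $X$ whose direct image is $m\,y$, and then to assemble the general case one point at a time.

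First I would record the two structural inputs. Since $Y$ is smooth, every effective generalized divisor on $Y$ is Cartier, and since $k$ is algebraically closed every effective Cartier divisor on $Y$ is a finite sum $E=\sum_{j=1}^r m_j y_j$ over pairwise distinct closed points $y_j$ with $m_j\ge 1$; moreover such a divisor is completely determined by the integers $\deg_{y}(E)$, because $\Oo_{Y,y}$ is a DVR. The second input is Proposition \ref{FiberDegree}: whenever $\pi_*(D)$ is locally principal at $y$ (automatic here, as $Y$ is smooth) one has $\deg_y(\pi_*(D))=\sum_{\pi(x)=y}\deg_x(D)$. So it suffices to build, for each pair $(y,m)$, an effective generalized divisor $D^{(y,m)}$ on $X$ supported over $y$ with $\sum_{\pi(x)=y}\deg_x(D^{(y,m)})=m$ and with $\pi_*(D^{(y,m)})$ supported only at $y$; then $\deg_y(\pi_*D^{(y,m)})=m$ and $\deg_{y'}(\pi_*D^{(y,m)})=0$ for $y'\ne y$, and since $\pi_*D^{(y,m)}$ is effective Cartier on the smooth curve $Y$ these conditions force $\pi_*D^{(y,m)}=m\,y$.

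To construct $D^{(y,m)}$: the morphism $\pi$ is finite and surjective (being a finite flat morphism of positive degree), so I pick a closed point $x\in\pi^{-1}(y)$; as $k$ is algebraically closed $\kappa(x)=k$, so $\deg_x$ of a divisor supported at $x$ is just the colength of its local ideal. In the one-dimensional Noetherian local ring $\Oo_{X,x}$ the lengths $\ell(\Oo_{X,x}/\m_x^{k})$ grow without bound, and every $k$-subspace sandwiched between $\m_x^{k+1}$ and $\m_x^{k}$ is an ideal (since $\m_x$ annihilates $\m_x^{k}/\m_x^{k+1}$); choosing $k$ with $\ell(\Oo_{X,x}/\m_x^{k})\le m\le \ell(\Oo_{X,x}/\m_x^{k+1})$ and the appropriate intermediate subspace, I obtain an ideal $I\subseteq\Oo_{X,x}$ with $\ell(\Oo_{X,x}/I)=m$. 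Let $D^{(y,m)}$ be the effective generalized divisor on $X$ with ideal sheaf equal to $I$ at $x$ and to $\Oo_X$ elsewhere. Then $\Supp(D^{(y,m)})=\{x\}$, so $\pi_*(D^{(y,m)})$ is supported in $\pi(\{x\})=\{y\}$ because the $0$-th Fitting scheme of $\pi_*\Oo_{D^{(y,m)}}$ has the same underlying space as the support of $\pi_*\Oo_{D^{(y,m)}}$; and the degree formula of Proposition \ref{FiberDegree} applies as above, giving $\pi_*(D^{(y,m)})=m\,y$.

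Finally, for a general $E=\sum_j m_j y_j$ I would set $D=\sum_j D^{(y_j,m_j)}$, an effective generalized divisor on $X$. To identify $\pi_*(D)$ I work locally: restricting to the open $V_j=Y\setminus\{y_{j'}:j'\ne j\}$, the remaining summands of $D$ are supported outside $\pi^{-1}(V_j)$, so by Proposition \ref{DirectImageProperties}(3) the restriction of $\pi_*(D)$ to $V_j$ equals the restriction of $\pi_*(D^{(y_j,m_j)})=m_j y_j$; moreover $\pi_*(D)$ is supported in $\{y_1,\dots,y_r\}$. Hence $\pi_*(D)=\sum_j m_j y_j=E$, which proves surjectivity. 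The only slightly delicate point is the existence of a local ideal of prescribed colength when $x$ is a singular point of $X$, which is handled by the Hilbert--Samuel growth of $\ell(\Oo_{X,x}/\m_x^{k})$ noted above; everything else is routine bookkeeping with the already-established additivity and degree properties of $\pi_*$.
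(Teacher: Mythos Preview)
Your proposal is correct and follows essentially the same strategy as the paper: write $E=\sum_i d_i y_i$, build over each $y_i$ an effective divisor supported at a single preimage $x_i$ with the prescribed local degree, and conclude via Proposition~\ref{FiberDegree} together with the fact that an effective Cartier divisor on a smooth curve is determined by its multiplicities at closed points. The one substantive difference is in the local construction: the paper simply takes the ideal $\mathfrak m_{x_i}^{d_i}$, which has colength $d_i$ only when $x_i$ is a smooth point of $X$; your Hilbert--Samuel interpolation (choosing a $k$-subspace between $\mathfrak m_x^{k+1}$ and $\mathfrak m_x^{k}$, automatically an ideal since $\mathfrak m_x\cdot\mathfrak m_x^{k}\subseteq\mathfrak m_x^{k+1}$) produces an ideal of exact colength $m$ even when $x$ is singular, so your version is the more robust of the two. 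Your use of locality (Proposition~\ref{DirectImageProperties}(3)) to assemble the pieces, rather than invoking additivity of $\pi_*$ on a sum of non-Cartier summands, is likewise a clean choice.
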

\begin{proof}
Let $E \in \CDiv^+(Y)$ be an effective Cartier divisor on $Y$ and let $V=\Spec(R) \subseteq Y$ be an affine open subset of $Y$ such that $E$ is supported on $V$. Let $\Supp(E)=\{ y_1, \dots, y_r \}$ be the support of $E$ and for each $i=1, \dots, r$ let \[
d_i=\deg_{y_i}(E)/[\kappa(y_i):k]=\ell_{\Oo_{Y,y_i}}(\Oo_{E,y_i}).
\] 
Let $U=\Spec(S)=\pi^{-1}(V)$ be the preimage of $V$. For each $i$, pick one element $x_i \in \pi^{-1}(y_i)$ in the finite fiber of $y_i$  and let $\mathfrak{M}_i$ be the maximal ideal of $S$ corresponding to the point $x_i$. Then, the ideal $I=\mathfrak{M}_1^{d_1} \cdot \dots \cdot \mathfrak{M}_r^{d_r}$ in $S$ defines a divisor $D$ on $U$ (and hence of $X$) such that $\deg_{y_i}(E)=\deg_{x_i}(D)$. Looking at its direct image $\pi_*(D)$, it is an effective divisor on $Y$ with the same support of $E$ and the same degree at any point by Propostion \ref{FiberDegree}. Since $Y$ is smooth, we conclude that $\pi_*(D)=E$.
\end{proof}

\section{The direct and inverse image \\for families of generalized divisors}\label{SectionDirectImageFamilies}
Let $\pi: X \rightarrow Y$ be a \textbf{finite, flat map of degree $n$ between projective curves over a field $k$}. In the present section, we discuss  the definition of direct and inverse image for families of effective generalized divisors. Under suitable conditions, recalling Definition \ref{HilbertDef}, we aim to define a pair of geometric morphisms: 
\begin{align*}
\pi_*&: \Hilb_X \rightarrow \Hilb_Y \\
\pi^*&: \Hilb_Y \rightarrow \Hilb_X
\end{align*}
that, on $k$-valued points, coincide with the direct and inverse image between $\GDiv^+(X)$ and $\GDiv^+(Y)$.

\vspace{1em}

\begin{rmk}
Giving a definition of the direct image for families of effective generalized divisors is not possible in general when the curve $Y$ is not smooth over $k$. Consider, for example, the setting of Example \ref{EsempioTacnode}; since $X$ and $Y$ are reduced curves with planar singularities, their Hilbert schemes of generalized divisors of given degree are connected (see \cite{AIK}, \cite{BGS}). The effective divisors $D_1$ and $D_2$ on $X$ defined by $(x^2, y)$ and $(x)$  on $X$  have both degree $2$, but their direct images on the quotient node $Y$ have degree respectively equal to $3$ and $2$. Then, their $k$-points $D_1, D_2$ in the connected component $\Hilb_X^2$ of $\Hilb_X$ are sent to different connected components of $\Hilb_Y$. This shows that \textit{the direct image of divisors in general is not defined as a geometric map}.
\end{rmk}

\vspace{1em}

In the rest of the section, consider $\pi: X \rightarrow Y$ a finite, flat map of degree $n$ between projective curves over $k$, and \textbf{suppose that $Y$ is smooth over $k$}. Recall that, in such case, $\Hilb_Y=\lHilb_Y$.

\begin{deflemma}{(Direct image for families of effective generalized divisors)}\label{DirectImageForHilbDef}
	Let $T$ be any $k$-scheme. The \textit{direct image map for the Hilbert scheme of effective generalized divisors} is defined on the $T$-valued points as:
	\begin{align*}
	\pi_*(T): \hspace{2em}	\Hilb_X(T)  &\longrightarrow \lHilb_Y(T) \\
	D \subseteq X \times_k T &\longmapsto \Zz\left( \Fitt_0(\pi_{T,*}(\Oo_D)) \right)
	\end{align*}
where $\pi_T: X \times_k T \rightarrow Y \times_k T$ is the morphism induced by base change of $\pi$.
\end{deflemma}
\begin{proof}
Let $D \subseteq X \times_k T$ be a $T$-flat family of effective divisors of $X$, defined by an ideal sheaf $\Ii\subseteq  \Oo_{X \times_k T}$ such that $\Oo_D=\Oo_{X \times_k T}/\Ii$ is flat over $S$. From the exact sequence \[
0 \rightarrow \Ii \rightarrow \Oo_{X \times_k T} \rightarrow \Oo_D \rightarrow 0
\]
we deduce that also $\Ii$ is flat over $T$. Since $\pi$ is finite and flat, $\pi_{T,*} (\Ii)$ is also flat over $T$, fiberwise locally free since $Y$ is smooth and hence locally free on $Y \times_k T$ by \cite[Lemma 2.1.7]{Huy}. Moreover, it fits the exact sequence \[
0 \rightarrow \pi_{T,*}(\Ii) \xrightarrow{\varphi} \pi_{T,*}(\Oo_{X \times_k T}) \rightarrow \pi_{T,*}(\Oo_D) \rightarrow 0.
\]
Then, by Definition \ref{FittingDef}, the $0$-th Fitting ideal of $\pi_{T,*}(\Oo_D)$ is the image of the canonical injection \[
\det\left( \pi_{T,*} (\Ii) \right) \otimes \det\left( \pi_{T,*}\Oo_{X\times_k T} \right)^{-1} \xhookrightarrow{\det(\varphi)} \Oo_{Y \times_k T}
\]
and this is locally free over $Y \times_k T$, hence flat over $T$. Then, it defines a $T$-flat family of effective divisors of $Y$.
\end{proof}

\begin{rmk}
	For any $T$-family of effective generalized divisors $D \subseteq X \times_k T$  and for any point $t \in T$, the fiber $\pi_*(T)(D)_t$ is equal to the direct image $\pi_*(D_t)$ defined for the effective divisor $D_t$ on $X$. Moreover, since $Y$ is smooth, by Corollary \ref{DegreeCor} we have: \[
	\deg_Y(\pi_*(D_t)) = \deg_X(D_t).
	\]
	Then, for any $d \geq 0$, $\pi_*$ restricts  to a map: \[
	\pi_*^d: \Hilb_X^d \longrightarrow \lHilb_Y^d.
	\]
\end{rmk}

\begin{deflemma}{(Inverse image for families of effective generalized divisors)}\label{InverseImageForHilb}
	Let $T$ be any $k$-scheme. The \textit{inverse image map for the Hilbert scheme of effective Cartier divisors} is defined on the $T$-valued points as:
\begin{align*}
\pi^*(T): \hspace{2em}	\lHilb_Y(T)  &\longrightarrow \lHilb_X(T) \\
D \subseteq Y \times_k T &\longmapsto \Zz\left( \pi_T^{-1}(\Ii_D)\cdot \Oo_{X \times_k T} \right)
\end{align*}
where $\pi_T: X \times_k T \rightarrow Y \times_k T$ is the morphism induced by base change of $\pi$ and $\Ii_D \subseteq  \Oo_{Y \times_k T}$ is the ideal sheaf of $D$.
\end{deflemma}
\begin{proof}
Since $\Ii$ is locally principal, $\pi_T^{-1}(\Ii)\cdot \Oo_{X \times_k T} \subseteq \Oo_{X \times_k T}$ is locally principal. The restriction of $\pi^*(T)(D)$ to the fiber over any $t \in T$ has ideal sheaf $\pi^{-1}(\Ii_t)\cdot \Oo_{X \times_k t}$, which is equal to the defining ideal of $\pi^*(D_t)$ by Definition \ref{InverseImageDef}. Hence, $\pi^*(T)(D)$ is a locally principal subscheme of  $X \times_k T$, such that all fibers over $T$ are effective Cartier divisors by Lemma \ref{InverseLinear}. Then, $\pi^*(T)(D)$ is $T$-flat  by \cite[\href{https://stacks.math.columbia.edu/tag/062Y}{Tag 062Y}]{stacks-project}, hence it is a $T$-family of Cartier divisors.
\end{proof}

\begin{rmk}
For any integer $d \geq 0$, $\pi^*$ restricts to a map: \[
\pi^*_d: \lHilb_Y^d \longrightarrow \lHilb_X^{nd}.
\]
\end{rmk}

We study now some properties of the direct and inverse image for families of effective generalized divisors. With a slight abuse of notation, we will write $\pi_*$ and $\pi^*$ instead of $\pi_*(T)$ and $\pi^*(T)$, when it is clear that we are working on $T$-points.

\begin{prop} {\normalfont (Properties of the direct and inverse image for families of effective generalized divisors)}
	Let $T$ be any $k$-scheme. Then, the following fact holds.
	\begin{enumerate}
		\item Let $D, E$ be $T$-families of effective divisors over $X$ such that $E$ is a family of Cartier divisors. Then, $\pi_*(D+E) = \pi_*(D)+\pi_*(E)$.
		\item Let $F, G$ be $T$-families of effective divisors over $Y$. Then, $\pi^*(F+G) = \pi^*(F)+\pi^*(G)$.
		\item If $F$ is a $T$-family of effective divisors over $Y$, then $\pi_*(\pi^*(F)) = nF$.
	\end{enumerate}
\end{prop}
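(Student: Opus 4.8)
The plan is to observe that all three identities are really statements about ideal sheaves on the total spaces $X\times_k T$ and $Y\times_k T$, and that the local computations already carried out for a finite flat morphism of curves apply verbatim to the base-changed morphism $\pi_T\colon X\times_k T\to Y\times_k T$. Indeed $\pi_T$ is again finite and flat of degree $n$, hence $\pi_{T,*}\Oo_{X\times_k T}$ is locally free of rank $n$ over $\Oo_{Y\times_k T}$, the functor $\pi_{T,*}$ is exact, and by Definition \ref{FittingDef} the formation of $\Fitt_0$ commutes with localization and base change. The curve hypotheses, and the smoothness of $Y$, entered the earlier discussion only to guarantee that $\pi_*$ of a divisor is again a \emph{Cartier} divisor and that degrees are preserved — facts already built into Definition/Lemma \ref{DirectImageForHilbDef} and Definition/Lemma \ref{InverseImageForHilb}. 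The multiplicativity assertions themselves use nothing beyond finite flatness, so the strategy is: reduce each statement to the corresponding ideal-sheaf identity and invoke the relevant earlier lemma with $\pi$ replaced by $\pi_T$, using that the sum of two families of effective divisors is the family cut out by the product of their ideal sheaves.

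\textbf{Statements (1) and (2).} For (1), the sum of the family $D$ with the Cartier family $E$ has ideal sheaf $\Ii_D\cdot\Ii_E\subseteq\Oo_{X\times_k T}$, where $\Ii_E$ is invertible because $E$ is a family of effective Cartier divisors. Applying the relative version of Lemma \ref{TrivialOnSaturated} to $\pi_T$ and $\Ii_E$, I would pass to an affine open of $Y\times_k T$ on which $\pi_{T,*}\Oo_{X\times_k T}$ is free and $\pi_{T,*}\Ii_E$ is free of rank one over $\pi_{T,*}\Oo_{X\times_k T}$; then the matrix-and-minors argument in the proof of Lemma \ref{linear} yields
\[
\Fitt_0\big(\pi_{T,*}\Oo_{D+E}\big)=\Fitt_0\big(\pi_{T,*}\Oo_D\big)\cdot\Fitt_0\big(\pi_{T,*}\Oo_E\big),
\]
which is exactly $\pi_*(D+E)=\pi_*(D)+\pi_*(E)$ once sums of families are read as products of ideal sheaves. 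Statement (2) is more direct still: since $\Ii_{F+G}=\Ii_F\cdot\Ii_G$ and the operation $\pi_T^{-1}(-)\cdot\Oo_{X\times_k T}$ carries products of ideals to products of ideals, the computation of Lemma \ref{InverseLinear} performed over $Y\times_k T$ gives $\pi_T^{-1}(\Ii_F\Ii_G)\cdot\Oo_{X\times_k T}=\big(\pi_T^{-1}\Ii_F\cdot\Oo_{X\times_k T}\big)\cdot\big(\pi_T^{-1}\Ii_G\cdot\Oo_{X\times_k T}\big)$, i.e.\ $\pi^*(F+G)=\pi^*(F)+\pi^*(G)$.

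\textbf{Statement (3).} Write $\Ii_F\subseteq\Oo_{Y\times_k T}$ for the locally principal ideal sheaf of $F$, so that $nF$ is by definition the family with ideal sheaf $\Ii_F^{\,n}$. As in Remark \ref{InverseImageVsPullback}, flatness of $\pi_T$ gives $\Oo_{X\times_k T}/\big(\pi_T^{-1}\Ii_F\cdot\Oo_{X\times_k T}\big)=\pi_T^*\big(\Oo_{Y\times_k T}/\Ii_F\big)$. Choosing local generators of $\Ii_F$, pulling back, and pushing forward, the block–diagonal $n\times nr$ matrix that appears in the proof of Proposition \ref{DirectAndInverseImage} has its $n\times n$ minors generating precisely $\Ii_F^{\,n}$ locally, hence $\Fitt_0\big(\pi_{T,*}(\Oo_{X\times_k T}/\pi_T^*\Ii_F)\big)=\Ii_F^{\,n}$, that is $\pi_*(\pi^*(F))=nF$.

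\textbf{Main obstacle.} The only genuinely delicate point — and where I would spend the most words — is justifying that these earlier local lemmas, stated for a finite flat morphism of \emph{curves over a field}, remain valid for $\pi_T$, whose target $Y\times_k T$ need not be a curve at all (it may be very singular and not of pure dimension one). This amounts to isolating exactly which hypotheses the proofs of Lemmas \ref{linear}, \ref{InverseLinear} and Proposition \ref{DirectAndInverseImage} actually use: finite flatness of degree $n$, local freeness of $\pi_{T,*}\Oo_{X\times_k T}$, exactness of $\pi_{T,*}$, invertibility of the ideal sheaves of the Cartier families, and compatibility of $\Fitt_0$ with localization and base change — none of which involve the base being a nice curve. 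Note that the naive alternative, namely deducing the identities from their fibrewise counterparts in Propositions \ref{DirectImageProperties} and \ref{InverseImageProperties}, is not sufficient, since a $T$-flat family of divisors is not determined by its fibres; the argument should therefore be made at the level of ideal sheaves on the total spaces, as above. Once the list of needed hypotheses is spelled out, (1)–(3) follow from the quoted computations word for word.
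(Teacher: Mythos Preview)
Your proposal is correct and matches the paper's approach exactly: the paper's proof simply states that parts (1), (2), and (3) follow the proofs of Lemma \ref{linear}, Lemma \ref{InverseLinear}, and Proposition \ref{DirectAndInverseImage} respectively. Your write-up is in fact more careful than the paper's, since you explicitly isolate which hypotheses those earlier arguments use and verify they hold for the base-changed morphism $\pi_T$, and you correctly flag that a fibrewise argument would not suffice.
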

\begin{proof}
The proof of parts (1) and (2) follows the proofs of the second part of Lemma \ref{linear} and \ref{InverseLinear} respectively. The proof of part (3) follows the proof of Proposition \ref{DirectAndInverseImage}.
\end{proof}

\section{The Norm  and the inverse image \\for families of torsion-free rank-1 sheaves}\label{SectionNormOnJbar}
In the present section, we provide the definition of the Norm map for families torsion-free sheaves of rank 1 and the related inverse image map. Since we are dealing with  families of sheaves, we  \textbf{assume that $X$ and $Y$ are projective curves over a field $k$}. 

\vspace{1em}

In order to be compatible with the direct image for generalized line bundles, our definition of the Norm map on $\Jbar(X)$ will be inspired by the sheaf-theoretic formula of Proposition \ref{DetFormula}. By Proposition \ref{GeneralizedDetFormula} and its corollaries, the generalization of such formula to generalized divisors and torsion-free sheaves involves taking the double $\omega$-dual of the exterior power of the pushforward of generalized line bundles. In general, this operation does not behave well in families if $Y$ is not smooth. 

Then, in accordance with the previous section, we \textbf{suppose that $Y$ is smooth over $k$}; in such case, the moduli space $\Jbar(Y)$ is actually equal to the Jacobian $J(Y)$. Then, the aim of this chapter is to provide a pair of geometric morphism: \begin{align*}
\Nm_\pi: \Jbar(X) &\rightarrow J(Y) \\
\pi^{-1}: J(Y) &\rightarrow J(X) \subseteq \Jbar(X)
\end{align*}
that, on $k$-valued points, coincide with the direct and inverse image maps between $\GPic(X)$ and $\GPic(Y)=\Pic(Y)$. 

Recall also (Definition \ref{TwistedAbel}) that, for any line bundle $M$ on $X$, the compactified Jacobian of $X$ is related to the Hilbert scheme of effective generalized divisors via the twisted Abel map $\A_M$. We will show that the direct image map and the Norm map are compatible as well as the inverse image maps, meaning that for any $M \in J(X)$ and $N \in J(Y)$ there are commutative diagrams of $k$-schemes:
\[ \begin{tikzcd}
\Hilb_X \arrow{rrr}{\pi_*} \arrow{d}{\A_M} &[-25pt]  &[-25pt]  & \lHilb_Y \arrow{d}{\A_{\Nm_\pi(M)}} \\
\Jgen(X) & \subseteq & \Jbar(X) \arrow{r}{\Nm_\pi} & J(Y) 
\end{tikzcd}
\hspace{4em}
\begin{tikzcd}
\lHilb_Y \arrow{r}{\pi^*} \arrow{d}{\A_N}  & \lHilb_X
\arrow{d}{\A_{\pi^*(N)}} \\
J(Y) \arrow{r}{\pi^*} & J(X).
\end{tikzcd}
\]

\vspace{1em}

We give first the definition for the Norm map on $\Jbar(X)$.

\begin{deflemma}{(Norm map for torsion-free rank-1 sheaves)}\label{NormOnMPureDef}
	Let $T$ be any $k$-scheme. The \textit{Norm map between compactified Jacobians} associated to $\pi$ is defined on the $T$-valued points as:
	\begin{align*}
	\Nm_\pi(T): \hspace{2em} \Jbar(X)(T) &\longrightarrow J(Y)(T) \\
	\Ll &\longmapsto \det\left( \pi_{T,*} (\Ll) \right) \otimes \det\left( \pi_{T,*}\Oo_{X\times_k T} \right)^{-1}.
	\end{align*}
\end{deflemma}
\begin{proof}
 Let $\Ll$ be a $T$-family of torsion-free sheaves of rank 1 on $X$, i.e. a $T$-flat coherent sheaf on $X \times_k T$, whose  fibers over $T$ are torsion-free sheaves of rank 1. The push-forward $\pi_{T,*} (\Ll)$ is a $T$-flat coherent sheaf on $Y \times_k T$ such that, for any $t \in T$, the fiber $(\pi_{T,*} \Ll)_t$ equals $\pi_*(\Ll_t)$ on $Y \simeq Y \times_k {t}$, . Since $Y$ is smooth,  $\pi_*(\Ll_t)$ is a locally free sheaf of rank $n$ for any $t \in T$. Then, by \cite[Lemma 2.1.7]{Huy}, $\pi_{T,*} (\Ll)$ is a locally sheaf of rank $n$ on $Y \times_k T $. Its determinant bundle is a line bundle on $Y \times_k T$, hence flat over $T$.
\end{proof}

The definition of the inverse image is standard but we give it for the sake of completeness.
\begin{deflemma}{(Inverse image map for line bundles)}\label{InverseImageOnCompJacDef}
Let $T$ be any $k$-scheme. The \textit{inverse image map between Jacobians} associated to $\pi$ is defined on the $T$-valued points as:
	\begin{align*}
\pi^*(T): \hspace{2em} J(Y)(T) &\longrightarrow J(X) \subseteq \Jbar(X)(T) \\
\Nn &\longmapsto \pi_T^*(\Nn).
\end{align*}
\end{deflemma}
\begin{proof}
By hypothesis, $\Nn$ is a $T$-flat coherent sheaf on $Y \times_k T$ that is a line bundle on any fiber over $T$; then, by \cite[Lemma 2.1.7]{Huy}, it is a line bundle on $Y \times_k T$. We conclude that $\pi^*\Nn$ is a line bundle on $X \times_k T$ and hence a $T$-flat family of line bundles over $T$.
\end{proof}

\begin{rmk} When there is no ambiguity, we will write $\Nm_\pi$ and $\pi^*$ instead of $\Nm_\pi(T)$ and $\pi^*(T)$. The Norm and the inverse image for generalized line bundles define morphisms of algebraic stacks \begin{align*}
\Nm_\pi: \hspace{2em} \Jbar(X) &\longrightarrow J(Y) \\
\pi^*: \hspace{2em} J(Y) &\longrightarrow J(X).
	\end{align*}
For any integer $d$, they restricts to: \begin{align*}
\Nm_\pi^d: \hspace{2em} \Jbar(X,d) &\longrightarrow J^d(Y) \\
\pi^*_d: \hspace{2em} J^d(Y) &\longrightarrow J^{nd}(X).
\end{align*}
Moreover, the Norm for torsion-free rank-1 sheaves, restricted to the locus of line bundles $J(X) \subseteq \Jbar(X)$, coincides with the classical Norm map from $J(X)$ to $J(Y)$ of Definition \ref{NormOnJacDef}.
\end{rmk}

We study now some properties of the Norm map on $\Jbar(X)$ and the inverse image map. First, we need a technical lemma.

\begin{lemma}\label{AssociatveFormulaForDet}
	Let $T$ be a fixed $k$-scheme. For any $T$-flat family  $\Ll$ of torsion-free rank-1 sheaves on $X \times_k T$ and any line bundle $\Mm$ on $X \times_k T$, there is an isomorphism: \begin{align*}
	\det\left( \pi_{T,*} (\Ll \otimes \Mm) \right) \otimes \det\left( \pi_{T,*}\Oo_{X\times_k T} \right) \simeq \\ \simeq \det\left( \pi_{T,*} (\Ll ) \right) \otimes  \det\left( \pi_{T,*} (\Mm) \right).
	\end{align*}
\end{lemma}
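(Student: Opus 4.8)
The statement to prove is the "associativity formula for determinants": for $\Ll$ a $T$-flat family of torsion-free rank-$1$ sheaves on $X\times_k T$ and $\Mm$ a line bundle on $X\times_k T$, one has a canonical isomorphism
\[
\det\bigl(\pi_{T,*}(\Ll\otimes\Mm)\bigr)\otimes\det\bigl(\pi_{T,*}\Oo_{X\times_k T}\bigr)\simeq\det\bigl(\pi_{T,*}\Ll\bigr)\otimes\det\bigl(\pi_{T,*}\Mm\bigr).
\]
The natural strategy is to reduce the problem to the case where $\Mm$ is trivial by a dévissage on $\Mm$, using that any line bundle on a projective curve (times $T$) can be written, up to a short exact sequence, as a difference of line bundles built from effective Cartier divisors, and that $\det$ is multiplicative on short exact sequences. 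First I would record the key formal input: if $0\to\Ff'\to\Ff\to\Ff''\to0$ is an exact sequence of $T$-flat coherent sheaves on $X\times_k T$ all of whose pushforwards along the finite flat map $\pi_T$ are locally free (which holds here since $Y$ is smooth and the fibers are torsion-free, by \cite[Lemma 2.1.7]{Huy} as used in the proof of Definition/Lemma \ref{NormOnMPureDef}), then $\pi_{T,*}$ preserves exactness and $\det(\pi_{T,*}\Ff)\simeq\det(\pi_{T,*}\Ff')\otimes\det(\pi_{T,*}\Ff'')$.

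Next I would tensor $\Ll$ with the defining short exact sequence of an effective relative Cartier divisor. Concretely, pick an effective Cartier divisor $E\subseteq X\times_k T$, flat over $T$, such that $\Mm\simeq\Oo_{X\times_k T}(E-E')$ for effective Cartier divisors $E,E'$ (this uses that $X$ is projective over a field and the argument of Lemma \ref{DifferenceOfEffective}, now in families, together with the fact that a sufficiently ample twist of $\Mm$ has a regular section); tensoring $0\to\Oo(-E)\to\Oo\to\Oo_E\to0$ by $\Ll\otimes\Oo(E)$ gives $0\to\Ll\to\Ll(E)\to(\Ll\otimes\Oo(E))|_E\to0$, and since $E$ is finite over $T$ and disjoint from the singular locus issues are killed, the pushforward of the torsion term $(\Ll\otimes\Oo(E))|_E$ along $\pi_T$ is locally free with $\det$ depending only on $E$ and not on $\Ll$ (as $\Ll\otimes\Oo(E)$ is invertible near $\Supp E$ — here one uses that $\Ll$ is generically invertible and $E$ can be chosen supported in the locus where $\Ll$ is locally free, or more robustly compares with the case $\Ll=\Oo$). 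Applying the determinant-additivity twice — once to $\Ll$ and once to $\Oo_{X\times_k T}$ — and taking the ratio, the divisor contribution cancels and the claimed formula for $\Ll\otimes\Mm$ reduces to the same formula for $\Ll\otimes\Oo(E)$ with one fewer divisor; iterating and handling the $\Oo(-E')$ factor the same way reduces everything to $\Mm=\Oo_{X\times_k T}$, where the statement is the tautology $\det(\pi_{T,*}\Ll)\otimes\det(\pi_{T,*}\Oo)\simeq\det(\pi_{T,*}\Ll)\otimes\det(\pi_{T,*}\Oo)$.

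The main obstacle I anticipate is making the "$\det$ of the torsion term depends only on the divisor, not on $\Ll$" step clean and canonical in families: the sheaf $\Ll$ is only torsion-free, not locally free, so near $\Supp E$ one must either choose $E$ avoiding the non-locally-free locus of $\Ll$ on each fiber (possible fiberwise but requiring care to do compatibly over $T$), or instead argue via the comparison $\det(\pi_{T,*}((\Ll\otimes\Oo(E))|_E))\simeq\det(\pi_{T,*}(\Oo(E)|_E))\otimes\bigl(\text{something built from }\Ll|_E\bigr)$ and observe that $\Ll|_E$ is flat over $T$ supported finitely, so its pushforward is locally free and its $\det$ is a fixed line bundle on $T$ that cancels in the ratio with the $\Ll=\Oo$ case. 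An alternative, perhaps cleaner, route that avoids divisor dévissage entirely: work Zariski-locally on $Y\times_k T$ over an open where $\pi_{T,*}\Oo_{X\times_k T}$ is free and $\Mm$ is trivial — then $\pi_{T,*}(\Ll\otimes\Mm)\simeq\pi_{T,*}\Ll$ as $\Oo_{X\times_k T}$-modules are identified after trivializing $\Mm$, the issue being only the cocycle twist, which for $\det$ is exactly $\det(\pi_{T,*}\Mm)\otimes\det(\pi_{T,*}\Oo)^{-1}$; gluing the local isomorphisms, functoriality of $\bigwedge^n\pi_{T,*}(\underline{\ })$ makes the obstruction cocycle cancel, exactly as in the glueing argument at the end of Proposition \ref{GeneralizedDetFormula}. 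I would likely present this second argument as the main proof and mention the dévissage as motivation.
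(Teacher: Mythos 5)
Your proposal is correct, and the argument you single out as the main proof --- trivializing $\Mm$ locally over an open cover of $Y\times_k T$ pulled back through $\pi_T$, defining local isomorphisms via the trivializations, and using functoriality of $\det(\pi_{T,*}(\underline{\ \ }))$ to cancel the obstruction cocycles exactly as at the end of Proposition \ref{GeneralizedDetFormula} --- is precisely the proof the paper gives. The divisor d\'evissage you sketch as motivation is not needed, and you correctly identified its weak point (controlling the torsion term uniformly over $T$) before setting it aside.
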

\begin{proof}
	The proof is similar to the second part of the proof of Proposition \ref{GeneralizedDetFormula}.
	
	Since $\pi_T: X \times_k T \rightarrow Y \times_k T$ is finite and flat, $\pi_{T,*}(\Ll)$ is a $T$-flat coherent sheaf on $Y \times_k T$, that is locally free on any fiber over $T$ since $Y$ is smooth; then, $\pi_{T,*}(\Ll)$ is locally free of rank $n$ by \cite[Lemma 2.1.7]{Huy}. The same holds for $\pi_{T,*}(\Ll \otimes \Mm)$. On the other hand, by \cite[Proposition 6.1.12]{EgaII}, $\pi_{T,*} \Mm$ is a locally free $\pi_{T,*}\Oo_X$-module of rank $1$. In particular, there is open cover $\{V_i\}_I$ of $Y$ such that $\Mm$ is a trivial $\Oo_{X|U_i}$-module on each $U_i = \pi_T^{-1}(V_i)$, i.e. there are isomorphisms $\lambda_i: \Mm_{|U_i}  \xrightarrow{{}_\sim} \Oo_{X|U_i}$ for each $i \in I$. On the intersections $U_i \cap U_j$, the collection $\{\lambda_i \circ \lambda_j^{-1} \}$ of automorphisms of ${\Oo_{X \times_k T}}_{|U_i \cap U_j}$ is a cochain that measures the obstruction for the $\lambda_i$'s to glue to a global isomorphism.  We define now an isomorphism \[
	\alpha: \det\left( \pi_{T,*} (\Ll \otimes \Mm) \right) \otimes \det\left( \pi_{T,*}\Oo_{X\times_k T} \right) 	\longrightarrow \det\left( \pi_{T,*} (\Ll ) \right) \otimes  \det\left( \pi_{T,*} (\Mm) \right)
	\] by glueing a collection of isomorphisms $\alpha_i$ defined on each $V_i$. To do so, we define each $\alpha_i$ as the following composition of arrows:
	\vspace{1em}
	
	\noindent\begin{tikzcd}
		\left(\det\big( \pi_{T,*} (\Ll \otimes \Mm) \big) \otimes \det(\pi_{T,*}\Oo_X)\right)_{|V_i} \arrow{r}{\alpha_i} \arrow{d}[phantom, anchor=center,rotate=-90,yshift=-1ex]{=}  &  \left(\det\big( \pi_{T,*} \Ll \big) \otimes \det\big( \pi_{T,*} \Mm \big)\right)_{|V_i} \\
		\det\big( \pi_{T,*} (\Ll \otimes \Mm)_{|U_i} \big) \otimes \det(\pi_{T,*}\Oo_{X|U_i}) \arrow{d}[swap]{\det\big(\pi_{T,*}\lambda_i\big) \otimes \id }  & \det\big( \pi_{T,*} \Ll_{|U_i} \big) \otimes \det\big( \pi_{T,*} \Mm_{|U_i} \big) \arrow{u}[anchor=center,rotate=-90,yshift=1ex]{=}  \\
		\det\big( \pi_{T,*} \Ll_{|U_i} \big) \otimes \det(\pi_{T,*}\Oo_{X|U_i}) \arrow[r, phantom, "="] &
		\det\big( \pi_{T,*} \Ll_{|U_i} \big) \otimes \det\big(\pi_{T,*}\Oo_{X|U_i} \big)
		\arrow{u}[swap]{\id \otimes \det \big(\pi_{T,*} \lambda_i^{-1}\big)} 
	\end{tikzcd}
	\vspace{1em}
	
	\noindent i.e. $\alpha_i := \det\big(\pi_{T,*}\lambda_i\big) \otimes \det \big(\pi_{T,*} \lambda_i^{-1}\big)$. Since $\det\big(\pi_{T,*}\_ \big)$ is functorial, the obstruction $\alpha_i \circ \alpha_j^{-1}$ is trivial on any $V_i \cap V_j$, whence the $\alpha_i$'s glue together to a global isomorphism $\alpha$.
\end{proof}

\begin{prop}{\normalfont (Properties of the Norm and the inverse image map)}\label{PropertiesOfNormOnCompJac}
	Let $T$ be any $k$-scheme. Then, the following facts hold.
	\begin{enumerate}
		\item Let $\Ll, \Mm \in \Jbar(X)(T)$  such that $\Mm$ is a $T$-flat family of line bundles. Then, $\Nm_\pi(\Ll \otimes \Mm) \simeq \Nm_\pi(\Ll) \otimes \Nm_\pi(\Mm)$.
		\item Let $\Nn, \Nn' \in J(Y)(T)$. Then, $\pi^*(\Nn \otimes \Nn') \simeq \pi^*(\Nn) \otimes \pi^*(\Nn')$.
		\item Let $\Nn \in J(Y)(T)$. Then, $\pi^*(\Nn)$ is a $T$-flat family of  line bundles over $X$ and $\Nm_\pi(\pi^*(\Nn)) \simeq \Nn^{\otimes n}$.
	\end{enumerate}
\end{prop}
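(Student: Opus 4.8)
The plan is to deduce all three assertions formally from the determinantal description of the Norm map in Definition/Lemma~\ref{NormOnMPureDef}, together with Lemma~\ref{AssociatveFormulaForDet} and the projection formula; none of the three requires an essentially new geometric input. For part (1), by Definition/Lemma~\ref{NormOnMPureDef} one has $\Nm_\pi(\Ll \otimes \Mm) = \det(\pi_{T,*}(\Ll \otimes \Mm)) \otimes \det(\pi_{T,*}\Oo_{X\times_k T})^{-1}$. Since $\Mm$ is a $T$-flat family of line bundles it is in fact an invertible sheaf on the total space $X\times_k T$ (by \cite[Lemma~2.1.7]{Huy}, exactly as in the proof of Definition/Lemma~\ref{InverseImageOnCompJacDef}), so Lemma~\ref{AssociatveFormulaForDet} applies verbatim and yields $\det(\pi_{T,*}(\Ll \otimes \Mm)) \simeq \det(\pi_{T,*}\Ll) \otimes \det(\pi_{T,*}\Mm) \otimes \det(\pi_{T,*}\Oo_{X\times_k T})^{-1}$; substituting this and regrouping the two copies of $\det(\pi_{T,*}\Oo_{X\times_k T})^{-1}$ gives the desired isomorphism with $\Nm_\pi(\Ll)\otimes\Nm_\pi(\Mm)$.

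Part (2) is immediate: the pullback functor $\pi_T^*$ is symmetric monoidal, hence $\pi_T^*(\Nn \otimes \Nn') \simeq \pi_T^*\Nn \otimes \pi_T^*\Nn'$, and both sides are line bundles on $X\times_k T$ since $\Nn,\Nn'$ are, so they coincide in $J(X)(T)$. For part (3), that $\pi^*(\Nn)$ is a $T$-flat family of line bundles is precisely the content of the proof of Definition/Lemma~\ref{InverseImageOnCompJacDef}. For the Norm I would apply Definition/Lemma~\ref{NormOnMPureDef} together with the projection formula $\pi_{T,*}(\pi_T^*\Nn) \simeq \Nn \otimes \pi_{T,*}\Oo_{X\times_k T}$; since $\pi_{T,*}\Oo_{X\times_k T}$ is locally free of rank $n$ (as $\pi$ is finite and flat of degree $n$ and $Y$ is smooth), the identity $\det(\Nn \otimes \mathcal{E}) \simeq \Nn^{\otimes n}\otimes\det(\mathcal{E})$ for a sheaf $\mathcal{E}$ of rank $n$ gives $\Nm_\pi(\pi^*\Nn) = \det(\pi_{T,*}\pi_T^*\Nn)\otimes\det(\pi_{T,*}\Oo_{X\times_k T})^{-1} \simeq \Nn^{\otimes n}$.

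The point deserving care — rather than a genuine obstacle — is the applicability of Lemma~\ref{AssociatveFormulaForDet} in part (1): one must know that $\Mm$ is honestly an invertible $\Oo_{X\times_k T}$-module and not merely fiberwise a line bundle, which is guaranteed by the $T$-flatness hypothesis. All the substantive bookkeeping of the determinant line bundles and the glueing of the local isomorphisms has already been carried out in the proofs of Lemma~\ref{AssociatveFormulaForDet} and Proposition~\ref{GeneralizedDetFormula}, so the present proof is essentially a formal consequence of those results together with standard properties of $\pi_T^*$ and of determinants.
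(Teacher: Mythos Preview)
Your proposal is correct and follows essentially the same approach as the paper: part (1) is reduced to Lemma~\ref{AssociatveFormulaForDet} after observing via \cite[Lemma~2.1.7]{Huy} that $\Mm$ is a genuine line bundle on $X\times_k T$, part (2) is the monoidality of pullback, and part (3) is the projection formula together with the rank-$n$ determinant identity. The paper's proof is organized identically and invokes the same lemmas.
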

\begin{proof}
	To prove (1), note that $\Mm$ is a $T$-flat coherent sheaf on $X \times_k T$ that is a line bundle on any fiber over $T$; hence it is a line bundle on $X \times_k T$ by \cite[Lemma 2.1.7]{Huy}. Then, applying Definition \ref{NormOnMPureDef} and Lemma \ref{AssociatveFormulaForDet}, we have:
	\begin{align*}
	\Nm_\pi(\Ll \otimes \Mm) &= \det\left( \pi_{T,*} (\Ll \otimes \Mm) \right) \otimes \det\left( \pi_{T,*}\Oo_{X\times_k T} \right)^{-1} \simeq \\
	&\simeq  \det\left( \pi_{T,*} (\Ll ) \right) \otimes  \det\left( \pi_{T,*} (\Mm) \right) \otimes \det\left( \pi_{T,*}\Oo_{X\times_k T} \right)^{-2}  \simeq \\
	&\simeq \Nm_\pi(\Ll) \otimes \Nm_\pi(\Mm).
	\end{align*}
	
\noindent	Part (2) follows from the associative properties of the tensor product.
	
\noindent	To prove (3), compute by Definitions \ref{NormOnMPureDef} and \ref{InverseImageOnCompJacDef}:
	\begin{align*}
	\Nm_\pi(\pi^*(\Nn)) = \det\left( \pi_{T,*} (\pi_T^*(\Nn)) \right) \otimes \det\left( \pi_{T,*}\Oo_{X\times_k T} \right)^{-1}.
	\end{align*}
	By the projection formula \cite[\href{https://stacks.math.columbia.edu/tag/01E8}{Tag 01E8}]{stacks-project} and the standard properties of determinants, we have:
	\begin{align*}
\Nm_\pi(\pi^*(\Nn)) &= \det\left( \Nn \otimes \pi_{T,*}\Oo_{X\times_k T}  \right) \otimes \det\left( \pi_{T,*}\Oo_{X\times_k T} \right)^{-1} \simeq \\
&\simeq  \Nn^n  \otimes  \det\left(\pi_{T,*}\Oo_{X\times_k T} \right) \otimes \det\left( \pi_{T,*}\Oo_{X\times_k T} \right)^{-1} \simeq \\
&\simeq \Nn^n.
\end{align*}	
\end{proof}

We compare now the Norm and the inverse image maps between the compactified Jacobians with the direct and inverse image maps between the Hilbert schemes of divisors, respectively.

\begin{prop}{\normalfont (Comparison of the direct image and the Norm map via the Abel map)}
For any line bundle $M$ of degree $e$ on $X$ and for any $d \geq 0$, there is a commutative diagram of algebraic stacks over $k$:
\[
\begin{tikzcd}
\Hilb_X^d \arrow{rrr}{\pi_*^d} \arrow{d}{\A_M^d} &[-25pt]  &[-25pt]  & \lHilb_Y^d \arrow{d}{\A_{\Nm_\pi(M)}^d} \\
\Jgen(X,-d+e) & \subseteq & \Jbar(X,-d+e) \arrow{r}{\Nm_\pi^{-d+e}} & J^{-d+e}(Y) 
\end{tikzcd}
\]
\end{prop}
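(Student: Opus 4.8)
The plan is to verify the commutativity of the square on $T$-valued points for an arbitrary $k$-scheme $T$, since all four objects are algebraic stocks and the maps are defined functorially on $T$-points. A $T$-point of $\Hilb_X^d$ is a $T$-flat family of effective degree-$d$ divisors $D \subseteq X \times_k T$, with ideal sheaf $\Ii_D \subseteq \Oo_{X \times_k T}$. Going down then right gives $\Nm_\pi^{-d+e}\big(\A_M^d(D)\big) = \Nm_\pi\big(\Ii_D \otimes M\big)$; going right then down gives $\A_{\Nm_\pi(M)}^d\big(\pi_*^d(D)\big) = \Ii_{\pi_*(D)} \otimes \Nm_\pi(M)$, where $\Ii_{\pi_*(D)} = \Fitt_0(\pi_{T,*}(\Oo_D))$ by Definition/Lemma \ref{DirectImageForHilbDef}. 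So the claim reduces to producing a functorial isomorphism
\[
\Nm_\pi\big(\Ii_D \otimes M\big) \simeq \Fitt_0\big(\pi_{T,*}(\Oo_D)\big) \otimes \Nm_\pi(M).
\]

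\textbf{Key steps.} First I would use Proposition \ref{PropertiesOfNormOnCompJac}(1): since $M$ is a $T$-flat family of line bundles and $\Ii_D$ is a $T$-flat family of torsion-free rank-$1$ sheaves, $\Nm_\pi(\Ii_D \otimes M) \simeq \Nm_\pi(\Ii_D) \otimes \Nm_\pi(M)$. This cancels the $\Nm_\pi(M)$ factor on both sides, and it remains to show $\Nm_\pi(\Ii_D) \simeq \Fitt_0(\pi_{T,*}(\Oo_D))$, i.e. that the Norm of the ideal sheaf, computed as $\det(\pi_{T,*}\Ii_D) \otimes \det(\pi_{T,*}\Oo_{X\times_k T})^{-1}$ by Definition/Lemma \ref{NormOnMPureDef}, agrees with the $0$-th Fitting ideal that defines the direct image divisor. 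The second step is to push forward the short exact sequence $0 \to \Ii_D \to \Oo_{X\times_k T} \to \Oo_D \to 0$: since $\pi_T$ is finite and flat, this yields the exact sequence $0 \to \pi_{T,*}(\Ii_D) \xrightarrow{\varphi} \pi_{T,*}(\Oo_{X\times_k T}) \to \pi_{T,*}(\Oo_D) \to 0$, which is a finite presentation of $\pi_{T,*}(\Oo_D)$ with locally free middle term (here I use that $Y$ is smooth, so $\pi_{T,*}(\Ii_D)$ is locally free of rank $n$ by \cite[Lemma 2.1.7]{Huy}, exactly as in the proof of Definition/Lemma \ref{DirectImageForHilbDef}). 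By Definition/Lemma \ref{FittingDef}, $\Fitt_0(\pi_{T,*}(\Oo_D))$ is the image in $\Oo_{Y \times_k T}$ of the map $\det(\pi_{T,*}\Ii_D) \otimes \det(\pi_{T,*}\Oo_{X \times_k T})^{-1} \xrightarrow{\det\varphi \otimes 1} \Oo_{Y \times_k T}$. Because $Y$ is smooth, this map is an injection of line bundles into $\Oo_{Y \times_k T}$, so its source is isomorphic as an abstract $\Oo_{Y\times_k T}$-module to its image $\Fitt_0(\pi_{T,*}(\Oo_D))$; but the source is precisely $\Nm_\pi(\Ii_D)$ by Definition/Lemma \ref{NormOnMPureDef}. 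This gives the required isomorphism.

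\textbf{Compatibility and functoriality.} Having produced the isomorphism pointwise, I would check it is natural in $T$: all the constructions involved — pushforward along $\pi_T$, formation of determinants, formation of the $0$-th Fitting ideal, and the Norm map — commute with base change (for Fitting ideals this is \cite[Corollary 20.5]{Eis}, cited after Definition/Lemma \ref{FittingDef}; for the rest it is standard), so the isomorphism is compatible with pullback along morphisms $T' \to T$. This upgrades the pointwise statement to an isomorphism of the two composite morphisms of algebraic stacks. Finally, the degree bookkeeping is already taken care of: $\A_M^d$ lands in $\Jgen(X, -d+e) \subseteq \Jbar(X,-d+e)$ by Definition \ref{TwistedAbel}, $\Nm_\pi^{-d+e}$ lands in $J^{-d+e}(Y)$ by the remark following Definition/Lemma \ref{InverseImageOnCompJacDef}, and $\pi_*^d$ lands in $\lHilb_Y^d$ (with the degree preserved) by Corollary \ref{DegreeCor} combined with the remark after Definition/Lemma \ref{DirectImageForHilbDef}, so $\A_{\Nm_\pi(M)}^d$ has the right source and target; one checks $\deg \Nm_\pi(M) = e$ so both paths arrive in $J^{-d+e}(Y)$.

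\textbf{Main obstacle.} The only real subtlety is the identification of the abstract line bundle $\Nm_\pi(\Ii_D) = \det(\pi_{T,*}\Ii_D) \otimes \det(\pi_{T,*}\Oo_{X\times_k T})^{-1}$ with the Fitting \emph{ideal} $\Fitt_0(\pi_{T,*}(\Oo_D)) \subseteq \Oo_{Y\times_k T}$: the former is defined up to isomorphism as a line bundle, the latter is an honest subsheaf of $\Oo_{Y\times_k T}$. The point — which is where smoothness of $Y$ is essential — is that over a smooth base the map $\det\varphi$ exhibits the source as an invertible fractional ideal, and the Fitting ideal is precisely its image; over a non-smooth $Y$ one would have to insert the double $\omega$-dual (as in Lemma \ref{FormulaForEffective} and Proposition \ref{GeneralizedDetFormula}), and that operation fails to be well-behaved in families, which is exactly why the hypothesis ``$Y$ smooth'' is imposed throughout this section. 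Everything else is a routine diagram chase plus base-change compatibility.
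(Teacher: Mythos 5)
Your proposal is correct and follows essentially the same route as the paper: both compute the two composites on $T$-points, identify the ideal sheaf of $\pi_*^d(D)$ with $\det(\pi_{T,*}\Ii_D)\otimes\det(\pi_{T,*}\Oo_{X\times_k T})^{-1}$ via the pushed-forward short exact sequence and the Fitting-ideal description, and reduce everything to the determinant identity $\det(\pi_{T,*}(\Ii_D\otimes\Mm))\otimes\det(\pi_{T,*}\Oo_{X\times_k T})\simeq\det(\pi_{T,*}\Ii_D)\otimes\det(\pi_{T,*}\Mm)$. The only cosmetic difference is that you invoke this identity through Proposition \ref{PropertiesOfNormOnCompJac}(1), whereas the paper cites Lemma \ref{AssociatveFormulaForDet} directly — but the former is proved by the latter, so the underlying argument is identical.
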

\begin{proof}
Let $T$ be any $k$-scheme, let $D$ be a $T$-flat family of effective divisors of degree $d$ on $X$ with ideal sheaf $\Ii$, and denote with $\Mm$ the pullback of $M$ to $X \times_k T$.  Following the bottom-left side of the square, combining Definitions \ref{TwistedAbel} and \ref{NormOnMPureDef} we get: \[
 \Nm_\pi^{-d+e}(\A_M^d(D)) =  \det\left( \pi_{T,*} (\Ii \otimes \Mm) \right) \otimes \det\left( \pi_{T,*}\Oo_{X\times_k T} \right)^{-1}.
\] Following the top-right side of the square, combining definitions \ref{DirectImageForHilbDef}, \ref{NormOnMPureDef} and \ref{TwistedAbel} we get:\[
 \A_{\Nm_\pi(M)}^d(\pi_*^d(D)) = \det\left( \pi_{T,*} (\Ii ) \right) \otimes  \det\left( \pi_{T,*} ( \Mm) \right) \otimes \det\left( \pi_{T,*}\Oo_{X\times_k T} \right)^{-2}.
\]
We are left to prove that: 
\begin{align*}
\det\left( \pi_{T,*} (\Ii \otimes \Mm) \right) \otimes \det\left( \pi_{T,*}\Oo_{X\times_k T} \right) \simeq \\ \simeq \det\left( \pi_{T,*} (\Ii ) \right) \otimes  \det\left( \pi_{T,*} ( \Mm) \right).
\end{align*}
Now, $\Ii$ is a $T$-flat family of generalized line bundles by hypothesis and $\Mm$ is a line bundle on $X \times_k T$ since it is the pull-back of a line bundle on $X$. Then, the assertion is true by Lemma \ref{AssociatveFormulaForDet}.
\end{proof}

\begin{prop}{\normalfont (Comparison of the inverse image maps via the Abel map)}
	For any line bundle $N$ of degree $e$ on $Y$ and any $d \geq 0$, there is a commutative diagram of algebraic stacks over $k$:
	\[
	\begin{tikzcd}
	\lHilb_Y^d \arrow{r}{\pi^*_d} \arrow{d}{\A_N^d}  & \lHilb_X^d
	\arrow{d}{\A_{\pi^*(N)}^d} \\
	J(Y,-d+e) \arrow{r}{\pi^*_{-d+e}} & J(X,-d+e)
	\end{tikzcd}
	\]
\end{prop}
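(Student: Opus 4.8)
The plan is to verify commutativity on $T$-valued points for an arbitrary $k$-scheme $T$, in the same style as the proof of the preceding proposition. Fix a $T$-flat family $D \subseteq Y \times_k T$ of effective Cartier divisors of degree $d$, with ideal sheaf $\Ii_D \subseteq \Oo_{Y \times_k T}$, and let $\Nn$ (resp.\ $\Mm$) denote the pullback of $N$ to $Y \times_k T$ (resp.\ of $\pi^*(N)$ to $X \times_k T$), so that $\Mm \simeq \pi_T^*(\Nn)$. Running $D$ down the left edge, Definition~\ref{TwistedAbel} gives $\A_N^d(D) = \Ii_D \otimes \Nn$, and then Definition~\ref{InverseImageOnCompJacDef} gives
\[
\pi^*_{-d+e}\big(\A_N^d(D)\big) \;=\; \pi_T^*\big(\Ii_D \otimes \Nn\big) \;\simeq\; \pi_T^*(\Ii_D) \otimes \pi_T^*(\Nn) \;=\; \pi_T^*(\Ii_D) \otimes \Mm .
\]
Running $D$ along the top edge, Definition~\ref{InverseImageForHilb} gives $\pi^*_d(D) = \Zz\big(\pi_T^{-1}(\Ii_D)\cdot \Oo_{X\times_k T}\big)$, whose ideal sheaf is $\pi_T^{-1}(\Ii_D)\cdot \Oo_{X\times_k T}$, so Definition~\ref{TwistedAbel} gives
\[
\A_{\pi^*(N)}\big(\pi^*_d(D)\big) \;=\; \big(\pi_T^{-1}(\Ii_D)\cdot \Oo_{X\times_k T}\big)\otimes \Mm .
\]
Thus the square reduces to producing a canonical isomorphism $\pi_T^*(\Ii_D) \simeq \pi_T^{-1}(\Ii_D)\cdot \Oo_{X\times_k T}$ of $\Oo_{X\times_k T}$-modules.

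This is exactly the family version of Remark~\ref{InverseImageVsPullback}. Since $\pi_T$ is flat, being a base change of the flat morphism $\pi$, applying $\pi_T^*$ to $0 \to \Ii_D \to \Oo_{Y\times_k T} \to \Oo_D \to 0$ yields an exact sequence $0 \to \pi_T^*(\Ii_D) \to \Oo_{X\times_k T} \to \pi_T^*(\Oo_D) \to 0$ (using $\pi_T^*\Oo_{Y\times_k T} = \Oo_{X\times_k T}$), and by construction the image of the injection $\pi_T^*(\Ii_D) \hookrightarrow \Oo_{X\times_k T}$ is precisely the extended ideal $\pi_T^{-1}(\Ii_D)\cdot \Oo_{X\times_k T}$. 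Tensoring this isomorphism with $\Mm$ identifies the two outputs displayed above, which yields the pointwise commutativity.

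Finally one must upgrade the pointwise statement to a statement of stacks. Each of the four maps is defined functorially in $T$ and commutes with base change in $T$ — the Abel maps tautologically, $\pi^*_d$ because forming $\pi_T^{-1}(\Ii_D)\cdot\Oo_{X\times_k T}$ commutes with base change, and $\pi^*_{-d+e}$ by flat base change — and the isomorphism just constructed is itself natural in $T$; hence the diagram of algebraic stacks commutes. The main thing to track, rather than a genuine difficulty, is the degree bookkeeping: $\pi^*_d$ multiplies the degree of a divisor by $n$ and $\pi^*$ multiplies the degree of a line bundle by $n$, so both compositions land in the component of degree $n(-d+e)$ on $X$; the right vertical and bottom horizontal arrows should accordingly be read with this normalization, i.e.\ $\A_{\pi^*(N)}$ is the Abel map on $\lHilb_X^{nd}$ and the target of $\pi^*_{-d+e}$ is $J^{\,n(-d+e)}(X)$. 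No essential obstacle arises: the entire content is the naturality of the identity ``pullback of an invertible ideal sheaf equals the extended ideal'', already isolated in Remark~\ref{InverseImageVsPullback}.
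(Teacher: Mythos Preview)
Your proof is correct and follows essentially the same approach as the paper: check commutativity on $T$-points by unwinding the definitions, use that pullback commutes with tensor product, and invoke the identification of the pullback of an ideal sheaf with the extended ideal (the paper cites Remark~\ref{InverseImageVsPullbackBundles}, which packages the same content as your use of Remark~\ref{InverseImageVsPullback}). Your observation about the degree bookkeeping is also apt: the target of the top arrow is really $\lHilb_X^{nd}$ and the bottom-right corner is $J^{n(-d+e)}(X)$, a labeling slip in the stated diagram that does not affect the argument.
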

\begin{proof}
Let $T$ be any $k$-scheme, let $D$ be a $T$-flat family of divisors of degree $d$ on $Y$ with ideal sheaf $\Ii \subseteq Y \times_k T$, and denote with $\Nn$ the pullback of $N$ to $Y \times_k T$. By Definition \ref{TwistedAbel}, Proposition \ref{PropertiesOfNormOnCompJac}(2) and Remark \ref{InverseImageVsPullbackBundles}, we have: 
\begin{align*}
\pi^*_d(\A_N^d(D)) &= \pi_T^*(\Ii \otimes \Nn) \simeq \pi_T^*(\Ii) \otimes \pi_T^*(\Nn) \simeq \\
&\simeq \A_{\pi^*(N)}^d(\pi^*_{-d+e}(\Nn)).
\end{align*}
\end{proof}

\chapter{The fibers of the Norm map and the Prym stack}\label{ChapterPrym}

In the present chapter we study the fibers of the Norm map defined in Chapter \ref{ChapterNorm}. The purpose is to generalize the  Prym scheme associated to a finite, flat morphism between projective curves to the context of torsion-free sheaves of rank 1. Fix an algebraically closed field $k$ of characteristic 0. Let $X, Y$ be projective curves over $k$, such that $Y$ is smooth, and let $\pi: X \rightarrow Y$ be a finite, flat morphism of degree $n$. We start by recalling the following definition.
\begin{defi}
The \textit{Prym scheme} of $X$ associated to $\pi$  is the locus  $\Pr(X,Y) \subseteq J(X)$ of line bundles whose Norm with respect to $\pi$ is trivial. In other words: \[
\Pr(X,Y) = \{ \Ll \in J(X): \Nm_\pi(\Ll) \simeq \Oo_Y \}.
\]
\end{defi}

We can now extend the definition of the Prym stack to the context of torsion-free rank 1 sheaves using the Norm map defined in the previous chapter.

\begin{defi}
The \textit{Prym stack} of $X$  associated to $\pi$ is the locus  $\Prbar(X,Y) \subseteq \Jbar(X)$ of torsion-free rank-1 sheaves whose Norm with respect to $\pi$ is trivial. In other words: \[
\Prbar(X,Y) := \{ (\Ll, \lambda): \Ll \in \Jbar(X)\ \mathrm{ and }\ \lambda: \Nm_\pi(\Ll) \xrightarrow{{}_\sim} \Oo_Y \}.
\]
\end{defi}
\begin{rmk}
It follows from the definition that: \[
\Pr(X,Y) = \Prbar(X,Y) \cap J(X).
\]
\end{rmk}

Since being locally-free is an open condition,  $\Pr(X,Y)$ is open in $\Prbar(X,Y)$.

\vspace{1em}

The Prym stack is a fiber of the Norm map associated to $\pi$. In the present chapter, we study the fibers of the Norm map assuming that the curve $X$ is reduced with locally planar singularities. The study of such fibers involves the study of the fibers of the direct image map for effective divisors defined in Chapter \ref{ChapterNorm}, and the study of the fibers of the Hilbert-Chow morphism. We start from the last.

\section{The fibers of the Hilbert-Chow morphism}
In this section, we refer to \cite{Ber} for notations and known results.
\begin{defi}
Let $X$ be a projective curve over $k$ and let $d$ be a positive integer. The $d$-\textit{symmetric power} $X^{(d)}$ of $X$ is  the quotient $X^d/\Sigma_d$ of the Cartesian product $X^n$ by the action of the symmetric group $\Sigma_d$ in $d$-letters permuting the factors. Note that, for $d=1$, $X^{(1)}=X$. For $d=0$, we put $X^{(0)}=\{0\}$.
The \textit{scheme of effective $0$-cycles} (or  of \textit{effective Weil divisors}) of $X$ is the algebraic scheme: \[
\WDiv^+(X) = \bigsqcup_{d \geq 0} X^{(d)} =  \bigsqcup_{d \geq 0} \WDiv^d(X).
\]
The \textit{Hilbert-Chow morphism}  of degree $d$ associated to $X$ is the map of schemes 
\begin{align*}
\rho_X^d: \Hilb_X^d &\rightarrow X^{(d)} \\
D &\mapsto \sum_{x \textrm{ cod 1}} \deg_x(D) \cdot [x]
\end{align*}
Denote with $\lrho_X^d$ the restriction of $\rho_X^d$ to the subscheme $\lHilb_X^d \subseteq \Hilb_X^d$ parametrizing Cartier divisors. The collection of Hilbert-Chow morphisms in non-negative degrees gives rise to the Hilbert-Chow morphism in any degree: \[
\rho_X:  \Hilb_X \rightarrow \WDiv^+(X).
\]
Finally, denote with $\lrho_X$ the restriction of $\rho_X$ to $\lHilb_X \subseteq \Hilb_X$.
\end{defi}

\begin{rmk}
If $X$ is a smooth projective curve, then $\rho_X$ is an isomorphism and coincides with $\lrho_X$.
\end{rmk}

We study now the fibers of the Hilbert-Chow morphism in the case when $X$ is  reduced  with locally planar singularities. 

\begin{prop}\label{fibersOfHC}
Suppose that $X$ is reduced with locally planar singularities and that $Y$ is smooth. Let $W \in \WDiv^d(X)$ be an effective Weil divisor of degree $d$ on $X$ and let $\rho_X^{-1}(W)$ be the corresponding fiber in $\Hilb_X$. The locus $\lrho_X^{-1}(W)$ of Cartier divisors in $\rho_X^{-1}(W)$ is an open and dense subset of $\rho_X^{-1}(W)$.
\end{prop}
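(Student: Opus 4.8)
The plan is to work locally on $X$, since both the Hilbert-Chow morphism and the notion of being Cartier are local on $X$, and the fiber $\rho_X^{-1}(W)$ decomposes as a product over the points in the support of $W$ of local punctual Hilbert schemes. Concretely, writing $W = \sum_i n_i [x_i]$, the fiber $\rho_X^{-1}(W)$ is isomorphic to $\prod_i \Hilb^{n_i}_{X, x_i}$, where $\Hilb^{n}_{X,x}$ denotes the punctual Hilbert scheme parametrizing length-$n$ subschemes supported at $x$. At a smooth point $x$ the punctual Hilbert scheme $\Hilb^n_{X,x}$ is a single reduced point corresponding to the Cartier divisor $n[x]$, so the only interesting factors are those over the (finitely many) singular points of $X$ appearing in $\mathrm{Supp}(W)$. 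Since $X$ is reduced with locally planar singularities, at such a point the complete local ring is $\widehat{\Oo}_{X,x} \cong k[[u,v]]/(f)$ for some $f$, and the relevant object is the punctual Hilbert scheme of a reduced planar curve singularity.

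The key input is then the structure of $\Hilb^n_{X,x}$ for a reduced planar curve singularity, for which I would invoke the results collected in \cite{Ber} (and the classical references \cite{AIK}, \cite{BGS}): the punctual Hilbert scheme of a reduced locally planar curve is irreducible, and the locus of Cartier (i.e. principal) ideals is open and dense inside it. Granting this, I would argue as follows. Openness of $\lrho_X^{-1}(W) \subseteq \rho_X^{-1}(W)$ is clear, since being locally principal is an open condition on a flat family of subschemes. For density: $\rho_X^{-1}(W) \cong \prod_i \Hilb^{n_i}_{X,x_i}$ is a product of irreducible schemes, hence irreducible; and $\lrho_X^{-1}(W)$ is the product of the corresponding Cartier loci, each of which is open and dense in its factor by the cited structure result (and is automatically the whole factor at smooth points). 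A finite product of dense open subsets of irreducible schemes is dense, and in fact the product of nonempty open subsets of irreducible schemes is nonempty and dense in the product, so $\lrho_X^{-1}(W)$ is dense in $\rho_X^{-1}(W)$. One should also check the Cartier locus is nonempty: the ideal $\mathfrak m_{x_i}^{n_i}$ — or more to the point, the principal ideal generated by a general element of $\mathfrak m_{x_i}^{n_i} \setminus \mathfrak m_{x_i}^{n_i+1}$ realized by a local equation cutting out a length-$n_i$ Cartier divisor — gives a point of each Cartier factor, so each is nonempty.

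The main obstacle is establishing, or correctly citing, the two facts about punctual Hilbert schemes of reduced planar curve singularities: irreducibility, and density of the principal-ideal locus. Irreducibility of the full Hilbert scheme $\Hilb^d_X$ of a reduced locally planar curve is a theorem of Altman--Iarrobino--Kleiman \cite{AIK} (see also Briançon--Granger--Speder \cite{BGS}), and the punctual version follows by the product decomposition of the Hilbert-Chow fibers; density of the smoothable / Cartier locus is the statement that the generic length-$d$ subscheme supported at a reduced planar point is a Cartier divisor, which can be seen via the fact that a general deformation of any such subscheme inside the curve moves it to a reduced divisor (a union of distinct smooth points, hence Cartier), together with the upper-semicontinuity that keeps the limit Cartier on a dense open. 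Once these structural facts are in hand, the rest of the argument is the formal product-of-irreducibles manipulation above. I would phrase the write-up so that the heavy lifting is quoted from \cite{Ber} and the product decomposition of $\rho_X^{-1}(W)$ is made explicit, reducing everything to the single-point case.
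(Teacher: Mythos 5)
Your proposal is correct and follows essentially the same route as the paper: decompose $\rho_X^{-1}(W)$ as the product of punctual Hilbert schemes $\prod_i \Hilb_{X,x_i}^{n_i}$ over the distinct points of the support, observe openness of the Cartier locus, and reduce density to the known structure of punctual Hilbert schemes of reduced planar curve singularities, which the paper extracts from \cite{BGS} (Proposition 2.3 there identifies the Cartier locus with the smooth, hence dense, locus of each punctual factor). Your additional remarks on nonemptiness and on the product-of-dense-opens step are consistent with, and slightly more explicit than, the paper's argument.
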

\begin{proof}
	First, note that $\lrho_X^{-1}(W) = \rho_X^{-1}(W) \cap \lHilb_X$. Since $\lHilb_X \subseteq \Hilb_X$ is open (see for example \cite[Fact 2.4]{MRVFine}), $\lrho_X^{-1}(W)$ is open in $\rho_X^{-1}(W)$.
	
	To prove that it is dense, let $W= \sum_{i=1}^s n_i \cdot [x_i]$ with the $x_i$'s are $s$ distinct points. For each $i$, the fiber of $\rho_X$ over the cycle $n_i \cdot [x_i]$ is equal by definition to the punctual Hilbert scheme $\Hilb_{X,x_i}^{n_i}$  parametrizing $0$-dimensional subschemes of $X$ supported at $x_i$ having length $n_i$ over $k$. Since the $x_i$ are distinct, we have: \begin{align*}
	\rho_X^{-1}(W) &=\rho_X^{-1}\left( \sum_{i=1}^s n_i \cdot [x_i]\right) =\\
	&=\prod_{i=1}^s \rho_X^{-1}(n_i \cdot [x_i]) = \prod_{i=1}^s \Hilb_{X,x_i}^{n_i}.
	\end{align*}
	For each $i$, let  $\lHilb_{X,x_i}^{n_i}=\Hilb_{X,x_i}^{n_i} \cap \lHilb_X$. An effective divisor is Cartier if and only if it is Cartier at each point of its support, hence: \[
	\lrho_X^{-1}(W)=\prod_{i=1}^s \left(\Hilb_{X,x_i}^{n_i} \cap \lHilb_X\right) = \prod_{i=1}^s \lHilb_{X,x_i}^{n_i}.
	\]
	For each $i$, $\lHilb_{X,x_i}^{n_i}$ is the smooth locus of $\Hilb_{X,x_i}^{n_i}$ by \cite[Proposition  2.3]{BGS}, so it is dense; hence $\lrho_X^{-1}(W)$ is dense in $\rho_X^{-1}(W)$.
\end{proof}

\section{The fibers of the direct image between Hilbert schemes}

In the present section, we study the fibers of the direct image map $\pi_*$ defined between the Hilbert schemes of generalized divisors of $X$ and $Y$. 

\vspace{1em}

First, we introduce a similar notion for Weil divisors and we see how it relates to the direct image for generalized divisors.

\begin{defi}
The \textit{direct image for Weil divisors} associated to $\pi$ is the morphism of schemes: \[
\pi_*: \WDiv^+(X) \rightarrow \WDiv^+(Y)
\]
given on the level of points by \[
\sum_{i=1}^d n_i \cdot [x_i] \longmapsto \sum_{i=1}^d n_i \cdot [\pi(x_i)].
\]
\end{defi}

\begin{prop}\label{DirectImageVsChow}
Assume that $Y$ is smooth. There is a commutative square of schemes over $k$:
\[
\begin{tikzcd}
\Hilb_X \arrow{r}{\pi_*} \arrow{d}{\rho_X}  & \lHilb_Y
\arrow{d}{\rho_Y} \\
\WDiv(X) \arrow{r}{\pi_*} & \WDiv(Y).
\end{tikzcd}
\]
\end{prop}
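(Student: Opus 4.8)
**Proof plan for the commutativity of the square relating $\pi_*$ (Hilbert schemes) with $\pi_*$ (Weil divisors) via Hilbert–Chow.**

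The plan is to check commutativity on $T$-valued points, and in fact it suffices to check it on $\overline{k}$-points (i.e. on geometric points) since both composites are morphisms of reduced schemes of finite type over $k$ — actually, more carefully, since $\WDiv(Y) = \bigsqcup_d Y^{(d)}$ is separated, two morphisms from $\Hilb_X$ agreeing on a dense set of points agree, but the cleanest route is to observe that both composites are honest morphisms of schemes and to identify them pointwise. So first I would fix a closed point $D \in \Hilb_X^d$, corresponding to a $0$-dimensional subscheme $D \subseteq X$ with defining ideal $\Ii_D$. Going down-then-right, $\rho_X(D) = \sum_{x} \deg_x(D)\cdot[x]$ and then $\pi_*$ sends this to $\sum_x \deg_x(D)\cdot[\pi(x)]$. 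Going right-then-down, $\pi_*(D)$ is the effective Cartier divisor on $Y$ cut out by $\Fitt_0(\pi_*\Oo_D)$ (well-defined and Cartier because $Y$ is smooth, as established in Definition/Lemma~\ref{DirectImageForHilbDef} and the remark following it), and then $\rho_Y$ of that is $\sum_{y}\deg_y(\pi_*(D))\cdot[y]$.

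The key computational input is Proposition~\ref{FiberDegree}: since $\pi_*(D)$ is Cartier (hence locally principal at every point of its support, $Y$ being smooth), for each $y \in Y$ in codimension $1$ we have $\deg_y(\pi_*(D)) = \sum_{\pi(x)=y}\deg_x(D)$. Summing the cycle $\sum_y \deg_y(\pi_*(D))\cdot[y]$ and substituting this identity, I get $\sum_y\big(\sum_{\pi(x)=y}\deg_x(D)\big)\cdot[y] = \sum_x \deg_x(D)\cdot[\pi(x)]$, which is exactly $\pi_*(\rho_X(D))$. This settles the diagram on closed points. To upgrade to a scheme-theoretic identity, I would note that $\Hilb_X^d$ is reduced? — not in general — so instead I would argue functorially: for a $T$-flat family $D \subseteq X\times_k T$, the formation of $\Fitt_0(\pi_{T,*}\Oo_D)$ commutes with base change (noted after Definition/Lemma~\ref{FittingDef}), so $(\pi_*(D))_t = \pi_*(D_t)$ for every $t\in T$; likewise the Hilbert–Chow morphism is compatible with base change by construction. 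Hence both composites $\rho_Y\circ\pi_*$ and $\pi_*\circ\rho_X$, evaluated on the family $D$, produce families of $0$-cycles on $Y$ whose fibers over every $t$ coincide by the closed-point computation; since a family of effective $0$-cycles (a map $T \to \WDiv(Y)$) is determined by its fibers when $\WDiv(Y)$ is a fine moduli space of $0$-cycles — more robustly, since $\WDiv^+(Y) = \bigsqcup Y^{(d)}$ is a projective scheme and the two $T$-morphisms agree on all points, one concludes they agree after passing to $T_{\mathrm{red}}$, and genuine equality then follows because both are obtained by base-changing the two fixed morphisms $\Hilb_X \to \WDiv(Y)$.

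The main obstacle I anticipate is precisely this last passage from a pointwise identity to an identity of morphisms of (possibly non-reduced) schemes: $\Hilb_X^d$ can fail to be reduced, so "agree on all closed points" does not immediately give "equal as morphisms". The honest fix is to establish the identity at the level of the universal family — i.e.\ show directly that the $\Oo_{\WDiv}$-valued data (equivalently, the associated maps to $\WDiv(Y)$) built by the two routes coincide, using that $\Fitt_0$, pushforward along the finite flat $\pi$, and the symmetric-power construction all commute with arbitrary base change, together with Proposition~\ref{FiberDegree} applied fibrewise. Once one phrases the Hilbert–Chow morphism as sending a flat family of subschemes to the associated relative effective Cartier divisor / $0$-cycle (which, over a regular base like $Y\times_k T$, is literally the norm/divisor of $\Fitt_0$), the two routes become the equality $\operatorname{div}_Y(\Fitt_0(\pi_{T,*}\Oo_D)) = \pi_{T,*}\big(\operatorname{div}_X(\Ii_D)\big)$ of relative $0$-cycles, which is the relative version of Corollary~\ref{DegreeCor1} and Proposition~\ref{FiberDegree} and holds because both sides are flat over $T$ with equal fibres and $\WDiv^+(Y)$ is separated. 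This is the only point requiring care; the rest is the bookkeeping of the cycle identity $\sum_x \deg_x(D)\cdot[\pi(x)] = \sum_y\big(\sum_{\pi(x)=y}\deg_x(D)\big)\cdot[y]$ carried out above.
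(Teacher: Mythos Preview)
Your core argument is exactly the paper's: check on closed points, use that $Y$ smooth makes $\pi_*(D)$ Cartier everywhere, and invoke Proposition~\ref{FiberDegree} to get $\deg_y(\pi_*(D)) = \sum_{\pi(x)=y}\deg_x(D)$, then rewrite the cycle $\sum_y \deg_y(\pi_*(D))\cdot[y]$ as $\sum_x \deg_x(D)\cdot[\pi(x)] = \pi_*(\rho_X(D))$. That is the entire content of the paper's proof.

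The difference is that the paper stops there: it gives only the pointwise verification and does not discuss the passage to an identity of morphisms when $\Hilb_X$ may be non-reduced. Your additional paragraph worrying about this upgrade is more scrupulous than the original. In the paper's applications (Proposition~\ref{fibersOfPiStar}), only the set-theoretic commutativity on fibers is actually used, so the pointwise check suffices for the downstream results; but if you want the diagram as a genuine identity of scheme morphisms, your base-change argument (Fitting ideals and Hilbert--Chow both commute with base change, hence reduce to the universal case and then to fibers over a separated target) is the right way to close the gap.
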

\begin{proof}
Let $D$ be an effective generalized divisor on $X$. Following the bottom-left side of the square, we obtain: \[
\pi_*(\rho_X(D)) = \sum_{x \textrm{ cod 1}} \deg_x(D) \cdot [\pi(x)],
\]
while on the top-right side of the square we have \[
\rho_Y(\pi_*(D)) = \sum_{y \textrm{ cod 1}} \deg_y(\pi_*(D)) \cdot [y].
\]
Since $Y$ is smooth, $\pi_*(D)$ is Cartier locally at each point of $Y$. By Proposition \ref{FiberDegree}, for any point $y$ in codimension 1 of $Y$ \[
\deg_y(\pi_*(D)) = \sum_{\pi(x)=y} \deg_x(D).
\]
Then we compute: \begin{align*}
\rho_Y(\pi_*(D)) &= \sum_{y \textrm{ cod 1}} \deg_y(\pi_*(D)) \cdot [y] = \\
&= \sum_{y \textrm{ cod 1}} \left(\sum_{\pi(x)=y} \deg_x(D)\right) \cdot [y] = \\
&= \sum_{y \textrm{ cod 1}} \sum_{\pi(x)=y} \deg_x(D) \cdot [\pi_*(x)] = \\
&= \sum_{x \textrm{ cod 1}}  \deg_x(D) \cdot [y] = \pi_*(\rho_X(D)).
\end{align*}
\end{proof}

We are now ready to study the fibers of the direct image for generalized divisors.

\begin{prop}\label{fibersOfPiStar}
Assume that $Y$ is smooth and $X$ is reduced with locally planar singularities. Let $E \in \Hilb_Y$ be an effective divisor of degree $d$ on $Y$ and let $\pi_*^{-1}(E)$ be the corresponding  fiber in $\Hilb_X$. Then, $\pi_*^{-1}(E) \cap \lHilb_X$ is an open and non-empty dense subset of $\pi_*^{-1}(E)$.
\end{prop}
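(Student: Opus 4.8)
The plan is to reduce the statement to a fibrewise analysis over the Hilbert--Chow morphism and then apply the known structure of punctual Hilbert schemes of planar curves. Openness of $\pi_*^{-1}(E)\cap\lHilb_X$ inside $\pi_*^{-1}(E)$ is immediate, since $\lHilb_X\subseteq\Hilb_X$ is open; so the substance is non-emptiness and density.

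First I would identify $\pi_*^{-1}(E)$ with a fibre of $\rho_X$. Because $Y$ is smooth, an effective divisor on $Y$ is determined by its associated $0$-cycle, so $E$ corresponds to the Weil divisor $W_E:=\rho_Y(E)=\sum_j d_j\,[y_j]$. For an effective generalized divisor $D$ on $X$, Proposition \ref{DirectImageVsChow} gives $\rho_Y(\pi_*(D))=\pi_*(\rho_X(D))$, and Proposition \ref{FiberDegree} (applicable since $\pi_*(D)$ is automatically Cartier on the smooth curve $Y$) shows that the multiplicity of $\pi_*(D)$ at each $y$ equals $\sum_{\pi(x)=y}\deg_x(D)$; hence $\pi_*(D)$ depends only on $\rho_X(D)$, and $\pi_*(D)=E$ exactly when $\pi_*(\rho_X(D))=W_E$. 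Writing $\mathcal{W}:=\pi_*^{-1}(W_E)\subseteq\WDiv^d(X)$ for the fibre of the direct image on Weil divisors, this yields
\[ \pi_*^{-1}(E)=\rho_X^{-1}(\mathcal{W}),\qquad \pi_*^{-1}(E)\cap\lHilb_X=\lrho_X^{-1}(\mathcal{W}). \]
Moreover $W_E$ is concentrated at the finitely many $y_j$, so $\mathcal{W}$ is $0$-dimensional and splits as a product over $j$ of the (finite) schemes of degree-$d_j$ cycles on $\pi^{-1}(y_j)$, and $\rho_X^{-1}(\mathcal{W})$ splits accordingly over the disjoint open neighbourhoods of the fibres $\pi^{-1}(y_j)$ in $X$.

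It then remains to treat each factor. If $\pi^{-1}(y_j)$ is contained in the smooth locus of $X$, every divisor over $y_j$ is Cartier and that factor lies entirely in $\lHilb_X$. For the remaining (finitely many) indices $j$, for a single cycle $W=\sum_i n_i[x_i]$ with all $x_i\in\pi^{-1}(y_j)$, Proposition \ref{fibersOfHC} identifies $\rho_X^{-1}(W)=\prod_i\Hilb^{n_i}_{X,x_i}$, and its Cartier locus $\lrho_X^{-1}(W)=\prod_i\lHilb^{n_i}_{X,x_i}$ is open and dense whenever it is non-empty (it is the smooth locus, and each $\Hilb^{n_i}_{X,x_i}$ is irreducible for planar $X$). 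One then concludes by propagating this over $\mathcal{W}$ — which is $0$-dimensional, so up to nilpotents $\rho_X^{-1}(\mathcal{W})$ is the disjoint union of the $\rho_X^{-1}(W)$ — and by exhibiting at least one effective Cartier divisor on $X$ with direct image $E$, for instance via the construction of Corollary \ref{SurjDirectImage} carried out componentwise, choosing in each fibre $\pi^{-1}(y_j)$ a point and a local equation of the prescribed colength.

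\textbf{Main obstacle.} The whole difficulty is concentrated at the points $y_j$ lying below the singular locus of $X$: there one must produce an effective Cartier divisor on $X$ of the prescribed local degree supported over $y_j$, so that $\lrho_X^{-1}(\mathcal{W})$ is non-empty and dense in the corresponding factor. This is exactly where the hypothesis that $X$ has locally planar singularities is used, through the smoothness and irreducibility of the punctual Hilbert schemes; but one has to be careful, since over a singular point $x$ of $X$ with $\pi^{-1}(\pi(x))=\{x\}$ the colengths realised by principal ideals of $\Oo_{X,x}$ are constrained by the numerical semigroup of that singularity, so the construction must either spread the required degree over the other points of the fibre or use a degree large enough to lie in that semigroup. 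Establishing this (and verifying that the componentwise density assembles into density of $\lrho_X^{-1}(\mathcal{W})$ in $\rho_X^{-1}(\mathcal{W})$) is the heart of the argument; everything else is formal.
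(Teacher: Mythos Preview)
Your approach coincides with the paper's: factor through the commutative square of Proposition~\ref{DirectImageVsChow}, use that $\rho_Y$ is an isomorphism (since $Y$ is smooth) to write $\pi_*^{-1}(E)=\rho_X^{-1}(\mathcal W)$ for the finite set $\mathcal W=\pi_*^{-1}(\rho_Y(E))$ of Weil cycles on $X$, and then apply Proposition~\ref{fibersOfHC} to each cycle in $\mathcal W$. The paper also cites Corollary~\ref{SurjDirectImage} for non-emptiness of $\pi_*^{-1}(E)$, exactly as you do.

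The paper does not treat your ``Main obstacle'' separately: it simply invokes Proposition~\ref{fibersOfHC}, which as stated asserts that $\lrho_X^{-1}(W)$ is open and dense in $\rho_X^{-1}(W)$ for \emph{every} effective Weil cycle $W$, without the caveat ``whenever it is non-empty'' that you insert. Your semigroup concern --- that principal ideals at a unibranch singular point realise only colengths in the value semigroup, so that $\lHilb^{n}_{X,x}$ may be empty for small $n$ --- is therefore not an obstacle to the reduction carried out here but rather a potential issue with Proposition~\ref{fibersOfHC} itself. If you accept that proposition as stated, your argument is complete and matches the paper's; if you doubt it, the place to confront the semigroup gaps is in the proof of Proposition~\ref{fibersOfHC}, not here.
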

\begin{proof}
By Proposition \ref{DirectImageVsChow}, there is a commutative diagram of schemes over $k$: \[
\begin{tikzcd}
\Hilb_X \arrow{r}{\pi_*} \arrow{d}{\rho_X}  & \lHilb_Y
\arrow{d}{\rho_Y} \\
\WDiv(X) \arrow{r}{\pi_*} & \WDiv(Y).
\end{tikzcd}
\]
First note that, since $Y$ is smooth, the map $\pi_*$ is surjective by Corollary \ref{SurjDirectImage}, hence $\pi_*^{-1}(E)$ is non-empty. Moreover, the Hilbert-Chow morphism $\rho_Y$ is an isomorphism. Then, \[
\pi_*^{-1}(E)=\rho_X^{-1}\pi_*^{-1}(\rho_Y(E)).
\]
Let $S=\{ y_1, \dots, y_r\}$ be the support of $E$ and let $d_i= \deg_{y_i} E$ for each $i$; then \[
\rho_Y(E) = \sum_{i=1}^r d_i \cdot [y_i].
\]
For each $i$, denote with $\{ x_i^1, \dots, x_i^{n_i} \}$ the discrete fiber of $\pi$ over $y_i$. Since points can occur with multeplicity in the geometric fibers, $n_i \leq n$ holds for each $i$. The fiber of $ \sum_{i=1}^r d_i \cdot [y_i]$ is then the discrete subset of $\WDiv(X)$ given by: \[
\pi_*^{-1}\left( \sum_{i=1}^r d_i \cdot [y_i]\right)= \left\{\sum_{i=1}^{r} \sum_{j=1}^{n_i} c_{ij} \cdot [x_i^j] \ \middle|\ c_{ij} \in \Z_{\geq 0}, \  \sum_{j=1}^{n_i} c_{ij} = d_i \textrm{ for any } i\right\}.
\]

Denote with $Z$ the set of all tuples $(c_{ij})$ of positive integers with the conditions that $\sum_{j=1}^{n_i} c_{ij} = d_i$ for any $i$.  Then, we can write the previous set as \[
\pi_*^{-1}\left( \sum_{i=1}^r d_i \cdot [y_i]\right) = \bigsqcup_{(c_{ij}) \in Z} \left\{ \sum_{i=1}^{r} \sum_{j=1}^{n_i} c_{ij} \cdot [x_i^j] \right\}.
\]
Then, applying $\rho_*^{-1}$, we otbain: \begin{align*}
\rho_X^{-1}\pi_*^{-1}(\rho_Y(E)) &= \rho_X^{-1}\pi_*^{-1}\left( \sum_{i=1}^r d_i \cdot [y_i]\right) =\\
&= \rho_X^{-1}\left( \bigsqcup_{(c_{ij}) \in Z} \left\{ \sum_{i=1}^{r} \sum_{j=1}^{n_i} c_{ij} \cdot [x_i^j] \right\} \right) = \\
&= \bigsqcup_{(c_{ij}) \in Z} \rho_X^{-1}\left(   \sum_{i=1}^{r} \sum_{j=1}^{n_i} c_{ij} \cdot [x_i^j]  \right),
\end{align*}
where the last disjoint union is a disjoint union of topological subspaces of $\Hilb_X$ in the same connected component. Then, $\pi_*^{-1}(E)$ is a non-emtpy disjoint uniont of a finite number of fibers of the Hilbert-Chow morphism. By Proposition \ref{fibersOfHC}, the intersection of any such fiber with the Cartier locus $\lHilb_X$ is open and dense in the same fiber.  Taking the disjoint union,  $\pi_*^{-1}(E) \cap \lHilb_X$ is a non-empty open and dense subset of $\pi_*^{-1}(E)$.
\end{proof}

\section{The fibers of the Norm map and $\Prbar(X,Y)$}
In this section, assuming that $Y$ is smooth and $X$ is reduced with locally planar singularities, we prove that the generalized Prym of $X$ with respect to $Y$ is non-empty, open and dense in the Prym stack of $X$ with respect to $Y$. 
The theorem is based upon the following two auxiliaries proposition. 

\begin{prop}\label{SurjNorm}
Assume that $Y$ is smooth 
and let $\Nn \in J(Y)$ be any line bundle on $Y$. Then, there is a line bundle $\Ll$ on $X$ such that $\Nm_\pi(\Ll) \simeq \Nn$.
\end{prop}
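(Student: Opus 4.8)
The plan is to reduce the surjectivity of $\Nm_\pi : J(X) \to J(Y)$ to the surjectivity of the direct image map $\pi_*^+ : \GDiv^+(X) \to \CDiv^+(Y)$ established in Corollary~\ref{SurjDirectImage}, using the compatibility between $\pi_*$ on divisors and $\Nm_\pi$ on line bundles through the quotient maps $q_X$, $q_Y$ (Proposition in Section~\ref{SectionReviewNorm}), together with the fact that $\Nm_\pi$ and $q_X$, $q_Y$ are group homomorphisms. First I would reduce to the effective case: given an arbitrary $\Nn \in J(Y)$, write its class in $\GPic(Y) = \Pic(Y)$ as a difference of two classes of effective Cartier divisors. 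Concretely, since $Y$ is smooth and projective, for a sufficiently ample line bundle $\Oo_Y(m)$ both $\Nn \otimes \Oo_Y(m)$ and $\Oo_Y(m)$ have nonzero global sections, hence are associated to effective Cartier divisors $F_1$ and $F_2$ on $Y$ with $\Nn \simeq \Oo_Y(F_1 - F_2) = \Oo_Y(F_1) \otimes \Oo_Y(F_2)^{-1}$.

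Next I would lift each $F_i$. By Corollary~\ref{SurjDirectImage} (applicable since $Y$ is smooth and $k$ is algebraically closed — our standing hypotheses in this chapter), there exist effective generalized divisors $D_1, D_2 \in \GDiv^+(X)$ with $\pi_*(D_1) = F_1$ and $\pi_*(D_2) = F_2$. Now consider the generalized divisor $D := D_1 - D_2 \in \GDiv(X)$ and its fractional ideal $\Ii_D$, a generalized line bundle on $X$; set $\Ll := \Ii_D$, an element of $\GPic(X)$. Applying the direct image $[\pi_*]$ for generalized line bundles and its linearity with respect to Cartier divisors (Proposition~\ref{DirectImageProperties}, or equivalently Definition~\ref{DirectImageDef}), we get
\[
[\pi_*](\Ll) = [\pi_*(D_1)] - [\pi_*(D_2)] = [F_1] - [F_2] \simeq \Nn
\]
in $\GPic(Y) = \Pic(Y)$.

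The remaining point — and the place where a little care is needed — is that the $\Ll$ produced this way is a priori only a \emph{generalized} line bundle on $X$, whereas the statement asks for a genuine line bundle. Here I would invoke Corollary~\ref{NmVsPiStar}, which identifies $\Nm_\pi$ with $[\pi_*]$ on honest line bundles, but to apply it I first need $\Ll$ to be Cartier. To arrange this, I would not take an arbitrary preimage $D_i$ but rather use the explicit construction in the proof of Corollary~\ref{SurjDirectImage}: there each $F_i$ is lifted to a divisor $D_i$ defined locally by powers of \emph{maximal ideals of closed points} $x_i^j \in \pi^{-1}(y)$, chosen one in each fibre. Since $X$ may be singular at those points, such $D_i$ need not be Cartier; so instead I would argue as follows. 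Because $X$ is reduced with locally planar singularities, its smooth locus $X^{\mathrm{sm}}$ is dense and, for each point $y$ in the (finite) support of $F_1$ and $F_2$, the fibre $\pi^{-1}(y)$ meets $X^{\mathrm{sm}}$ — this uses that $\pi$ is finite flat and surjective and $X^{\mathrm{sm}}$ is dense open, so for all but finitely many $y$ the fibre lies in $X^{\mathrm{sm}}$, and after translating $F_i$ by a large multiple of a very ample divisor supported on such good fibres (and correcting at the end, since the correction is itself Cartier and its Norm is controlled by Proposition~\ref{PropertiesOfNormOnCompJac}) we may assume all the chosen points $x_i^j$ lie in $X^{\mathrm{sm}}$. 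Then the lift $D = D_1 - D_2$ is Cartier, $\Ll = \Ii_D$ is a line bundle, and Corollary~\ref{NmVsPiStar} gives $\Nm_\pi(\Ll) = [\pi_*](\Ll) \simeq \Nn$, as required. The main obstacle is precisely this Cartier-ness bookkeeping; everything else is a formal consequence of the homomorphism properties and the surjectivity of $\pi_*^+$.
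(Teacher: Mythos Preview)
Your approach is essentially the paper's own: write $\Nn$ as a difference of effective Cartier divisors on $Y$, lift each to an effective \emph{Cartier} divisor on $X$, take their difference, and invoke Corollary~\ref{NmVsPiStar}. The only difference is in how the Cartier lifts are produced. The paper does it in one stroke by citing Proposition~\ref{fibersOfPiStar}, which (under the standing hypothesis of this section that $X$ is reduced with locally planar singularities) says that $\pi_*^{-1}(E)\cap \lHilb_X$ is non-empty for every effective $E$ on $Y$; hence effective Cartier preimages $H,K$ of $E,F$ exist immediately and $\Ll=\Ii_{H-K}$ is a genuine line bundle.

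Your detour through Corollary~\ref{SurjDirectImage} followed by an ad hoc Cartier-ness fix is more work and, as phrased, not quite right: ``translating $F_i$ by a large multiple of a very ample divisor supported on good fibres'' does not move the support of $F_i$ away from $\pi(\mathrm{Sing}\,X)$, so the lifted divisors may still fail to be Cartier, and the ``correcting at the end'' step does not repair this. What you really want is simpler: since $Y$ is smooth projective, for $m\gg 0$ the linear systems $|\Nn(m)|$ and $|\Oo_Y(m)|$ are base-point-free, so one may \emph{choose} $F_1\in|\Nn(m)|$ and $F_2\in|\Oo_Y(m)|$ avoiding the finite set $\pi(\mathrm{Sing}\,X)$; then the explicit lifts constructed in the proof of Corollary~\ref{SurjDirectImage} are supported in $X^{\mathrm{sm}}$ and hence Cartier. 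That argument is correct, but it is just a by-hand reproof of the non-emptiness part of Proposition~\ref{fibersOfPiStar}; citing that proposition directly, as the paper does, is the cleaner route.
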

\begin{proof}
Let $D$ be a Cartier divisor on $Y$ such that $\Nn \in [D]$ and let $D=E-F$ where $E,F$ are effective Cartier divisors by Lemma \ref{DifferenceOfEffective}. By Proposition \ref{fibersOfPiStar} there are effective Cartier divisors $H,K$ on $X$ such that $\pi_*(H)=E$ and $\pi_*(K)=F$. Then, by Lemma \ref{linear} $\pi_*(H-K)=D$. Set $H-K=C$ and let $\Ll$ be the defining ideal of $C$, seen as a line bundle.  By Lemma \ref{NmVsPiStar} we conclude that $\Nm_\pi(\Ll)\simeq \Nn$.
\end{proof}

\begin{prop}\label{fibersOfNm}
Assume that $Y$ is smooth  and $X$ is reduced with locally planar singularities and let $\Nn \in J(Y)$ be any line bundle on $Y$. Then, the fiber $\Nm_\pi^{-1}(\Nn)$ is non-empty and contains $\Nm_\pi^{-1}(\Nn) \cap J(Y)$ as an open and dense substack.
\end{prop}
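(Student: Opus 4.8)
The plan is to derive the three assertions --- non-emptiness, openness, and density of the line-bundle locus --- from Propositions \ref{SurjNorm} and \ref{fibersOfPiStar} together with the compatibility of $\Nm_\pi$ with $\pi_*$ through the Abel map. \emph{Non-emptiness and openness.} Non-emptiness is immediate from Proposition \ref{SurjNorm}: there is a line bundle $\Ll$ on $X$ with $\Nm_\pi(\Ll)\simeq\Nn$, so the fibre $\mathcal{F}:=\Nm_\pi^{-1}(\Nn)$ already contains a line bundle. For openness, recall that being locally free is an open condition, hence $J(X)\subseteq\Jbar(X)$ is open and so $\Nm_\pi^{-1}(\Nn)\cap J(X)$ is open inside $\mathcal{F}$.

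\emph{Reduction of density to a fibre of $\pi_*$.} Fix a line bundle $M$ on $X$ of sufficiently large degree. Since $X$ is reduced we have $\Jbar(X)=\Jgen(X)$, so every $\Mm\in\mathcal{F}$ is a generalized line bundle; realizing it as the fractional ideal of a generalized divisor (Section \ref{SectionReviewDivisors}) and twisting by $M$, we may write $\Mm\simeq\A_M(D)$ for a suitable effective generalized divisor $D\in\Hilb_X$ (equivalently, for $\deg M$ large the twisted Abel map $\A_M$ is surjective onto the component of $\Jbar(X)$ containing $\mathcal{F}$). Set $E:=\pi_*(D)\in\lHilb_Y$, which is legitimate because $Y$ is smooth. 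From the identity $\Nm_\pi\circ\A_M=\A_{\Nm_\pi(M)}\circ\pi_*$ (the comparison of the direct image and the Norm map via the Abel map, Chapter \ref{ChapterNorm}) we get, for every $D'\in\pi_*^{-1}(E)$,
\[
\Nm_\pi\big(\A_M(D')\big)=\A_{\Nm_\pi(M)}\big(\pi_*(D')\big)=\A_{\Nm_\pi(M)}(E)=\Nm_\pi\big(\A_M(D)\big)=\Nm_\pi(\Mm)=\Nn,
\]
so $\A_M$ carries the whole fibre $\pi_*^{-1}(E)$ into $\mathcal{F}$, and it carries $\pi_*^{-1}(E)\cap\lHilb_X$ into $\mathcal{F}\cap J(X)$, since the ideal sheaf of a Cartier divisor tensored with a line bundle is again a line bundle.

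\emph{Conclusion.} By Proposition \ref{fibersOfPiStar} the Cartier locus $\pi_*^{-1}(E)\cap\lHilb_X$ is dense in $\pi_*^{-1}(E)$; in particular $D$ lies in its closure. Applying the continuous morphism $\A_M$ and using the inclusions of the previous paragraph, $\Mm=\A_M(D)$ lies in the closure of $\A_M\big(\pi_*^{-1}(E)\cap\lHilb_X\big)\subseteq\mathcal{F}\cap J(X)$. Since $\Mm\in\mathcal{F}$ was arbitrary, $\mathcal{F}\cap J(X)$ is dense in $\mathcal{F}$; together with openness this proves the statement, and carrying the argument out on $T$-valued points (equivalently, on an atlas) gives the assertion at the level of substacks.

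\emph{Main difficulty.} The substance of the proof is concentrated in Proposition \ref{fibersOfPiStar}, hence ultimately in Proposition \ref{fibersOfHC}: the density of Cartier divisors in the fibres of the direct image is where the hypothesis that $X$ has locally planar singularities is used, through the smoothness of the Cartier locus inside punctual Hilbert schemes. The only other point requiring care is the uniform choice of the twist $M$ making $\A_M$ surjective on the relevant component of $\Jbar(X)$, and the (routine) passage from the set-theoretic density statement to one about open substacks.
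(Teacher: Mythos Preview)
Your proof is correct and follows the same overall strategy as the paper: reduce to Proposition~\ref{fibersOfPiStar} via the compatibility square between $\pi_*$ and $\Nm_\pi$ through the Abel map, with non-emptiness coming from Proposition~\ref{SurjNorm}.

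The execution differs in one respect worth noting. The paper invokes \cite[Proposition 2.5]{MRVFine} to produce an open cover $\{U_\beta\}$ of $\Jbar(X)$ together with line bundles $M_\beta$ for which $\A_{M_\beta}$ is \emph{smooth and surjective} onto $U_\beta$; density is then checked locally on this cover by pulling back along the smooth surjection. You instead argue pointwise: for each $\Mm$ in the fibre you choose $D$ with $\A_M(D)\simeq\Mm$, and use only the \emph{continuity} of $\A_M$ to push the density statement from $\pi_*^{-1}(E)$ forward. Your route is more elementary in that it does not need the smoothness of the Abel map (only its surjectivity in large degree, which is standard for reduced $X$), while the paper's route is cleaner at the level of stacks since smooth surjections are the natural atlases and transfer topological statements automatically. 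Your final sentence about ``carrying the argument out on $T$-valued points'' is the one place that could be tightened: the pointwise closure argument you gave already suffices, because density of an open substack can be checked on geometric points of a finite-type atlas.
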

\begin{proof}
First, note that $\Jbar(X)=\Jgen(X)$ since $X$ is reduced. The fiber $\Nm_\pi^{-1}(\Nn)$ is non-empty by Proposition \ref{SurjNorm} and the substack $\Nm_\pi^{-1}(\Nn) \cap J(Y)$ is open in $\Nm_\pi^{-1}(\Nn)$ since being locally free is an open condition. To prove that it is dense, recall that for any fixed line bundle $M \in J(X)$ there is a commutative diagram of stacks over $k$:
\[
\begin{tikzcd}
\Hilb_X \arrow{r}{\pi_*} \arrow{d}{\A_M}  & \lHilb_Y
\arrow{d}{\A_{\Nm_\pi(M)}} \\
\Jbar(X) \arrow{r}{\Nm_\pi} & J(Y)
\end{tikzcd}
\]
Moreover, by \cite[Proposition 2.5]{MRVFine} there is a cover of $\Jbar(X)$ by $k$-finite type open subsets $\{ U_\beta \}$ such that, for each $\beta$, there is a line bundle $M_\beta \in J(X)$ with the property that $\A_{M_\beta|V_\beta}: \A_{M_\beta}^{-1}(U_\beta)=V_\beta \rightarrow U_\beta$ is smooth and surjective.

Since density can be checked locally, fix $M=M_\beta$, $U=U_\beta$ and $V=V_\beta$. Then, we have: \[
\Nm_\pi^{-1}(\Nn) \cap U = \A_M\left( V \cap \pi_*^{-1}\left(\A_{\Nm_\pi(M)}^{-1}(\Oo_Y)\right) \right)
\] and \[
\Nm_\pi^{-1}(\Nn) \cap J(X) \cap U = \A_M\left( V \cap \pi_*^{-1}\left(\A_{\Nm_\pi(M)}^{-1}(\Oo_Y)\right) \cap \lHilb_X \right).
\]
Put $\textbf{K}=\A_{\Nm_\pi(M)}^{-1}(\Oo_Y)$. Then,  the topological space underlying $\pi_*^{-1}(\textbf{K})$ contains the topological union of fibers of the points in $\textbf{K}$, so that: \[
\pi_*^{-1}(\textbf{K}) \supseteq \bigcup_{E \in \textbf{K}} \pi_*^{-1}(E).
\]
By Proposition \ref{fibersOfPiStar}, $\pi_*^{-1}(E) \cap \lHilb_X$ is a non-empty open and dense subscheme of $\pi_*^{-1}(E)$ for any closed point $E \in \textbf{K}$. Hence, $\pi_*^{-1}(\textbf{K}) \cap \lHilb_X$ is non-empty, open and dense in $\pi_*^{-1}(\textbf{K})$. Intersecting with $V$ and composing with $\A_M$, we get the thesis.
\end{proof}

We finally come to the Prym stack of $X$ with respect to $Y$. Recall that by definition: \begin{align*}
\Pr(X,Y) &=\Nm_\pi^{-1}(\Oo_Y) \cap J(X), \\
\Prbar(X,Y) &= \Nm_\pi^{-1}(\Oo_Y).
\end{align*}
\begin{cor}
$\Pr(X,Y)$ is non-empty, open and dense in $\Prbar(X,Y)$.
\end{cor}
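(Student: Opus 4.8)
The plan is to obtain this statement as the special case $\Nn = \Oo_Y$ of Proposition~\ref{fibersOfNm}, after unwinding the definitions. First I would recall that, by definition, $\Prbar(X,Y) = \Nm_\pi^{-1}(\Oo_Y)$ and, by the Remark following its definition, $\Pr(X,Y) = \Prbar(X,Y) \cap J(X)$. Non-emptiness then needs nothing beyond exhibiting one point: the structure sheaf $\Oo_X$ is a line bundle on $X$ and, by Definition~\ref{NormOnMPureDef}, $\Nm_\pi(\Oo_X) = \det(\pi_*\Oo_X) \otimes \det(\pi_*\Oo_X)^{-1} \simeq \Oo_Y$, so that $\Oo_X \in \Pr(X,Y)$; alternatively this follows at once from the surjectivity statement Proposition~\ref{SurjNorm}. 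Openness of $\Pr(X,Y)$ in $\Prbar(X,Y)$ is, as already noted, simply because local freeness is an open condition on $\Jbar(X)$.

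It remains to address density, and here there is essentially nothing left to do: Proposition~\ref{fibersOfNm}, whose hypotheses ($Y$ smooth, $X$ reduced with locally planar singularities) are exactly those standing in this section, asserts precisely that $\Nm_\pi^{-1}(\Nn) \cap J(X)$ is open and dense in $\Nm_\pi^{-1}(\Nn)$ for every $\Nn \in J(Y)$, and specializing to $\Nn = \Oo_Y$ yields the claim. Thus the only genuine step is the bookkeeping identifying $\Prbar(X,Y)$ and $\Pr(X,Y)$ with the relevant fiber of $\Nm_\pi$ and its line-bundle locus; I do not expect any real obstacle, since all the substance was already carried out in the proofs of Propositions~\ref{SurjNorm} and~\ref{fibersOfNm}, which in turn rest on the analysis of the fibers of $\pi_*$ between Hilbert schemes (Proposition~\ref{fibersOfPiStar}) and of the Hilbert--Chow morphism (Proposition~\ref{fibersOfHC}).
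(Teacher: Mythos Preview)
Your proposal is correct and follows exactly the paper's approach: the paper's proof is the one-liner ``Set $\Nn = \Oo_Y$ and apply Proposition~\ref{fibersOfNm}.'' Your additional remarks (exhibiting $\Oo_X$ explicitly and recalling why openness is immediate) are fine but not needed, since non-emptiness, openness, and density are all already contained in the conclusion of Proposition~\ref{fibersOfNm}.
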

\begin{proof}
Set $\Nn = \Oo_Y$ and apply Proposition \ref{fibersOfNm}.
\end{proof}

\chapter{Spectral data for $G$-Higgs pairs}\label{ChapterSpectralData}

In this chapter we study how the spectral correspondence (see Chapter \ref{ChapterPrelim}, Section \ref{SectionReviewSpectral}) specializes for $G$-Higgs pairs, where $G=\SL(r,\C)$, $\PGL(r, \C)$, $\Sp(2r,\C)$, $\GSp(2r,\C)$, $\PSp(2r,\C)$. Throughout this chapter, $C$ denotes a fixed smooth curve over the field of complex numbers and $L$ a fixed line bundle on $C$ with degree $\ell = \deg L$.


\section{$\SL(r, \Cc)$-Higgs pairs}
\noindent The Special Linear Group $\SL(r, \C)$ is defined as: \[
\SL(r, \C) = \{ M \in \GL(r, \C): \det M = 1 \}.
\]
The Lie algebra associated to $\SL(r, \C)$ is  	\[
\slalg(r, \C) = \left\{ X \in \gl(r, \C): \tr X = 0  \right\}.
\]

If $(P, \phi)$ is a $\SL(r, \Cc)$-Higgs pair on $C$, the associated vector bundle $E$ is endowed with a  volume form $\lambda: \det E \xrightarrow{{}_\sim} \Oo_C $ and the associated Higgs field $\Phi$ is a $L$-valued endomorphism with $\tr \Phi = 0$; we say that $(E, \Phi)$ is a Higgs pair \textit{with trace zero}. Viceversa, let $(E, \Phi)$ be a Higgs pair of rank $r$ such that $\tr\Phi=0$ and let $\lambda: \det E \xrightarrow{{}_\sim} \Oo_C$ be a volume form on $E$; then, the frame bundle $P=\Fr_\SL(E, \lambda)$ of all ordered basis whose $\lambda$-volume equals 1 is a principal $\SL(r, \C)$-bundle, and the Higgs field $\Phi$ is the image of a unique global section $\phi$ of $\ad P \otimes L$ with respect to the morphism  $\ad P \otimes L \rightarrow \End E \otimes L$ induced by the canonical embedding $\rho: \SL(r, \C) \hookrightarrow \GL(r, \C)$.

\vspace{1em}

To sum up, the datum $(P, \phi)$ of a $\SL(r, \Cc)$-Higgs pair on $C$ corresponds univocally, via the associated bundle construction, to the datum of $(E, \Phi, \lambda)$, where $(E, \Phi)$ is a Higgs pair of rank $r$ with trace zero, and $\lambda$ is a trivialization of $\det E$. Note that the isomorphism $\det  E \simeq \Oo_C$ implies in particular that the Higgs pairs associated to $\SL$-Higgs pairs have always degree equal to 0.

\vspace{1em}

A basis for the invariant polynomials of $\slalg(r, \C)$ is given by $\{p_2, \dots, p_r\}$ where $p_i(P, \phi) := (-)^i \tr(\wedge^i \phi )$; in particular, if $(E, \Phi)$ is the associated Higgs pair, then $p_i(P, \phi)=a_i(E, \phi)$. Then, the $\SL$-Hitchin morphism of rank $r$ can be defined as:
\begin{align*}
\Hh_{\SL,r}: \Mm_\SL(r):=\Mm_\SL(r,0) &\longrightarrow  \A_\SL(r)=\bigoplus_{i=2}^r H^0(C, L^i) \\
(P, \phi) &\longmapsto (p_2(P, \phi), \dots, p_r(P, \phi))= \\
& \hspace{2em} = (a_2(E, \Phi), \dots, a_r(E, \Phi)) \\
&  \hspace{2.5em} \textrm{ for } (E, \Phi) \textrm{ associated to } (P, \phi).
\end{align*}


\begin{prop}\label{DataSL}{\normalfont (Spectral correspondence for $\SL(r, \C)$)}
Let $\avect \in \A_\SL(r)$ be any characteristic, let $X=X_\avect \xrightarrow{\pi}C$ be the associated spectral curve, and denote
 $B:=\det(\pi_*\Oo_X)^{-1}$. The fiber $\Hh_{\SL,r}^{-1}(\avect)$ of the $\SL$-Hitchin morphism is isomorphic, via the spectral correspondence, to the fiber $\Nm_\pi^{-1}(B)$ of the Norm map from $\Jbar(X, d')$ to $J(C,d')$ induced by $\pi$ for $d'=\frac{r(r-1)}{2}\ell$.

\end{prop}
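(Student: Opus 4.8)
The plan is to combine the classical spectral correspondence (Proposition \ref{Spectral}) with the definition of the Norm map on the compactified Jacobian (Definition/Lemma \ref{NormOnMPureDef}), reducing the statement to a computation identifying the trace-zero condition on the Higgs side with the triviality-up-to-$B$ condition on the sheaf side. First I would recall that, by Proposition \ref{Spectral}, there is an isomorphism of stacks $\Pi: \Hh_{r,0}^{-1}(\avect) \xrightarrow{\sim} \Jbar(X,d')$ with $d'=\frac{r(r-1)}{2}\ell$, sending a torsion-free rank-$1$ sheaf $\Mm$ to the Higgs pair $(E,\Phi) = (\pi_*\Mm, \pi_*(\cdot x))$. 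The fiber $\Hh_{\SL,r}^{-1}(\avect)$ is the locus inside $\Hh_{r,0}^{-1}(\avect)$ consisting of pairs $(P,\phi)$, i.e., of Higgs pairs $(E,\Phi)$ of rank $r$ with $\tr\Phi = 0$ together with a trivialization $\lambda: \det E \xrightarrow{\sim} \Oo_C$ (noting that the volume form is part of the datum, and matching the groupoid structures carefully). So the task is to show that, under $\Pi$, the condition "$\tr\Phi = 0$ and $\det E$ is trivialized" corresponds precisely to "$\Nm_\pi(\Mm)$ is trivialized", i.e., to giving a point of $\Nm_\pi^{-1}(B)$ after the bookkeeping $B = \det(\pi_*\Oo_X)^{-1}$.

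The key step is the identification $\det(\pi_*\Mm) \simeq \Nm_\pi(\Mm) \otimes \det(\pi_*\Oo_X)$, which is exactly Definition/Lemma \ref{NormOnMPureDef} (extending Proposition \ref{DetFormula}); hence $\det E = \det(\pi_*\Mm) \simeq \Nm_\pi(\Mm) \otimes B^{-1}$, and a trivialization of $\det E$ is the same as an isomorphism $\Nm_\pi(\Mm) \xrightarrow{\sim} B$, i.e., a point of $\Nm_\pi^{-1}(B)$. The second half of the key step is to check that the constraint $\tr\Phi = 0$ is automatic for the Higgs field arising from the spectral construction on a spectral curve $X_\avect$ with $\avect \in \A_\SL(r)$: indeed $a_1(E,\Phi) = -\tr\Phi$ is the first coefficient of the characteristic polynomial, which is $0$ by the very definition of the $\SL$-spectral curve (the $x^{r-1}y$ term is absent because $\A_\SL(r) = \bigoplus_{i=2}^r H^0(C,L^i)$). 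Conversely any $\Mm \in \Jbar(X_\avect,d')$ with $\avect \in \A_\SL(r)$ automatically produces a trace-zero Higgs pair, so the only extra datum cutting out the $\SL$-fiber is precisely the volume form, equivalently the trivialization $\lambda$ of $\Nm_\pi(\Mm)$.

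To assemble this into a clean proof I would: (1) record that for $\avect \in \A_\SL(r)$ the associated spectral curve coincides with the $\GL$-spectral curve of the characteristic $(0, a_2, \dots, a_r) \in \A(r)$, so Proposition \ref{Spectral} applies verbatim with $d = 0$; (2) observe that $\Pi$ restricts to an isomorphism between the stack of trace-zero Higgs pairs in $\Hh_{r,0}^{-1}(\avect)$ and all of $\Jbar(X_\avect,d')$, since the trace-zero condition is automatic; (3) promote this to an isomorphism of stacks $\Hh_{\SL,r}^{-1}(\avect) \xrightarrow{\sim} \Nm_\pi^{-1}(B)$ by adding the volume form on the left and the trivialization of $\Nm_\pi(\Mm)$ on the right, the equivalence of these two data being exactly the determinant formula of Definition/Lemma \ref{NormOnMPureDef}; and (4) for the last assertion, when $X_\avect$ is reduced one has $\Jbar(X_\avect) = \Jgen(X_\avect)$ and, since $C$ is smooth, the Norm map and the Prym stack $\Prbar(X,C) = \Nm_\pi^{-1}(\Oo_C)$ are defined, so it remains only to deal with the twist by $B$. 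Here I would note that $\Nm_\pi^{-1}(B)$ and $\Nm_\pi^{-1}(\Oo_C)$ are non-canonically isomorphic via tensoring by any line bundle $\Ll_0$ on $X$ with $\Nm_\pi(\Ll_0) \simeq B$, which exists by Proposition \ref{SurjNorm} (surjectivity of the Norm, using $Y = C$ smooth), together with Proposition \ref{PropertiesOfNormOnCompJac}(1). The main obstacle I anticipate is the stacky bookkeeping in step (3): one must verify that the isomorphism of objects upgrades to an isomorphism of groupoids, i.e., that automorphisms of $(P,\phi)$ match automorphisms of $(\Mm,\lambda)$ — in particular that the scalar automorphisms of $E = \pi_*\Mm$ compatible with a fixed volume form correspond to the $\mu_r$-automorphisms of $\Mm$ compatible with a fixed trivialization of $\Nm_\pi(\Mm)$ — and that the functoriality in the test scheme $T$ is respected throughout; the geometric content, by contrast, is entirely contained in the determinant formula and the vanishing of $a_1$.
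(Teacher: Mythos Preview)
Your proposal is correct and follows essentially the same route as the paper: translate $(P,\phi)$ to $(E,\Phi,\lambda)$ via the associated bundle construction, apply the spectral correspondence to get $\Mm$, and use the determinant formula $\Nm_\pi(\Mm)=\det(\pi_*\Mm)\otimes B$ to convert the volume form $\lambda$ into an isomorphism $\epsilon:\Nm_\pi(\Mm)\xrightarrow{\sim}B$. You are in fact more explicit than the paper on two points it leaves implicit (that $\tr\Phi=0$ is automatic for $\avect\in\A_\SL(r)$, and the stacky matching of automorphisms), and your step (4) handles the reduced case, which the paper states and proves as a separate proposition immediately afterwards using exactly the argument you give.
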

\begin{proof}
The datum of a $\SL(r, \C)$-Higgs pair $(P, \phi)$ with characteristic $\avect$ corresponds uniquely, via the associated bundle construction, to the datum of $(E, \Phi, \lambda)$, where $(E, \Phi)$ is a Higgs pair of rank $r$ and degree 0 with $\Hh_{r,0}(E, \Phi)=\avect$ and $\lambda$ is a trivialization of $\det E$. 

The datum of the Higgs pair $(E, \Phi) \in \Hh_{r,0}^{-1}(\avect)$ corresponds uniquely, via the spectral correspondence (Proposition \ref{Spectral}), to the datum of a torsion-free rank-1 sheaf $\Mm$ on $X$ of degree $d'$. By Definition \ref{NormOnMPureDef}, $\Nm_\pi(\Mm) := \det(\pi_*\Mm) \otimes B$, so giving a isomorphism $\lambda$ between $\det \pi_*\Mm$ and $\Oo_C$ is the same as giving a isomorphism $\epsilon$ between $\Nm_\pi(\Mm)$ and $B$.

To sum up, the datum of $(P, \phi)$ corresponds to the datum $(\Mm, \epsilon)$ of an element $\Mm \in \Jbar(X,d')$ and an isomorphism $\epsilon: \Nm(\Mm) \xrightarrow{{}_\sim} B$. This is an element of the fiber product stack: \[
\begin{tikzcd}
\Nm^{-1}(B) \arrow[dr, phantom, "\lrcorner", very near start]  \arrow{r} \arrow{d} & \Jbar(X,d') \arrow{d}{\Nm_\pi} \\
B \arrow{r} & J(C,d')
\end{tikzcd}
\]

We conclude that $\Hh_{\SL,r}^{-1}(\avect) \simeq \Nm^{-1}(B)$.
\end{proof}

\begin{prop}
Let $\avect \in \Areg_\SL(r)$ be a characteristic such that the spectral curve $X$ is reduced.  Then, \[
\Hh_{\SL,r}^{-1}(a) \simeq \Prbar(X,C).
\]
\end{prop}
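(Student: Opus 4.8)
The plan is to deduce the claim from Proposition \ref{DataSL} by a degree-shifting translation. By that proposition $\Hh_{\SL,r}^{-1}(\avect)$ is isomorphic, as a stack, to the fibre $\Nm_\pi^{-1}(B)$ of the Norm map $\Nm_\pi\colon\Jbar(X,d')\to J^{d'}(C)$ over $B=\det(\pi_*\Oo_X)^{-1}$, with $d'=\frac{r(r-1)}{2}\ell$, whereas by definition $\Prbar(X,C)=\Nm_\pi^{-1}(\Oo_C)$. So it will be enough to exhibit an isomorphism of stacks $\Nm_\pi^{-1}(\Oo_C)\xrightarrow{{}_\sim}\Nm_\pi^{-1}(B)$. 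Here I use that $\pi=\pi_\avect$ is finite and flat of degree $r$ (flat by miracle flatness, $X$ being Cohen--Macaulay of pure dimension $1$ inside the smooth surface $P$) and that $C$ is smooth, so that $\Nm_\pi$ and $\Prbar(X,C)$ are defined (Definition \ref{NormOnMPureDef}); the hypothesis that $X$ is reduced, together with the fact that $X$ always has locally planar singularities, places us in the setting of Chapter \ref{ChapterPrym}.

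First I would record that $\Nm_\pi$ preserves the degree: for a rank-$1$ torsion-free sheaf $\Ll$ of degree $d$ on $X$ one has $\deg\det(\pi_*\Ll)=\chi(X,\Ll)-r\chi(\Oo_C)=d+\chi(\Oo_X)-r\chi(\Oo_C)$, whence $\deg\Nm_\pi(\Ll)=d$; in particular $\deg B=r\chi(\Oo_C)-\chi(\Oo_X)=\frac{r(r-1)}{2}\ell=d'$, using $\chi(\Oo_X)=r(1-g)-\frac{r(r-1)}{2}\ell$ (Lemma \ref{CanonicalSheafOfX} and the attendant Euler characteristic computation). Thus $\Nm_\pi^{-1}(\Oo_C)$ lives in $\Jbar(X,0)$, and the isomorphism I am after will shift degree by $d'$.

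Next I would invoke Proposition \ref{SurjNorm} (which only requires $C$ smooth) to obtain a line bundle $N$ on $X$ with $\Nm_\pi(N)\simeq B$; automatically $\deg N=d'$. Fixing such an $N$ and an isomorphism $\tau\colon\Nm_\pi(N)\xrightarrow{{}_\sim}B$, the assignment $(\Mm,\lambda)\mapsto(\Mm\otimes N,\lambda_N)$, where $\lambda_N$ is the composite $\Nm_\pi(\Mm\otimes N)\xrightarrow{{}_\sim}\Nm_\pi(\Mm)\otimes\Nm_\pi(N)\xrightarrow{\lambda\otimes\tau}\Oo_C\otimes B\xrightarrow{{}_\sim}B$ (the first arrow being the canonical isomorphism of Proposition \ref{PropertiesOfNormOnCompJac}(1), itself coming from Lemma \ref{AssociatveFormulaForDet}), defines a morphism $\Nm_\pi^{-1}(\Oo_C)\to\Nm_\pi^{-1}(B)$; tensoring by $N^{-1}$ together with the inverse translation of the trivialisation data gives a two-sided inverse. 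Composing with Proposition \ref{DataSL} then yields $\Hh_{\SL,r}^{-1}(\avect)\simeq\Prbar(X,C)$.

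The one point I expect to require care — rather than a genuine obstacle — is checking that this translation is an equivalence of the fibre-product stacks and not merely a bijection on isomorphism classes: since $\_\otimes N$ is an equivalence $\Jbar(X,0)\xrightarrow{{}_\sim}\Jbar(X,d')$ intertwining $\Nm_\pi$ with itself up to the fixed natural isomorphism above, it lifts to an equivalence of the $2$-fibre products $\Jbar(X,0)\times_{J^0(C)}\{\Oo_C\}$ and $\Jbar(X,d')\times_{J^{d'}(C)}\{B\}$, i.e. of $\Prbar(X,C)$ with $\Nm_\pi^{-1}(B)$. I would also note in passing that the reducedness of $X$ is not strictly needed for this chain of isomorphisms (only $C$ smooth is used); it is retained because it is exactly under this hypothesis that $\Prbar(X,C)$ has been given its geometric analysis in Chapter \ref{ChapterPrym}.
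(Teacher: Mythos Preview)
Your proof is correct and follows essentially the same line as the paper's: both reduce via Proposition~\ref{DataSL} to showing $\Nm_\pi^{-1}(B)\simeq\Nm_\pi^{-1}(\Oo_C)$, produce a line bundle $N$ with $\Nm_\pi(N)\simeq B$ (you cite Proposition~\ref{SurjNorm}, the paper cites Proposition~\ref{fibersOfNm}), and then translate by $N$ using Proposition~\ref{PropertiesOfNormOnCompJac}(1). Your extra care in spelling out the stacky translation and the degree computations is welcome, and your closing remark that only smoothness of $C$ is really used matches the hypotheses of Proposition~\ref{SurjNorm} as stated.
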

\begin{proof}
Let $B=\det(\pi_*\Oo_X)^{-1} \in J(C)$. Since $X$ is reduced with locally planar singularities, $\Jbar(X)=\Jgen(X)$ by Remark \ref{JacobianInclusions} and $\Nm_\pi^{-1}(B)$ contains at least a line bundle by Proposition \ref{fibersOfNm}; denote it with $N$. Tensoring by $N$ in $\Jbar(X)$ induces then by Proposition \ref{PropertiesOfNormOnCompJac} a well-defined isomorphism:
\begin{align*}
\Prbar(X,C) \longrightarrow \Nm_\pi^{-1}(B) \\
\Ll \longmapsto \Ll \otimes N.
\end{align*}
\end{proof}

\section{$\PGL(r, \C)$-Higgs pairs}\label{PGLHiggs}
We come now to the case of $\PGL(r,\C)$-Higgs pairs. Recall first that the Projective Linear Group $\PGL(r, \C)$ is defined by the exact sequence: \begin{align}\label{ShortSeqPGL}
0 \rightarrow \C^* \xrightarrow{\lambda \mapsto \lambda I_r} \GL(r, \C) \rightarrow \PGL(r, \C) \rightarrow 0.
\end{align}
From the exponential sequence $0 \rightarrow \Z \rightarrow \Oo_C \rightarrow \Oo_C^* \rightarrow 0$ we deduce that $H^2(C, \Oo_C^*) = 0$; hence,  the sequence \ref{ShortSeqPGL} applied to the structure sheaves: \[
0 \rightarrow \Oo_C^* \rightarrow \GL(r, \Oo_C) \rightarrow \PGL(r, \Oo_C) \rightarrow 0
\]
yields the cohomology exact sequence \begin{align}\label{LongSeqPGL}
H^1(C, \Oo_C^*) \rightarrow H^1(C, \GL(r, \Oo_C)) \xtwoheadrightarrow{q} H^1( C, \PGL(r, \Oo_C)) \rightarrow 0.
\end{align}

\vspace{1em}

The sequence \ref{LongSeqPGL}, read in terms of cocycles, means that $\PGL(r, \C)$-principal bundles are in one-to-one correspondence with equivalence classes of $\GL(r, \C)$-principal bundles, with respect to the action on associated bundles given by tensor product of line bundles on $C$. If $P$ is a $\PGL(r, \C)$-principal bundle and $\tilde{P}$ is a $\GL(r, \C)$-principal bundle such that $q(\tilde{P})=P$, we say that $\tilde{P}$ is a \textit{lifting} of $P$ to a $\GL(r, \C)$-principal bundle.

Moreover, the Lie algebra $\pgl(r, \C)$ associated to $\PGL(r, \C)$ is equal to the Lie algebra $\slalg(r,\C)$  of $\SL(r, \C)$. If $P$ is any $\PGL(r, \C)$-principal bundle and $\tilde{P}$ is a lifting of $P$ to a $\GL(r, \C)$-principal bundle, then a section $\phi$ of $H^0(C, \ad P \otimes L)$ determines uniquely a section $\tilde{\phi}$ of $H^0(C, \ad\tilde{P} \otimes L)$ with trace equal to $0$, and viceversa. We say that $(\tilde{P}, \tilde{\phi})$ is a lifting of $(P, \phi)$ to a $\GL$-Higgs pair and with a slight abuse of notation we write $q(\tilde{P}, \tilde{\phi})=(P, \phi)$.

\vspace{1em}

Tu sum up, any $\PGL(r, \C)$-Higgs pair $(P, \phi)$ has a lifting $(\tilde{P}, \tilde{\phi})$ to a to a $\GL(r, \C)$-Higgs pair, corresponding to a Higgs pair $(E, \Phi)$ with trace zero via the associated bundle construction. Then, the datum of $(P, \phi)$ corresponds uniquely to the datum of the equivalence class $[(E, \Phi)]$ of Higgs pairs on $C$ with trace zero, under the equivalence relation $\sim_{J(C)}$ defined by: \[
(E, \Phi) \sim_{J(C)} (E \otimes N, \Phi \otimes 1_N) \hspace{2em}\textrm{for any } N \in J(C).
\]
Let $\Mm(r)=\coprod_{d \in \Z} \Mm(r,d)$ be the moduli stack of Higgs pairs of rank $r$ in any degree and denote with $\M^{\tr = 0}(r)$ the closed substack of $\Mm(r)$ given by Higgs pairs of rank $r$ with trace zero. Then, $[(E, \Phi)]$ is the orbit of $(E, \Phi)$ under the action of $J(C)$ on $\M^{\tr = 0}(r)$ defined by:
\begin{align*}
\M^{\tr = 0}(r) \times J(C)&\longrightarrow \M^{\tr = 0}(r) \\
((E, \Phi), N ) &\longmapsto (E \otimes N, \Phi \otimes 1_N).
\end{align*}

\vspace{1em}

The degree of Higgs pairs associated to a $\PGL$-Higgs pair is defined only  modulo $r$. Indeed, if $N$ is any line bundle on $C$, then \begin{align*}
\deg(E \otimes N)&=\deg(\det(E\otimes N)) = \\
&=\deg(\det(E) \otimes N^r)=\deg(E)+r\deg(N)
\end{align*}
so the degree of $J(C)$-equivalent Higgs pairs may differ by multiples of $r$. Hence, we can give the following definition. 
\begin{defi}
Let $(P, \phi)$ be a $\PGL$-Higgs pair and let $(E, \Phi)$ be a Higgs pair with trace zero and degree $d \in \Z$ corresponding to a lifting $(\tilde{P}, \tilde{\phi})$ of $(P, \phi)$ to a $\GL$-Higgs pair. The \textit{degree of} $(P, \phi)$ is the congruence class $\overline{d}  \in \Z/r\Z$. 
\end{defi}
\begin{rmk}
Obviously, the same definition can be given just for $\PGL$-principal bundles. Let $P$ be a $\PGL$-principal bundle ald let $E$ be the vector bundle associated to any lifting of $P$ to a $\GL$-principal bundle. The \textit{degree} of a $\PGL$-principal bundles $P$ is the congruence class $\overline{d} \in \Z/r\Z$ of the degree $d = \deg E$.

As a matter of fact, the first homotopy group  $\pi_1(\PGL(r,\C))$ of $\PGL(r, \C)$ is isomorphic to $\Z/r\Z$ and the degree of $(P, \phi)$ characterizes uniquely the topological type of $P$.
\end{rmk}

Up to the action on $(E, \Phi)$ of a line bundle of degree $1$ on $C$, it is straightforward to see that the $J(C)$-orbit $[(E, \Phi)]$ corresponding to a $\PGL$-Higgs pair with degree $\overline{d} \in \Z/r\Z$ is always the orbit of a Higgs pair of degree $d$ with $d \in \{ 0, \dots, r-1 \}$. If we restrict the correspondence to Higgs pairs with fixed degree $d \in \{ 0, \dots, r-1 \}$, then a $\PGL$-Higgs pair of degree $\overline{d}$ is identified uniquely with the orbit of a Higgs pair of trace zero \textit{and degree }$d$ with respect to the action of line bundles \textit{of degree} $0$ on $C$.  We have then the following proposition.

\begin{prop}
Let $\overline{d} \in \Z/r\Z$ with $d \in \{ 0, \dots, r-1 \}$. Denote with $\M^{\tr = 0}(r,d)$ the closed substack of $\Mm(r,d)$ given by Higgs pairs with trace zero. The moduli stack $\Mm_\PGL(r,\overline{d})$ of $\PGL$-Higgs pairs of rank $r$ and degree $\overline{d}$ is isomorphic to the quotient stack \begin{align*}
q: \Mm^{\tr = 0}(r,d) &\longrightarrow  \Mm^{\tr = 0}(r,d)/J^0(C) \xrightarrow{{}_\sim} \Mm_\PGL(r,\overline{d}) \\
(E, \Phi) &\longmapsto [(E, \Phi)] \longmapsto (P, \phi) := q(\tilde{P}, \tilde{\phi})
\end{align*}
where $(E, \Phi)$ is associated to the $\GL$-principal bundle $(\tilde{P}, \tilde{\phi})$.
\end{prop}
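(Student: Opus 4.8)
The plan is to realize the stated isomorphism in three stages: (i) construct the ``associated bundle'' morphism $q\colon \M^{\tr=0}(r,d)\to\M_\PGL(r,\overline d)$ of algebraic stacks; (ii) show it is invariant under the $J^0(C)$-action, hence factors through a morphism $\overline q\colon \M^{\tr=0}(r,d)/J^0(C)\to\M_\PGL(r,\overline d)$; (iii) prove $\overline q$ is an equivalence by checking full faithfulness and essential surjectivity fppf-locally on the test scheme, using the exact sequence \ref{LongSeqPGL} upgraded to families.

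\textbf{Construction of $q$.} Fix a $k$-scheme $T$ and a trace-zero Higgs pair $(E,\Phi)$ of rank $r$ and degree $d$ on $C\times_k T$. Its frame bundle $\tilde P=\Fr(E)$ is a $\GL(r,\C)$-bundle on $C\times_k T$, and applying the surjection of groups $\GL(r,\C)\twoheadrightarrow\PGL(r,\C)$ (see \ref{ShortSeqPGL}) to cocycles produces a $\PGL(r,\C)$-bundle $P$. Since $\pgl(r,\C)=\slalg(r,\C)$, the map $\ad\rho$ recalled in the preliminaries identifies $\ad P\otimes L$ with the subbundle of trace-zero endomorphisms of $E$ twisted by $L$; as $\tr\Phi=0$, the section $\Phi$ thus determines a $\PGL$-Higgs field $\phi$. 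Setting $q(E,\Phi):=(P,\phi)$ and noting that $\Fr(-)$ and the projection $\GL(r,\C)\to\PGL(r,\C)$ are functorial and commute with base change, this defines a morphism of algebraic stacks; its topological type is $\overline d$ by the Remark following the definition of the degree of a $\PGL$-pair.

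\textbf{Invariance and descent.} For $N\in J^0(C)(T)$ one has $\tr(\Phi\otimes 1_N)=0$ and $\deg(E\otimes N)=\deg E+r\deg N=d$, so $J^0(C)$ acts on $\M^{\tr=0}(r,d)$. Now $\Fr(E\otimes N)$ is obtained from $\Fr(E)$ by twisting along the $\mathbb{G}_m$-torsor attached to $N$, whose image in $H^1(C\times_k T,\PGL(r,\C))$ is trivial by exactness of \ref{LongSeqPGL} over $C\times_k T$; and the canonical isomorphism $\End_0(E\otimes N)\simeq\End_0(E)$ sends $\Phi\otimes 1_N$ to $\Phi$. Hence there is a natural $2$-isomorphism $q(E\otimes N,\Phi\otimes 1_N)\simeq q(E,\Phi)$, compatible with composition in $N$, so $q$ descends to $\overline q$ on the quotient stack. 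Essential surjectivity of $\overline q$: given a $\PGL$-Higgs pair $(P,\phi)$ of degree $\overline d$, the surjection in \ref{LongSeqPGL} lets one lift $P$ fppf-locally to a $\GL(r,\C)$-bundle, hence to a trace-zero Higgs pair $(E,\Phi)$ with $\deg E\equiv d\ (\mathrm{mod}\ r)$; twisting by the pullback of a suitable power of a degree-$1$ line bundle on $C$ corrects $\deg E$ to exactly $d$ without changing $q(E,\Phi)$, so $(P,\phi)$ lies in the essential image. Full faithfulness: an isomorphism $(P,\phi)\xrightarrow{\sim}(P',\phi')$ of $\PGL$-Higgs pairs lifts, fppf-locally, to an isomorphism of the corresponding $\GL$-Higgs pairs after twisting one side by a line bundle $N$; two lifts differ by a section of the central $\C^*$ (which acts trivially on the $\PGL$ side), and $N$ is pinned down up to pullback from $T$ by the requirement $\deg(E'\otimes N)=d$, i.e. by a $T$-point of $\Pic^0(C\times_k T/T)=J^0(C)$ --- precisely the extra isomorphism data in $\M^{\tr=0}(r,d)/J^0(C)$.

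\textbf{Main obstacle.} The delicate point is stage (iii): one must match, as groupoids of $T$-points, the quotient stack $\M^{\tr=0}(r,d)/J^0(C)$ with $\M_\PGL(r,\overline d)$, which amounts to controlling the gerbe of $\GL(r,\C)$-lifts of a $\PGL(r,\C)$-bundle on $C\times_k T$ (its class lies in an $H^2$ with $\mathbb{G}_m$-coefficients and must be shown to vanish fppf-locally, using that $C$ is a curve and reducing to $T$ affine) and identifying the ``ambiguity of the lift of fixed degree $d$'' as a $J^0(C)(T)$-torsor, including the vanishing of the contribution of the central $\mathbb{G}_m$. The pointwise version of all this is exactly \ref{LongSeqPGL}; the work is in making it functorial in $T$ and in the language of stacks.
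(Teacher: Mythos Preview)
The paper does not give a separate proof for this proposition: it is stated as a summary (``We have then the following proposition'') of the discussion in the preceding paragraphs, which argues pointwise via the cohomology exact sequence (\ref{LongSeqPGL}) that a $\PGL$-Higgs pair of degree $\overline d$ is identified with the $J^0(C)$-orbit of a trace-zero Higgs pair of degree $d$, after using a degree-$1$ line bundle to normalize the degree into $\{0,\dots,r-1\}$.

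Your proposal follows exactly this line of reasoning but is considerably more explicit: you build $q$ as a morphism of stacks, verify $J^0(C)$-invariance, and then check essential surjectivity and full faithfulness of the induced map on the quotient. The ingredients you use --- the surjection in (\ref{LongSeqPGL}), the identification $\pgl=\slalg$ for lifting the Higgs field, and the degree-correction by a power of a degree-$1$ line bundle --- are precisely those appearing in the paper's discussion. So the approach is the same; you are simply supplying the stack-level details (and, in your ``Main obstacle'' paragraph, honestly flagging the one genuinely nontrivial upgrade, namely promoting (\ref{LongSeqPGL}) to families over an arbitrary base $T$) that the paper leaves implicit.
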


\vspace{1em}

A basis for the invariant polynomials of $\pgl(r, \C)=\slalg(r, \C)$ is given by $p_2, \dots, p_r$ where $p_i(P, \phi) := (-)^i \tr(\wedge^i \phi )$; moreover, if $[(E, \Phi)]$ is the associated $J^0(C)$-equivalence class of Higgs pairs with zero trace, then $p_i(P, \phi)=a_i(E, \Phi)$ for any $(E, \Phi)$ associated to a lifting of $(P, \phi)$.  Hence, we have the following $\PGL$-Hitchin morphism of rank $r$ and degree $\overline{d}$:
\begin{align*}
\Hh_{\PGL,r, \overline{d}}: \Mm_\PGL(r, \overline{d}) &\longrightarrow  \A_\PGL(r)=\bigoplus_{i=2}^r H^0(C, L^i) \\
(P, \phi) &\longmapsto (p_2(P, \phi), \dots, p_r(P, \phi)).
\end{align*}
The $\PGL$-Hitchin morphism fits the following commutative diagram: \[
\begin{tikzcd}[column sep=small]
\M(r,d) \arrow{d}{\Hh_{r,d}} & \Mm^{\tr = 0}(r,d) \arrow[closed']{l} \arrow[twoheadrightarrow]{rr} \arrow{d}{\Hh_{r,d}^{\tr = 0}} && \M_\PGL(r,\overline{d}) \arrow{d}{\Hh_{\PGL,r,\overline{d}}} \\
\A(r) & \bigoplus_{i=2}^r H^0(C, L^{i}) \arrow[closed']{l} & = & \A_\PGL(r)
\end{tikzcd}
\]
with $d \in \{ 0, \dots, r-1 \}$ and $\overline{d} \in \Z/r\Z$.

\vspace{1em}

We are now ready to state the spectral correspondence for $\PGL(r, \C)$-Higgs pairs.

\begin{prop}\label{DataPGL}{\normalfont (Spectral correspondence for $\PGL(r, \C)$)}
Let $\avect \in \A_\PGL(r)$ be any characteristic and let $X=X_\avect \xrightarrow{\pi}C$ be the associated spectral curve. Let $\overline{d} \in \Z/r\Z$ with $d \in \{ 0, \dots, r-1 \}$ be any degree. The fiber $\Hh_{\PGL,r,\overline{d}}^{-1}(\avect)$ of the $\PGL$-Hitchin morphism is isomorphic, via the spectral correspondence, to the quotient moduli space \[
\Jbar(X,d')/\pi^*J^0(C)
\]
of torsion-free sheaves of rank 1 and degree $d'$ up to the action of line bundles of degree $0$ on $C$ by tensor product, with $d' =d+\frac{r(r-1)}{2}\ell$.
\end{prop}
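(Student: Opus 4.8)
The plan is to reduce the statement to the classical spectral correspondence of Proposition~\ref{Spectral}, combined with the presentation of $\Mm_\PGL(r,\overline d)$ as a quotient stack and the behaviour of the correspondence under twisting by pulled-back line bundles. First I would invoke the previous proposition, which identifies $\Mm_\PGL(r,\overline d)$ with the quotient stack $\Mm^{\tr=0}(r,d)/J^0(C)$, together with the commutative diagram following it, which says that $\Hh_{\PGL,r,\overline d}$ is the morphism induced on this quotient by $\Hh_{r,d}^{\tr=0}\colon \Mm^{\tr=0}(r,d)\to\A_\PGL(r)$. The latter is $J^0(C)$-invariant, since the invariant polynomials $p_i(\phi)$ are unchanged when $(E,\Phi)$ is replaced by $(E\otimes N,\Phi\otimes 1_N)$. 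Because the formation of a stack quotient commutes with base change, the fiber of $\Hh_{\PGL,r,\overline d}$ over $\avect$ is the quotient stack $\big(\Hh_{r,d}^{\tr=0}\big)^{-1}(\avect)/J^0(C)$.

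Next I would identify $\big(\Hh_{r,d}^{\tr=0}\big)^{-1}(\avect)$ with $\Jbar(X,d')$. The trace-zero condition $\tr\Phi=0$ is exactly $a_1(E,\Phi)=0$, so $\Mm^{\tr=0}(r,d)$ is the preimage under $\Hh_{r,d}$ of the linear subspace $\{0\}\oplus\bigoplus_{i=2}^r H^0(C,L^i)\subseteq\A(r)$, and hence $\big(\Hh_{r,d}^{\tr=0}\big)^{-1}(\avect)=\Hh_{r,d}^{-1}\big((0,\avect)\big)$. The spectral curve attached to the characteristic $(0,\avect)\in\A(r)$ is precisely $X=X_\avect$, so Proposition~\ref{Spectral} gives an isomorphism of stacks $\Pi\colon \Hh_{r,d}^{-1}\big((0,\avect)\big)\xrightarrow{\ \sim\ }\Jbar(X,d')$ with $d'=d+\tfrac{r(r-1)}{2}\ell$, sending $\Mm$ to $(\pi_*\Mm,\pi_*(\cdot x))$.

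The last and most delicate step is to check that $\Pi$ is equivariant, intertwining the $J^0(C)$-action $(E,\Phi)\mapsto(E\otimes N,\Phi\otimes 1_N)$ on the Higgs side with the $\pi^*J^0(C)$-action $\Mm\mapsto\Mm\otimes\pi^*N$ on $\Jbar(X,d')$. For a line bundle $N$ of degree $0$ on $C$, the projection formula gives $\pi_*(\Mm\otimes\pi^*N)\simeq\pi_*\Mm\otimes N$; and since $x$ is a section of $\pi^*L$, multiplication by $x$ on $\Mm\otimes\pi^*N$ pushes forward, under this identification, to $\pi_*(\cdot x)\otimes 1_N$. Thus $\Pi(\Mm\otimes\pi^*N)\cong(\pi_*\Mm\otimes N,\pi_*(\cdot x)\otimes 1_N)$, which is exactly the image of $\Pi(\Mm)$ under the $J^0(C)$-action. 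Moreover $\deg(\pi_*\Mm\otimes N)=d$ and $\tr(\pi_*(\cdot x)\otimes 1_N)=0$, while $\deg(\pi^*N)=r\deg N=0$, so the action does preserve $\Hh_{r,d}^{-1}\big((0,\avect)\big)$ and $\pi^*N$-twisting preserves $\Jbar(X,d')$. I expect the main obstacle to be bookkeeping rather than a genuine difficulty: one must carry out the projection-formula identification, the compatibility of $\pi_{T,*}(\cdot x)$ with twisting, and the verification that these assemble into genuine actions of $J^0(C)$ and of $\pi^*J^0(C)$ in the stacky sense, all functorially in an arbitrary base scheme $T$. Granting this equivariance, $\Pi$ descends to the asserted isomorphism $\Hh_{\PGL,r,\overline d}^{-1}(\avect)\simeq\big(\Hh_{r,d}^{\tr=0}\big)^{-1}(\avect)/J^0(C)\simeq\Jbar(X,d')/\pi^*J^0(C)$.
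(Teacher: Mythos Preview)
Your proposal is correct and follows essentially the same route as the paper: identify a $\PGL$-Higgs pair with a $J^0(C)$-orbit of trace-zero Higgs pairs, apply the spectral correspondence to obtain a sheaf $\Mm\in\Jbar(X,d')$, and use the projection formula $\pi_*(\Mm\otimes\pi^*N)\simeq\pi_*\Mm\otimes N$ together with the compatibility of $\pi_*(\cdot x)$ to establish equivariance, then pass to quotients. The paper's proof is slightly terser (it does not spell out the base-change-of-quotients step or the identification $(\Hh_{r,d}^{\tr=0})^{-1}(\avect)=\Hh_{r,d}^{-1}((0,\avect))$ explicitly), but the argument is the same.
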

\begin{proof}
The datum of a $\PGL(r, \C)$-Higgs pair  $(P, \phi)$ with degree $\overline{d}$ and characteristic $\avect$ corresponds uniquely to the datum of a $J^0(C)$-equivalence class $[(E, \Phi)]$ of Higgs pairs of rank $r$ and degree $d$ with characteristic $\avect$, where $(E, \Phi)$ is the Higgs pair associated, via the vector bundle construction, to a lifting of $(P, \phi)$ to a $\GL$-Higgs pair.  By the spectral correspondence, the datum of $(E, \Phi)$ corresponds uniquely to the datum of a pure sheaf $\Mm \in \Jbar(X,d')$ such that $\pi_*\Mm = E$ and $d'=d+\frac{r(r-1)}{2}\ell$. Let $N \in J^0(C)$ be any line bundle of degree 0 on $C$. By the projection formula, \[
\pi_*(\Mm \otimes \pi^*N) \simeq \pi_*\Mm \otimes N,
\]
and the following square commutes: \[
\begin{tikzcd}
\pi_*(\Mm \otimes \pi^* N) \arrow{r}{{}_\sim} \arrow{d}{\pi_*(\cdot x)} & \pi_*\Mm \otimes N \arrow{d}{\Phi_N} \\
\pi_*(\Mm \otimes \pi^* N) \otimes L \arrow{r}{{}_\sim} & (\pi^*\Mm \otimes N) \otimes L.
\end{tikzcd}
\]
Hence, the spectral correspondence is equivariant with respect to the action of $\pi^*J^0(C)$ by tensor product on $\Jbar(X,d')$ and the action of $J^0(C)$ on $\M^{\tr=0}(r,d)$. Then, passing to the quotient on both sides, the datum of the $J^0(C)$-orbit $[(E, \Phi)]$ corresponds uniquely to the datum of a $\pi^*J^0(C)$-orbit $[\Mm] \in \Jbar(X,d')/\pi^*J^0(C)$. 
\end{proof}

We study now the special case of $\PGL(r, \C)$-Higgs pairs of degree $0$. Since $\C$ is algebraically closed, the Projective Linear group is equal to the Projective Special Linear group $\PSL(r, \C)$ defined by the exact sequence:
\begin{align}\label{ShortSeqPSL}
0 \rightarrow \mu_r \xrightarrow{\lambda \mapsto \lambda I_r} \SL(r, \C) \rightarrow \PSL(r, \C) \rightarrow 0;
\end{align}
here we denote with $\mu_r$ the group of $r$-th roots of unity, as defined by the exact sequence: \[
0 \rightarrow \mu_r \rightarrow \C^* \xrightarrow{\lambda \mapsto \lambda^r} \C^* \rightarrow 0.
\]
In particular, $H^2(C, \mu_r)=\Z/r\Z$, hence the exact sequence \ref{ShortSeqPSL}, applied to structure sheaves,  induces the cohomology exact sequence: \begin{align}\label{LongSeqPSL}
H^1(C, \mu_r) \rightarrow H^1(C, \SL(r, \Oo_C)) \xtwoheadrightarrow{q} H^1( C, \PSL(r, \Oo_C)) \xrightarrow{\deg} \Z/r\Z \rightarrow 0.
\end{align}

The sequence \ref{LongSeqPSL}, read in terms of cocycles, means that a $\PGL(r, \C)$-principal bundle $P$ can be lifted to a principal $\SL(r, \C)$-principal bundle $P_0$ if and only if $P$ has degree $0 \in \Z/r\Z$; any other lifting $P_0'$ of $P$ differs from $P_0$ by the action on associated bundles of a $r$-th torsion line bundle by tensor product. Moreover, a Higgs field $\phi$ on $P$ determines uniquely a Higgs field  $\phi_0$ on $P_0$, and viceversa.

\vspace{1em}
To sum up, any $\PGL(r, \C)$-principal bundle $(P, \phi)$ of degree zero has a lifting $(P_0, \phi_0)$  to a $\SL(r, \C)$-principal bundle, corresponding to the datum $(E, \Phi, \lambda)$ of a Higgs pair $(E, \Phi)$ with trace zero and a volume form $\lambda: \det E \xrightarrow{{}_\sim} \Oo_C$ via the associated bundle construction. Then, the datum of $(P, \phi)$ corresponds uniquely to the datum of the equivalence class $[(E, \phi, \lambda)]$ of Higgs pairs with trace zero, under the equivalence relation $\sim_{J^0(C)[r]}$ defined by: \[
(E, \Phi, \lambda) \sim_{J^0(C)[r]} (E \otimes N, \Phi \otimes 1_N, \lambda_{N, \epsilon}) \hspace{2em}\textrm{for any } (N,\epsilon) \in J^0(C)[r].
\]
Here, $J^0(C)[r]$ is the group stack of $r$-torsion line bundles $(N, \eta)$ on $C$, where $\eta$ is the isomorphism $N^r \xrightarrow{{}_\sim} \Oo_C$, and $\lambda_{N, \eta}$ denote the isomorphism: \[
\lambda_{N, \eta}: \det(E \otimes N) \xrightarrow{{}_\sim} \det(E) \otimes N^r \xrightarrow[\lambda \otimes \eta]{\sim} \Oo_C.
\]

We can prove now the following proposition.

\begin{prop}\label{DataPGL0}{\normalfont (Spectral correspondence for $\PGL(r, \C)$ of degree 0)}
Let $\avect \in \A_\PGL(r)$ be any characteristic, let $X=X_\avect \xrightarrow{\pi}C$ be the associated spectral curve and denote
$B:=\det(\pi_*\Oo_X)^{-1}$. Let $d'=\frac{r(r-1)}{2}\ell$ and let $\Nm_\pi$ be the Norm map induced by $\pi$ on $\Jbar(X,d')$. Let $J^0(C)[r]$ be the group stack of line bundles with $r$-th torsion on $C$, acting on $\Nm_\pi^{-1}(B)$ as follows: \begin{align*}
\Nm_\pi^{-1}(B) \times \pi^* J^0(C)[r] &\longrightarrow \Nm_\pi^{-1}(B) \\
((\Mm, \epsilon), (\pi^*N, \pi^*\eta)) &\longmapsto (\Mm \otimes \pi^*N, \epsilon_{N, \eta})
\end{align*}
where $\epsilon_{N, \eta}$ is equal to the following composition: \begin{align*}
\epsilon_{N, \eta}: &\Nm( \Mm \otimes \pi^*N ) \xrightarrow{{}_\sim} \Nm( \Mm) \otimes \Nm(\pi^*N) \xrightarrow{{}_\sim} \Nm(\Mm) \otimes N^r \xrightarrow{\epsilon \otimes \eta} B.
\end{align*}
Then, the fiber $\Hh_{\PGL,r,\overline 0}^{-1}(\avect)$ of the $\PGL$-Hitchin morphism is isomorphic, via the spectral correspondence, to the quotient moduli space \[\Nm_\pi^{-1}(B) /\pi^*(J^0C)[r]. \]
\end{prop}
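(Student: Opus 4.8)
The plan is to reduce the statement to the $\SL(r,\C)$-case already settled in Proposition~\ref{DataSL}, by first rewriting $\Hh_{\PGL,r,\overline 0}^{-1}(\avect)$ as a quotient of $\Hh_{\SL,r}^{-1}(\avect)$ and then transporting that presentation across the spectral correspondence. Concretely: by the analysis of $\PGL$-bundles of degree $\overline 0$ carried out in Section~\ref{PGLHiggs}, a $\PGL(r,\C)$-Higgs pair of degree $\overline 0$ and characteristic $\avect$ is the same datum as a $J^0(C)[r]$-orbit of triples $(E,\Phi,\lambda)$, where $(E,\Phi)\in\Hh_{r,0}^{-1}(\avect)$ has trace zero and $\lambda\colon\det E\xrightarrow{\sim}\Oo_C$ is a volume form, the element $(N,\eta)\in J^0(C)[r]$ acting by $(E,\Phi,\lambda)\mapsto(E\otimes N,\Phi\otimes 1_N,\lambda_{N,\eta})$ with $\lambda_{N,\eta}\colon\det(E\otimes N)\xrightarrow{\sim}\det E\otimes N^r\xrightarrow{\lambda\otimes\eta}\Oo_C$. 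Since the datum $(E,\Phi,\lambda)$ together with its Hitchin characteristic is precisely an $\SL$-spectral datum, this already identifies $\Hh_{\PGL,r,\overline 0}^{-1}(\avect)$ with the quotient stack $\Hh_{\SL,r}^{-1}(\avect)/J^0(C)[r]$.

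The next step is to feed in Proposition~\ref{DataSL}, which identifies $\Hh_{\SL,r}^{-1}(\avect)$ with $\Nm_\pi^{-1}(B)$ by sending $(E,\Phi,\lambda)$ to $(\Mm,\epsilon)$, where $\Mm\in\Jbar(X,d')$ is the spectral sheaf of $(E,\Phi)$ (so $E=\pi_*\Mm$) and $\epsilon$ is the isomorphism $\Nm_\pi(\Mm)=\det(\pi_*\Mm)\otimes B\xrightarrow{\sim}B$ induced by $\lambda$. What remains is to check that, under this identification, the $J^0(C)[r]$-action above corresponds to the $\pi^*J^0(C)[r]$-action in the statement; granting this, passing to quotients yields $\Hh_{\PGL,r,\overline 0}^{-1}(\avect)\simeq\Nm_\pi^{-1}(B)/\pi^*J^0(C)[r]$. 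On underlying sheaves the verification is exactly the computation already performed in the proof of Proposition~\ref{DataPGL}: the projection formula gives a canonical isomorphism $\pi_*(\Mm\otimes\pi^*N)\simeq\pi_*\Mm\otimes N$ intertwining $\pi_*(\cdot x)$ with $\Phi\otimes 1_N$, so the spectral sheaf of $(E\otimes N,\Phi\otimes 1_N)$ is $\Mm\otimes\pi^*N$, matching the first coordinate of the prescribed action.

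The heart of the proof — and the step I expect to be the main obstacle — is the compatibility of the isomorphism on volume forms with the isomorphism $\epsilon_{N,\eta}$ on norms. Here one must chase a diagram of canonical isomorphisms: the multiplicativity $\Nm_\pi(\Mm\otimes\pi^*N)\simeq\Nm_\pi(\Mm)\otimes\Nm_\pi(\pi^*N)\simeq\Nm_\pi(\Mm)\otimes N^{\otimes r}$ furnished by Proposition~\ref{PropertiesOfNormOnCompJac}(1) and (3) must be shown to agree, after the substitution $\Nm_\pi(-)=\det(\pi_*-)\otimes B$, with the determinant isomorphism $\det\pi_*(\Mm\otimes\pi^*N)\otimes B\simeq\det\pi_*\Mm\otimes N^{\otimes r}\otimes B$ coming from Lemma~\ref{AssociatveFormulaForDet} (applied with the torsion-free sheaf $\Mm$ and the line bundle $\pi^*N$, together with the projection-formula identity $\det\pi_*(\pi^*N)\simeq N^{\otimes r}\otimes\det\pi_*\Oo_X$). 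This is essentially automatic, since Proposition~\ref{PropertiesOfNormOnCompJac}(1) is itself proved via Lemma~\ref{AssociatveFormulaForDet}, but it requires care to see that the identification $\epsilon\leftrightarrow\lambda$ and the twist by $\eta$ are respected throughout; once this commutativity is in hand, $\epsilon_{N,\eta}$ is precisely the image of $\lambda_{N,\eta}$, the equivariance of the spectral correspondence is established, and — noting that $(N,\eta)\mapsto(\pi^*N,\pi^*\eta)$ realizes $J^0(C)[r]$ as the group stack $\pi^*J^0(C)[r]$ acting on $\Nm_\pi^{-1}(B)$ — the two quotient stacks coincide.
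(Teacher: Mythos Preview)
Your proposal is correct and follows essentially the same strategy as the paper: reduce to Proposition~\ref{DataSL} to identify $\Hh_{\SL,r}^{-1}(\avect)\simeq\Nm_\pi^{-1}(B)$, then check via the projection formula that the $J^0(C)[r]$-action on triples $(E,\Phi,\lambda)$ transports to the $\pi^*J^0(C)[r]$-action on pairs $(\Mm,\epsilon)$, and pass to the quotient. The paper's proof is terser --- it simply asserts that $(E\otimes N,\Phi_N,\lambda_{N,\eta})$ corresponds by projection formula to $(\Mm\otimes\pi^*N,\epsilon_{N,\eta})$ --- whereas you spell out the diagram chase through Lemma~\ref{AssociatveFormulaForDet} and Proposition~\ref{PropertiesOfNormOnCompJac}; but the underlying argument is the same.
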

\begin{proof}

The datum of a $\PGL(r, \C)$-Higgs pair  $(P, \phi)$ with degree $0$ and characteristic $\avect$ corresponds uniquely to the datum of a $J^0(C)[r]$-equivalence class $[(E, \Phi, \lambda)]$ of Higgs pairs  of rank $r$ and degree $0$ with characteristic $\avect$, where $(E, \Phi, \lambda)$ is the datum of a Higgs pair with a volume form $\lambda$ associated, via the vector bundle construction, to a lifting of $(P, \phi)$ to a $\SL$-Higgs pair.

By Proposition \ref{DataSL}, the datum of $(E, \Phi, \lambda)$ corresponds to the datum $(M, \epsilon)$ of a torsion-free rank-1 sheaf $\Mm$ of degree $d'$ on $X$ and an isomorphism $\epsilon: \Nm(\Mm) \xrightarrow{{}_\sim} B$, i.e. an element of $\Nm^{-1}(B)$. If $(N, \eta)$ is a line bundle on $C$ with $r$-th torsion, the datum of $(E \otimes N, \Phi_N, \lambda_{N, \eta})$ corresponds by projection formula to the datum of $(M \otimes \pi^* N, \epsilon_{N, \eta)}$. Then, the datum of the  $J^0(C)[r]$-equivalence class $[(E, \Phi, \lambda)]$ corresponds uniquely to the datum of the $\pi^* J^0(C)[r]$-equivalence class $[(\Mm, \epsilon)]$.

\end{proof}

\section{$\Sp(2r, \C)$-Higgs pairs}\label{SpHiggs}
The symplectic group  $\Sp(2r, \C)$ is defined as: \[
\Sp(2r, \C) = \{ M \in \GL(2r, \C): M \Omega M^t = \Omega \}
\]
where $\Omega=\left( \begin{matrix}
0 & I_r \\
-I_r &0
\end{matrix} \right)$.

The Lie algebra associated to $\Sp(2r, \C)$ is 
\begin{align*}
\spalg(2r, \C) &= \left\{ X \in \gl(2r, \C): X\Omega+\Omega X^t = 0  \right\}.
\end{align*}

If $(P, \phi)$ is a $\Sp(2r, \Cc)$-Higgs pair on $C$, the associated vector bundle $E$ is endowed with a  non-degenerate symplectic form $\omega: E \otimes E \rightarrow \Oo_C $ and the associated Higgs field $\Phi$ is satisfies the condition $\omega(\Phi v, w)=-\omega(v, \Phi w)$ for all sections $v, w$ of $E$. The existence of a non-degenerate symplectic form implies  that $E$ has trivial determinant and, in particular, that $E$ has degree $0$. Viceversa, let $(E, \Phi)$ be a Higgs pair  of rank $2r$ and let $\omega$ be a non-degenerate symplectic form on $E$ satisfying the condition  $\omega(\Phi v, w)=-\omega(v, \Phi w)$; then, the frame bundle $P=\Fr_\Sp(E, \omega)$ of all ordered symplectic basis of $(E, \omega)$ is a principal $\Sp(2r, \C)$-bundle, and the Higgs field $\Phi$ is the image of a unique global section $\phi$ of $\ad P \otimes L$ with respect to the morphism  $\ad P \otimes L \rightarrow \End E \otimes L$ induced by the canonical embedding $\rho: \Sp(2r, \C) \hookrightarrow \GL(2r, \C)$.

\vspace{1em}

To sum up, the datum of a $\Sp(2r, \C)$-Higgs pair $(P, \phi)$ on $C$ corresponds univocally to the datum of $(E, \Phi, \omega)$ where $(E, \Phi)$ is a Higgs pair of rank $2r$ and degree $0$ and $\omega: E \otimes E \rightarrow \Oo_C$ is a non-degenerate symplectic form on $E$ satisfying the condition: \[
\omega(\Phi v, w)=-\omega(v, \Phi w).
\]

\vspace{1em}

A basis for the invariant polynomials of $\spalg(2r, \C)$ is given by $\{p_{2i}\}_{i=1, \dots, r}$ where $p_{2i}(P, \phi) := \tr(\wedge^{2i} \phi )$; if $(E, \Phi)$ is the associated Higgs pair, then $p_{2i}(P, \phi)=a_{2i}(E, \phi)$. The corresponding $\Sp$-Hitchin morphism takes the form:
\begin{align*}
\Hh_{\Sp,2r}: \Mm_\Sp(2r):=\Mm_\Sp(2r,0) &\longrightarrow \A_\Sp(2r)=\bigoplus_{l=1}^r H^0(C, L^{2l}) \\
(P, \phi) &\longmapsto (p_2(P, \phi), p_4(P, \phi), \dots, p_{2r}(P, \phi)).
\end{align*}

For any characteristic $\avect \in \A_\Sp(2r)$, the spectral curve $\pi: X_\avect \rightarrow C$ is defined in the total space $p: \Pp(\Oo_C \oplus L^{-1}) \rightarrow C$ by the equation \[
x^{2r}y + a_2 x^{2r-2}y^2 + \dots + a_{2r-2} x^2 y^{2r-2} + a_{2r}y^{2r} = 0.
\]
Hence, the curve $X=X_\avect$ has an involution $\sigma$ defined by $\sigma(x)=-x$ and a quotient curve: \[
\begin{tikzcd}
X \arrow{dr}{}[swap]{\pi} \arrow{rr}{q} && X/\sigma=Y \arrow{dl}{\overline{\pi}} \\
& C
\end{tikzcd}
\]
The involution $\sigma$ induces by pullback an involution on the compactified Jacobian of torsion-free rank-1 sheaves with any degree $d'$: 
\begin{align*}
\sigma^*: \Jbar(X,d') &\longrightarrow \Jbar(X,d') \\
\Ll &\longmapsto \sigma^*\Ll.
\end{align*}

\begin{prop}\label{DataSp}{\normalfont (Spectral correspondence for $\Sp(2r, \C)$)}
	Let $\avect \in \A_\Sp(2r)$ be any characteristic and let $X=X_\avect \xrightarrow{\pi}C$ be the associated spectral curve with involution $\sigma: X \rightarrow X$.  The fiber $\Hh_{\Sp,2r}^{-1}(\avect)$ of the $\Sp$-Hitchin morphism is isomorphic, via the spectral correspondence, to the equalizer $\Ee_\avect$ of the two maps 
	\begin{align*}
	\texttt{\char`_}^\chk:=\Hh om_{\Oo_X}(\texttt{\char`_}, \Oo_X): \Jbar(X,d') & \rightarrow  \Jbar(X,-d') \\
	\sigma^*\texttt{\char`_} \otimes \pi^* L^{1-2r}: \Jbar(X,d')  
	&\rightarrow \Jbar(X,-d'),
	\end{align*}
	where $d'=r(2r-1)\ell$.
\end{prop}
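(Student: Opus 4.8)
The plan is to run the $\GL$-spectral correspondence of Proposition~\ref{Spectral} for the degree-$2r$ spectral curve $X$ and then translate the symplectic datum into the equalizer condition. Recall from the dictionary above that giving an $\Sp(2r,\C)$-Higgs pair $(P,\phi)$ with characteristic $\avect$ is the same as giving a Higgs pair $(E,\Phi)$ of rank $2r$ and degree $0$ with $\Hh_{2r,0}(E,\Phi)=\avect$, together with a non-degenerate alternating form $\omega\colon E\otimes E\to\Oo_C$ satisfying $\omega(\Phi v,w)=-\omega(v,\Phi w)$. I would first rephrase $\omega$ as an isomorphism $\widetilde\omega\colon E\xrightarrow{{}_\sim}E^\vee:=\Hhom_{\Oo_C}(E,\Oo_C)$ with $\widetilde\omega^\vee=-\widetilde\omega$ (the alternating condition), which by the anti-adjointness identity intertwines $\Phi$ with the $L$-twisted transpose $-\Phi^\vee$; that is, $\widetilde\omega$ is an \emph{alternating} isomorphism of Higgs pairs $(E,\Phi)\xrightarrow{{}_\sim}(E^\vee,-\Phi^\vee)$. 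By Proposition~\ref{Spectral} we have $(E,\Phi)\leftrightarrow\Mm\in\Jbar(X,d')$ with $d'=\frac{2r(2r-1)}{2}\ell=r(2r-1)\ell$, $E=\pi_*\Mm$ and $\Phi=\pi_*(\cdot x)$.

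The next step is to identify the spectral data of $(E^\vee,-\Phi^\vee)$. By relative (Grothendieck--Serre) duality for the finite flat morphism $\pi$, together with $\pi^{!}\Oo_C=\omega_X\otimes\pi^*\omega_C^{-1}=\pi^*L^{2r-1}$ coming from Lemma~\ref{CanonicalSheafOfX} applied to the degree-$2r$ spectral curve, there is a canonical isomorphism $E^\vee\cong\pi_*\bigl(\Hhom_{\Oo_X}(\Mm,\pi^*L^{2r-1})\bigr)=\pi_*\bigl(\Mm^\chk\otimes\pi^*L^{2r-1}\bigr)$ under which the transpose $\Phi^\vee$ of multiplication by $x$ is again multiplication by $x$; hence $(E^\vee,\Phi^\vee)\leftrightarrow\Mm^\chk\otimes\pi^*L^{2r-1}$ (consistently, this sheaf has degree $-d'+2r(2r-1)\ell=d'$ while $\deg E^\vee=0$). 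Since $\sigma$ carries the tautological section $x$ to $-x$, multiplication by $x$ on $\sigma^*\Mm'$ corresponds to $-$(multiplication by $x$) on $\Mm'$ under the identification $\pi_*\sigma^*\Mm'=\pi_*\Mm'$; so reversing the sign of a Higgs field is translated into applying $\sigma^*$, giving $(E^\vee,-\Phi^\vee)\leftrightarrow\sigma^*\bigl(\Mm^\chk\otimes\pi^*L^{2r-1}\bigr)=\sigma^*\Mm^\chk\otimes\pi^*L^{2r-1}$.

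Now I would conclude. Since the spectral correspondence $\Pi$ is an isomorphism of stacks, an isomorphism of Higgs pairs $(E,\Phi)\xrightarrow{{}_\sim}(E^\vee,-\Phi^\vee)$ is the same datum as an isomorphism $\widetilde\omega\colon\Mm\xrightarrow{{}_\sim}\sigma^*\Mm^\chk\otimes\pi^*L^{2r-1}$; applying $\Hhom_{\Oo_X}(-,\Oo_X)$ together with reflexivity of torsion-free rank-$1$ sheaves on the Gorenstein curve $X$, i.e.\ $\Mm^{\chk\chk}\cong\Mm$ (Lemma~\ref{TorsionVsDoubleDual}), and unwinding the duals of line bundles, this is in turn equivalent to an isomorphism $\Mm^\chk\xrightarrow{{}_\sim}\sigma^*\Mm\otimes\pi^*L^{1-2r}$, i.e.\ to a point of the equalizer $\Ee_\avect$ of the two maps in the statement. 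All of these identifications are functorial in the base scheme, so they assemble into mutually inverse morphisms $\Hh_{\Sp,2r}^{-1}(\avect)\rightleftarrows\Ee_\avect$, which is the asserted isomorphism.

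The step I expect to require the most care is the \emph{alternating} condition $\widetilde\omega^\vee=-\widetilde\omega$, which is invisible in the bare description of $\Ee_\avect$. What must be checked is that under the translation above this condition becomes vacuous. The set of isomorphisms $\Mm\xrightarrow{{}_\sim}\sigma^*\Mm^\chk\otimes\pi^*L^{2r-1}$ is a torsor under $\Aut(\Mm)$; the operation ``transpose by $\Hhom_{\Oo_X}(-,\Oo_X)$, double dual, and $\sigma^*$'' is an $\Aut(\Mm)$-equivariant involution of this torsor, hence multiplication by a sign $\lambda\in\{\pm1\}$, and under the dictionary above $\lambda=-1$ means precisely that every such isomorphism corresponds to an alternating form (whereas $\lambda=+1$ would single out the symmetric forms, i.e.\ $\SO$-data). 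One verifies $\lambda=-1$ for the spectral curve of an $\Sp$-characteristic by restricting to the dense open locus of $X$ where $\pi$ is étale: there $E$ splits into the eigen-line bundles of $\Phi$ indexed by the sheets of $X$, the involution $\sigma$ interchanges the $x$- and $(-x)$-sheets, and the induced pairing between the corresponding lines is visibly alternating; since the equalizer condition is closed, checking it on a dense open suffices. The remaining verifications — compatibility of the various canonical isomorphisms with the Higgs fields, and the elementary degree bookkeeping — are routine.
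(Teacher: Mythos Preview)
Your argument is correct and follows essentially the same route as the paper: translate the symplectic form into an isomorphism $E\simeq E^\vee$ intertwining $\Phi$ with $-\Phi^\vee$, use relative duality for $\pi$ (with $\pi^!\Oo_C\simeq\pi^*L^{2r-1}$) to identify the spectral sheaf of $(E^\vee,\Phi^\vee)$ with $\Mm^\chk\otimes\pi^*L^{2r-1}$, and use $\sigma^*$ to account for the sign change. The only technical difference is that the paper carries this out concretely by dualizing and applying $\sigma^*$ to the four-term exact sequence of Proposition~\ref{SpectralExactSeq}, reading off the kernel, whereas you argue abstractly via the spectral equivalence $\Pi$; both lead to the same identification $\Mm^\chk\simeq\sigma^*\Mm\otimes\pi^*L^{1-2r}$. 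Your final paragraph on the alternating condition is a genuine addition: the paper's proof simply asserts that the induced form $\omega$ is symplectic without checking skew-symmetry, so your torsor-and-sign argument (reducing to the \'etale locus) fills a point the paper leaves implicit.
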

\begin{proof}
The dual of a torsion-free sheaf on a Gorenstein curve is torsion-free sheaf with the same rank, hence the map $\Hh om_{\Oo_X}(\texttt{\char`_}, \Oo_X)$ is well-defined. Let $(P, \phi)$ be any $\Sp$-Higgs pair with characteristic $\avect$ and let $(E, \Phi, \omega)$ be the associated datum of a Higgs pair of rank $2r$ and degree 0 with characteristic $\avect$ and symplectic form $\omega$. The torsion-free sheaf $\Mm$  on $X$ associated to $(E, \Phi)$ by the spectral correspondence fits into the exact sequence: \[
	0 \rightarrow \Mm \otimes \pi^*L^{1-2r} \rightarrow \pi^* E \xrightarrow{\pi^*\Phi - x} \pi^*(E \otimes L) \rightarrow \Mm \otimes \pi^*L \rightarrow 0.
	\]
	Taking the dualized sequence and tensoring by $\pi^*L$ gives a left exact sequence: \[
	0 \rightarrow \Mm^\chk \rightarrow \pi^*E^\chk \xrightarrow{\pi^*\Phi^t - x} \pi^*E^\chk \otimes \pi^*L. 
	\]
	On the other hand, applying $\sigma^*$ to the first exact sequence gives: \[
	0 \rightarrow \sigma^*\Mm \otimes \pi^*L^{1-2r} \rightarrow \pi^* E \xrightarrow{\pi^*\Phi + x} \pi^*(E \otimes L) \rightarrow \sigma^*\Mm \otimes \pi^*L \rightarrow 0.
	\]
	Now, the symplectic form $\omega$ induce an isomorphism $\omega_E: E \simeq E^\chk$ and hence a commutative diagram with vertical isomorphisms:
	\[
	\begin{tikzcd}
	0 \arrow{r} & \Mm^\chk \arrow{r} & \pi^*E^\chk \arrow{r}{\pi^*\Phi^t - x} & \pi^*E^\chk \otimes \pi^*L \\
	0 \arrow{r} & \sigma^*\Mm \otimes \pi^*L^{1-2r} \arrow{r} & \pi^* E \arrow{u}{\omega_E}[swap]{\sim} \arrow{r}{-\pi^*\Phi - x} & \pi^*(E \otimes L)  \arrow{u}{\omega_E}[swap]{\sim}
	\end{tikzcd}
	\]
	Hence, we conclude that the sheaf $\Mm$ comes with a sheaf isomorphism  \[
\lambda: \Mm^\chk \xrightarrow{{}_\sim} \sigma^* \Mm \otimes \pi^*L^{1-2r}.
\]

	Viceversa, suppose that $\Mm$ is a torsion-free sheaf of rank 1 and degree $d'$ on $X$ with an isomorphism $\lambda: \Mm^\chk \xrightarrow{{}_\sim} \sigma^* \Mm\otimes \pi^*L^{1-2r}$. Let $(E, \Phi)$ be the $\GL$-Higgs pair with characteristic $\avect$ corresponding to $\Mm$. By \cite{HublSastry}, the Relative duality formula holds for $X/C$ with dualizing sheaf equal to the sheaf of relative differentials $\omega_{X/C}$. By Lemma \ref{CanonicalSheafOfX} and Riemann-Hurwitz we obtain that $\omega_{X/C} \simeq \pi^*L^{2r-1}$. Applying Relative duality in dimension 0 and the projection formula, we obtain an isomorphism: 
	\begin{align*}
	E^\chk &= \pi_*(\Mm)^\chk \simeq \pi_*(\Mm^\chk\otimes \pi^*L^{2r-1}) \simeq \\
	&\simeq \pi_*(\sigma^*(\Mm)) = \pi_*(\sigma_* \sigma^*\Mm) \simeq \\
	&\simeq \pi_*( \sigma_*\Oo_X \otimes \Mm ) = \pi_*(\Oo_X \otimes \Mm) \simeq \\
	&\simeq \pi_*(\Mm) = E;
	\end{align*}
	that induces the symplectic structure $\omega: E \otimes E \rightarrow \Oo_C$. 
	
	\vspace{1em}

To sum up, the datum of $(P, \phi)$ in the fiber $\Hh_{\Sp,2r}^{-1}(\avect)$ corresponds uniquely to the datum $(\Mm, \lambda)$ of a torsion-free rank-1 sheaf $\Mm \in \Jbar(X, d')$ and an isomorphism $\lambda: \Mm^\chk \xrightarrow{{}_\sim} \sigma^* \Mm \otimes \pi^*L^{1-2r}$, i.e. an element of the equalizer stack $\Ee_\avect$.
\end{proof}

\section{$\GSp(2r, \C)$-Higgs pairs}
The General Symplectic group $\GSp(2r,\C)$ is defined as: \[
\GSp(2r, \C) = \{ M \in \GL(2r, \C): M \Omega M^t = \lambda \Omega \textrm{ for some } \lambda \in \C^*  \}
\]
where $\Omega=\left( \begin{matrix}
0 & I_r \\
-I_r &0
\end{matrix} \right)$.

The Lie algebra associated to $\GSp(2r, \C)$ is 
\begin{align*}
\gsp(2r, \C) &= \left\{ A \in \gl(2r, \C): A\Omega+\Omega A^t = \frac{\tr A}{r} \Omega  \right\} \simeq \\
&\simeq \spalg(2r, \C) \oplus \C
\end{align*}
where the isomorphism is given by the decomposition $A=X + \frac{\tr A}{2r}I_{2r}$, with $X \in \spalg(2r, \C)$ and $\tr(A) \in \C$.

\vspace{1em}

If $(P, \phi)$ is a $\GSp(2r, \Cc)$-Higgs pair on $C$, the associated vector bundle $E$ is endowed with a  non-degenerate symplectic form $\omega: E \otimes E \rightarrow M $ with values in a line bundle $M$ on $C$ and the associated Higgs field $\Phi$ satisfies the condition \[ \omega(\Phi v, w)+\omega(v, \Phi w) = \frac{\tr\Phi}{r}\omega(v,w) \] for all sections $v, w$ of $E$. The existence of a non-degenerate symplectic $M$-valued form implies  that $E$ has determinant isomorphic to $M^r$ and, in particular, that $E$ has degree equal to $r \deg M$. Viceversa, let $(E, \Phi)$ be a Higgs pair of rank $2r$ and let $\omega$ be a non-degenerate symplectic $M$-valued form on $E$ satisfying the condition  \[ \omega(\Phi v, w)+\omega(v, \Phi w) = \frac{\tr\Phi}{r}\omega(v,w); \] then, the frame bundle $P=\Fr_\GSp(E, \omega)$ of all ordered symplectic basis of $(E, \omega)$ is a principal $\GSp(2r, \C)$-bundle, and the Higgs field $\Phi$ is the image of a unique global section $\phi$ of $\ad P \otimes L$ with respect to the morphism  $\ad P \otimes L \rightarrow \End E \otimes L$ induced by the canonical embedding $\rho: \Sp(2r, \C) \hookrightarrow \GL(2r, \C)$.

\vspace{1em}

Tu sum up, the datum of a $\GSp(2r, \C)$-Higgs pair $(P, \phi)$ corresponds uniquely, via the associated bundle construction, to the datum $(E, \Phi, M, \omega)$ of a Higgs pair $(E, \Phi)$ of rank $2r$ and degree $rd$ and a non-degenerate symplectic form $\omega: E \otimes E \rightarrow M $  with values in a line bundle $M$ of degree $d$,  and $\Phi \in H^0(C, \End(E) \otimes L)$ satisfies \begin{align}\label{PhiAntisymmGSp}
\omega(\Phi v, w) + \omega(v, \Phi w) = \frac{\tr\Phi}{r}\omega(v,w).
\end{align}

\vspace{1em}

Since $\gsp(2r, \C) \subset \gl(2r, \C)$, the $\GSp$-Hitchin morphism can be defined with the help of the $\GL$-Hitchin morphism, as follows: \begin{align*}
\Hh_{\GSp,2r,rd}: \Mm_\GSp(2r,rd) &\longrightarrow  \A_\GSp(2r) \subseteq \bigoplus_{i=1}^{2r} H^0(C, L^{i}) \\
(P, \phi) &\longmapsto (p_1(P, \phi), \dots, p_{2r}(P, \phi)).
\end{align*}
where $p_i(P, \phi)=(-)^i\tr(\wedge^i \phi)$ and $\A_\GSp(2r)$ is defined as the locus of characteristics in $\A_\GL(2r)$ resulting as characteristics of $\GSp(2r, \C)$-Higgs pairs.

In order to study the space of characteristics $\A_\GSp(2r)$, let $(E, \Phi)$ be the Higgs pair associated to any $\GSp$-Higgs pair $(P, \phi)$ and consider then the Higgs field \[
\Phi'=\Phi-\frac{\tr\Phi}{2r}\id_E \in H^0(C, \End(E) \otimes L).
\] Reformulating Equation \ref{PhiAntisymmGSp}, the following condition on $\Phi'$ holds: \begin{align}\label{PhiAntisymmGspF}
\omega(\Phi' v, w) + \omega(v, \Phi' w) = 0.
\end{align}
As in the case of $\Sp$-Higgs pairs, this condition implies that $a_i(E,\Phi')=0$ when $i=2l+1$. In particular, the vector $\avect' = (a_2(E, \Phi'), a_4(E, \Phi'), \dots, a_{2r}(E, \Phi'))$ may assume any value in the affine space
\[
\bigoplus_{l=1}^r H^0(C, L^{2l}) = \A_\Sp(2r) \subset \A_\GL(2r)=\A(2r).
\]
Let $\avect=(a_1(E, \Phi), a_2(E,\Phi), \dots, a_{2r}(E, \Phi)) \in \A(2r)$ be the characteristic of $(E,\Phi)$. Since $a_i(E, \Phi)=p_i(P, \phi)$ for any $i=0, \dots, 2r$, we have that $\avect$ is equal to the characteristic of $(P,\phi)$ in $\A_\GSp(2r)$. What is the relation  between $\avect$ and $\avect'$?  Denote with $\chi_\Phi$ and $\chi_{\Phi'}$ respectively the characteristic polynomials of $\Phi$ and $\Phi'$, and set $\frac{\tr\Phi}{2r} = \mu \in H^0(C, L)$. By definition of $\Phi'$, the characteristic polynomials $\chi_\Phi$ and $\chi_{\Phi'}$ are related by the following equality:  \begin{align}\label{GSpCharCompare}
\chi_\Phi(t+\mu) = \det(\Phi - (t+\mu) \id_E) = \det(\Phi'-t\id_E) = \chi_{\Phi'}(t).
\end{align}
Comparing the coefficients in Equation \ref{GSpCharCompare} and recalling that $\tr\Phi=a_1(E, \Phi)$, it follows that the vector $(\avect', \mu)$ can be determined by a polynomial combination $\Pvect(\avect)$ of the entries of $\avect$. Viceversa, by the equality $\chi_\Phi(t) = \chi_{\Phi'}(t-\mu)$, it follows that the vector $\avect$ can be determined back by a polynomial combination of the entries of the vector $\avect'$ and the scalar $\mu$, denoted by $\Qvect(\avect', \mu)$. In other words, $\Pvect$ defines an isomorphism between $\A_\GSp(2r)$ and the affine space $\A_\Sp(2r) \oplus H^0(C, L)$, with inverse $\Qvect$.

By means of the previous discussion, the datum of a $\GSp(2r, \C)$-Higgs pair $(P, \phi)$ corresponds uniquely, via the associated bundle construction and the translation of the Higgs field, to the datum $(E, \Phi', \omega, M, \mu )$ of a Higgs pair $(E, \Phi')$ of rank $2r$ and degree $rd$, a non-degenerate symplectic form $\omega: E \otimes E \rightarrow M $  with values in a line bundle $M$ of degree $d$, and a global global section $\mu  \in H^0(C, L)$, such that $\Phi' \in H^0(C, \End(E) \otimes L)$ satisfies \begin{align}\label{PhiPrimeAntisymmGSp}
\omega(\Phi' v, w) + \omega(v, \Phi' w) = 0.
\end{align}

Moreover, the affine space $\A_\Sp(2r) \oplus H^0(C, L)$ can be taken as basis of another $\GSp$-Hitchin morphism $\widetilde{\Hh}=\Pvect \circ \Hh$: \begin{align*}
\widetilde{\Hh}_{\GSp,2r,rd}: \Mm_\GSp(2r,rd) &\longrightarrow \A_\Sp(2r) \oplus H^0(C,L) \\
(P, \phi) &\longmapsto (\avect', \mu) = \Pvect(p_1(P, \phi), \dots, p_{2r}(P, \phi)) =\\
& \hspace{2em} =(a_2(E, \Phi'), a_4(E, \Phi'), \dots, a_{2r}(E, \Phi'), \frac{\tr\Phi}{2r}).
\end{align*}

\begin{prop}\label{DataGSp}{\normalfont (Spectral correspondence for $\GSp(2r, \C)$)}
Let $\avect' \in \A_\Sp(2r)$ be any characteristic and let $\mu \in H^0(C,L)$ be any section. Let $X=X_{\avect'} \xrightarrow{\pi}C$
be the spectral curve associated to $\avect'$ and let $\sigma$ be the involution defined on $X$ as in Section \ref{SpHiggs}.
Let $d'=rd+r(2r-1)\ell$ and denote with $\mathcal{P}(d',n)=\Jbar(X,d') \times J^d(C)$ the Cartesian product of the compactified Jacobian of degree $d'$ on $X$ and the Jacobian of degree $d$ on $C$, endowed with the projection maps $p_X$ and $p_C$ on $\Jbar(X,d')$ and $J^d(C)$ respectively. Let $\Ee_{\avect'}$ be the equalizer of the two maps 
\begin{align*}
(\Hh om_{\Oo_X}(\texttt{\char`_}, \Oo_X)\circ p_X ) \otimes (\pi^* \circ p_C): \mathcal{P}(d',d) &\rightarrow \Jbar(X, rd-r(2r-1)\ell) \\
(\sigma^*\circ p_X) \otimes \pi^* L^{1-2r}: \mathcal{P}(d',d) 
&\rightarrow \Jbar(X,rd-r(2r-1)\ell).
\end{align*}
Then, the fiber $\widetilde{\Hh}_{\GSp,2r,rd}^{-1}(\avect', \mu)$ is isomorphic, via the spectral correspondence, to $\Ee_{\avect'}$.
\end{prop}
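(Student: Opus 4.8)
The plan is to run the argument of Proposition \ref{DataSp} in a relative form, keeping track of two new features: the symplectic form now takes values in a line bundle $M$ of degree $d$ rather than in $\Oo_C$, which produces a twist by $\pi^*M$ throughout; and the section $\mu\in H^0(C,L)$ is recovered from the translation $\Phi=\Phi'+\mu\,\id_E$. Since $\mu$ is fixed along the fiber and $\Phi\mapsto\Phi'=\Phi-\mu\,\id_E$ is invertible, the first step is to identify $\widetilde{\Hh}_{\GSp,2r,rd}^{-1}(\avect',\mu)$, as a stack, with the groupoid of quadruples $(E,\Phi',\omega,M)$ where $(E,\Phi')$ is a Higgs pair of rank $2r$ and degree $rd$ whose $\GL$-characteristic in $\A(2r)$ is $\avect'$ (the odd Hitchin coefficients vanishing), $M\in J^d(C)$, and $\omega\colon E\otimes E\to M$ is a non-degenerate antisymmetric form with $\omega(\Phi'v,w)+\omega(v,\Phi'w)=0$; in particular the fiber is independent of $\mu$, consistently with the notation $\Ee_{\avect'}$.

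Next I would forget $\omega$ and $M$: the spectral curve of $(E,\Phi')$ as a $\GL(2r)$-Higgs pair is exactly $X=X_{\avect'}$, so by the spectral correspondence (Proposition \ref{Spectral}) the datum $(E,\Phi')$ is equivalent — already as an equivalence of stacks — to a torsion-free rank-$1$ sheaf $\Mm\in\Jbar(X,d')$ with $d'=rd+r(2r-1)\ell$, $\pi_*\Mm=E$ and $\Phi'=\pi_*(\cdot x)$. It then remains to match the residual datum $(\omega,M)$ with an isomorphism $\lambda\colon\Mm^\chk\otimes\pi^*M\xrightarrow{\ \sim\ }\sigma^*\Mm\otimes\pi^*L^{1-2r}$, that is, with the extra structure promoting $(\Mm,M)\in\mathcal{P}(d',d)$ to an object of the equalizer $\Ee_{\avect'}$. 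For the direction $(\omega,M)\rightsquigarrow\lambda$ I would dualize the spectral exact sequence of Proposition \ref{SpectralExactSeq} for $(E,\Phi')$ — using that $X$ is Gorenstein (being planar), so that $\mathcal{E}xt^1$ of a torsion-free sheaf into $\Oo_X$ vanishes — then apply $\sigma^*$ to the original sequence, and use the isomorphism $\omega_E\colon E\xrightarrow{\ \sim\ }\Hh om_{\Oo_C}(E,M)$ to identify the $\pi^*M$-twist of the dualized sequence with the $\sigma^*$-pullback of the spectral sequence (the sign $x\mapsto-x$ being absorbed by the overall $-1$ on the middle map). This is precisely the commutative diagram in the proof of Proposition \ref{DataSp}, now carrying the extra factor $\pi^*M$; comparing kernels yields $\lambda$.

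For the opposite direction I would reconstruct $(\omega,M)$ from $(\Mm,M,\lambda)$ by relative duality for $\pi\colon X\to C$. By \cite{HublSastry} relative duality holds with relative dualizing sheaf $\omega_{X/C}$, and by Lemma \ref{CanonicalSheafOfX} together with Riemann–Hurwitz $\omega_{X/C}\simeq\pi^*L^{2r-1}$; hence, using relative duality in relative dimension $0$, the projection formula, $\pi\circ\sigma=\pi$ and $\sigma_*\Oo_X=\Oo_X$, one obtains
\[
\Hh om_{\Oo_C}(E,M)\;\simeq\;\pi_*\!\big(\Mm^\chk\otimes\pi^*L^{2r-1}\otimes\pi^*M\big)\;\xrightarrow{\ \pi_*(\lambda\otimes\id)\ }\;\pi_*\big(\sigma^*\Mm\big)\;=\;\pi_*\Mm\;=\;E,
\]
a non-degenerate $M$-valued form $\omega$ on $E$; the compatibility of $\lambda$ with the middle maps of the two spectral sequences then gives $\omega(\Phi'v,w)+\omega(v,\Phi'w)=0$. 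Finally I would check that the two assignments are quasi-inverse and natural in the test scheme $T$ — everything being built from pushforward along the finite flat $\pi_T$, dualizing, $\sigma_T^*$ and relative duality, all compatible with base change — so that they define an isomorphism of stacks $\widetilde{\Hh}_{\GSp,2r,rd}^{-1}(\avect',\mu)\xrightarrow{\ \sim\ }\Ee_{\avect'}$.

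The main obstacle I expect is the verification that the form $\omega$ produced in the backward direction is genuinely \emph{symplectic}, not merely non-degenerate with $\Phi'$ antisymmetric: as in the $\Sp$ case this needs the $\sigma$-equivariance of the relative-duality pairing together with the precise way $\lambda$ intertwines $\sigma^*$, which is the point that forces the symmetry type and distinguishes the $\GSp$ (resp.\ $\Sp$) correspondence from its orthogonal analogue. A secondary, purely bookkeeping, difficulty is arranging all the twists $\pi^*M$, $\pi^*L^{1-2r}$, $\pi^*L^{2r-1}$ and signs so that the two equalizer maps share the target $\Jbar(X,rd-r(2r-1)\ell)$; the degree identity $\deg\big(\sigma^*\Mm\otimes\pi^*L^{1-2r}\big)=d'+2r(1-2r)\ell=rd-r(2r-1)\ell$ is the relevant sanity check.
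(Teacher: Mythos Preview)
Your proposal is correct and follows essentially the same route as the paper: both arguments reduce to running the proof of Proposition~\ref{DataSp} with the extra twist by $\pi^*M$, dualizing the spectral exact sequence of Proposition~\ref{SpectralExactSeq}, comparing it via $\omega_E\colon E\xrightarrow{\ \sim\ }E^\chk\otimes M$ to the $\sigma^*$-pullback, and then inverting by relative duality with $\omega_{X/C}\simeq\pi^*L^{2r-1}$. Your write-up is in fact slightly more thorough than the paper's, since you flag the antisymmetry check for the reconstructed $\omega$ and the base-change compatibility, both of which the paper leaves implicit.
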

\begin{proof}
The datum of a $\GSp$-Higgs pair $(P, \phi)$ in the fiber $\widetilde{\Hh}_{\GSp,2r,rd}^{-1}(\avect', \mu)$ corresponds uniquely to the datum $(E, \Phi', \omega, M, \mu)$ of a Higgs pair $(E, \Phi')$ with rank $2r$ and characteristic $\avect'$, a non-degenerate symplectic form $\omega$ on $E$ with values in a line bundle $M$ of degree $d$ on $C$, and $\mu \in H^0(C,L)$ fixed. The torsion-free sheaf $\Mm$ on $X$ corresponding to $(E, \Phi')$ by the spectral correspondence fits into the exact sequence: \[
0 \rightarrow \Mm \otimes \pi^*L^{1-2r} \rightarrow \pi^* E \xrightarrow{\pi^*\Phi - x} \pi^*(E \otimes L) \rightarrow \Mm \otimes \pi^*L \rightarrow 0.
\]
Taking the dualized sequence and tensoring by $\pi^*(L \otimes M)$ gives a left exact sequence: \[
0 \rightarrow \Mm^\chk \otimes M \rightarrow \pi^*(E^\chk \otimes M) \xrightarrow{\pi^*\Phi^t - x} \pi^*(E^\chk \otimes M) \otimes \pi^*L . 
\]
On the other hand, applying $\sigma^*$ to the first exact sequence gives: \[
0 \rightarrow \sigma^*\Mm \otimes \pi^*L^{1-2r} \rightarrow \pi^* E \xrightarrow{\pi^*\Phi + x} \pi^*(E \otimes L) \rightarrow \sigma^*\Mm \otimes \pi^*L \rightarrow 0.
\]
Now, the non-degenerate symplectic form $\omega$ induces an isomorphism $\omega_E: E \rightarrow E^\chk \otimes M$ and hence a commutative diagram with vertical isomorphisms:
\[
\begin{tikzcd}
0 \arrow{r} & \Mm^\chk \otimes M \arrow{r} & \pi^*(E^\chk \otimes M) \arrow{r}{\pi^*\Phi^t - x} & \pi^*(E^\chk \otimes M) \otimes \pi^*L \\
0 \arrow{r} & \sigma^*\Mm \otimes \pi^*L^{1-2r} \arrow{r} & \pi^* E \arrow{u}{\omega_E}[swap]{\sim} \arrow{r}{-\pi^*\Phi - x} & \pi^*(E \otimes L)  \arrow{u}{\omega_E}[swap]{\sim}
\end{tikzcd}
\]
Hence, we conclude that the sheaf $\Mm$ comes with sheaf isomorphism \[
\lambda: \Mm^\chk \otimes \pi^* M \xrightarrow{{}_\sim} \sigma^* \Mm \otimes \pi^*L^{1-2r}.
\]

\vspace{1em}
Viceversa, suppose that $\Mm$ is a torsion-free sheaf of rank 1 and degree $d'$ on $X$ with a line bundle $M$ of degree $d$ on $C$ and an isomorphism $\lambda: \Mm^\chk \otimes \pi^*M \xrightarrow{{}_\sim} \sigma^* \Mm\otimes \pi^*L^{1-2r}$. Let $(E, \Phi')$ be the Higgs pairs of rank $2r$ and degree $rd$ corresponding to $\Mm$ by the spectral correspondence. Applying Relative duality  in dimension 0 and the projection formula as in Section \ref{SpHiggs}, we have an isomorphism: 
\begin{align*}
E^\chk &= \pi_*(\Mm)^* \simeq \pi_*(\Mm^\chk\otimes \pi^*L^{2r-1}) \simeq \\
&\simeq \pi_*(\sigma^*(\Mm) \otimes \pi^* M^{-1}) \simeq \\
&\simeq \pi_*(\sigma^*(\Mm)) \otimes M^{-1} = \pi_*(\sigma_* \sigma^*\Mm ) \otimes M^{-1} \simeq \\
&\simeq \pi_*( \sigma_*\Oo_X \otimes \Mm ) \otimes M^{-1} = \pi_*(\Oo_X \otimes \Mm) \otimes M^{-1} \simeq \\
&\simeq \pi_*(\Mm) \otimes M^{-1} = E \otimes M^{-1};
\end{align*}
that induces the non-degenerate $M$-valued symplectic structure $\omega: E \otimes E \rightarrow M$.

To sum up, the datum of $(P, \phi)$ in the fiber $\Hh_{\GSp,2r,rd}^{-1}(\avect)$ corresponds uniquely to the datum $(\Mm, M, \lambda)$ of a torsion-free rank-1 sheaf $\Mm \in \Jbar(X, d')$, a line bundle $M \in J(C, d)$ and an isomorphism $\lambda: \Mm^\chk \otimes \pi^* M \xrightarrow{{}_\sim} \sigma^* \Mm \otimes \pi^*L^{1-2r}$, i.e. an element of the equalizer stack $\Ee_\avect$.
\end{proof}

\section{$\PSp(2r, \C)$-Higgs pairs}
The Projective Symplectic group $\PSp(2r, \C)$ is defined by the exact sequence: \begin{align}\label{PSpShortSeq1}
0 \rightarrow \C^* \xrightarrow{\lambda \mapsto \lambda I_{2r}} \GSp(2r, \C) \rightarrow \PSp(2r, \C) \rightarrow 0
\end{align}
or by the exact sequence: \begin{align}\label{PSpShortSeq2}
0 \rightarrow \{\pm 1 \} \xrightarrow{ 1 \mapsto I_{2r}} \Sp(2r, \C) \rightarrow \PSp(2r, \C) \rightarrow 0.
\end{align}
In particular,  the sheaf-theoretic version of sequence \ref{PSpShortSeq1} yields the cohomology exact sequence: 
\begin{align}\label{PSpLongSeq1}
H^1(C, \Oo_C^*) \rightarrow H^1(C, \GSp(2r, \Oo_C)) \xtwoheadrightarrow{q} H^1( C, \PSp(2r, \Oo_C)) \rightarrow 0.
\end{align}
This surjection, read in terms of cocycles, means that $\PSp(2r, \C)$-principal bundles are in one-to-one correspondence with equivalence classes of $\GSp(2r, \C)$-principal bundles, with respect to the action on associated bundles given by tensor product of line bundles. If $P$ is a $\PSp(2r, \C)$-principal bundle and $\tilde{P}$ is a 	$\GSp(2r, \C)$-principal bundle such that $q(\tilde{P)}=P$, we say that $\tilde{P}$ is a lifting of $P$ to a $\GSp(2r, \C)$-principal bundle.

Moreover, by sequence \ref{PSpShortSeq2}, $\PSp(2r,\C)$ is the quotient of $\Sp(2r, \C)$ by the action of a finite group, hence the associated Lie algebras $\psp(2r, \C)$  and $\spalg(2r, \C)$ are equal. If $P$ is any $\PSp(r2, \C)$-principal bundle and $\tilde{P}$ is a lifting of $P$ to a $\GSp(2r, \C)$-principal bundle, then  a section $\phi$ of $H^0(C, \ad P \otimes L)$ determines uniquely a section $\tilde{\phi}$ of $H^0(C, \ad\tilde{P} \otimes L)$ with trace equal to 0, and viceversa. We say that $(\tilde{P}, \tilde{\phi})$ is a lifting of $(P, \phi)$ to a $\GSp(2r, \C)$-Higgs pair and with a slight abuse of notation we write $q(\tilde{P}, \tilde{\phi})=(P, \phi)$. 

\vspace{1em}

To sum up, any $\PSp(2r, \C)$-Higgs pair $(P, \phi)$ has a lifting $(\tilde{P}, \tilde{\phi})$ to a $\GSp(2r, \C)$-Higgs pair with trace zero, corresponding via the associated bundle construction to the datum $(E, \Phi, M, \omega)$ of a Higgs pair $(E, \Phi)$ of rank $2r$, a non-degenerate symplectic form  $\omega: E \otimes E \rightarrow M $ with values in a line bundle $M$ of degree $d$ on $C$, and a Higgs field $\Phi$ satisfying the condition \[ \omega(\Phi v, w)+\omega(v, \Phi w) =0. \] 
Then, the datum of   $(P, \phi)$ corresponds uniquely to the datum of the equivalence class $[(E, \Phi, M, \omega)]$ of Higgs pairs of rank $2r$ with a non-degenerate symplectic form, under the equivalence relation $\sim_{J(C)}$ defined by: \[
(E, \Phi, M \omega) \sim_{J(C)} (E \otimes N, \Phi \otimes 1_N, M \otimes N^2, \omega_N) \hspace{2em}\textrm{for any }  N \in J(C)
\]
where \[
\omega_N: (E \otimes N) \otimes (E \otimes N) \rightarrow M \otimes N^2
\]
is obtained by extension of scalars.

\vspace{1em}

More precisely, let $\Mm_\GSp(2r)=\coprod_{d \in \Z} \Mm_\GSp(2r,rd)$ be the moduli stack of $\GSp$-Higgs pairs of rank $2r$ in any degree and denote with $\M_\GSp^{\tr = 0}(2r)$ the closed substack of $\Mm_\GSp(2r)$ given by $\GSp$-Higgs pairs of rank $2r$ with trace zero. Then, $[(E, \Phi, M, \omega)]$ is the orbit of $(E, \Phi, M, \omega)$ under the action of $J(C)$ on $\M_\GSp^{\tr = 0}(2r)$ defined by:
\begin{align*}
\M_\GSp^{\tr = 0}(2r) \times J(C)&\longrightarrow \M_\GSp^{\tr = 0}(2r) \\
((E, \Phi, M, \omega), N ) &\longmapsto (E \otimes N, \Phi \otimes 1_N, M \otimes N^2, \omega_N).
\end{align*}

\vspace{1em}

Recall that the isomorphism $\det E \simeq M^r$ implies that $\deg(E)=r\deg(M)$; in other words, the degree of the Higgs pair in the datum $(E, \Phi, M, \omega)$ is determined by the degree of the line bundle $M$ in the same datum. The action of $J(C)$ on such datum modifies the degree of $M$ by multiples of $2$, since $\deg(M \otimes N^2) = \deg(M)+2\deg(N)$. This his fact implies that the degree of  $\GSp$-Higgs pairs associated to a $\PSp$-Higgs pair is defined only  modulo $2r$. Then, we can give the following definition. 
\begin{defi}
Let $(P, \phi)$ be a $\PSp(2r, \C)$-Higgs pair and let $(E, \Phi, M, \omega)$ be the datum of a Higgs pair with trace zero and degree $rd$ endowed with a $M$-valued symplectic form $\omega$, corresponding to a lifting $(\tilde{P}, \tilde{\phi})$ of $(P, \phi)$ to a $\GSp$-Higgs pair. The \textit{degree of} $(P, \phi)$ is the congruence class $\overline{rd}  \in \Z/2r\Z$.
\end{defi}
\begin{rmk}
The same definition works for $\PSp(2r, \C)$-principal bundles, without Higgs pair. The degree of a $\PSp$-principal bundle $P$ is the congruence class (modulo $2r$) of the degree of the vector bundles associated to any lifting $\tilde{P}$ of $P$ to a $\GSp$-principal bundle.

Clearly, only two cases are possible: the degree of a $\PSp(2r, \C)$-Higgs pair $(P, \phi)$ is congruent either to $0$ or to $r$ modulo $2r$. This reflects the fact that the topological type of $P$ is parametrized by $\pi_1(\PSp(2r, \C)) \simeq \Z/2\Z$.
\end{rmk}

Up to the action on $(E, \Phi, M, \omega)$ of a line bundle of degree $1$ on $C$, it is straightforward to see that the $J(C)$-orbit $[(E, \Phi, M, \omega)]$ corresponding to a $\PGL$-Higgs pair with degree $\overline{rd} \in \Z/2r\Z$ is always the orbit of a datum whose Higgs pair has degree $0$ or $rd$. If we restrict the correspondence to Higgs pairs with fixed degree $0$ or $rd$, we see that a $\PGL$-Higgs pair of degree $\overline{rd}$ is identified uniquely by the orbit of a Higgs pair with trace zero \textit{and degree }$0$ or $rd$ with respect to the action of line bundles \textit{of degree} $0$ on $C$.  
\vspace{1em}

A basis for the invariant polynomials of $\psp(2r, \C)=\spalg(2r, \C)$ is given by $\{p_{2i}\}_{i=1, \dots, r}$ where $p_{2i}(P, \phi) := \tr(\wedge^{2i} \phi )$. The corresponding $\PSp$-Hitchin morphism takes the form:
\begin{align*}
\Hh_{\PSp,2r, \overline{rd}}: \Mm_\PSp(2r, \overline{rd})  &\longrightarrow \A_\PSp(2r)=\bigoplus_{l=1}^r H^0(C, L^{2l}) \\
(P, \phi) &\longmapsto (p_2(P, \phi), p_4(P, \phi), \dots, p_{2r}(P, \phi)).
\end{align*}

If $(\tilde{P}, \tilde{\phi})$ is any lifting of $(P, \phi)$ to a $\GSp(2r, \C)$-Higgs pair, then $\Hh_{\GSp,2r, rd}(\tilde P, \tilde \phi) = \Hh_{\PSp,2r,\overline{rd}}(P, \phi)$.  Moreover, for any characteristic $\avect \in \A_\PSp(r)$, the spectral curve $\pi: X_\avect \rightarrow C$ is defined in the total space $p: \Pp(\Oo_C \oplus L^{-1}) \rightarrow C$ by the equation \[
x^{2r}y + a_2 x^{2r-2}y^2 + \dots + a_{2r-2} x^2 y^{2r-2} + a_{2r}y^{2r} = 0.
\]
Hence, as in the case of $\Sp(2r, \C)$-Higgs pairs, the curve $X_\avect$ has an involution defined by $\sigma(x)=-x$.

We can now state the following proposition.

\begin{prop}\label{DataPSp}{\normalfont (Spectral correspondence for $\PSp(2r,\C)$)}
Let $\avect \in \A_\PSp(2r)$ be any characteristic, let $X=X_\avect \xrightarrow{\pi}C$ be the associated spectral curve with involution $\sigma: X \rightarrow X$. Let $d \in \{0, 1\}$, $d'=rd+r(2r-1)\ell$ and denote with $\mathcal{P}(d',n)=\Jbar(X,d') \times J^d(C)$ the Cartesian product of the compactified Jacobian of degree $d'$ on $X$ and the Jacobian of degree $d$ on $C$, endowed with the projection maps $p_X$ and $p_C$ on $\Jbar(X,d')$ and $J^d(C)$ respectively. Let $\Ee_{\avect'}$ be the equalizer of the two maps 
\begin{align*}
(\Hh om_{\Oo_X}(\texttt{\char`_}, \Oo_X)\circ p_X ) \otimes (\pi^* \circ p_C): \mathcal{P}(d',d) &\rightarrow \Jbar(X, rd-r(2r-1)\ell) \\
(\sigma^*\circ p_X) \otimes \pi^* L^{1-2r}: \mathcal{P}(d',d) 
&\rightarrow \Jbar(X,rd-r(2r-1)\ell).
\end{align*}
The group $J^0(C)$ of line bundles of degree $0$ on $C$ acts on $\Ee_\avect$ as follows: \begin{align*}
\Ee_\avect \times J^0(C) &\longrightarrow \Ee_\avect \\
((\Mm, M, \lambda), N) &\longmapsto (\Mm \otimes \pi^*N, M \otimes N^2, \lambda _N)
\end{align*}
where $\lambda_N$ is given by the composition of $\lambda \otimes \id_{\pi^* N}$ with the canonical isomorphisms: \begin{align*}
(\Mm \otimes \pi^*N)^\chk \otimes \pi^*(M \otimes N^2) &\xrightarrow{{}_\sim} \Mm^\chk \otimes (\pi^*N)^{-1} \otimes \pi^*M \otimes ( \pi^*N)^2  \xrightarrow{{}_\sim} \\
&\xrightarrow{{}_\sim}  \Mm^\chk \otimes \pi^*M \otimes  \pi^*N \xrightarrow{\lambda \otimes \id_{\pi^* N} } \\
&\xrightarrow{{}_\sim} \sigma^*\Mm \otimes \pi^*L^{1-2r} \otimes \pi^* N \xrightarrow{{}_\sim} \\
&\xrightarrow{{}_\sim} \sigma^*(\Mm \otimes \pi^*N) \otimes \pi^*L^{1-2r}.
\end{align*}
Then, the fiber $\Hh_{\PSp,2r, \overline{rd}}^{-1}(\avect)$ of the $\PSp$-Hitchin morphism is isomorphic, via the spectral correspondence, to the quotient $\Ee_\avect/J^0(C)$.
\end{prop}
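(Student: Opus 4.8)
The plan is to reduce the statement to the $\GSp$-spectral correspondence of Proposition~\ref{DataGSp} and then to transport the $J^0(C)$-action along it, exactly as the quotient in Proposition~\ref{DataPGL} was deduced from the $\GL$-correspondence. First I would use the description established before the statement: a $\PSp(2r,\C)$-Higgs pair $(P,\phi)$ of degree $\overline{rd}$ with characteristic $\avect$, after fixing the lifting to have Higgs-pair degree $0$ or $rd$ (i.e. $d\in\{0,1\}$), is the same datum as a $J^0(C)$-orbit of a tuple $(E,\Phi,M,\omega)$ with $(E,\Phi)$ of rank $2r$, trace zero and degree $rd$, with $\omega\colon E\otimes E\to M$ a non-degenerate symplectic form and $\deg M=d$, and $\Hh_{\GSp,2r,rd}(E,\Phi,M,\omega)=\avect$. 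Since $\tr\Phi=0$ we have $\mu=\tfrac{\tr\Phi}{2r}=0$ and $\Phi'=\Phi$, so such a tuple lies in the fiber $\widetilde{\Hh}_{\GSp,2r,rd}^{-1}(\avect,0)$.

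By Proposition~\ref{DataGSp}, the spectral correspondence gives an isomorphism $\widetilde{\Hh}_{\GSp,2r,rd}^{-1}(\avect,0)\xrightarrow{{}_\sim}\Ee_\avect$, where $\Ee_\avect\subseteq\mathcal{P}(d',d)=\Jbar(X,d')\times J^d(C)$ with $d'=rd+r(2r-1)\ell$ is the equalizer of the two maps in the statement; under it the tuple $(E,\Phi,M,\omega)$ goes to $(\Mm,M,\lambda)$ with $\pi_*\Mm=E$ and $\lambda\colon\Mm^\chk\otimes\pi^*M\xrightarrow{{}_\sim}\sigma^*\Mm\otimes\pi^*L^{1-2r}$ the isomorphism built from $\omega$. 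It then remains to check that under this identification the $J(C)$-action $(E,\Phi,M,\omega)\mapsto(E\otimes N,\Phi\otimes 1_N,M\otimes N^2,\omega_N)$ becomes the action $((\Mm,M,\lambda),N)\mapsto(\Mm\otimes\pi^*N,M\otimes N^2,\lambda_N)$ displayed in the statement. On underlying sheaves this is the projection formula $\pi_*(\Mm\otimes\pi^*N)\simeq\pi_*\Mm\otimes N$ together with the commuting square identifying $\pi_*(\cdot x)$ on $\Mm\otimes\pi^*N$ with $\Phi\otimes 1_N$, precisely as in the proof of Proposition~\ref{DataPGL}. For the symplectic datum I would re-run the construction of $\omega$ from $\lambda$ in the proof of Proposition~\ref{DataGSp} with $\Mm$ replaced by $\Mm\otimes\pi^*N$ and $M$ by $M\otimes N^2$; using $\sigma^*\pi^*N\simeq\pi^*N$ (since $\pi\circ\sigma=\pi$) and the naturality of relative duality and of the projection formula, the transported isomorphism is exactly the composite $\lambda_N$ of the canonical isomorphisms written in the statement. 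In particular this confirms that the $J^0(C)$-action preserves $\Ee_\avect$, so the action is well defined.

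Having produced a $J^0(C)$-equivariant isomorphism of stacks $\widetilde{\Hh}_{\GSp,2r,rd}^{-1}(\avect,0)\xrightarrow{{}_\sim}\Ee_\avect$ for the fixed $d\in\{0,1\}$, I would finally pass to quotient stacks. Since a $\PSp$-Higgs pair of degree $\overline{rd}$ and characteristic $\avect$ is, by the reduction in the first step, precisely a point of the quotient $\widetilde{\Hh}_{\GSp,2r,rd}^{-1}(\avect,0)/J^0(C)$ (for the chosen degree $0$ or $rd$), equivariance yields $\Hh_{\PSp,2r,\overline{rd}}^{-1}(\avect)\simeq\Ee_\avect/J^0(C)$.

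The step I expect to be the main obstacle is the verification in the middle paragraph at the level of the isomorphism $\lambda$: one must unwind the chain of relative-duality and projection-formula identifications from the proof of Proposition~\ref{DataGSp} and check its naturality under tensoring with $\pi^*N$ and $N^2$, so that the transported isomorphism agrees on the nose with the prescribed $\lambda_N$ rather than merely up to an ambiguous automorphism of line bundles. This is pure bookkeeping of canonical maps, and of the ordering convention for the symplectic form, involving no new idea beyond the projection formula, but it is where the care lies.
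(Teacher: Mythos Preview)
Your proposal is correct and follows essentially the same route as the paper: reduce to the $\GSp$-spectral correspondence of Proposition~\ref{DataGSp} (noting $\mu=0$ since the lifted Higgs field has trace zero), then check via the projection formula that the action of $J^0(C)$ on $(E,\Phi,M,\omega)$ corresponds to the stated action on $(\Mm,M,\lambda)$, and pass to the quotient. The paper's proof is in fact terser than yours about the equivariance step, simply asserting that ``by the projection formula the datum of $(E\otimes N,\Phi\otimes 1_N,M\otimes N^2,\omega_N)$ corresponds to the datum of $(\Mm\otimes\pi^*N,M\otimes N^2,\lambda_N)$'' without unwinding the relative-duality chain you single out as the main care point.
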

\begin{proof}
Let $(P, \phi)$ be any $\PSp(2r, \C)$-Higgs pair with characteristic $\avect$ and let $(E, \Phi,M, \omega)$ be the datum of a Higgs pair   with a non-degenerate $M$-valued symplectic form such that $(P \phi)$ corresponds uniquely to the datum of the $J^0(C)$-equivalence class $[(E, \Phi, M, \omega)]$.

By Proposition \ref{DataGSp}, the datum of $(E, \Phi, M, \omega)$ corresponds, via the spectral correspondence, to the datum $(\Mm, M, \lambda)$ of a torsion-free rank-1 sheaf $\Mm \in \Jbar(X, d')$, the line bundle $M \in J^d(C)$ and an isomorphism $\lambda: \Mm^\chk \otimes \pi^*M \xrightarrow{{}_\sim} \sigma^* \Mm \otimes \pi^*L^{1-2r}$, i.e. an element of $\Ee_\avect$. If $N$ is any line bundle of degree $0$ on $C$, then by the projection formula the datum of $(E \otimes N, \Phi \otimes 1_N, M \otimes N^2, \omega_N)$ corresponds to the datum of $(\Mm \otimes \pi^*N, M \otimes N^2, \lambda_N)$.

We conclude that the datum of the $J^0(C)$-orbit $[(E, \Phi, M, \omega)]$  corresponds, via the spectral correspondence, to the datum of the $J^0(C)$-orbit of $(\Mm, M, \lambda)$.
\end{proof}

A particular case occurs for $d=0$, i.e. for $\PGL(2r, \C)$-Higgs pairs of degree $0$. Indeed, recall the sequence \ref{PSpShortSeq1}: \[
0 \rightarrow \{\pm 1 \} \xrightarrow{ 1 \mapsto I_{2r}} \Sp(2r, \C) \rightarrow \PSp(2r, \C) \rightarrow 0.
\]
Recalla also that $\{ \pm 1 \}=\mu_2$ and $H^2(C, \mu_2)=\Z/2\Z$. Hence this sequence, applied to structure sheaves, induces the cohomology exact sequence:
\[
H^1(C, \mu_2) \rightarrow H^1(C, \Sp(2r, \Oo_C)) \xtwoheadrightarrow{q} H^1( C, \PSp(2r, \Oo_C)) \xrightarrow{\deg/r} \Z/2\Z \rightarrow 0
\]
where the last map sends a $\PSp(2r, \C)$-principal bundle of degree $\overline{rd}$ to the congruence class $\overline{d}$. This sequence, read in terms of cocycles, means that a $\PSp(2r, \C)$-principal bundle $P$ can be lifted to $\Sp(2r, \C)$-principal bundle $P_0$ if and only $P$ has degree equal to $0 \in \Z/2r\Z$; any other lifting $P_0'$ of $P$ differs from $P_0$ by the action on the associated bundles of a $2$-torsion line bundle by tensor product. Moreover, a Higgs field $\phi$ on $P$ determines uniquely a a Higgs field $\phi_0$ on $P$ and viceversa.

\vspace{1em} To sum up, any $\PSp(2r, \C)$-Higgs pair $(P, \phi)$  of degree $0$ has a lifting $(\tilde{P}, \tilde{\phi})$ to a $\Sp(r, \C)$-Higgs pair, corresponding to the datum $(E, \Phi, \omega)$ of a Higgs pair of rank $2r$ and degree $0$ and a non-degenerate symplectic form $\omega$ via the associated bundle construction. Then, the datum of   $(P, \phi)$ corresponds uniquely to the datum of the equivalence class $[(E, \Phi, \omega)]$ of Higgs pairs of rank $2r$ (and degree $0$) with a non-degenerate symplectic form, under the equivalence relation $\sim_{J^0(C)[2]}$ defined by: \[
(E, \Phi, \omega) \sim_{J^0(C)[2]} (E \otimes N, \Phi \otimes 1_N, \omega_{N, \epsilon}) \hspace{2em}\textrm{for any } (N,\epsilon) \in J^0(C)[2].
\]
Here, $J^0(C)[2]$ denotes the group stack parametrizing line bundles with $2$-torsion $(N, \epsilon)$ on $C$, where $\epsilon$ is the isomorphism $N^{-1} \xrightarrow{{}_\sim} N$, and $\omega_{N, \epsilon}$ is obtained by extension of scalars, as follows: \[
\omega_{N, \epsilon}: (E \otimes N) \otimes (E \otimes N) \rightarrow N^2 \xrightarrow[\epsilon^{-1} \otimes 1]{{}_\sim} N^{-1} \otimes N \xrightarrow[\ev]{{}_\sim} \Oo_C. \]

We can finally state the following proposition.

\begin{prop}\label{DataPSp0}{\normalfont (Spectral correspondence for $\PSp(2r,\C)$ of degree $0$)}
Let $\avect \in \A_\PSp(2r)$ be any characteristic and let $X=X_\avect \xrightarrow{\pi}C$ be the associated spectral curve with involution $\sigma: X \rightarrow X$. Let $d'=r(2r-1)\ell$ and let $\Ee_\avect$ be the equalizer stack of the two maps:
\begin{align*}
	\texttt{\char`_}^\chk:=\Hh om_{\Oo_X}(\texttt{\char`_}, \Oo_X): \Jbar(X,d') &\rightarrow \Jbar(X,-d') \\
	\sigma^*\texttt{\char`_} \otimes \pi^* L^{1-2r}: \Jbar(X,d') 
	&\rightarrow  \Jbar(X,-d')
\end{align*}	
The group stack $J^0(C)[2]$  of 2-torsion line bundles on $C$ acts on $\Ee_\avect$ as follows: \begin{align*}
\Ee_\avect \times \pi^* J^0(C)[2] &\longrightarrow \Ee_\avect \\
((\Mm, \lambda), (\pi^*N, \pi^*\epsilon)) &\longmapsto (\Mm \otimes \pi^*N, \lambda \otimes \pi^*\epsilon).
\end{align*}
Then, the fiber $\Hh_{\PSp,2r,\overline 0}^{-1}(\avect)$ of the $\PSp$-Hitchin morphism is isomorphic, via the spectral correspondence, to the quotient $\Ee_\avect/\pi^* J^0(C)[2]$.
\end{prop}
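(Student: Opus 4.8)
The plan is to reduce to the symplectic case (Proposition \ref{DataSp}) by lifting $\PSp$-pairs of degree $\overline 0$ to $\Sp$-pairs, exactly as the degree-$0$ case of Proposition \ref{DataPGL0} is deduced from Proposition \ref{DataSL}. First I would recall from the cohomological discussion preceding the proposition that a $\PSp(2r,\C)$-Higgs pair $(P,\phi)$ of degree $\overline 0$ with characteristic $\avect$ admits a lifting $(\tilde P,\tilde\phi)$ to an $\Sp(2r,\C)$-Higgs pair, that the associated bundle construction turns $(\tilde P,\tilde\phi)$ into a datum $(E,\Phi,\omega)$ of a Higgs pair of rank $2r$ and degree $0$ together with a non-degenerate symplectic form $\omega\colon E\otimes E\to\Oo_C$ satisfying $\omega(\Phi v,w)+\omega(v,\Phi w)=0$, and that any two liftings of $(P,\phi)$ differ by the action of a $2$-torsion line bundle; hence $(P,\phi)$ is identified uniquely with the $\sim_{J^0(C)[2]}$-equivalence class $[(E,\Phi,\omega)]$. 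Since the $\Sp$-Hitchin image of $(\tilde P,\tilde\phi)$ equals the $\PSp$-Hitchin image of $(P,\phi)$ and the defining equation of the spectral curve is literally the same in $\A_\PSp(2r)=\A_\Sp(2r)$, Proposition \ref{DataSp} applies and produces, functorially in the test scheme, a bijection between the data $(E,\Phi,\omega)$ and the pairs $(\Mm,\lambda)\in\Ee_\avect$, where $\Mm\in\Jbar(X,d')$ with $d'=r(2r-1)\ell$ and $\lambda\colon\Mm^\chk\xrightarrow{{}_\sim}\sigma^*\Mm\otimes\pi^*L^{1-2r}$.

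The key step is to verify that this correspondence is equivariant for the action of $J^0(C)[2]$ on the $\Sp$-data, via $\sim_{J^0(C)[2]}$, and the action of $\pi^*J^0(C)[2]$ on $\Ee_\avect$ written in the statement. Given $(N,\epsilon)\in J^0(C)[2]$, the projection formula gives a canonical isomorphism $\pi_*(\Mm\otimes\pi^*N)\simeq\pi_*\Mm\otimes N=E\otimes N$ fitting into a commutative square that intertwines $\pi_*(\,\cdot\,x)$ with $\Phi\otimes 1_N$, just as in the proof of Proposition \ref{DataPGL}; so the torsion-free sheaf attached to $(E\otimes N,\Phi\otimes 1_N)$ is $\Mm\otimes\pi^*N$. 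It then remains to follow the symplectic datum through the isomorphisms of the proof of Proposition \ref{DataSp}: using $\pi\circ\sigma=\pi$, hence $\sigma^*\pi^*N\simeq\pi^*N$, and the $2$-torsion trivialization $\pi^*\epsilon\colon(\pi^*N)^{-1}\xrightarrow{{}_\sim}\pi^*N$, one gets that $\lambda\otimes\pi^*\epsilon$ is an isomorphism $(\Mm\otimes\pi^*N)^\chk\xrightarrow{{}_\sim}\sigma^*(\Mm\otimes\pi^*N)\otimes\pi^*L^{1-2r}$, because the dual introduced by $\Hh om_{\Oo_X}(\texttt{\char`_},\Oo_X)$ replaces $\pi^*N$ by $(\pi^*N)^{-1}$ and $\pi^*\epsilon$ cancels it. This simultaneously shows that the action of $\pi^*J^0(C)[2]$ on $\Ee_\avect$ in the statement is well defined and that $(\Mm\otimes\pi^*N,\lambda\otimes\pi^*\epsilon)$ is precisely the element of $\Ee_\avect$ associated by Proposition \ref{DataSp} to $(E\otimes N,\Phi\otimes 1_N,\omega_{N,\epsilon})$, where here $M=\Oo_C$ and $M\otimes N^2=N^2$ is trivialized back to $\Oo_C$ through $\epsilon$.

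Finally, passing to quotients on both sides: the datum of the $J^0(C)[2]$-orbit $[(E,\Phi,\omega)]$ — equivalently of $(P,\phi)\in\Hh_{\PSp,2r,\overline 0}^{-1}(\avect)$ — corresponds uniquely to the $\pi^*J^0(C)[2]$-orbit $[(\Mm,\lambda)]$, giving an isomorphism of stacks $\Hh_{\PSp,2r,\overline 0}^{-1}(\avect)\simeq\Ee_\avect/\pi^*J^0(C)[2]$. I expect the main obstacle to be the bookkeeping of line-bundle twists in the equivariance statement, and specifically checking that $\omega_{N,\epsilon}$ matches $\lambda\otimes\pi^*\epsilon$ on the nose, with the correct trivialization and no stray sign; this forces one to unwind the isomorphism $\omega_E\colon E\simeq E^\chk$ and the relative-duality identification $\omega_{X/C}\simeq\pi^*L^{2r-1}$ used in the proof of Proposition \ref{DataSp}. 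Everything else is a formal consequence of that proof together with the description of $\PSp$-bundles of degree $0$ via the cohomology sequence of $0\to\mu_2\to\Sp(2r,\C)\to\PSp(2r,\C)\to 0$.
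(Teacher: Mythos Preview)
Your proposal is correct and follows essentially the same route as the paper: lift the $\PSp$-pair of degree $\overline 0$ to an $\Sp$-pair, invoke Proposition~\ref{DataSp} to obtain $(\Mm,\lambda)\in\Ee_\avect$, verify via the projection formula that under the action of $(N,\epsilon)\in J^0(C)[2]$ the spectral datum becomes $(\Mm\otimes\pi^*N,\lambda\otimes\pi^*\epsilon)$, and pass to the quotient. The paper's proof is in fact terser than yours on the equivariance step, so your added detail on $\sigma^*\pi^*N\simeq\pi^*N$ and the role of $\pi^*\epsilon$ only strengthens the argument.
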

\begin{proof}
Let $(P, \phi)$ be any $\PSp(2r, \C)$-Higgs pair with degree $0$ and characteristic $\avect$ and let $(\tilde{P}, \tilde{\phi})$ be any lifting to a $\Sp(2r, \C)$-Higgs pair. By construction, $(\tilde{P}, \tilde{\phi})$  is a $\Sp(2r, \C)$-Higgs pair with characteristic $\avect$ and corresponds uniquely, via the associated bundle construction, to the datum $(E, \Phi, \omega)$ of a Higgs pair of rank $2r$ and degree $0$ with characteristic $\avect$ and a non-degenerate symplectic form. Then, the datum of $(P, \phi)$ corresponds uniquely to the datum of the $J^0(C)[2]$-equivalence class $[(E, \Phi, \omega)]$.

By Proposition \ref{DataSp}, the datum of $(E, \Phi, \omega)$ corresponds to the datum $(\Mm, \lambda)$ of a torsion-free rank-1 sheaf $\Mm \in \Jbar(X, d')$ and an isomorphism $\lambda: \Mm^\chk \xrightarrow{{}_\sim} \sigma^* \Mm \otimes \pi^*L^{1-2r}$, i.e. an element of $\Ee_\avect$. If $N$ is any $2$-torsion line bundle on $C$ with isomorphism $\epsilon: N^{-1} \xrightarrow{{}_\sim} N$, then by the projection formula the datum of $(E \otimes N, \Phi \otimes 1_N, \omega_N)$ corresponds to the datum of $(\Mm \otimes \pi^*N, \lambda \otimes \pi^*\epsilon)$.

We conclude that the datum of the $J^0(C)[2]$-orbit $[(E, \Phi, \omega)]$  corresponds to the datum of the $\pi^* J^0(C)[2]$-orbit of $(\Mm, \lambda)$.
\end{proof}
\bibliographystyle{alpha}
\bibliography{biblio}

\end{document}